\newtheorem{theorem}{Theorem}
\newtheorem{proposition}[theorem]{Proposition}
\newtheorem{lemma}[theorem]{Lemma}
\newtheorem{example}[theorem]{Example}
\newtheorem{definition}[theorem]{Definition} 
\newtheorem{remark}[theorem]{Remark} 
\newtheorem{corollary}[theorem]{Corollary} 
\newtheorem{convention}[theorem]{Convention}
\newtheorem{conjecture}[theorem]{Conjecture}
\DeclareMathOperator{\rk}{rk}
\DeclareMathOperator{\im}{im}
\DeclareMathOperator{\coker}{coker}
\begin{document}

\title{Cohomology groups for spaces of $12$-fold tilings}

\author{Nicolas Bédaride\footnote{
    Aix Marseille Université, CNRS,
    Centrale Marseille, I2M, UMR 7373, 13453 Marseille, France.
    Email: \texttt{nicolas.bedaride@univ-amu.fr}}  \and
    Franz G{\"a}hler\footnote{Faculty of Mathematics, Bielefeld University, 33615 Bielefeld, Germany.
    Email: \texttt{gaehler@math.uni-bielefeld.de}} \and
    Ana G. Lecuona\footnote{School of Mathematics and Statistics, University of Glasgow, Glasgow, UK.
    Email: \texttt{ana.lecuona@glasgow.ac.uk}}}
    
\date{}
\maketitle
\begin{abstract}
We consider tilings of the plane with $12$-fold symmetry obtained by the cut and projection method. We compute their cohomology groups using the techniques introduced in \cite{Gah.Hun.Kell.13}. To do this we completely describe the window, the orbits of lines under the group action and the orbits of 0-singularities. The complete family of generalized 12-fold tilings can be described
using 2-parameters and it presents a surprisingly rich cohomological structure. To put this finding into perspective, one should compare our results with the cohomology of the generalized $5$-fold tilings (more commonly known as generalized Penrose tilings). In this case the tilings form a 1-parameter family, which fits in simply one of two types of cohomology.
\end{abstract}

\section{Introduction}

This paper deals with tilings of the plane $\mathbb R^{2}$. Given a tiling, the group of translations of the plane acts on it, allowing us to associate to each tiling a space, the hull of the tiling, defined as the closure of the orbit of the tiling under the group action. This space, which has been thoroughly studied for the last 20 years, has many interesting properties. We suggest~\cite{Sad.08} for a good introduction.

Cohomology has been an essential tool of algebraic topology. It is a topological invariant that associates groups (or a more complex ring structure) to spaces and can be used, among other things, to tell spaces apart. The computation of the cohomology of the hull of a tiling emerged at the beginning of the $2000$'s. It has been successfully used to obtain dynamical results on tilings, see for example  \cite{Jul.10,Jul.Sad.18} or \cite{Bar.Gamb.14,Schmi.Trev.18}. In this article we will compute the cohomology of some tiling spaces, enhancing our understanding of them and providing the community with some complete calculations. 

There are several classes of tilings with nice properties. Worth mentioning are the class of substitution tilings and the class of cut and project tilings. For the former one, there is a well developed technique that allows to compute the cohomology groups of these tilings \cite{And.Put.98,Gah.Mal.13}. For the latter ones, in spite of the general methods described by G\"ahler, Hunton and Kellendonk to compute these groups \cite{Gah.Hun.Kell.13}, very few examples have been treated in detail. Some examples can be found in \cite{Gah.Kell.00,And.Put.98}. However, most of these examples are also substitution tilings; thus, the theoretical method concerning the cut and project tilings has never been optimized to be applied in actual computations.

In this article we are going to focus on a two parameter family of cut and projection tiling spaces: the generalized 12-fold tilings. The hull of these tilings will be denoted by $\Omega_{E_{12}^{\gamma}}$ with $\gamma\in\mathbb R^{2}$ (the notation and construction method are explained in Sections~\ref{subcohom} and~\ref{subnfold}). Our goal is double: first, we want to obtain a deeper understanding of this space of tilings using cohomology, and second we want to exploit the techniques in \cite{Gah.Hun.Kell.13} to carry out the computations. Our computational efforts crystallize in the following theorem.
\begin{theorem}\label{main}
The cohomology groups of $\Omega_{E_{12}^{\gamma}}$ satisfy:
\begin{enumerate}
\item For all parameters $\gamma$, all the cohomology groups are torsion free. 
\item For all parameters $\gamma$, $H^{0}(\Omega_{E_{12}^{\gamma}})=\mathbb Z$.
\item The rank of the group $H^{1}(\Omega_{E_{12}^{\gamma}})$ depends on $\gamma$ and takes values in the set $\{7, 10, 13, 16, 19, 22, 25\}$. The precise values of $\gamma$ associated to the different ranks are explicit in Figure~\ref{fig-orbit-tout}.
\item The rank of the group $H^{2}(\Omega_{E_{12}^{\gamma}})$ depends on $\gamma$. Moreover, if the rank of $H^{1}(\Omega_{E_{12}^{\gamma}})\leq 13$, then the rank of $H^{2}(\Omega_{E_{12}^{\gamma}})$ is presented in Propositions~\ref{allgamma0} to~\ref{last2orbits}.
\end{enumerate}
\end{theorem}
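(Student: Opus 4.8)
The plan is to follow the general machinery of Gähler–Hunton–Kellendonk for cut and project tilings, which expresses the cohomology of $\Omega_{E_{12}^{\gamma}}$ in terms of the topology of the acceptance domain (the window $W$) together with the combinatorial data of how the singular lines and $0$-singularities subdivide $W$ into its orbit cells under the relevant group action. Concretely, I would first set up the exact description of the window for the $12$-fold tilings and the induced CW-structure on it coming from the internal-space projections of the lattice hyperplanes. The GHK approach then produces a complex whose cohomology computes $H^*(\Omega_{E_{12}^{\gamma}})$, and the whole theorem is really an organized computation of the ranks of this complex's cohomology as $\gamma$ varies.

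Step by step: (1) Establish $H^0=\mathbb{Z}$; this is immediate from minimality/connectedness of the hull, which holds for these cut and project tilings, so that part is essentially a remark once the hull is known to be the suspension of a minimal $\mathbb{Z}^2$-action. (2) Prove torsion-freeness of all groups: here I would argue that in the GHK spectral sequence (or the associated long exact sequences built from the stratification of $W$), all the incidence/boundary maps are, after choosing suitable bases adapted to the orbit structure, represented by integer matrices whose relevant minors force the cokernels to be free — equivalently, that the relative homology groups of $W$ modulo its singular skeleton are free and the differentials split. This typically reduces to checking that certain inclusion-induced maps on $H_*$ of unions of convex (or at least contractible) pieces are injective with free cokernel, which one can do by an explicit (if lengthy) analysis of the face poset. (3) For $H^1$: its rank is governed by the number of orbits of singular lines under the symmetry/translation group, and the seven possible values $\{7,10,13,\dots,25\}$ arise from the finitely many combinatorial types of how $\gamma$ can sit relative to these lines — coincidences among orbits of lines reduce the count, and a careful enumeration of all such coincidence patterns (the content of Figure~\ref{fig-orbit-tout}) yields exactly the arithmetic progression with common difference $3$. (4) For $H^2$: under the hypothesis $\mathrm{rk}\,H^1\le 13$, only a restricted list of $\gamma$-types occurs, and for each I would compute the rank of $H^2$ directly from the Euler characteristic of the GHK complex together with the already-known $H^0$ and $H^1$ ranks and the (proven) torsion-freeness, with the case-by-case answers recorded in Propositions~\ref{allgamma0}--\ref{last2orbits}.

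The main obstacle, as the abstract already hints, is the combinatorial explosion in cases: the two-parameter family has a genuinely rich stratification of parameter space, so the bulk of the work is the exhaustive and error-free enumeration of the orbits of singular lines and of $0$-singularities for every combinatorial type of $\gamma$, and then the bookkeeping of how these feed into the GHK complex. The torsion-freeness claim (part 1) is the most conceptually delicate point, since a priori the boundary maps in the complex could produce torsion; the plan is to show that does not happen by exhibiting, in each stratum, bases in which the matrices are unimodular on their supports, which is the kind of structural observation that must be made uniformly rather than case by case. Everything else — $H^0$, the list of $H^1$ ranks, and the $H^2$ ranks under $\mathrm{rk}\,H^1\le 13$ — then follows by assembling these inputs, with Euler-characteristic constraints pinning down $H^2$ once $H^0$, $H^1$ and torsion-freeness are in hand.
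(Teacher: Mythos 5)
Your overall strategy coincides with the paper's: both use the Gähler--Hunton--Kellendonk machinery, reduce $H^{1}$ to counting $\Gamma$-orbits of singular lines as $\gamma$ varies, and obtain $H^{2}$ from the Euler characteristic $e=-L_{0}+\sum_{\alpha}L_{0}^{\alpha}$ once the orbits of $0$-singularities are enumerated (in the paper this enumeration is computer assisted for $L_{1}\le 12$). The place where your proposal has a genuine gap is part (1), torsion-freeness, and with it the exact rank formula for $H^{1}$. You propose to prove freeness by showing that all boundary maps in a stratification-based complex admit bases making them ``unimodular on their supports,'' uniformly in $\gamma$. This is both stronger than what is needed and not clearly achievable: the orbit data entering those differentials genuinely change with $\gamma$, so a uniform structural statement about all incidence maps is the wrong target. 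The paper's route is much more economical: Theorem~\ref{ref-rank-cohomologie} (GHK, Thm.~5.3) already isolates all possible torsion into $H^{2}$ and identifies it with the torsion of $\coker\beta=\Lambda^{2}\Delta_{0}/\langle\Lambda^{2}\Gamma^{\alpha}\rangle_{\alpha\in I_{1}}$, and this cokernel depends only on the \emph{directions} of the singular lines (six of them, with stabilizers $\Gamma^{\alpha}$ computed in Lemma~\ref{Stab-delta}), hence not on $\gamma$ at all. A single explicit $6\times 6$ integer computation (Lemma~\ref{lem-R1-sansDelta}) then gives $R=\rk\beta=3$ and $\coker\beta\cong\mathbb Z^{3}$, which settles torsion-freeness for every parameter in one stroke.

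The same missing lemma also affects your part (3): the GHK formula gives $\rk H^{1}=4+L_{1}-R$, so the clean statement $\rk H^{1}=1+L_{1}$, and hence the list $\{7,10,\dots,25\}$ from $L_{1}\in\{6,9,\dots,24\}$ (Proposition~\ref{orbits-line}), requires knowing that $R=3$ independently of $\gamma$; your sketch ties the rank only to the number of line orbits and never accounts for this correction term. Once Lemma~\ref{lem-R1-sansDelta} is in place, the rest of your plan (connectedness for $H^{0}$, Euler-characteristic bookkeeping for $H^{2}$ in the cases $\rk H^{1}\le 13$) does line up with the paper's proof, with the caveat that the ``Euler characteristic of the GHK complex'' is not an independent input but precisely the quantity $e$ whose computation is the hard combinatorial content of Sections~\ref{gammanul} and~\ref{2lines}.
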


As mentioned before, the proof of this theorem is based on the techniques described in \cite{Gah.Hun.Kell.13}, which we briefly summarize in the next subsections. A detailed plan of the proof is given in Subsection \ref{practical}. 

In this article, the computation of the groups $H^{1}(\Omega_{E_{12}^{\gamma}})$ is completely explicit; on the other hand, the results concerning the groups $H^{2}(\Omega_{E_{12}^{\gamma}})$ are computer assisted if $\gamma\not\in\mathbb Z[\sqrt 3]\times\mathbb Z[\sqrt 3]$ \cite{FGweb}. We have not explicitly described all the possible ranks of the second cohomology groups as a function of the parameter $\gamma$. However, the information gathered in Proposition~\ref{prop-points-droite}, particularly in Tables~\ref{tab-even1} to~\ref{tab-odd2}, makes it very easy to compute the rank of $H^{2}(\Omega_{E_{12}^{\gamma}})$ for any fixed value of $\gamma\in\mathbb R^{2}$. For a `generic' value of $\gamma$ we propose the following conjecture, supported by computer calculations.
Generic $\gamma$ parameters are those with the maximal number of singular
lines and the least order of multiple intersections of singular lines.
Starting with generic parameters, a multiple intersection of lines
will only move when the parameters are varied, but never split into
several intersections. For special $\gamma$ values, however, several
intersections may merge to a single one of higher-order.

\begin{conjecture}
The maximal cohomology attained among all the generalized 12-fold tilings is
$$
H^0(\Omega_{E_{12}^\gamma})=\mathbb Z,\quad H^1(\Omega_{E_{12}^\gamma})=\mathbb Z^{25}\quad\mathrm{and\ }\quad H^2(\Omega_{E_{12}^\gamma})=\mathbb Z^{564}.
$$
\end{conjecture}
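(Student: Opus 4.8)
The plan is to single out a \emph{generic} parameter $\gamma$ --- one for which the arrangement of singular lines in the window realizes the maximal number of line orbits and only the minimal multiplicity of intersections --- and to push the method of \cite{Gah.Hun.Kell.13} all the way through for this one representative, while separately proving that no other $\gamma$ does better. The group $H^0=\mathbb Z$ is the same for every $\gamma$ by Theorem~\ref{main}(2), so there is nothing to do there. For $H^1$ the conjecture is essentially Theorem~\ref{main}(3): the rank lies in $\{7,10,13,16,19,22,25\}$, so $\mathbb Z^{25}$ is the top value, attained exactly on the locus of Figure~\ref{fig-orbit-tout}; it remains only to check that a generic $\gamma$ in our sense lands on that locus, i.e.\ that maximizing the number of line orbits forces $H^1$ to be maximal, which is a direct reading of the $H^1$ chain complex. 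The real content is the value $H^2=\mathbb Z^{564}$ together with the claim that $564$ is the global maximum.

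For the generic computation one follows \cite{Gah.Hun.Kell.13} verbatim: write $H^*(\Omega_{E_{12}^\gamma})$ as the target of a spectral sequence whose $E_1$-page is assembled, orbit by orbit, from the cells of the singular-line arrangement in the internal torus --- the $2$-dimensional faces, the $1$-dimensional segments between consecutive $0$-singularities, and the $0$-singularities themselves --- each contributing the cohomology of its stabilizer, with differentials given by the incidence maps of the arrangement. Then $H^2(\Omega)$ is read off from the top of this (short) spectral sequence, essentially as the cokernel of the last incidence differential. The ingredients are exactly those already produced for Theorem~\ref{main}: the enumeration of line orbits, and the enumeration and local classification of the $0$-singularities recorded in Proposition~\ref{prop-points-droite} and Tables~\ref{tab-even1}--\ref{tab-odd2}. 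One computes the ranks of the chain groups, the ranks of the incidence maps, checks torsion-freeness as in Theorem~\ref{main}(1), and verifies that the bookkeeping sums to $564$. It would be highly desirable --- and is the natural way to make the statement human-checkable rather than merely computer-assisted --- to extract a closed formula for $\rk H^2$ as a weighted count over the intersection types of the tables, with $564$ the value of that formula at the generic configuration.

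The maximality half is a \emph{specialization} argument. As explained in the text, from a generic $\gamma$ a multiple intersection of singular lines can only move, never split, whereas passing to a special $\gamma$ merges several intersections into one of higher order and can also kill line orbits. One must show that each such degeneration is cohomologically monotone: it never raises $\rk H^2$. The clean approach is local --- a merge replaces a cluster of low-order transverse intersections by a single higher-order one, and one checks, type by type against Tables~\ref{tab-even1}--\ref{tab-odd2}, that the induced change in (number of $0$-cells, number of $1$-cells, rank of the incidence map) has non-positive net effect on $H^2$; the disappearance of a line orbit is handled in the same spirit. Since $H^2$ sits at the top of the complex, its rank equals (rank of the $2$-chains) minus (rank of the differential into degree $2$), so the whole question reduces to comparing ranks of explicit integer matrices attached to the finitely many local configurations.

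I expect this monotonicity step to be the main obstacle. The Euler characteristic is additive under these degenerations and easy to track, but $\rk H^2$ is \emph{not} a sum of purely local contributions: a merge can lower the $2$-cell count and lower the rank of the outgoing differential simultaneously, and controlling the sign of the difference uniformly over every intersection type in Tables~\ref{tab-even1}--\ref{tab-odd2}, and over every way a line orbit can vanish, is the delicate combinatorial core of the argument. By contrast the generic-case count of $564$, though large, is in principle the same kind of finite verification already carried out for the cases $\rk H^1\le 13$ in Propositions~\ref{allgamma0}--\ref{last2orbits}.
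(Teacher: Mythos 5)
Be aware first of what the paper actually establishes: the statement you are proving is stated there as a \emph{conjecture}, supported only by a computer-assisted computation at the single parameter $\gamma=(1/7+\sqrt3/11,\,1/13+\sqrt3/17)$ (Section~\ref{generalfinal}), whose genericity is itself only conjectured (the program requires $\gamma\in\mathbb Q(\sqrt3)$). Your ``generic half'' is essentially that same computation: via Theorem~\ref{ref-rank-cohomologie} one has $\rk H^1=1+L_1$ and $\rk H^2=L_1-L_0+\sum_\alpha L_0^\alpha$, and the computed values $L_1=24$, $L_0=516$, $\sum_\alpha L_0^\alpha=1056$ give $25$ and $564$. So for that half you and the paper take the same route, and the same caveats apply: the $564$ rests on a finite but computer-assisted enumeration at one rational point, and Proposition~\ref{prop-points-droite} only gives the upper bound $L_0^\alpha\le 44$ per line, not a lower bound on $L_0$, so no closed-form certificate of the generic value is currently available.

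The genuine gap is the maximality half, which is exactly the content that keeps this a conjecture. Your specialization argument needs the claim that every degeneration (merging of $0$-singularities, identification of line orbits) is cohomologically monotone for $\rk H^2$, and you correctly flag this as the delicate core --- but nothing in the paper, and nothing in your proposal, establishes it, and it is not obviously true even locally. The paper's own data already shows that $\rk H^2$ is not governed in any simple monotone way by how ``special'' $\gamma$ is: among the strata with $L_1=12$ (Section~\ref{2lines}, Propositions up to~\ref{last2orbits}) the rank of $H^2$ takes the values $100$, $112$, $114$ and $129$, with the largest value occurring on a stratum ($\gamma_1,\gamma_2\in\frac12G\setminus G$, six of the nine cosets) that is in no sense ``more generic'' than the others; moreover even at the generic parameter the $p=4$ intersections show that singular lines do not move independently, so a merge cannot be analyzed one intersection at a time without accounting for these forced coincidences. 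Until you either prove the monotonicity statement uniformly over all degeneration types (including loss of line orbits, which changes $L_1$, the stabilizer data, and all three quantities in $\rk H^2$ simultaneously) or replace it by a global bound of the form $L_1-L_0+\sum_\alpha L_0^\alpha\le 564$ valid for every $\gamma$, the proposal is a programme rather than a proof --- which is consistent with the paper, where the statement remains open.
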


\subsection{Topology of tiling spaces}
Consider a finite set $\mathcal{P}$ of polygons. A \emph{tiling} of the plane based on $\mathcal{P}$ is given by a family of polygons $(T_i)_{i\in I}$ such that 
\begin{itemize}
\item $\bigcup T_i=\mathbb R^2$,
\item $T_i, T_j$ have disjoint interiors for $i\neq j$,
\item each $T_i$ is the image by a translation of an element of $\mathcal{P}$,
\item any two tiles meet full-edge to full-edge.
\end{itemize}
A \emph{patch} of the tiling is a union of some $T_i$. 

Let us consider the set (possibly empty) of all tilings of the plane based on $\mathcal{P}$. Given any two elements $T$ and $T'$ in this set we are going to define a distance, $d(T,T')$, between them (see~\cite{Sad.08} for further details). Let
$R(T,T')$ be the supremum of all radii $r$ such that there exists two translations $t_u, t_v$ of the plane of norm less than $\frac{1}{2r}$ such that the patch of $T-t_u$ intersecting the ball $B(0,r)$ coincides with the patch of $T'-t_v$ in the ball $B(0,r)$. With these notations in place we define
$$
d(T,T'):=\min\{1,R(T,T')\}.
$$ 
We are now ready to introduce the concept of a \emph{tiling space} which is a set of tilings based on $\mathcal{P}$, closed under translation, and complete for the distance metric.
Now, if $T$ is a tiling based on $\mathcal{P}$ we can can construct a tiling space from $T$ by simply considering the closure (with respect to the topology defined by the above distance) of the set of tilings obtained from the action of the translation group of the plane on $T$. This tiling space is called the \emph{hull} of $T$ and denoted $\Omega(T)$. It is well known that $\Omega(T)$ is connected, not path connected and with contractible connected components, see  \cite{Sad.08} for details. 

The key objective of this paper is to compute the cohomology groups of the hulls of a set of tilings obtained by the cut-and-projection method, which we recall in the next section.

\subsection{Background on cut-and-projection tilings cohomology}\label{subcohom}

Given a $2$-plane $E$ in the vector space $\mathbb R^n$ containing no integer line (a line directed by an element of $\mathbb Z^n$) we want to study the tiling $T$ of the plane $E$ obtained by the cut and projection method. We recall this construction: to start with we need to consider the following orthogonal decomposition 
$$\mathbb R^n=E\oplus E^\perp.$$  Let $\pi$ be the projection on $E$ and $\pi^\perp$ the projection on $E^\perp$. We are now ready to describe the tiling $T$ we want to study: Its vertices are the images by $\pi$ of the points in $\mathbb Z^n\cap (E+[0,1]^n)$. The tiles of the tiling are rhombi obtained by the projection $\pi$ of the $2$-faces of the complex $\mathbb Z^n$ which are inside the ``band'' $E+[0,1]^n$.
Let us denote $\Omega_E$ the hull of this tiling. In our case it is the closure of $T$ under the action of the group of translations of the plane $E$. 

Remark that we can also consider an affine plane $\mathcal E=E+\gamma$ for some vector $\gamma$ and associate a tiling to $\mathcal E$ by considering the band $\mathcal E+[0,1]^n$ and doing the same construction. The hull $\Omega_{\mathcal E}$ of this new tiling will be denoted $\Omega_E^\gamma$. Obviously if $\gamma$ is in $\mathbb Z^n$ or in $E$, the hulls of $E$ and $\mathcal E$ coincide. Hence, we will only consider translation vectors $\gamma$ in $E^{\perp}$. In this case, the hulls of $E$ and $\mathcal E$ might differ. This is for example the case of Penrose tilings, where one obtains for a generic value $\gamma$ what is known under the name of a ``generalized Penrose tiling''. 

During the last decade, starting with the work of \cite{And.Put.98} and \cite{Ben.Gamb.03}, a lot of effort has been put into understanding the topology of $\Omega_E$. We are particularly interested in computing the cohomology groups with integer coefficients, $H^k(\Omega_E,\mathbb Z)$ for $k=0,1,2$, of the hull of the tiling. A general description of these cohomology groups is given by G\"ahler-Hunton-Kellendonk in \cite{Gah.Hun.Kell.13}. In order to state their results, we need to introduce quite a bit of notation and establish some conventions, which we will follow throughout the paper. Figure~\ref{Penrose-window} might help the reader visualize some of the terminology.

\begin{figure}
\begin{center}
\includegraphics[width=5cm]{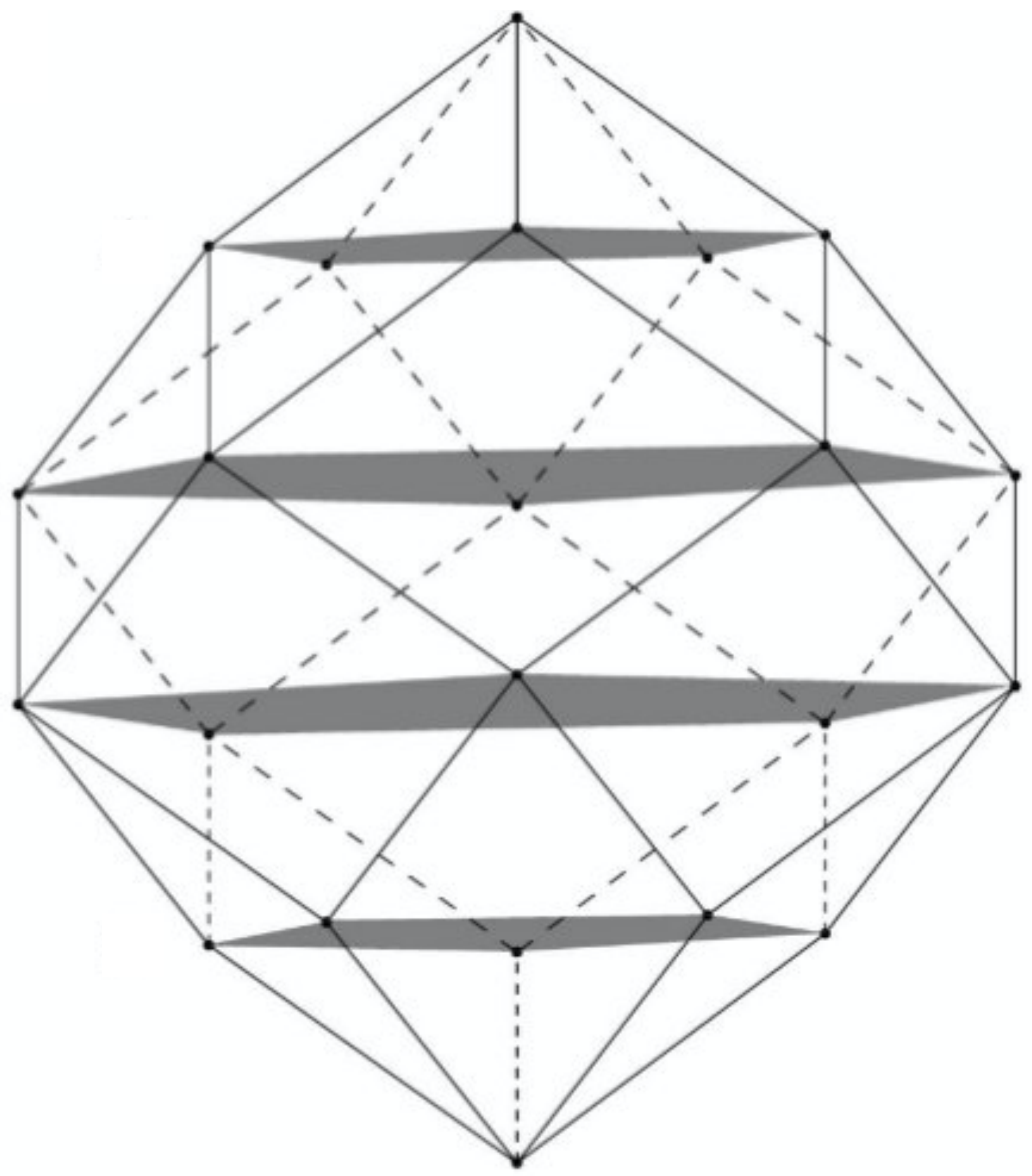}
\end{center}
\caption{The cut and projection description of the Penrose tiling starts with $\mathbb R^{5}$ decomposed into a plane $E$, which will be tiled, and $E^{\perp}\cong\mathbb R^{3}$. In this case, the closure of the group $\Gamma=\pi^{\perp}(\mathbb Z^{5})$ is $\mathbb Z\times\mathbb R^{2}$ and thus $\Delta=\mathbb Z$. The rhombic-icosahedron in the figure is the window, that is, $\pi^{\perp}([0,1]^{5})\subset E^{\perp}$. The $z$-axis, through the bottom and top vertices of the window, is the vector space $F$. The four horizontal shaded pentagons, plus the bottom and top vertices, are the intersection of the family $P$ of planes (parallel to $F^{\perp}$) and the window.}
\label{Penrose-window}
\end{figure}

\begin{itemize}
 
\item The group $\pi^\perp(\mathbb Z^n)$ will be called $\Gamma$. 
  The closure of this group is the product of a free abelian group and a continuous group since it has no torsion part:
  $\overline\Gamma\sim \mathbb Z^k\times \mathbb R^\ell$. We call $\Delta$ the discrete part of $\overline\Gamma$.

\item Denote by $F$ the vector space generated by $\Delta\subset E^\perp$. We will decompose the space $E^\perp$ as $E^\perp=F\oplus F^{\perp}$ and denote by $\Delta_0$ the stabilizer of $F^{\perp}$ under the group $\Gamma$.

\item Assuming that $\ell = \dim F^\perp = 2$, we let $P$ be the collection of planes parallel to $F^{\perp}$, defined as 
  $$
  P=\bigcup_{\delta\in\Delta}F^{\perp}+\delta.
  $$ 

\item The polytope obtained by projecting the cube $[0,1]^n$ onto $E^\perp$ will be called the window and is denoted $W$.
  We need to assume that the intersection between $W$ and $P$ is a union of polygons, segments and points.
  The boundaries of the polygons and the segments are contained in lines directed by the vectors $f_1,\dots, f_n$.

\item We consider the action of $\Gamma$ on the lines described in the last point. The set of orbits is denoted $I_1$
  and each orbit is called a $1$-singularity. The cardinality of the set $I_1$ will be denoted $L_{1}=|I_1|$.

\item The intersection of any two $1$-singularities will be called a $0$-singularity. The set of all orbits of
  $0$-singularities is denoted $I_0$. The cardinality of the set $I_0$ will be denoted $L_{0}=|I_0|$.

\item Let $\Gamma^i\leq\Gamma$ be the stabilizer under the action of $\Gamma$ of the vector space spanned by $f_i$.
  Every $1$-singularity $\alpha\in I_1$ is generated by some direction $f_i$. We denote by $L_0^\alpha$ the set of orbits
  of $0$-singular points on $\alpha$ under the action of $\Gamma^i$, and we set $\Gamma^\alpha:=\Gamma^i$, if the
  $1$-singularity $\alpha$ is in direction $f_i$.

\item Since $\Gamma^\alpha\subset\Delta_0$ for all $\alpha\in I_1$, we can define the natural inclusion map
  \begin{equation}
    \beta: \bigoplus_{\alpha\in I_1}\Lambda^2\Gamma^\alpha \rightarrow \Lambda^2\Delta_0,\label{beta-map}
  \end{equation}
  where $\Lambda^2 A$ denotes the exterior product of two copies of the free abelian group $A$.
  In our case, each group $\Gamma^\alpha$ is of rank two, thus generated by two vectors, and $\Lambda^2\Gamma^\alpha$ represents
  the exterior product of the two generators.

\item We define the numbers $R$ and $e$ as: 
  $$ R = \rk \beta \quad \textrm{and} \quad e=-L_0+\displaystyle\sum_{\alpha\in I_1}L_0^\alpha. $$
  $e$ will turn out to be the Euler characteristic of the tiling space $\Omega_{E}$.  
\end{itemize}

With all this notation and conventions in place, we are now ready to state two general results about the cohomology groups
$H^{*}(\Omega_{E},\mathbb Z)$.

Firstly, we note (compare \cite[Thm.~2.10]{Gah.Hun.Kell.13}) that the cohomology is finitely generated, iff there exists
a natural number $\nu$ such that $\rk \Delta_0 = \nu \dim F^\perp$ (or, equivalently, $\dim E=(\nu-1)\dim F^\perp$), and
$\nu = \rk \Gamma^\alpha$ for all $\alpha\in I_1$. In our case, these conditions are met for $\nu=2$, and the cohomologies
of all dodecagonal tilings considered here are finitely generated.

Secondly, with the notation introduced above, the cohomology groups can now be expressed explicitly.

\begin{theorem}[Thm.~5.3 in~\cite{Gah.Hun.Kell.13}]\label{ref-rank-cohomologie}
The free abelian part of $H^{k}(\Omega_E,\mathbb Z)$ is given by the following isomorphisms:
{\renewcommand\arraystretch{1.3}
\[ \begin{array}{|c|c|c|}
\hline
H^0(\Omega_E,\mathbb Z) & H^1(\Omega_E,\mathbb Z) & H^2(\Omega_E,\mathbb Z)\\ 
\hline
\mathbb Z & \mathbb Z^{4+L_1-R} & \mathbb Z^{3+L_1+e-R}\\
\hline
\end{array}\]}
The torsion part of $H^2(\Omega_E,\mathbb Z)$ is isomorphic to the torsion in $\coker \beta:= \Lambda^2\Delta_0 / \langle \Lambda^2\Gamma^\alpha \rangle_{\alpha \in I_1}$,
whereas the other cohomology groups are torsion free.
\end{theorem}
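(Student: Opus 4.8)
This is the specialization of the general structure theorem of G\"ahler, Hunton and Kellendonk (\cite[Thm.~5.3]{Gah.Hun.Kell.13}) to the present two-dimensional setting, and the plan is to follow that proof. Its starting point is that the \v{C}ech cohomology $H^*(\Omega_E,\mathbb Z)$ agrees with a pattern-equivariant cohomology that can be read off from a stratification of the canonical transversal of the hull. So the first step is to identify this transversal with a suitable completion of the window $W\subset E^\perp$ and record its stratification by the singular set $W\cap P$: the $2$-strata are the interiors of the polygons of $W\cap P$, the $1$-strata are the relative interiors of the singular segments (which lie in lines directed by the $f_i$), and the $0$-strata are the singular points. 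The group $\Gamma$ permutes the strata, and the orbit bookkeeping is exactly what $L_1=|I_1|$, $L_0=|I_0|$ and the refined counts $L_0^\alpha$ encode; the hypotheses $\rk\Delta_0=\nu\dim F^\perp$ and $\rk\Gamma^\alpha=\nu$ with $\nu=2$ are what make the answer finitely generated.

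Second, one feeds this stratification into the associated spectral sequence, filtered by the strata, whose three columns are: a ``bulk'' term governed by $\Delta_0$ together with the $\mathbb R^2$-flow of the hull (the continuous summand $\mathbb R^\ell=F^\perp$ of $\overline\Gamma$ is absorbed into this flow, which is why $\Delta_0$, not $\Gamma$ itself, controls the bulk); a term indexed by $I_1$, built from the $2$-tori $F^\perp/\Gamma^\alpha$ decorated by their $L_0^\alpha$ marked points; and a term indexed by $I_0$ recording the $0$-singularities. Since the relevant dimensions are small ($\dim E=2=\dim F^\perp$), only finitely many entries survive and the sequence degenerates quickly. Assembling the surviving pieces, the bulk supplies $\mathbb Z$ in degree $0$ (so $\Omega_E$ is connected) and a $\mathbb Z^4$ in degree $1$; the $I_1$-orbits add $L_1$ further free classes to $H^1$; the relations among all these are precisely the image of the natural map $\beta$ of \eqref{beta-map}, which has rank $R$ and lowers the free rank of \emph{both} $H^1$ and $H^2$ by $R$; and the $0$-singularity data enters $H^2$ through the Euler correction $e=-L_0+\sum_{\alpha}L_0^\alpha$. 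This produces $\rk H^1=4+L_1-R$ and $\rk H^2=3+L_1+e-R$, and the alternating sum confirms $\chi(\Omega_E)=e$.

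Third, the torsion claim: the only differential whose image can fail to be a direct summand is $\beta\colon\bigoplus_{\alpha\in I_1}\Lambda^2\Gamma^\alpha\to\Lambda^2\Delta_0$; it lands in the free group $\Lambda^2\Delta_0$, which sits in top degree, so $H^0$ and $H^1$ remain torsion free while $H^2$ acquires exactly the torsion of $\coker\beta$. Every other group occurring in the spectral sequence is the cohomology of a torus or a free group on orbit data, hence free, so no further torsion can appear.

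The genuine obstacle is carrying out the first two steps rigorously: constructing the stratified-transversal spectral sequence, proving it converges to $H^*(\Omega_E,\mathbb Z)$, and checking that it collapses where claimed; and, most delicately, handling the non-discrete summand $\mathbb R^\ell$ of $\overline\Gamma$, which is the feature distinguishing this case from the classical totally-disconnected-transversal one and which forces the finiteness hypotheses on $\Delta_0$ and the $\Gamma^\alpha$. Once the framework is in place the rest is bookkeeping: matching each orbit to its contribution and verifying that the connecting maps are the evident restriction and inclusion maps, in particular that the last one is $\beta$.
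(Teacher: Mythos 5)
You should first note that the paper does not prove this statement at all: it is imported verbatim as Theorem~\ref{ref-rank-cohomologie} from \cite{Gah.Hun.Kell.13} (their Thm.~5.3), so there is no in-paper argument to compare against; the only thing one can assess is whether your sketch is a faithful reconstruction of the cited proof. In spirit it is: the authors themselves recall in Section~\ref{sec-disc} the decomposition underlying the formula, namely $H^1\cong\coker\bigl(\oplus_{\alpha}\Lambda^3\Gamma^\alpha\to\Lambda^3\Delta_0\bigr)\oplus\ker\beta$ and $H^2\cong\coker\beta\oplus\ker\beta_0$, from which the stated ranks follow by pure counting ($\Lambda^3\Gamma^\alpha=0$, $\Lambda^3\Delta_0\cong\mathbb Z^4$ since $\rk\Delta_0=4$, $\ker\beta$ free of rank $L_1-R$, $\coker\beta$ of rank $6-R$ plus possible torsion, $\ker\beta_0$ free and carrying the $0$-singularity data through $e$), and the torsion claim is exactly that only $\coker\beta$ can fail to be free. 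Your bookkeeping lands on the right numbers and the right torsion statement.

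However, as a proof the proposal has a genuine gap, which you yourself flag: the identification of the \v{C}ech cohomology of $\Omega_E$ with the cohomology of the stratified window modulo the $\Gamma$-action, the construction and convergence of the filtration spectral sequence (or, in \cite{Gah.Hun.Kell.13}, the exact sequences of $\Gamma$-modules built from the singular lines and points), and the degeneration/exactness facts that produce the explicit three-term answer are precisely the content of the theorem, and nothing in your sketch establishes them --- including the delicate point about the continuous factor $\mathbb R^\ell$ of $\overline\Gamma$, which is why $\Delta_0$ rather than $\Gamma$ appears. In addition, some identifications are off as stated: the degree-one ``bulk'' $\mathbb Z^4$ is $\Lambda^3\Delta_0$, not the $H^1$ of a torus fibre; the contribution of a $1$-singularity orbit is the homology of its rank-two stabilizer $\Gamma^\alpha$ acting on the punctured singular line (producing the $\Lambda^*\Gamma^\alpha$ terms and the counts $L_0^\alpha$), not of a quotient torus $F^\perp/\Gamma^\alpha$; and $\beta$ enters $H^1$ through its kernel but $H^2$ through its cokernel, so saying it ``lowers the free rank of both by $R$'' gets the arithmetic right while obscuring exactly the mechanism that makes $H^0,H^1$ torsion free and places all torsion in $\coker\beta$. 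Treated as a roadmap to the citation your text is fine; treated as a proof it is incomplete.
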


We note that $R = \rk \beta = \rk \langle \Lambda^2\Gamma^\alpha \rangle_{\alpha \in I_1}$ depends
only on the directions of the $1$-singularities $\alpha$, not on their positions, and the same
holds true for $\coker \beta$.
Hence, these quantities are the same for all dodecagonal tilings considered here.
As we shall see, we have $R=3$, and the torsion of $\coker \beta$ vanishes, so that the cohomology
of all dodecagonal tilings is free.

\subsection{Background on $n$-fold tilings}\label{subnfold}

In this section we recall the definition of the $n$-fold tiling and summarize some known results on its cohomology groups. We refer the reader to \cite{Bed.fern.15} for further details on these tilings. Following the description in the previous section, to produce a tiling by the cut and projection method we need to start with a 2-plane in a vector space. In this case the plane we intend to tile, $E_{n}$, is defined as follows:
\begin{itemize}
\item If $n=2p+1$, then $E_{n}$ is the 2-plane in $\mathbb R^{2p+1}$ generated by
$$
\left(
\begin{array}{c}
1\\
\cos \frac{2\pi}{2p+1}\\
\vdots\\
\cos\frac{4p\pi}{2p+1}
\end{array}
\right)\ \mathrm{and}\ 
\left(
\begin{array}{c}
0\\
\sin \frac{2\pi}{2p+1}\\
\vdots\\
\sin\frac{4p\pi}{2p+1}
\end{array}
\right).
$$
\item If $n=2p$, then $E_{n}$ is the 2-plane in $\mathbb R^{p}$ generated by
$$
\left(
\begin{array}{c}
1\\
\cos \frac{\pi}{p}\\
\vdots\\
\cos\frac{(p-1)\pi}{p}
\end{array}
\right)\ \mathrm{and}\ 
\left(
\begin{array}{c}
0\\
\sin \frac{\pi}{p}\\
\vdots\\
\sin\frac{(p-1)\pi}{p}
\end{array}
\right).
$$
\end{itemize} 

The plane $E_{n}$ yields a decomposition of the ambient space as
$\mathbb R^{k}=E_{n}\oplus E_{n}^{\perp}$, where the value $k$ depends on the parity of $n$. Furthermore, the space $E_{n}^{\perp}$ decomposes as $E_{n}^{\perp}=F_{n}\oplus F_{n}^{\perp}$, where $F_{n}$ is the vector space spanned by $\Delta$ (recall that $\Delta$ is the discrete part of $\overline{\pi^{\perp}(\mathbb Z^{k})}\subset E_{n}^{\perp}$).
As explained in Section~\ref{subcohom}, starting with $E_{n}^{\perp}$ we can define a family of ``related'' tilings by considering the translate planes $E_{n}+\gamma$, $\gamma\in E_{n}^{\perp}$. 

If $\gamma\in F_n^\perp$, then the hulls of the tilings of $E_{n}$ and $E_{n}+\gamma$ coincide. 
Indeed, translating $E_n$ by $\gamma$ and looking at integer points in the band is the same as keeping $E_n$ and looking at points of the form $-\gamma+\mathbb Z^n$ in the band. Thus if $\gamma,\gamma'\in F_{n}$ and $\gamma-\gamma'\in\Delta$, then the tilings of $E_{n}+\gamma$ and $E_{n}+\gamma'$ coincide, since $\gamma-\gamma'$ is the projection of an element of $\mathbb Z^n$. 
Hence, we can restrict our study to the case $\gamma\in F_n$.

Our aim is to compute the cohomology groups of the hulls $\Omega_{E_n^{\gamma}}$ with the techniques introduced in \cite{Gah.Hun.Kell.13}. As explained above, only some values of $\gamma$ yield interesting sets $\Omega_{E_n^{\gamma}}$ (i.e.\ different from $\Omega_{E_n}$). From~\cite{Gah.Hun.Kell.13} we know that to compute the cohomology groups of $\Omega_{E_n^{\gamma}}$ we first need to understand the intersection of window $W$ (defined in Section~\ref{subcohom}) with the the family of planes $P^\gamma$ of the form
 
$$
P^{\gamma}=\bigcup_{\delta\in\Delta} F_{n}^{\perp}+\delta+\gamma,\ \mathrm{with}\ \gamma\in F_{n}.
$$

The following statement summarizes all the known values of the cohomology
groups of the (undecorated) $n$-fold tilings (cf.\ \cite[Table~5.1]{Gah.Hun.Kell.13}).
These results had mostly been announced without proof already in \cite{Gah.Kell.00},
but that paper contains some mistakes in the cohomology of the generalized Penrose
tilings, which were corrected later by Kalugin \cite{Kal.05}.

\begin{theorem}\label{known}
For small values of $n$, the $n$-fold tilings satisfy:
\begin{enumerate}
\item If $n=8$, these tilings are also known as Ammann--Beenker tilings. In this case $F=0$. The cohomology groups with integer coefficients of the $8$-fold tilings are:
\[{\renewcommand\arraystretch{1.3}
\begin{array}{|c|c|c|}
\hline
H^0(\Omega_{E_{8}})&H^1(\Omega_{E_{8}})&H^2(\Omega_{E_{8}})\\
\hline
\mathbb Z&\mathbb Z^5&\mathbb Z^9\\
\hline
\end{array}
}\]
\item If $n=5$, these tilings are also known as (Generalized) Penrose tilings. In this case $F$ is of dimension one and we can understand the translation parameter $\gamma$ as a real number. The cohomology groups with integer coefficients of the $5$-fold tilings depend on whether $\gamma$ is in $\mathbb Z[\varphi]$ or not, where $\varphi$ is the golden mean. The groups are as follows:
\[{\renewcommand\arraystretch{1.3}
\begin{array}{|c|c|c|c|}
\hline
\gamma&H^0(\Omega_{E_5^{\gamma}})&H^1(\Omega_{E_5^{\gamma}})&H^2(\Omega_{E_5^{\gamma}})\\
\hline
\mathbb Z[\varphi]&\mathbb Z&\mathbb Z^5&\mathbb Z^8\\
\hline
\mathbb R\setminus \mathbb Z[\varphi]&\mathbb Z&\mathbb Z^{10}&\mathbb Z^{34}\\
\hline
\end{array}
}\]

\item If $n=12$, then $F$ is of dimension two. For all $\gamma\in\Delta$, it holds $\Omega_{E_{12}^{\gamma}}=\Omega_{E_{12}}$. In this case the cohomology groups with integer coefficients of the $12$-fold tilings are:
\[{\renewcommand\arraystretch{1.3}
\begin{array}{|c|c|c|}
\hline
H^0(\Omega_{E_{12}})&H^1(\Omega_{E_{12}})&H^2(\Omega_{E_{12}})\\
\hline
\mathbb Z&\mathbb Z^7&\mathbb Z^{28}\\
\hline
\end{array}
}\]
\item If $n=7$, the cohomology groups with rational coefficients of the $7$-fold tilings are not finite-dimensional (compare \cite[Section~5]{FHK.02}).
\end{enumerate}
\end{theorem}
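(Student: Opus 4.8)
The plan is to treat the three finite cases $n\in\{8,5,12\}$ uniformly by instantiating Theorem~\ref{ref-rank-cohomologie}: its formulas reduce each computation to determining the combinatorial data $L_1$, $R=\rk\beta$, $e=-L_0+\sum_{\alpha\in I_1}L_0^\alpha$, and the torsion of $\coker\beta$, whereupon $H^0=\mathbb Z$, $H^1=\mathbb Z^{4+L_1-R}$ and $H^2=\mathbb Z^{3+L_1+e-R}$ (together with that torsion in degree $2$). The case $n=7$ lies outside this framework and is handled separately.

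For a fixed $n\in\{8,5,12\}$ I would proceed in four steps. \emph{(i)} From the description of $E_n$ in Section~\ref{subnfold} write down the ambient lattice $\mathbb Z^k$, compute $\Gamma=\pi^\perp(\mathbb Z^k)$ and its closure, identify the discrete part $\Delta$, the spaces $F,F^\perp$ and the stabilizer $\Delta_0$, and check that $\dim F^\perp=2$ together with the finite-generation criterion $\rk\Delta_0=2\dim F^\perp$, $\rk\Gamma^\alpha=2$ (compare \cite[Thm.~2.10]{Gah.Hun.Kell.13}); for $n=12$ this step also records that $\gamma\in\Delta$ returns the $\gamma=0$ tiling, by the reduction already explained in Section~\ref{subnfold}. \emph{(ii)} Compute the window $W=\pi^\perp([0,1]^k)$ explicitly as a zonotope --- a regular octagon for $n=8$, a rhombic icosahedron for $n=5$, a $4$-dimensional zonotope for $n=12$ --- and describe its intersection with the plane family $P$ (a single plane, as $\Delta=0$, for $n=8$; a $\mathbb Z$-indexed stack of parallel planes for $n=5$; a $\mathbb Z^2$-indexed family for $n=12$), checking that $W\cap P$ is a union of polygons, segments and points as the framework requires. \emph{(iii)} List the lines carrying the boundaries of these slices, read off the finitely many directions $f_i$ that occur, pass to $\Gamma$-orbits to get $I_1$ and $L_1$, collect pairwise intersections of these lines into $\Gamma$-orbits to get $L_0$, and refine by the smaller stabilizers $\Gamma^\alpha$ to get the numbers $L_0^\alpha$. \emph{(iv)} Assemble $e$, compute $R$ and the torsion of $\coker\beta$ from the directions $f_i$ alone, and substitute into the formulas above; for $n=8$ one should recover the data yielding $\mathbb Z^5$ and $\mathbb Z^9$, and for $n=12$ with $\gamma\in\Delta$ one should find $R=3$, $\coker\beta$ torsion-free, and the counts yielding $\mathbb Z^7$ and $\mathbb Z^{28}$.

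The genuinely delicate case is the Penrose family $n=5$, where the answer depends on whether $\gamma\in\mathbb Z[\varphi]$. For such $\gamma$ the pentagonal slices $W\cap P^\gamma$ are ``singular'': several boundary lines, and several of their intersections, coincide, so $L_1$ and $L_0$ are smaller and one obtains $\mathbb Z^5$ and $\mathbb Z^8$; for $\gamma\notin\mathbb Z[\varphi]$ these coincidences break, every slice contributes its full complement of lines and $0$-singularities, and the counts rise to give $\mathbb Z^{10}$ and $\mathbb Z^{34}$. Carrying out this case distinction correctly --- in particular deciding exactly which intersections of $1$-singularities survive as distinct $0$-singularity orbits --- is the main obstacle; it is precisely the point at which the values announced in \cite{Gah.Kell.00} had to be corrected by Kalugin \cite{Kal.05}, and I would re-derive the numbers from Theorem~\ref{ref-rank-cohomologie} following that corrected analysis. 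The same bookkeeping, with a larger window, is also what makes the dodecagonal count non-trivial.

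For $n=7$ one has $k=7$, $\dim E_7^\perp=5$, and $\pi^\perp(\mathbb Z^7)$ dense in $E_7^\perp$, so $\Delta=0$, $F=0$ and $\dim F^\perp=5\neq 2$: the hypotheses behind Theorem~\ref{ref-rank-cohomologie} are not met, and since there is no integer $\nu$ with $\rk\Delta_0=\nu\dim F^\perp$ (here $\Delta_0=\Gamma$ has rank $7$), the cohomology is not even finitely generated (compare \cite[Thm.~2.10]{Gah.Hun.Kell.13}). To reach the stated conclusion that the rational cohomology is infinite-dimensional, I would invoke the explicit computation of \cite[Section~5]{FHK.02}, where the relevant cohomology is obtained by a limit construction and shown to have unbounded ranks.
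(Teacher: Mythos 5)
The paper does not actually prove this theorem: it is presented as a summary of known results, with the three finite cases taken from \cite[Table~5.1]{Gah.Hun.Kell.13} (originally announced in \cite{Gah.Kell.00} and corrected for the generalized Penrose case by Kalugin \cite{Kal.05}), and the $n=7$ case cited from \cite[Section~5]{FHK.02}; only the $n=12$, $\gamma\in\Delta$ row is independently re-derived later in the paper (Lemma~\ref{lem-calc-droites-gamma-nul} and Proposition~\ref{allgamma0}), and there the route is exactly your steps (i)--(iv): compute $L_1=6$, $R=3$, $L_0=14$, $L_0^\alpha=6$, hence $e=22$, and substitute into Theorem~\ref{ref-rank-cohomologie}. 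So your overall strategy is the right one and coincides with what the paper does where it does anything; but note that for $n=8$ and especially $n=5$ your proposal stops at ``one should recover the data yielding'' the stated ranks, and for the Penrose case you explicitly fall back on Kalugin's corrected analysis. As a standalone proof this leaves the delicate counting (which is precisely where \cite{Gah.Kell.00} went wrong) undone; as a reading of the paper it is fine, since the paper also delegates these cases to the literature.

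There is, however, a concrete error in your $n=7$ discussion. It is not true that $\pi^\perp(\mathbb Z^7)$ is dense in $E_7^\perp$: the component of a lattice vector along the invariant diagonal direction $(1,\dots,1)$ is $(\sum_i n_i)/\sqrt7$, which is discrete. Hence $\Delta\cong\mathbb Z$, $F$ is the one-dimensional diagonal line, $\dim F^\perp=4$ (the two Galois-conjugate $2$-planes), and $\Delta_0=\Gamma\cap F^\perp$ has rank $6$, not $7$. The finite-generation criterion of \cite[Thm.~2.10]{Gah.Hun.Kell.13} still fails, but for the corrected reason: $\dim E=2=(\nu-1)\cdot 4$ (equivalently $\rk\Delta_0=6=\nu\cdot 4$) has no integer solution $\nu$. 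Your conclusion survives only because your wrong numbers also happen to admit no integer $\nu$; the stated structure ($\Delta=0$, $F=0$, $\dim F^\perp=5$) is incorrect and should be fixed, after which the appeal to \cite{FHK.02} for the infinite-dimensionality of the rational cohomology is the same as in the paper.
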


\subsection{Generalized Penrose tilings vs.\ 12-fold tilings}

To give some perspective to the results in this paper, we will make a fast comparison between the computation of the cohomology groups of the generalized Penrose tilings and the 12-fold tilings. 

In the Penrose case, we start from the space $\mathbb R^5$ and we consider the cut and project method onto a two dimensional plane. The main character in the construction, the polytope $W$, is in this case a 3-dimensional rhombic-icosahedron with vertices in five parallel planes depicted in Figure~\ref{Penrose-window}. The intersection of these planes and $W$ is a series of points and polygons. The boundaries of the polygons are contained in lines directed by $5$ different vectors. To compute the cohomology of the standard Penrose tiling, we need to study the orbits of these lines under the action of a certain group $\Gamma$.

In the classic $12$-fold tiling, we obtain a $4$-dimensional polytope $W$, described in Proposition~\ref{Prop-W}. Its vertices are contained in 16 parallel planes. This time the boundaries of the polygons determined by these vertices (the analogues of the shaded pentagons in Figure~\ref{Penrose-window}) are contained in lines directed by $6$ different vectors. 

The main difference between these two settings though is not simply that there is one more direction in the 12-fold tiling. The main difference is apparent when we consider the \emph{generalized} tilings. Indeed, to compute the cohomology of the generalized tilings we need to shift the planes cutting the polytope. In the Penrose case, the shifting occurs along the vertical axis of the rhombic-icosahedron (see Figure~\ref{Penrose-window}), and it can be encoded by the real number $\gamma$ in the statement of Theorem~\ref{known}.2. When we shift slightly the planes cutting the Penrose rhombic-icosahedron we obtain some polygons with 10 sides (contained still in 5 directions). Now, depending on whether or not
the translation parameter $\gamma$ is in $\mathbb Z[\varphi]$ (where $\varphi$ is the golden mean) the parallel opposite lines on the decagon will belong or not to the same orbit under the action of $\Gamma$.  
 
In the generalized 12-fold tiling, the situation is much richer. The shifting of the planes that cut the polytope is encoded by a 2-dimensional parameter $\gamma=(\gamma_{1},\gamma_{2})$. In Figure~\ref{fig:decalage-axe} one can see the number of sides that a polygon cut by the shifted planes will have. The 10 lines obtained in the generalized Penrose become 24 in the generalized 12-fold tiling (see Proposition~\ref{prop-intersections-cubes}). Each one of the original 6 directions can have at most 4 representatives in different orbits. The game is now to understand when, depending on the value of $\gamma$, these representatives are in the same orbit under the action of $\Gamma$. The result, surprisingly complicated, is the content of Proposition~\ref{orbits-line} and Figure~\ref{fig-orbit-tout}. This information is enough to compute completely the first cohomology group of the 12-fold generalized tilings. When trying to compute the second cohomology groups, the intricacy grows further as explored in Section~\ref{sec-lines}.

\subsection{Organization of the paper}\label{practical}
The proof of the main theorem of this paper is a long and involved computation of the quantities $e,L_{1}$ and $R$ which will allow us, via Theorem~\ref{ref-rank-cohomologie}, to compute the cohomology groups of the 12--fold tilings (as discussed after that theorem, these groups are all free). Roughly, the steps we will follow are:

\begin{enumerate}
\item We intersect $W$ with $P^\gamma$: we obtain several polygons. We compute the equations of the lines which support the edges of the polygons. The case $\gamma\in\Delta$ is presented in Section~\ref{delta=0} while the general case can be found in Section \ref{sec-cubes-droites}.
\item For each line $\alpha$ obtained in the preceding point, we consider its orbit under the action of $\Gamma$. The number of orbits is the quantity $L_1$. The relevant computations can be found in Section~\ref{sect:lines-gamma} and culminate with Proposition~\ref{orbits-line} and Figure~\ref{fig-orbit-tout}.

The lines $\alpha$ are in one of 6 possible directions. The smallest value of $L_{1}$ is then six, with just one $\Gamma$-orbit per direction; and the largest is $L_{1}=24$, corresponding to the case of four $\Gamma$-orbits per direction.
\item We compute the stabilizer $\Gamma^\alpha$ of each line $\alpha$ and obtain the quantity $R$, which is the rank of $\langle \Lambda^2\Gamma^\alpha \rangle_{\alpha \in I_1}$. This computation takes place in Section~\ref{sec-first-group} and allows us to compute the first cohomology groups for the 12-fold tiling in Proposition~\ref{rank}.
\item We count the number of intersections, up to the action, between a 1-singularity $\alpha$ and the other 1-singularities. This quantity is $L_0^\alpha$. We have not completed the computation of $L_{0}^{\alpha}$ for all the possible values of the parameter $\gamma$. The study breaks down into too many subcases which did not seem worth looking into in full detail. However: 
	\begin{itemize}
	\item In Section~\ref{gammanul} we present a complete calculation of the second cohomology group when $\gamma=0$. In this case we have actually computed the values $L_{0}$ and $L_{0}^{\alpha}$ (Lemma~\ref{lem-calc-droites-gamma-nul}), which is all that is needed to determine the cohomology groups of $\Omega_{E_{12}}$. These groups were known (Theorem~\ref{known}.3) and are restated in Proposition~\ref{allgamma0}.
	
	The case $\gamma=0$ is the simplest analyzed and corresponds to the case of $L_{1}=6$ described above.
	\item In Section~\ref{2lines} we have further computed the second cohomology groups, and thus the full cohomology, for all the cases in which under the action of $\Gamma$ there are at most two representatives per line, that is, the cases $L_{1}\leq 12$. This computation is computer assisted and we collect the results in Propositions~\ref{allgamma0} to~\ref{last2orbits}.	
	\item Finally, in Section~\ref{general1} we analyze the general case and summarize the results in Section~\ref{generalfinal}. We consider the largest set of 1-singularities, which has 4 orbits per direction and thus 24 1-singularities. We intersect each of these with the translates by the action of $\Gamma$ of the 20 non parallel 1-singularities. This yields a complete list of 0-singularities in each 1-singularity. Our results are collected in Proposition~\ref{prop-points-droite}, which provides an upper bound on the quantity $L_{0}^{\alpha}$. The complete lists of 0-singularities are displayed in Tables~\ref{tab-even1} to~\ref{tab-odd2}. 
	\end{itemize}
\item In the last section of the paper, we put our results into a
broader perspective. We briefly discuss sets of $\gamma$-parameters
which yield homeomorphic tiling spaces, and we show that projection
tilings with rational $\gamma$-parameters are also substitutive, from
which further conclusions with interesting applications to dynamical
systems are drawn. Although rational $\gamma$-parameters form only a
null set within all $\gamma$-parameters, they yield the most
interesting examples. In particular, all tilings considered in
Section~\ref{2lines} fall into this class.

\end{enumerate}

\section{The $12$-fold tilings}\label{conventions}

The $12$-fold tiling space corresponding to the plane $E_{12}^\gamma$ with $\gamma\in\Delta$ was introduced by Socolar in \cite{Soc.89}, where it was shown to be also a substitution tiling. Thus, the classical method exposed in \cite{Sad.08} allows us to compute its cohomology groups. In fact, many more projection tilings in the family $E_{12}^\gamma$ are substitutive (compare Section~\ref{sec-disc}), but they form only a null set among all projection tilings, and except for a few simple cases, their substitution description gets very complicated.
Our goal here is to extend this computation to all parameters $\gamma\in F_{12}$. (While it is not directly relevant to the discussion in this paper, the interested reader might want to take a look at Figure~\ref{fig:patches} for some patches of $E_{12}^{\gamma}$ tilings for different values of $\gamma$.)
\begin{figure*}
        \centering
        \begin{subfigure}{0.485\textwidth}
            \centering
            \includegraphics[width=\textwidth]{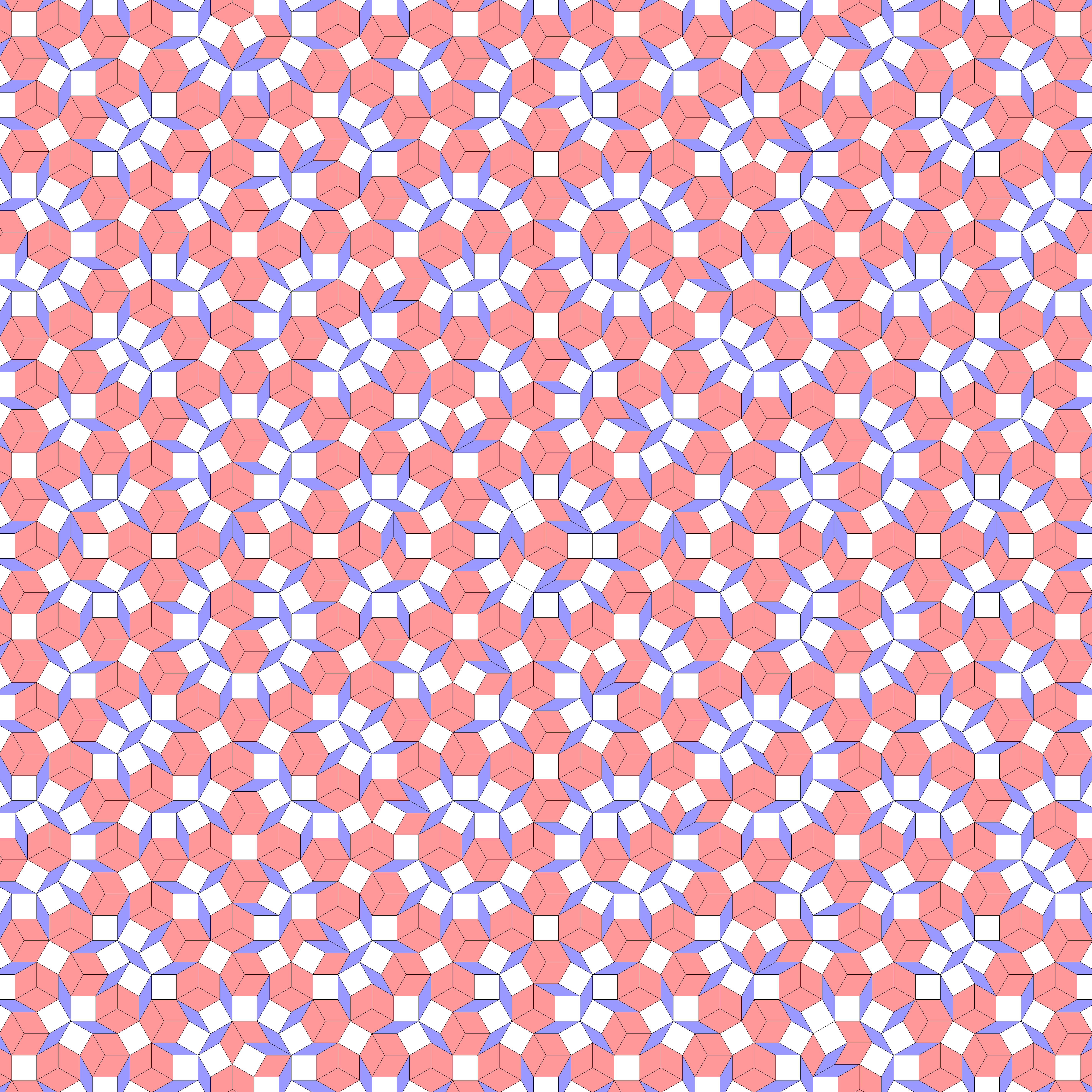}
            \caption[]%
            {{\small $\gamma=(0,0)$}}      
        \end{subfigure}
        \hfill
        \begin{subfigure}{0.485\textwidth}  
            \centering 
            \includegraphics[width=\textwidth]{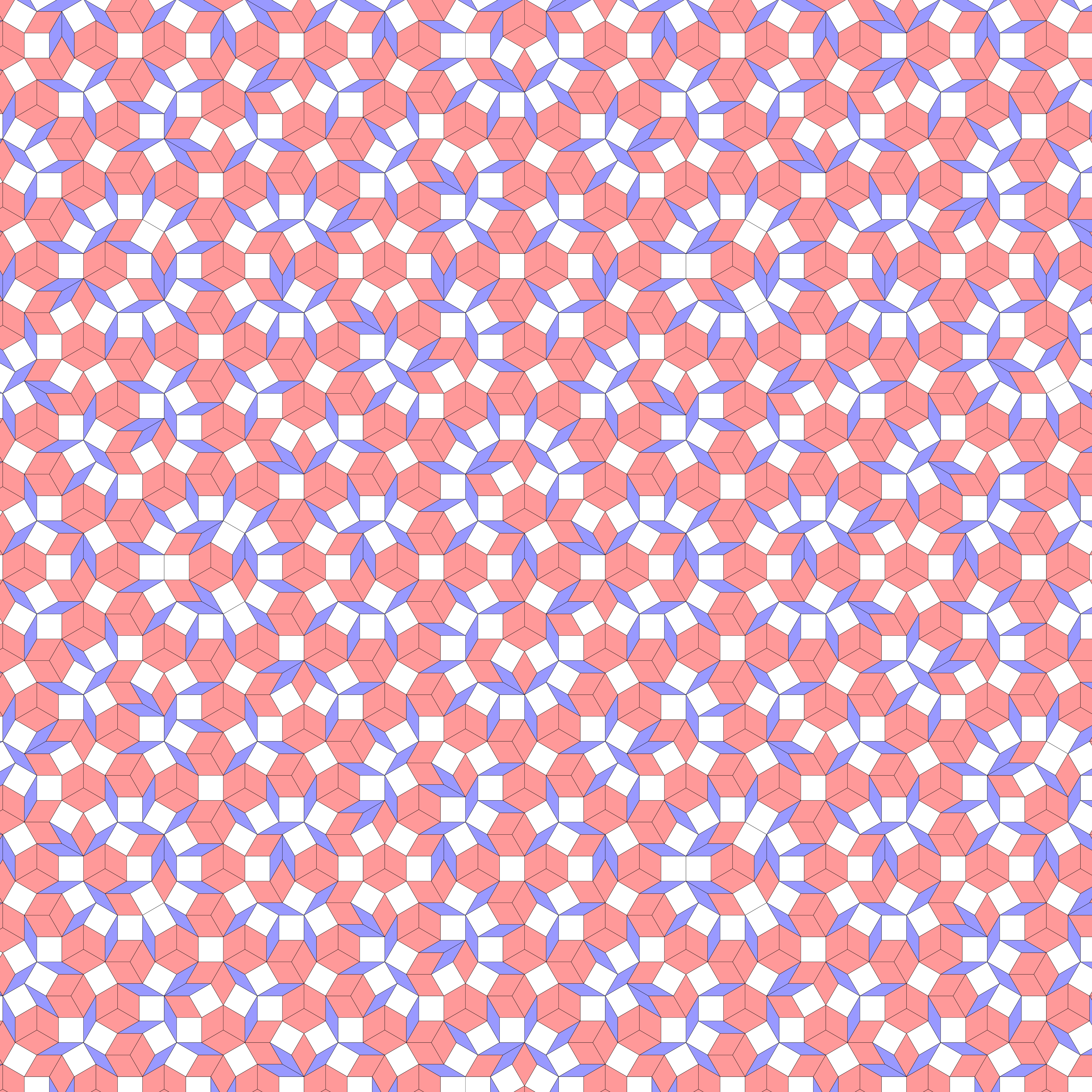}
            \caption[]%
            {{\small $\gamma=(\sqrt 3/2,0)$}}    
        \end{subfigure}
        \vskip\baselineskip
        \begin{subfigure}{0.485\textwidth}   
            \centering 
            \includegraphics[width=\textwidth]{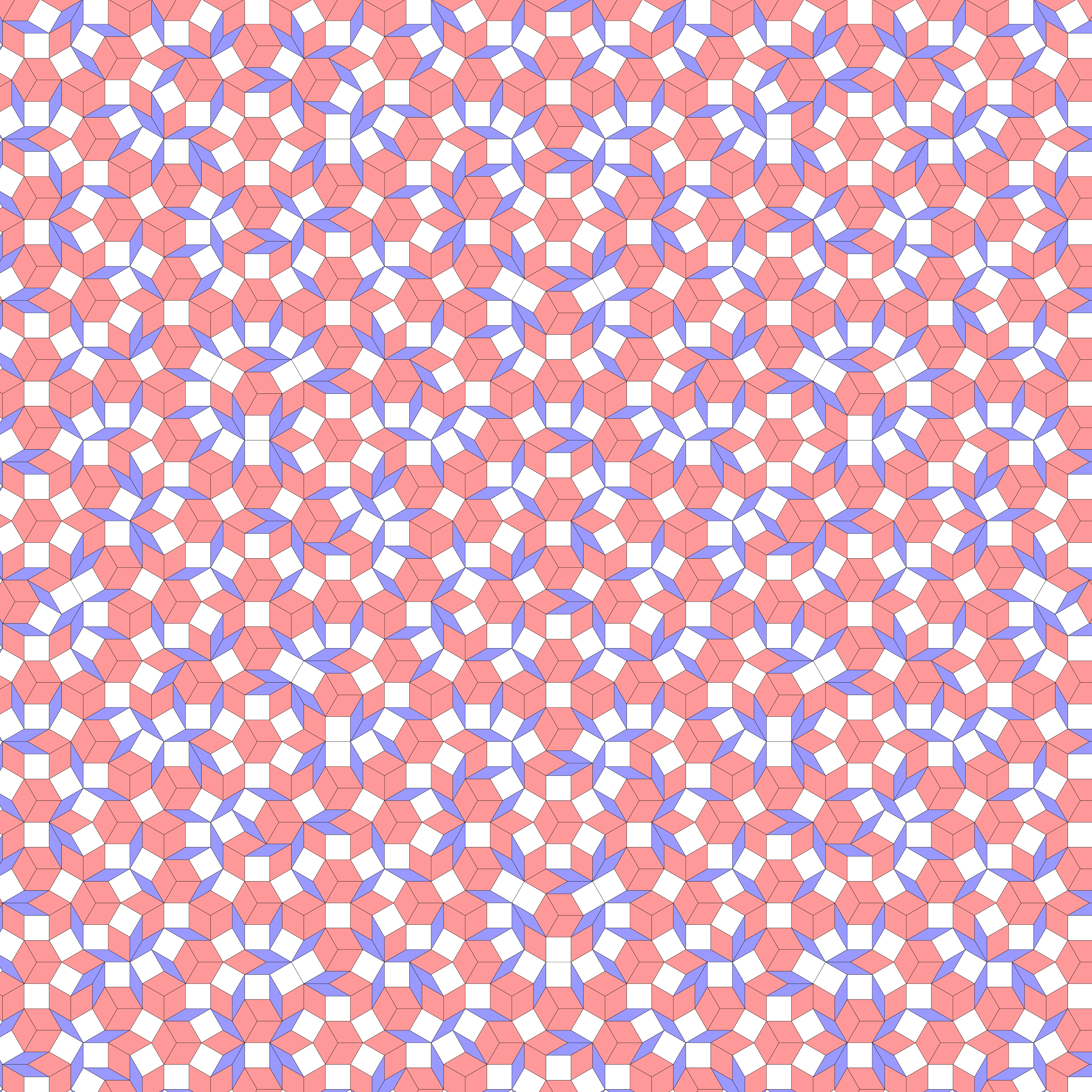}
            \caption[]%
            {{\small $\gamma=(0,\frac{1+\sqrt 3}{2\sqrt 3})$}}    
        \end{subfigure}
        \hfill
        \begin{subfigure}{0.485\textwidth}   
            \centering 
            \includegraphics[width=\textwidth]{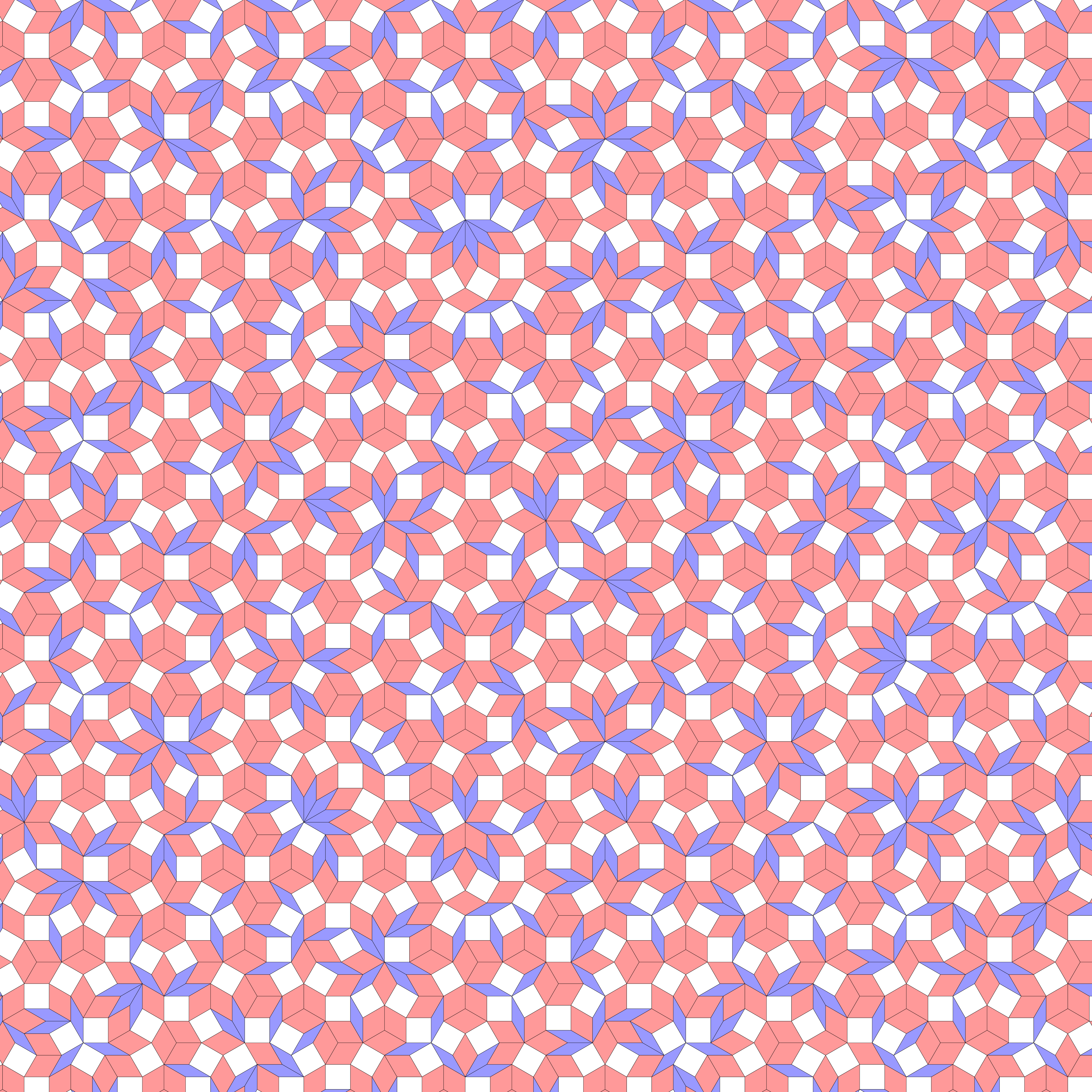}
            \caption[]%
            {{\small $\gamma$ generic}}    
        \end{subfigure}
        \caption[]%
        {\small Four patches of 12-fold tilings $E_{12}^{\gamma}$ for different values of the parameter $\gamma$.} 
        \label{fig:patches}
    \end{figure*}
Since we will be only concerned with the case $n=12$, from now on we will denote simply by $E,F$ etc.\ the spaces $E_{12}, F_{12}$ etc. We start by establishing some preferred bases for the spaces involved in the decompositions
$$
\mathbb R^6=E\oplus E^{\perp}\ \mathrm{and}\ E^{\perp}=F^{\perp}\oplus F.
$$
The plane $E$ will have the fixed orthonormal basis $\{u,v\}$ given by
$$
u=\frac{1}{\sqrt 3}\begin{pmatrix}
1\\ \sqrt{3}/2\\1/2\\0\\-1/2\\-\sqrt{3}/2
\end{pmatrix},
v=\frac{1}{\sqrt 3}\begin{pmatrix}
0\\1/2\\ \sqrt{3}/2\\1\\ \sqrt{3}/2\\ 1/2
\end{pmatrix}.
$$
The plane $F$ will have the following fixed orthogonal basis:
$$
A=\frac{1}{3}\begin{pmatrix}
1\\0\\-1\\0\\1\\0
\end{pmatrix},\ 
B=\frac{1}{3}\begin{pmatrix}
0\\1\\0\\-1\\0\\1
\end{pmatrix}.
$$
The basis $\{A,B\}$ is not orthonormal, but it has been chosen to facilitate the computations. Finally, the plane $F^{\perp}$ will be chosen to be the algebraic conjugate of $E$ and we will denote $\{u',v'\}$ its orthonormal basis. (Recall that to obtain $u',v'$ we simply need to replace $\sqrt3$ by $-\sqrt 3$ in the definition of $u$ and $v$.)

The projection $\pi^{\perp}$ onto $E^{\perp}$ is described in the following lemma. The simple proof is left to the reader.

\begin{lemma}\label{projections}
The image of the canonical basis of $\mathbb R^6$ by the projection $\pi^{\perp}$ onto $F^{\perp}\oplus F$ defines six vectors
$g_i, i=1,\dots, 6$. We have $g_i=f_i+\delta_i$ with $f_i\in F^{\perp}, \delta_i\in F$ and:
$$\begin{cases}
f_1=(\sqrt 3/3,0)\\
f_2=(-1/2,\sqrt 3/6)\\
f_3=(\sqrt 3/6,-1/2)\\
f_4=(0,\sqrt 3/3)\\
f_5=(-\sqrt 3/6,-1/2)\\
f_6=(1/2,\sqrt 3/6)
\end{cases}\quad\quad\mathrm{and}\quad\quad 
\begin{cases}
\delta_1=(1,0)\\
\delta_2=(0,1)\\
\delta_3=(-1,0)\\
\delta_4=(0,-1)\\
\delta_5=(1,0)\\
\delta_6=(0,1).\\
\end{cases}
$$
The vectors $f_{i}$ are expressed in the basis $\{u',v'\}$ and the $\delta_{i}$ in the basis $\{A,B\}$. 
\end{lemma}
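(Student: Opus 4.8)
The plan is to compute $\pi^\perp(e_k)$ for each canonical basis vector $e_k$ of $\mathbb R^6$ directly, by first subtracting off the component of $e_k$ lying in $E$ and then re-expressing the remainder in the two chosen bases for $F^\perp$ and $F$. Concretely, since $\{u,v\}$ is an orthonormal basis of $E$, the projection onto $E$ is $\pi(x)=\langle x,u\rangle u+\langle x,v\rangle v$, and $\pi^\perp(x)=x-\pi(x)$. So the first step is to record the six inner products $\langle e_k,u\rangle$ and $\langle e_k,v\rangle$, which are just (up to the overall factor $1/\sqrt3$) the entries of $u$ and $v$; these are explicit trigonometric values $\cos(k\pi/6)$, $\sin(k\pi/6)$ and present no difficulty.

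Next I would split $\pi^\perp(e_k)$ into its $F^\perp$-part and its $F$-part. Because $F^\perp$ is defined as the algebraic conjugate of $E$ with orthonormal basis $\{u',v'\}$ (obtained from $u,v$ by $\sqrt3\mapsto-\sqrt3$), the $F^\perp$-component of $\pi^\perp(e_k)$ is $\langle e_k,u'\rangle u'+\langle e_k,v'\rangle v'$; its coordinates in $\{u',v'\}$ are then read off exactly as before but with the sign of $\sqrt3$ flipped. This yields the claimed vectors $f_i$. For the $F$-part, I would subtract the $F^\perp$-component just found and express what remains in the basis $\{A,B\}$; since $A$ and $B$ have a very simple ($0,\pm1$)-pattern of entries, solving for the two coefficients is immediate and produces the stated $\delta_i$. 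One should also double-check consistency: the $\delta_i$ are exactly the residues of $e_k$ modulo the sublattice spanned by $E\oplus F^\perp$, which matches the periodicity pattern $\delta_{k+3}=-\delta_k$ visible in the $12$-fold geometry; this is a useful sanity check but not logically needed.

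The only mildly delicate point is bookkeeping: keeping the overall normalizing constants ($1/\sqrt3$ for $u,v,u',v'$; $1/3$ for $A,B$) straight, and making sure that the decomposition $E^\perp=F^\perp\oplus F$ is genuinely a direct sum so that the two components are well defined — this follows because $F^\perp$ is the conjugate plane of $E$ (hence a $2$-plane orthogonal to the $2$-plane $E$ and independent of the $2$-plane $F$ spanned by $A,B$ inside the $4$-dimensional $E^\perp$). Once the constants are handled correctly, each of the twelve numbers in the statement drops out of a one-line linear computation, so there is no real obstacle; the ``hard part'' is merely verifying the arithmetic, which is why the authors leave it to the reader.
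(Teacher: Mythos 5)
Your approach is the intended one: the paper omits the proof entirely (``left to the reader''), and the lemma is exactly the direct computation you describe --- since $u',v'$ are orthonormal and orthogonal to both $E$ and $F$, the $\{u',v'\}$-coordinates of $f_i$ are just $\langle e_i,u'\rangle,\langle e_i,v'\rangle$, and the remainder lies in $F$ with $\{A,B\}$-coordinates read off from the $(0,\pm1)$ pattern (equivalently $\tfrac{\langle e_i,A\rangle}{|A|^2},\tfrac{\langle e_i,B\rangle}{|B|^2}$); carried out with the normalizations handled as you indicate (in particular taking $u',v'$ with the entries conjugated but the prefactor $1/\sqrt3$ kept positive), this reproduces precisely the stated $f_i$ and $\delta_i$. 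One minor aside: your optional sanity check is misstated --- the actual relation is $\delta_{k+2}=-\delta_k$ (so $\delta_3=-\delta_1$, $\delta_5=\delta_1$), not $\delta_{k+3}=-\delta_k$ --- but, as you note, nothing in the proof depends on it.
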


\subsection{Algebra}
In this section we collect some technical lemmas that will be useful in the forthcoming computations. Many of them are straightforward and the proofs are left to the reader.

\begin{lemma}\label{rem-simplif-gamma}
The vectors $f_i\in F^{\perp},\ i=1,\dots,6$, satisfy the following two linear relations over $\mathbb Z$: 
$$
f_5=f_3-f_1\quad\mathrm{and}\quad f_6=f_4-f_2.
$$
Moreover, we have the following relations (where the indices are taken modulo $6$)
$$
\begin{cases}
f_1-f_{5}=\sqrt{3} f_{6}\\
f_2-f_{6}=-\sqrt{3} f_{1}\\ 
f_i+f_{i+2}=-\sqrt 3 f_{i+1}\quad \forall i=1,\dots, 6
\end{cases}
$$
\end{lemma}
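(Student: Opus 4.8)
The plan is to verify every displayed identity directly from the explicit coordinates of $f_1,\dots,f_6$ recorded in Lemma~\ref{projections}, all written in the orthonormal basis $\{u',v'\}$ of $F^\perp$. Each assertion is a linear relation with fixed coefficients---lying in $\mathbb Z$, or of the form $\pm\sqrt3$---between six explicit vectors of $\mathbb R^2$, so it reduces to checking one scalar equality per coordinate, and no conceptual input is strictly needed. It is nonetheless worth keeping in mind the guiding principle already noted in the text: each $f_i$ is obtained from the projection $\pi(\varepsilon_i)\in E$ of the $i$-th canonical basis vector $\varepsilon_i$ of $\mathbb R^6$ by the algebraic conjugation $\sqrt3\mapsto-\sqrt3$ (this is exactly the relation between the bases $\{u',v'\}$ and $\{u,v\}$). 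Hence each relation below is the conjugate of a relation among the $\pi(\varepsilon_i)$, and those are elementary plane trigonometry, the six vectors $\pi(\varepsilon_i)$ having common length $1/\sqrt3$ and argument $(i-1)\tfrac{\pi}{6}$.

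Concretely, for the two $\mathbb Z$-relations I would simply compute $f_3-f_1=\bigl(\tfrac{\sqrt3}{6}-\tfrac{\sqrt3}{3},\,-\tfrac12\bigr)=\bigl(-\tfrac{\sqrt3}{6},-\tfrac12\bigr)=f_5$ and $f_4-f_2=\bigl(\tfrac12,\,\tfrac{\sqrt3}{3}-\tfrac{\sqrt3}{6}\bigr)=\bigl(\tfrac12,\tfrac{\sqrt3}{6}\bigr)=f_6$; these are the $\sqrt3\mapsto-\sqrt3$ images of $\pi(\varepsilon_5)=\pi(\varepsilon_3)-\pi(\varepsilon_1)$ and $\pi(\varepsilon_6)=\pi(\varepsilon_4)-\pi(\varepsilon_2)$, which themselves encode the identity $\zeta^2=\zeta-1$ for a primitive sixth root of unity $\zeta$. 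For the relations carrying a factor $\sqrt3$, I would observe that they are all instances of the single family $f_i+f_{i+2}=-\sqrt3\,f_{i+1}$ once the indices are read cyclically with the sign twist $f_{i+6}:=-f_i$; equivalently, this is that family for $i=1,\dots,4$ together with the $i=5,6$ cases $f_5-f_1=-\sqrt3\,f_6$ (i.e.\ $f_1-f_5=\sqrt3\,f_6$) and $f_6-f_2=\sqrt3\,f_1$ (i.e.\ $f_2-f_6=-\sqrt3\,f_1$) written out separately. It then remains to check $f_i+f_{i+2}=-\sqrt3\,f_{i+1}$ on a short finite list, e.g.\ $f_1+f_3=\bigl(\tfrac{\sqrt3}{3}+\tfrac{\sqrt3}{6},\,-\tfrac12\bigr)=\bigl(\tfrac{\sqrt3}{2},-\tfrac12\bigr)=-\sqrt3\bigl(-\tfrac12,\tfrac{\sqrt3}{6}\bigr)=-\sqrt3\,f_2$. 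The reason behind the family is the sum-to-product formula $\cos(\theta-\tfrac{\pi}{6})+\cos(\theta+\tfrac{\pi}{6})=2\cos\tfrac{\pi}{6}\cos\theta=\sqrt3\cos\theta$ (together with its sine analogue), which yields $\pi(\varepsilon_i)+\pi(\varepsilon_{i+2})=\sqrt3\,\pi(\varepsilon_{i+1})$; applying $\sqrt3\mapsto-\sqrt3$ gives the claim.

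There is no genuine obstacle here; the lemma is of the ``straightforward, left to the reader'' type. The only points deserving a moment of care are the index convention in the last displayed line---``modulo $6$'' must be understood with the sign twist $f_{i+6}=-f_i$, reflecting that the twelve dodecagonal rays are paired into six lines through the origin---and simply keeping the arithmetic in the entries $\tfrac{\sqrt3}{3}$, $\tfrac{\sqrt3}{6}$, $\tfrac12$ consistent across all of the cases.
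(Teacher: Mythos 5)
Your verification is correct and is exactly the ``straightforward, left to the reader'' coordinate check the paper intends (it gives no written proof of Lemma~\ref{rem-simplif-gamma}), using the explicit $f_i$ from Lemma~\ref{projections}. Your clarification that in the family $f_i+f_{i+2}=-\sqrt3 f_{i+1}$ the cases $i=5,6$ must be read with the convention $f_{i+6}=-f_i$ (equivalently, they are the first two displayed relations) is the right reading of the statement and consistent with $f_i=\tfrac{1}{\sqrt3}x^{5(i-1)}$, $x^{6}=-1$.
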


\begin{corollary}\label{cor-groupe-delta}
The following group isomorphisms hold 
$$
\pi^{\perp}(\mathbb Z^6)\sim \mathbb ZA\oplus \mathbb ZB\oplus \langle f_1,\dots,f_6\rangle_{\mathbb Z}\quad\mathrm{and}\quad \overline{\pi^{\perp}(\mathbb Z^6)}\sim \mathbb Z^2\oplus \mathbb R^2.
$$
\end{corollary}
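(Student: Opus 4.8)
The plan is to establish the two isomorphisms in Corollary~\ref{cor-groupe-delta} by direct computation, using Lemma~\ref{projections} and Lemma~\ref{rem-simplif-gamma}. First I would observe that by Lemma~\ref{projections} the group $\pi^\perp(\mathbb Z^6)$ is generated by the six vectors $g_i=f_i+\delta_i$ with $f_i\in F^\perp$ and $\delta_i\in F$. Reading off the $\delta_i$ from Lemma~\ref{projections}, we have $\delta_1=\delta_5=(1,0)=A'$ (in the $\{A,B\}$ coordinates, the vector $A$ itself), $\delta_2=\delta_6=(0,1)=B$, $\delta_3=-A$, $\delta_4=-B$. Hence the $F$-components of the $g_i$ generate exactly $\mathbb ZA\oplus\mathbb ZB$. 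The subtlety is that $\pi^\perp(\mathbb Z^6)$ is not literally the direct sum of its projections to $F$ and $F^\perp$; what must be shown is that the natural surjection $\pi^\perp(\mathbb Z^6)\to \mathbb ZA\oplus\mathbb ZB$ (given by the $F$-component) splits, with kernel the subgroup $\langle f_1,\dots,f_6\rangle_{\mathbb Z}\subset F^\perp$.

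The key step is producing the splitting. Since $\delta_1-\delta_5=0$ and $\delta_2-\delta_6=0$ and $\delta_1+\delta_3=0$, $\delta_2+\delta_4=0$, the differences $g_1-g_5=f_1-f_5$, $g_2-g_6=f_2-f_6$, $g_1+g_3=f_1+f_3$, $g_2+g_4=f_2+f_4$ all lie in $F^\perp\cap\pi^\perp(\mathbb Z^6)$. Using the relations in Lemma~\ref{rem-simplif-gamma} (in particular $f_5=f_3-f_1$, $f_6=f_4-f_2$, and $f_i+f_{i+2}=-\sqrt3 f_{i+1}$) one checks that these differences, together with the lattice generated by all the $f_i$, give the subgroup $\langle f_1,\dots,f_6\rangle_{\mathbb Z}$ sitting inside $\pi^\perp(\mathbb Z^6)$ as the kernel of the $F$-projection; conversely any element of $\pi^\perp(\mathbb Z^6)$ mapping to $0$ in $F$ is an integer combination $\sum a_i g_i$ with $\sum a_i\delta_i=0$, and such combinations are spanned by the four relation vectors above, each of which lies in $\langle f_1,\dots,f_6\rangle_{\mathbb Z}$. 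Choosing, say, $g_1\mapsto A$, $g_2\mapsto B$ as a section (noting $\delta_1=A$, $\delta_2=B$ in coordinates) exhibits the splitting, giving the first isomorphism. For the second isomorphism, one takes closures: the $F$-part $\mathbb ZA\oplus\mathbb ZB$ is already discrete and closed, so contributes $\mathbb Z^2$; and the closure of $\langle f_1,\dots,f_6\rangle_{\mathbb Z}$ in $F^\perp\cong\mathbb R^2$ is all of $\mathbb R^2$, because the relation $f_i+f_{i+2}=-\sqrt3 f_{i+1}$ shows the group contains vectors $v$ and $\sqrt3\, v$ type combinations, so it is a dense (non-discrete, rank-$\ge3$, since $\mathbb Z+\sqrt3\mathbb Z$ is dense in $\mathbb R$) subgroup of the plane $F^\perp$; hence its closure is $\mathbb R^2$.

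I expect the main obstacle to be the density argument for $\overline{\langle f_1,\dots,f_6\rangle_{\mathbb Z}}=\mathbb R^2$: one must verify that the subgroup of $\mathbb R^2$ generated by the six explicit vectors is not contained in any proper closed subgroup. Concretely, $f_1=(\sqrt3/3,0)$ and $f_4=(0,\sqrt3/3)$ already span the plane over $\mathbb R$, so the group is not contained in a line; and along the $x$-axis the group contains $f_1-f_5=\sqrt3 f_6$ combined with the span of $f_1,f_6$, producing a $\mathbb Z[\sqrt3]$-multiple structure that is dense in $\mathbb R$. Making this precise amounts to showing the projections of the group onto two independent directions are each dense in $\mathbb R$, which follows from the irrationality of $\sqrt3$ together with the relations in Lemma~\ref{rem-simplif-gamma}. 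Everything else — reading off the $\delta_i$, checking the four relation vectors, and assembling the splitting — is routine linear algebra over $\mathbb Z$, so I would present those steps briefly and concentrate the write-up on the closure computation.
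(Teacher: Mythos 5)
The step that fails is your identification of the kernel of the $F$-component map with $\langle f_1,\dots,f_6\rangle_{\mathbb Z}$. The kernel is the image of the relation lattice $\{n\in\mathbb Z^6:\ \sum_i n_i\delta_i=0\}$, which is indeed spanned by your four vectors, so the kernel equals $\langle f_1+f_3,\ f_3+f_5,\ f_2+f_4,\ f_4+f_6\rangle_{\mathbb Z}$. But writing $f_3+f_5=2f_3-f_1$ and $f_4+f_6=2f_4-f_2$ (Lemma~\ref{rem-simplif-gamma}) and using that $f_1,f_2,f_3,f_4$ are $\mathbb Z$-linearly independent, this is a sublattice of index $9$ in $\langle f_1,\dots,f_6\rangle_{\mathbb Z}$: in the $(f_1,f_3)$-block the generators are $(1,1)$ and $(-1,2)$, of determinant $3$, and likewise for the $(f_2,f_4)$-block. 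In particular $f_1\notin\pi^\perp(\mathbb Z^6)$: requiring $\sum_i n_i f_i=f_1$ and $\sum_i n_i\delta_i=0$ forces $n_1-n_5=1$, $n_3=-n_5$ and $n_1+2n_5=0$, hence $-3n_5=1$, which has no integer solution. So the claim that ``$\langle f_1,\dots,f_6\rangle_{\mathbb Z}$ sits inside $\pi^\perp(\mathbb Z^6)$ as the kernel'' is false, and the ``one checks'' you defer cannot be carried out. The corollary, read as an abstract isomorphism, still survives your route, because the kernel is free abelian of rank $4$ and your section $A\mapsto g_1$, $B\mapsto g_2$ does split the surjection; but that is not what you wrote, and it is also not the paper's argument: the paper simply rewrites $\sum_i n_ig_i=\sum_i n_if_i+(n_1-n_3+n_5)A+(n_2-n_4+n_6)B$ and reads off both statements, without asserting your internal decomposition.

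Your closure argument also leans on this false internal splitting, though it can be repaired: the true kernel has finite index in $\langle f_1,\dots,f_6\rangle_{\mathbb Z}$, hence is still dense in $F^{\perp}$, and since $\pi^\perp(\mathbb Z^6)\subset F^{\perp}\oplus(\mathbb ZA\oplus\mathbb ZB)$ while the $g_i$ hit every class of $\mathbb ZA\oplus\mathbb ZB$, one gets $\overline{\pi^\perp(\mathbb Z^6)}=F^{\perp}\oplus(\mathbb ZA\oplus\mathbb ZB)\cong\mathbb R^2\oplus\mathbb Z^2$. Be careful, however, with your proposed criterion: density of the projections onto two independent directions does not imply density in the plane (the group $\mathbb R(1,1)+\mathbb Z(1,0)$ has both coordinate projections equal to $\mathbb R$ but is not dense). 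What does suffice, and what Lemma~\ref{rem-simplif-gamma} provides directly, is that the group contains dense subgroups of two distinct lines: $f_1$ together with $f_6-f_2=\sqrt3 f_1$ gives $\mathbb Z[\sqrt3]f_1$, and $f_6$ together with $f_1-f_5=\sqrt3 f_6$ gives $\mathbb Z[\sqrt3]f_6$, so the closed subgroup $\overline{\langle f_1,\dots,f_6\rangle_{\mathbb Z}}$ contains two distinct lines and hence all of $F^{\perp}$. This is essentially the density statement the paper invokes.
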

\begin{proof}
Every element in $\pi^{\perp}(\mathbb Z^6)$ is of the form $\sum n_i g_i$. By Lemma~\ref{projections}, we can rewrite this expression as 
$$
\sum_{i=1}^6 n_i f_i+\sum_{i=1}^6 n_i \delta_i=\sum_{i=1}^6 n_i f_i+(n_1-n_3+n_5)A+(n_2-n_4+n_6)B.
$$ 
It is straightforward that $\mathbb Z^2$ is isomorphic to the abelian group generated by $A, B$, while Lemma~\ref{rem-simplif-gamma} implies that the closure of the set $\{\sum_{i=1}^6 n_if_i: n_i\in\mathbb Z\}$ is isomorphic to $\mathbb R^2$. 
\end{proof}

\begin{corollary}\label{c:fvsx}
If we identify the plane $F^{\perp}$ with $\mathbb C$, then the vectors $f_i$ correspond to some roots of the equation $z^{12}=3^{-6}.$ 
The correspondence, illustrated in Figure~\ref{fig-complexe}, can be written as follows, where $x$ denotes the complex number $e^{i\frac{\pi}{6}}$: 
\begin{equation}\label{eq-fx}
f_i=\frac{1}{\sqrt 3}x^{5(i-1)},\ i=1,\dots, 6.
\end{equation}
\end{corollary}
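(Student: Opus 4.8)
The plan is to verify the explicit formula~\eqref{eq-fx} by a direct computation and then read off the statement about the roots of $z^{12}=3^{-6}$ as an immediate consequence. First I would fix the identification $F^{\perp}\cong\mathbb C$ that sends the orthonormal basis $\{u',v'\}$ to $\{1,i\}$, so that a vector with coordinates $(a,b)$ in the basis $\{u',v'\}$ becomes the complex number $a+bi$. Under this identification the six vectors listed in Lemma~\ref{projections} become $f_1=\tfrac{1}{\sqrt3}$, $f_2=-\tfrac12+\tfrac{\sqrt3}{6}i$, $f_3=\tfrac{\sqrt3}{6}-\tfrac12 i$, $f_4=\tfrac{\sqrt3}{3}i$, $f_5=-\tfrac{\sqrt3}{6}-\tfrac12 i$ and $f_6=\tfrac12+\tfrac{\sqrt3}{6}i$.

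Next I would compute the powers occurring on the right-hand side of~\eqref{eq-fx}. Using $x=e^{i\pi/6}$ and $x^{12}=1$, the exponents $5(i-1)$ for $i=1,\dots,6$ reduce modulo $12$ to $0,5,10,3,8,1$ respectively, so that the numbers $x^{5(i-1)}$ are $1,\ e^{5i\pi/6},\ e^{5i\pi/3},\ i,\ e^{4i\pi/3},\ e^{i\pi/6}$. Writing each of them as $\cos\theta+i\sin\theta$ and multiplying by $\tfrac{1}{\sqrt3}$ one recovers exactly the six complex numbers displayed above, which proves $f_i=\tfrac{1}{\sqrt3}x^{5(i-1)}$ for all $i$. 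As a slightly more conceptual variant, one may check only the base cases $f_1=\tfrac{1}{\sqrt3}$ and $f_2=\tfrac{1}{\sqrt3}x^{5}$ by hand and then appeal to the recursion $f_{i+2}=-\sqrt3\,f_{i+1}-f_i$ contained in Lemma~\ref{rem-simplif-gamma}: since $x^{10}+\sqrt3\,x^{5}+1=0$, the sequence $\tfrac{1}{\sqrt3}x^{5(i-1)}$ satisfies the same recursion, so the identity propagates to $i=3,4,5,6$.

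Finally, once~\eqref{eq-fx} is in hand the conclusion is immediate: since $x^{12}=1$,
\[
f_i^{12}=\Bigl(\tfrac{1}{\sqrt3}\Bigr)^{12}x^{60(i-1)}=3^{-6}\bigl(x^{12}\bigr)^{5(i-1)}=3^{-6},
\]
so each $f_i$ is a root of $z^{12}=3^{-6}$; moreover, as $5$ is invertible modulo $12$, the six values $f_1,\dots,f_6$ are pairwise distinct, which is the configuration drawn in Figure~\ref{fig-complexe}. There is essentially no obstacle in this argument; the only point requiring a little care is the bookkeeping of the exponents modulo $12$ (so that, for example, $f_6$ corresponds to $x^{25}=x$ rather than to $x^{25}$ read literally), which is exactly what makes the six values land on six distinct roots rather than repeating.
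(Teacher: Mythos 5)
Your computation is correct and is exactly the straightforward verification the paper intends (it leaves the proof of this corollary to the reader): identifying $\{u',v'\}$ with $\{1,i\}$, reducing the exponents $5(i-1)$ modulo $12$, and matching against the coordinates of the $f_i$ from Lemma~\ref{projections}, with the root-of-$z^{12}=3^{-6}$ claim following immediately from $x^{12}=1$. The recursion variant via $f_i+f_{i+2}=-\sqrt3 f_{i+1}$ and $x^{10}+\sqrt3x^5+1=0$ is also valid, so there is nothing to correct.
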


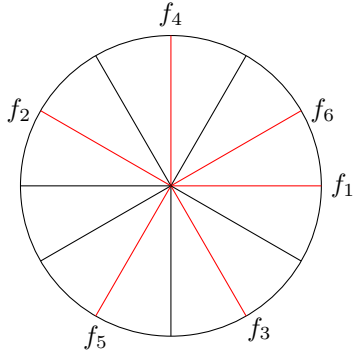
\begin{figure}
\begin{center}
\begin{tikzpicture}
\draw (0,0) circle(2);
\draw[red] (0,0)--(0:2);
\draw[red] (0,0)--(30:2);
\draw (0,0)--(60:2);
\draw[red] (0,0)--(90:2);
\draw (0,0)--(120:2);
\draw[red] (0,0)--(150:2);
\draw (0,0)--(0:-2);
\draw (0,0)--(30:-2);
\draw[red] (0,0)--(60:-2);
\draw (0,0)--(90:-2);
\draw[red] (0,0)--(120:-2);
\draw (0,0)--(150:-2);
\draw[right] (2,0) node{$f_1$};
\draw[right] (30:2) node{$f_6$};
\draw[above] (90:2) node{$f_4$};
\draw[left] (150:2) node{$f_2$};
\draw[below] (-120:2) node{$f_5$};
\draw[right] (-65:2.1) node{$f_3$};
\end{tikzpicture}
\caption{The vectors $f_1,\dots,f_6$ in $F^{\perp}$ and the directions of the 12\textsuperscript{th} roots of unity in $\mathbb C$.}\label{fig-complexe}
\end{center}
\end{figure}

\begin{lemma}\label{degree1}
The complex number $x=e^{i\frac{\pi}{6}}$ is a simple root of the polynomial $X^2-\sqrt 3X+1$. Moreover, every element of $\mathbb Z[x]$ can be expressed as a polynomial in $x$ of degree at most one with coefficients in $\mathbb Z[\sqrt 3]$. The precise expressions are collected in the following chart for $k\in\mathbb N$:
$$
{\renewcommand\arraystretch{1.3}
\begin{array}{|c|c|c|c|c|c|c|c|}
\hline
1&x&x^2&x^3&x^4&x^5&x^6&x^{6+k}\\
\hline
1&x&x\sqrt 3-1&2x-\sqrt 3&\sqrt 3x-2&x-\sqrt 3&-1&-x^k\\
\hline
\end{array}
}
$$
\end{lemma}

\noindent\textbf{Notation.} Since the group $\mathbb Z[\sqrt 3]$ will play a major role in this article, for simplicity we will denote it $G$.

\vspace{0.3cm}
When trying to determine the cohomology groups of the 12-fold tilings, we will need to understand when certain $\mathbb R$-linear combinations of powers of $x$ belong to $\mathbb Z[x]$. In some sense, the next lemma generalizes the equation $\sqrt{3} f_6=f_1-f_5$.

\begin{lemma}\label{lem-algebre-simple}
Let $\alpha,\beta\in\mathbb R$ and $i<j$ two integers in $[0,5]$.
\begin{enumerate}
\item $\alpha x^i\in \mathbb Z[x]$ holds if and only if $\alpha$ belongs to $G$.
\item  $\alpha x^i+\beta x^j\in \mathbb Z[x]$ holds if and only if 
$$
\begin{cases}
j-i=1,5\ \mathrm{and}\ \alpha,\beta\in G.\\
j-i=3\ \mathrm{and}\ \begin{cases}
	\beta=\frac{1}{2}(\beta_{1}+\beta_{2}\sqrt3),\ \alpha=\frac{1}{2}(\alpha_{1}+\alpha_{2}\sqrt3),\ \alpha_{i},\beta_{i}\in\mathbb Z,\\ 
	\alpha_{1}-3\beta_{2},\ \alpha_{2}-\beta_{1}\in2\mathbb Z.
					\end{cases}\\
j-i=2\ \mathrm{and}\ \begin{cases}
	\beta=\frac{1}{\sqrt3}(\beta_{1}+\beta_{2}\sqrt3),\ \alpha=\frac{1}{\sqrt3}(\alpha_{1}+\alpha_{2}\sqrt3),\ \alpha_{i},\beta_{i}\in\mathbb Z,\\ 
	\alpha_{1}-\beta_{1}\in3\mathbb Z.
					\end{cases}\\
j-i=4\ \mathrm{and}\ \begin{cases}
	\beta=\frac{1}{\sqrt3}(\beta_{1}+\beta_{2}\sqrt3),\ \alpha=\frac{1}{\sqrt3}(\alpha_{1}+\alpha_{2}\sqrt3),\ \alpha_{i},\beta_{i}\in\mathbb Z,\\		
	\alpha_{1}-2\beta_{1}\in3\mathbb Z.
					\end{cases}
\end{cases}
$$
\end{enumerate}
\end{lemma}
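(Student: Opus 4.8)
The plan is to pin down the $\mathbb Z$-module structure of $\mathbb Z[x]$ and then reduce everything to elementary arithmetic in $G=\mathbb Z[\sqrt3]$.

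First I would record two structural facts coming out of Lemma~\ref{degree1}. On the one hand, $\mathbb Z[x]=G\oplus Gx$, free of rank two over $G$ with $\mathbb Z$-basis $\{1,\sqrt3,x,\sqrt3x\}$. On the other hand, since $x^{12}=1$ the element $x$ is a unit of $\mathbb Z[x]$, with $x^{-1}=x^{11}=\sqrt3-x\in\mathbb Z[x]$. Multiplying through by the unit $x^{-i}$, we may assume $i=0$, so the problem becomes: characterize when $\alpha+\beta x^{k}\in\mathbb Z[x]$ for $\alpha,\beta\in\mathbb R$ and $k=j-i\in\{1,2,3,4,5\}$. For part~(1) this already finishes the argument: $\alpha x^{i}\in\mathbb Z[x]$ iff $\alpha\in\mathbb Z[x]$, and $\mathbb Z[x]\cap\mathbb R=G$, because in the basis $\{1,\sqrt3,x,\sqrt3x\}$ an element has zero imaginary part only if the $x$- and $\sqrt3x$-coefficients both vanish.

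For part~(2) the key observation is that $\{1,x\}$ is an $\mathbb R$-basis of $\mathbb C$ (as $x\notin\mathbb R$), hence for $a,b\in\mathbb R$ one has $a+bx\in\mathbb Z[x]=G+Gx$ \emph{if and only if} $a,b\in G$, by uniqueness of coordinates. Writing $x^{k}=p_{k}+q_{k}x$ from the chart in Lemma~\ref{degree1} (so $q_{1}=q_{5}=1$, $q_{2}=q_{4}=\sqrt3$, $q_{3}=2$, and $p_{1}=0$, $p_{2}=-1$, $p_{3}=p_{5}=-\sqrt3$, $p_{4}=-2$), the membership $\alpha+\beta x^{k}\in\mathbb Z[x]$ is equivalent to the two conditions $q_{k}\beta\in G$ and $\alpha+p_{k}\beta\in G$. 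The first condition forces $\beta\in G$ when $k\in\{1,5\}$, $\beta\in\tfrac{1}{\sqrt3}G$ when $k\in\{2,4\}$, and $\beta\in\tfrac{1}{2}G$ when $k=3$; I would then substitute the corresponding normal form $\beta=\tfrac{1}{\sqrt3}(\beta_1+\beta_2\sqrt3)$, resp. $\beta=\tfrac{1}{2}(\beta_1+\beta_2\sqrt3)$, into the second condition. Since $p_{k}\beta$ then lies in the same fractional lattice ($\tfrac{1}{\sqrt3}G$, resp. $\tfrac{1}{2}G$, using that $\tfrac{\sqrt3}{2}G\subseteq\tfrac{1}{2}G$ for $k=3$), the requirement $\alpha+p_{k}\beta\in G$ forces $\alpha$ to have the matching denominator, and splitting into rational and $\sqrt3$-parts turns it into the stated divisibility relations: a single congruence modulo $3$ on $\alpha_{1}$ when $k=2,4$ (with no constraint on $\alpha_{2}$, since $p_{2},p_{4}\in\mathbb Z$), and congruences modulo $2$ on both $\alpha_{1}$ and $\alpha_{2}$ when $k=3$ (since $p_{3}=-\sqrt3$ mixes the two parts). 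Running $k=1,2,3,4,5$ in turn then yields exactly the five alternatives in the statement.

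The proof is entirely mechanical once this dictionary is in place, so I do not expect a genuine obstacle — only bookkeeping. The one point that deserves a moment of care is verifying that the normal forms for $\alpha$ and $\beta$ displayed in the lemma are \emph{forced}, not merely sufficient; this is exactly what the uniqueness of $\{1,x\}$-coordinates over $\mathbb R$ together with the elementary descriptions of the lattices $\tfrac{1}{\sqrt3}G$ and $\tfrac{1}{2}G$ provide. The other thing to watch is, for $k=2,4$ versus $k=3$, not confusing the denominator $\sqrt3$ with the denominator $2$, since these are the only two sources of the ``$3\mathbb Z$'' versus ``$2\mathbb Z$'' congruences in the statement.
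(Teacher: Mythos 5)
Your proposal is correct and follows essentially the same route as the paper: reduce to $i=0$ using that $x^{i}$ is a unit of $\mathbb Z[x]$, expand $x^{j-i}$ via the chart of Lemma~\ref{degree1}, and compare coefficients in the $\mathbb R$-basis $\{1,x\}$ of $\mathbb C$ to extract the stated conditions on $\alpha,\beta$. The only difference is presentational (packaging the chart as $x^{k}=p_{k}+q_{k}x$ and the membership as the pair of conditions $q_{k}\beta\in G$, $\alpha+p_{k}\beta\in G$), which matches the paper's case-by-case computation.
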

\begin{proof}
We start with the first claim. By Lemma~\ref{degree1} it is clear that if $\alpha$ belongs to $\mathbb Z[\sqrt 3]$ then $\alpha x^i$ belongs to $\mathbb Z[x]$. 
We prove the other implication. By assumption, $\alpha x^i=\sum_{j} n_j x^j, n_j\in\mathbb Z$, which implies $\alpha= x^{-i}\sum_{j} n_j x^j$. Again by Lemma~\ref{degree1}, we know we can write $\alpha =ax+b$, with $a,b \in G$. Since $\{x,1\}$ is a basis of $\mathbb C$ as an $\mathbb R$ vector space, we conclude that $\alpha=b\in G$.

We now proceed to argue the second claim in the statement. We need to understand which conditions on $\alpha, \beta$ imply
$ 
\alpha x^i+\beta x^j\in \mathbb Z[x].
$
Since $x^i$ is invertible in $\mathbb Z[x]$, this last expression is equivalent to 
$
\alpha+\beta x^{j-i}\in \mathbb Z[x],
$
which, by Lemma~\ref{degree1} can be rewritten as 
$
\alpha+\beta x^{j-i}=ax+b,\ a,b\in G.
$
Thus, depending on the value of $j-i$ and again by Lemma~\ref{degree1}, we are looking to one of the following equations 
\[
\begin{cases} 
\beta x+\alpha=ax+b \quad j-i=1.\\ 
\beta\sqrt 3x-\beta+\alpha=ax+b\quad j-i=2.\\ 
2\beta x-\beta\sqrt 3+\alpha=ax+b \quad j-i=3.\\  
\beta\sqrt 3 x-2\beta+\alpha=ax+b \quad j-i=4.\\
\beta x-\beta\sqrt 3+\alpha=ax+b\quad j-i=5.
\end{cases} 
\] 
Now, since $\{x,1\}$ is an $\mathbb R$ basis for $\mathbb C$, we deduce
\begin{itemize}
\item[-] If $j-1=1$: $\beta=a,\ \alpha=b\iff\beta,\alpha\in G$.
\item[-] If $j-1=5$: $\beta =a,\ -\beta\sqrt 3+\alpha=b\iff\beta,\alpha\in G$.
\item[-] If $j-1=3$: 
\begin{align*}
&2\beta=a, -\beta\sqrt 3+\alpha=b\iff\exists \beta_{1},\beta_{2},\alpha_{1},\alpha_{2}\in\mathbb Z,\ \beta=\frac{1}{2}(\beta_{1}+\beta_{2}\sqrt3),\\ 
&\alpha=\frac{1}{2}(\alpha_{1}+\alpha_{2}\sqrt3) \mathrm{\ and\ }b=\frac{\alpha_{1}-3\beta_{2}}{2}-\sqrt3\frac{\alpha_{2}-\beta_{1}}{2}\iff\\
&\beta,\alpha\in\frac{1}{2}G,\ \alpha_{1}-3\beta_{2},\ \alpha_{2}-\beta_{1}\in 2\mathbb Z.
\end{align*}
\end{itemize}

For the two remaining cases, an analogous computation to the preceding one, and using the same notations, yields
\begin{itemize}
\item[-] If $j-1=2$: $ \beta\sqrt 3=a,\ -\beta+\alpha=b\iff\alpha,\beta\in\frac{1}{\sqrt3}G\ \mathrm{and\ }\alpha_{1}-\beta_{1}\in3\mathbb Z.$
\item[-] If $j-1=4$: $ \beta\sqrt 3=a,\ -2\beta+\alpha=b\iff\alpha,\beta\in\frac{1}{\sqrt3}G\ \mathrm{and\ }\alpha_{1}-2\beta_{1}\in3\mathbb Z.$
\end{itemize}
\end{proof}

We end this section with a last technical lemma which will not be needed until the reader is confronted with Figure~\ref{fig-orbit-line-pair-impair}.

\begin{lemma}\label{lem-intersection-ensemble}
Let $S:=\{(\gamma_{1},\gamma_{2})\not\in\frac{1}{2\sqrt3}G\times\frac{1}{2}G\}$ and consider its two subsets:
$$
A=\{(\gamma_1, \gamma_2)\in S\,|\, 2\gamma_1\sqrt 3+2\gamma_2\in G\}\ \mathrm{and}\ B=\{(\gamma_1, \gamma_2)\in S\,|\,\gamma_1\sqrt 3+2\gamma_2\in G\}.
$$
Moreover, interchanging the roles of $\gamma_{1}$ and $\gamma_{2}$, we obtain the analogous sets
$S':=\{(\gamma_{1},\gamma_{2})\not\in\frac{1}{2}G\times\frac{1}{2\sqrt3}G\}$,
$$
D=\{(\gamma_1, \gamma_2)\in S'\,|\, 2\gamma_2\sqrt 3+2\gamma_1\in G\}\ \mathrm{and}\ 
E=\{(\gamma_1, \gamma_2)\in S'\,|\,  \gamma_2\sqrt 3+2\gamma_1\in G\}.
$$
Finally, set $C=S\setminus (A\cup B)$ and $F=S'\setminus (D\cup E).$
Then, 
\begin{enumerate}
\item $A\cap B=D\cap E=\emptyset$, which implies $S=A\sqcup B\sqcup C$ and $S'=D\sqcup E\sqcup F$.
\item $B\cap S' \subset F$ and $E\cap S\subset C$.
\item $A\cap D$ is the set of couples $(\gamma_{1},\gamma_{2})$ subject to the following conditions:
		\begin{enumerate}
		\item $(\gamma_{1},\gamma_{2})\not\in\frac{1}{2\sqrt3}(G\times G)$.
		\item $(\gamma_{1},\gamma_{2})\in\frac{1}{4}(G\times G)$, i.e. $4\gamma_{1}=a+b\sqrt3, 4\gamma_{2}=c+d\sqrt3$,  $a,b,c,d\in\mathbb Z$.	
		\item The integers $a,b,c,d$ satisfy one of the following parity conditions:
			\begin{enumerate}
			\item $a,d$ even and $c,b$ odd.
			\item $a,d$ odd and $c,b$ even.
			\item $a,b,c,d$ odd. 
			\end{enumerate}
		\end{enumerate}
	
\end{enumerate}
\end{lemma}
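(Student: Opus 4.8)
The plan is to treat each of the three assertions separately, unwinding the definitions of the sets $A,B,C,D,E,F$ into explicit membership conditions on the coordinates $(\gamma_1,\gamma_2)$ written over $G=\mathbb Z[\sqrt3]$, and then reducing everything to elementary parity arithmetic. Throughout I will use the fact that $\{1,\sqrt3\}$ is a $\mathbb Q$-basis of $\mathbb Q(\sqrt3)$, so that an element lies in $G$ precisely when, written in this basis, both rational coordinates are integers, and that $G$ is a ring. I will also repeatedly use the elementary observation that $t\in G$ and $\sqrt3\,t\in G$ together force $t\in\mathbb Z+\frac{1}{\sqrt3}\mathbb Z\cap G=\mathbb Z\sqrt3\cdot\frac13\ldots$ — more precisely, $\sqrt3\,t\in G\iff t\in\frac{1}{\sqrt3}G$, and the intersection $G\cap\frac1{\sqrt3}G=\sqrt3\,\mathbb Z\oplus 3\mathbb Z\cdot\frac1{\sqrt3}$, facts that I would record as a one-line sub-remark before starting.

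For part (1), I would show $A\cap B=\emptyset$ by contradiction: if $(\gamma_1,\gamma_2)$ lies in both, then subtracting $\gamma_1\sqrt3+2\gamma_2\in G$ from $2\gamma_1\sqrt3+2\gamma_2\in G$ gives $\gamma_1\sqrt3\in G$, i.e. $\gamma_1\in\frac1{\sqrt3}G$; feeding this back gives $2\gamma_2\in G$, hence $\gamma_2\in\frac12 G$, and then $\gamma_1\sqrt3\in G$ together with $2\gamma_2\in G$ must be refined (using $\gamma_1\sqrt3+2\gamma_2\in G$ and the $\{1,\sqrt3\}$-decomposition) to conclude $\gamma_1\in\frac1{2\sqrt3}G$ and $\gamma_2\in\frac12 G$, contradicting $(\gamma_1,\gamma_2)\in S$. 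The identity $D\cap E=\emptyset$ is the same computation with $\gamma_1,\gamma_2$ swapped and $S$ replaced by $S'$. Since $C$ and $F$ are defined as the complements of $A\cup B$ in $S$ and of $D\cup E$ in $S'$, the disjoint-union statements $S=A\sqcup B\sqcup C$ and $S'=D\sqcup E\sqcup F$ are then immediate.

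For part (2), take $(\gamma_1,\gamma_2)\in B\cap S'$. Membership in $B$ says $\gamma_1\sqrt3+2\gamma_2\in G$. To show this point lies in $F=S'\setminus(D\cup E)$ I must rule out $D$ and $E$. If it were in $D$ then $2\gamma_2\sqrt3+2\gamma_1\in G$; combining with $\gamma_1\sqrt3+2\gamma_2\in G$ — multiply the latter by $\sqrt3$ to get $3\gamma_1+2\sqrt3\gamma_2\in\sqrt3 G\subset$ the relevant module, then subtract — I derive $3\gamma_1-2\gamma_1=\gamma_1\in$ (a module forcing) $\gamma_1\in\frac12 G$, and likewise $\gamma_2\in\frac1{2\sqrt3}G$, contradicting $(\gamma_1,\gamma_2)\in S'$. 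The case of $E$ is analogous but shorter. Then $E\cap S\subset C$ is proved symmetrically (swap the roles throughout). These are all short linear manipulations over $G$ followed by a basis-coordinate parity check.

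Part (3) is the computational heart and the main obstacle. Here one must intersect $A=\{2\gamma_1\sqrt3+2\gamma_2\in G,\ (\gamma_1,\gamma_2)\notin\frac1{2\sqrt3}(G\times G)\}$ with $D=\{2\gamma_2\sqrt3+2\gamma_1\in G,\ (\gamma_1,\gamma_2)\notin\frac1{2}G\times\frac1{2\sqrt3}G\}$ and match the answer to the stated parity description. The approach: from $2\gamma_1\sqrt3+2\gamma_2\in G$ and $2\gamma_2\sqrt3+2\gamma_1\in G$, add and subtract to get $2(\gamma_1+\gamma_2)(1+\sqrt3)\in G$ and $2(\gamma_1-\gamma_2)(1-\sqrt3)\in G$; since $1+\sqrt3$ and $1-\sqrt3$ are units in $\mathbb Z[\tfrac12,\sqrt3]$ up to the prime $2$ (their product is $-2$), this pins $\gamma_1\pm\gamma_2$ down to $\frac14 G$, hence $(\gamma_1,\gamma_2)\in\frac14(G\times G)$, giving condition (b). Writing $4\gamma_1=a+b\sqrt3$, $4\gamma_2=c+d\sqrt3$ with $a,b,c,d\in\mathbb Z$, I then substitute into $2\gamma_1\sqrt3+2\gamma_2\in G$ and $2\gamma_2\sqrt3+2\gamma_1\in G$; expanding in the $\{1,\sqrt3\}$-basis and clearing the denominator $2$ (since $2\cdot\frac14=\frac12$, one gets denominators $4$ which must cancel), each membership becomes two congruences mod $2$ among $a,b,c,d$ — roughly $c\equiv 3b\equiv b\pmod 2$, $d\equiv a\pmod 2$ from one, and the symmetric pair from the other — so that the four integers split into the allowed parity patterns. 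Finally, condition (a) is just the negation of $(\gamma_1,\gamma_2)\in\frac1{2\sqrt3}(G\times G)$, which in the $\frac14(G\times G)$ coordinates translates into excluding the parity pattern with $a,b,c,d$ all even (that being exactly $\frac12(G\times G)\subset\frac1{2\sqrt3}(G\times G)$ after the unit $\sqrt3$), leaving precisely the three listed patterns (i)–(iii); I would also double-check that each of (i)–(iii) does satisfy both original membership conditions, to get the converse inclusion. The bookkeeping of which congruences survive — and in particular verifying that exclusion of the "all even" case is equivalent to condition (a) rather than to something stronger — is the delicate part, and I expect that is where a careful case table (eight parity vectors, test each against the two congruence pairs and against (a)) is the cleanest way to finish.
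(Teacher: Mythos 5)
Parts (1) and (2) of your proposal are correct and essentially reproduce the paper's own argument: you eliminate a variable by multiplying one membership condition by $\sqrt3$ (or by $2$) and subtracting, land in $G$, and contradict membership in $S$ or $S'$; the reduction of $A\cap E=\emptyset$ to $B\cap D=\emptyset$ by swapping $\gamma_1$ and $\gamma_2$ is exactly what the paper does. Likewise, the second half of your part (3) — writing $4\gamma_1=a+b\sqrt3$, $4\gamma_2=c+d\sqrt3$, turning the two memberships into the congruences $a\equiv d$ and $b\equiv c \pmod 2$, and observing that, under these congruences, excluding the all-even pattern is precisely condition (a) — is the paper's computation. (Your preliminary ``sub-remark'' that $G\cap\frac{1}{\sqrt3}G$ equals some proper sublattice is garbled — since $\sqrt3\in G$ one has $G\subset\frac{1}{\sqrt3}G$, so the intersection is all of $G$ — but you never use it.)

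There is, however, a genuine gap at the key step of part (3), the derivation of condition (b). From $2\sqrt3\gamma_1+2\gamma_2\in G$ and $2\sqrt3\gamma_2+2\gamma_1\in G$ you pass to the sum and difference and, using $(1+\sqrt3)(\sqrt3-1)=2$, correctly conclude $4(\gamma_1+\gamma_2)\in G$ and $4(\gamma_1-\gamma_2)\in G$. But $\gamma_1\pm\gamma_2\in\frac{1}{4}G$ only gives $2\gamma_1,2\gamma_2\in\frac{1}{4}G$, i.e. $\gamma_1,\gamma_2\in\frac{1}{8}G$; the inference ``hence $(\gamma_1,\gamma_2)\in\frac{1}{4}(G\times G)$'' is a non sequitur (already $(\gamma_1,\gamma_2)=(\frac18,\frac18)$ has sum and difference in $\frac14 G$ but is not in $\frac14(G\times G)$). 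Replacing the two conditions by their sum and difference loses exactly a factor of $2$, and the sharper modules $\frac{\sqrt3\mp1}{4}G$ do not help, since $(\sqrt3-1)G=(\sqrt3+1)G$ is the prime above $2$, so you still only reach $\gamma_i\in\frac18(\sqrt3+1)G\not\subset\frac14 G$. The correct elimination — and the one the paper uses — works with the original pair: multiply $2\gamma_1+2\sqrt3\gamma_2\in G$ by $\sqrt3$ and subtract $2\sqrt3\gamma_1+2\gamma_2\in G$ to get $4\gamma_2\in G$, and symmetrically $4\gamma_1\in G$. With that single replacement, your subsequent parity bookkeeping (including the check that the three surviving patterns do satisfy both memberships and the conditions defining $S$ and $S'$) goes through and completes the proof as in the paper.
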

\begin{proof}
By the symmetry in the statement, it suffices to show the first claim for the sets $A,B$: if $(x,y)\in A\cap B$ then $2x\sqrt3+2y$ and $x\sqrt3+2y$ are in $G$. Subtracting these two expressions we obtain that $x\sqrt3
\in G$ which contradicts $(x,y)\in S$. We conclude $A\cap B=\emptyset$ and by an analogous argument $D\cap E=\emptyset$.

Now, we proceed to show that $B\cap S' \subset F$ and $E\cap S\subset C$. This amounts to show that $B\cap E=B\cap D=A\cap E=\emptyset$.
	\begin{itemize}
		\item $B\cap E=\emptyset$: if there was an element $(x,y)\in B\cap E\subset S\cap S'$, then $x,y\not\in\frac{1}{2\sqrt3}G$. However, since $(x,y)\in B\cap E$ we arrive to the following contradiction.
$$		
\begin{cases}
	x\sqrt3+2y\in G\\
	2x+y\sqrt3\in G
\end{cases}
		\Rightarrow
\begin{cases}
	6x+4\sqrt3y\in G\\
	6x+3\sqrt3y\in G
\end{cases}
		\Rightarrow		
\sqrt3y\in G.
$$
		\item $B\cap D=\emptyset$: if there was an element $(x,y)\in B\cap D\subset S\cap S'$, then $x,y\not\in\frac{1}{2\sqrt3}G$. However, since $(x,y)\in B\cap D$ we arrive to the following contradiction.
$$		
\begin{cases}
	x\sqrt3+2y\in G\\
	2x+2y\sqrt3\in G
\end{cases}
		\Rightarrow
\begin{cases}
	3x+2\sqrt3y\in G\\
	2x+2\sqrt3y\in G
\end{cases}
		\Rightarrow		
x\in G.
$$
		\item $A\cap E=\emptyset$: the analysis in this case is completely analogous to $B\cap D=\emptyset$ swapping the two variables.
	\end{itemize}

Finally, we will completely determine the set $A\cap D\subset S\cap S'$. If $(x,y)\in A\cap D\subset S\cap S'$, then $x,y\not\in\frac{1}{2\sqrt3}G$ and the following conditions need to be satistfied:
$$		
\begin{cases}
	2x\sqrt3+2y\in G\\
	2x+2y\sqrt3\in G
\end{cases}
		\Rightarrow
\begin{cases}
	2x\sqrt3+2y\in G\\
	2x\sqrt3+6y\in G
\end{cases}
		\Rightarrow
4y\in G.
$$
By the symmetry of the variables we must also have $4x\in G$. So, $(x,y)\in\frac{1}{4}(G\times G)$ and there exist $a,b,c,d\in\mathbb Z$ such that $4x=a+b\sqrt3$ and $4y=c+d\sqrt3$. Now, if $(x,y)\in A$ we have:
$$
2\sqrt3\frac{1}{4}(a+b\sqrt3)+2\frac{1}{4}(c+d\sqrt3)=\frac{3b+c}{2}+\frac{a+d}{2}\sqrt3\in G=\mathbb Z[\sqrt3].
$$
For this last condition to hold we need $a+d,3b+c\in2\mathbb Z$. Analyzing the constraints imposed by $(x,y)\in D$, we obtain $c+b,3d+a\in2\mathbb Z$. It follows that $a$ and $d$ and that $c$ and $b$ have the same parity. Moreover, since $x,y\not\in\frac{1}{2\sqrt3}G$, $a$ and $b$ cannot be both even and likewise $c$ and $d$. The parity conditions in the statement follow.

\end{proof}

\subsection{Description of the window}\label{delta=0}
In this section we describe the polytope $W=\pi^{\perp}([0,1]^6)$ and its intersection with the family of planes $P$. By definition, the window $W$ is the convex hull of the $2^6=64$ points $\sum_{i=1}^6 n_ig_i$, where each $g_i\in E^{\perp}$ is the projection of the $i^{\mathrm{th}}$ element of the canonical basis of $\mathbb R^6$ and $n_i$ is either 0 or 1. We end this section establishing a convention regarding the description of the three dimensional faces of W, which will be thoroughly studied in the next section. The main result of this part is the following proposition.
\begin{proposition}\label{Prop-W}
$\;$
\begin{itemize}
\item The polytope $W\subset E^{\perp}=F^{\perp}\oplus F$ has $52$ vertices, $132$ edges, $120$ faces of dimension two and $40$ faces of dimension three.  
\item All edges have the same length. 
\item The vertices are distributed in $16$ affine planes parallel to $F^{\perp}$. Each of these planes intersects
	\begin{itemize}
 		\item $F$ in a point (see Figure~\ref{polytope-coupe}); and 
		\item $W$ in either a point, a triangle or a hexagon.
 	\end{itemize}
\end{itemize}
\end{proposition}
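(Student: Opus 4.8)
The plan is to compute the face vector of $W=\pi^\perp([0,1]^6)$ directly from its description as a zonotope, namely the Minkowski sum of the six segments $[0,g_i]$ with $g_i=\pi^\perp(e_i)\in E^\perp$. Since $E^\perp$ is $4$-dimensional and the $g_i$ are in general position in the sense that no two are parallel (this follows from Lemma~\ref{projections}: the $f_i$ are pairwise non-parallel, being distinct $12$th roots scaled, so the $g_i$ are too), the combinatorics of the zonotope are governed by the rank-$4$ oriented matroid on the six vectors $g_1,\dots,g_6$. The standard zonotope face-count formulas then express the number of $k$-faces in terms of the numbers of independent $j$-subsets of $\{g_1,\dots,g_6\}$ for $j\le k$. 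Concretely, I would first record which subsets of the $g_i$ are linearly dependent: by Lemma~\ref{rem-simplif-gamma} the $f_i$ satisfy $f_5=f_3-f_1$, $f_6=f_4-f_2$, and the $\delta_i$ satisfy $\delta_1=\delta_3=\delta_5$ up to sign and similarly for the even ones, so one must check which of these lift to honest dependencies among the $g_i$ in $\mathbb R^6$ — in fact the $g_i$ are the columns of a $6\times 6$ matrix of rank $4$, so there is a $2$-dimensional space of linear relations, and I would exhibit an explicit basis for it and from there list all minimal dependent subsets (circuits).

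Once the circuits are known, the counting is mechanical. The number of $3$-faces of a $4$-dimensional zonotope on $n$ vectors in general position (rank $4$) equals $2\binom{n-1}{3}$ when the vectors are in \emph{general position} (no circuit of size $<n$); here $n=6$ gives $2\binom53=20$, but we expect $40$, so the presence of the short circuits \emph{increases} certain counts — more precisely one uses the formula that the number of $k$-faces of $Z=\sum[0,v_i]$ is $2\sum_{j=0}^{k-1}\binom{?}{?}$ replaced by the matroid-theoretic count: the $k$-faces correspond to pairs (a $k$-subset $S$ spanning a $k$-flat, a sign vector on the complement consistent with a face), and the clean way is: number of $k$-faces $=2\sum_{\text{independent }j\text{-subsets},\,j\le k} (\text{number of ways to extend})$. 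Rather than fight the general formula, I would instead slice $W$ by the $16$ hyperplanes parallel to $F^\perp$ and build the face vector layer by layer, which is also what the third bullet demands.

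That slicing is the natural route to the whole proposition at once. The linear functional cutting into layers is the one measuring the $F$-coordinate; since each $g_i$ has $F$-component $\delta_i\in\{\pm A,\pm B\}$ (Lemma~\ref{projections}), the value of $\sum n_i\delta_i$ over $n\in\{0,1\}^6$ ranges over $\{(n_1-n_3+n_5)A+(n_2-n_4+n_6)B\}$, i.e.\ over the $16$ points $(a,b)$ with $a,b\in\{-1,0,1\}$ — these are the $16$ affine planes, and the picture in Figure~\ref{polytope-coupe} records them. For each such $(a,b)$, the corresponding slice of $W$ is the projection to $F^\perp$ of the set of $n\in[0,1]^6$ with $(n_1-n_3+n_5, n_2-n_4+n_6)=(a,b)$, which is a (possibly lower-dimensional) polytope in $F^\perp\cong\mathbb R^2$; I would compute each of these $16$ planar sections explicitly, observing they are points (at the four "corners" $(\pm1,\pm1)$), triangles, or hexagons, using the relations $f_5=f_3-f_1$, $f_6=f_4-f_2$ and $f_i+f_{i+2}=-\sqrt3 f_{i+1}$ to identify the shapes and to see all edges have equal length (all edges are translates of the $g_i$, which have equal length since $\|f_i\|$ is constant and $\|\delta_i\|$ is constant, and $f_i\perp\delta_i$ componentwise across the decomposition $F^\perp\oplus F$). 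Summing the vertex/edge/face contributions across layers, together with the vertical edges and the $2$- and $3$-faces joining consecutive layers, yields $52$, $132$, $120$, $40$; the Euler relation $52-132+120-40=0$ serves as an arithmetic check.

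The main obstacle I anticipate is bookkeeping rather than conceptual: correctly counting the two- and three-dimensional faces that are \emph{transverse} to the layering (those not lying inside a single slicing plane), since a $3$-face of $W$ is a zonotope $\sum_{i\in S}[0,g_i]$ for a $3$-subset $S$ on which the $g_i$ are independent, translated to a vertex, and one must avoid double-counting and must correctly pair each such prism-like face with the layers it spans. I would organize this by the standard bijection between $k$-faces of a zonotope and covectors of its oriented matroid of rank equal to $\dim E^\perp$, reducing everything to enumerating the flats and their localizations; the six vectors with their two circuits form a small enough matroid that this enumeration is finite and checkable by hand (and cross-checkable against the layer-by-layer count and against Euler's formula).
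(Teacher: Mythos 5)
Your main route---slicing $W$ by the $16$ planes parallel to $F^{\perp}$, identifying each section as a point, a triangle or a hexagon, and then assembling the vertex/edge/face counts layer by layer with Euler's relation as a check---is essentially the paper's own proof (Lemma~\ref{lem-cut}, the subsequent corollaries, and the cube enumeration of Section~\ref{cubes}), and your equal-edge-length argument via $\|g_i\|^2=\|f_i\|^2+\|\delta_i\|^2$ being independent of $i$ is correct. One concrete correction to the slicing step: the $F$-components of the vertices are $(n_1-n_3+n_5,\,n_2-n_4+n_6)$ with each coordinate ranging over $\{-1,0,1,2\}$, not $\{-1,0,1\}$ (the latter would give only $9$ planes, contradicting your own count of $16$); accordingly the four singleton slices sit over $(-1,-1),(-1,2),(2,-1),(2,2)$, not over $(\pm1,\pm1)$. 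This is a slip the explicit computation would catch, but as written the layer decomposition is off.

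The zonotope aside is mistaken in a way worth understanding, since you lean on it for the count of $3$-faces. A rank-$4$ zonotope has one opposite pair of facets for each linear hyperplane spanned by its generators, so in general position the facet count is $2\binom{n}{3}$, i.e.\ $2\binom{6}{3}=40$ here, not $2\binom{n-1}{3}=20$; no correction from ``short circuits'' is needed, and in fact none exists. The relations $f_5=f_3-f_1$ and $f_6=f_4-f_2$ hold only among the projections to $F^{\perp}$ and do not lift to the $g_i$ (for instance $g_3-g_1-g_5=-3A\neq 0$); since every nonzero vector of $E=\ker\pi^{\perp}$ has at least five nonzero coordinates, every linear dependency among the $g_i$ has support at least $5$, so no four generators lie in a common hyperplane and no three in a common $2$-plane. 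That is exactly why all $3$-faces are parallelepipeds (the ``cubes'' of Proposition~\ref{prop-listecubes}) and why the matroid count already gives $40$ without any degeneracy corrections. Had you pursued the matroid bookkeeping believing in small circuits you would have miscounted; the slicing route you fall back on is the sound one, and it is the paper's.
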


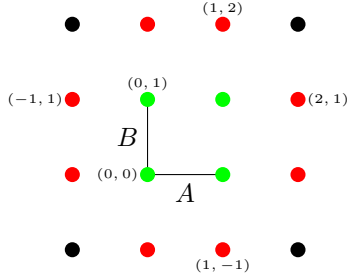
\begin{figure}
\begin{center}

\begin{tikzpicture}
\draw (0,0) node[left]{\tiny $(0,0)$};
\draw (0,1) node[above]{\tiny $(0,1)$};
\draw (2,1) node[right]{\tiny $(2,1)$};
\draw (1,-1) node[below]{\tiny $(1,-1)$};
\draw (-1,1) node[left]{\tiny $(-1,1)$};
\draw (1,2) node[above]{\tiny $(1,2)$};
\draw (0,0)--(1,0);
\draw (0,0)--(0,1);
\draw (0.5,0) node[below]{$A$};
\draw (0,0.5) node[left]{$B$};

\fill[green] (0,1) circle(.1);
\fill[green] (1,0) circle(.1);
\fill[green] (1,1) circle(.1);
\fill[green] (0,0) circle(.1);

\fill[red] (-1,0) circle(.1);
\fill[red] (2,0) circle(.1);
\fill[red] (0,-1) circle(.1);
\fill[red] (0,2) circle(.1);

\fill (-1,-1) circle(.1);
\fill (-1,2) circle(.1);
\fill (2,2) circle(.1);
\fill (2,-1) circle(.1);

\fill[red] (1,2) circle(.1);
\fill[red] (2,1) circle(.1);
\fill[red] (1,-1) circle(.1);
\fill[red] (-1,1) circle(.1);

\end{tikzpicture}
\caption{The polytope $W\subset E^{\perp}=F\cup F^{\perp}$ has its vertices on 16 planes parallel to $F^{\perp}$. The projection of each of these planes onto $F$, with fixed basis $\{A,B\}$, is a point. In this figure the number of vertices of $W$ in each of the different 16 planes is encoded by the following color code: Black=isolated point, red=triangle, green=hexagon.}\label{polytope-coupe}
\end{center}
\end{figure}

 The proof of this proposition will be splitted in several lemmas. 

\begin{lemma}\label{lem-cut}
The intersection of the window $W$ and the family of translates of $F^{\perp}$ given by $P=\bigcup_{\delta\in\Delta}F^{\perp}+\delta$ consists of points, triangles and hexagons.  
\end{lemma}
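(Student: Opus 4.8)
The plan is to explicitly parametrize the affine planes $F^{\perp}+\delta$ that actually meet $W$ and then identify the intersection as a polygon. First I would recall from Proposition~\ref{Prop-W} (whose proof is being assembled here — so strictly I would set up the combinatorics directly) that $W$ is the projection of the unit cube, hence $W=\{\sum_{i=1}^{6}t_ig_i:\ t_i\in[0,1]\}$, and that $g_i=f_i+\delta_i$ with $f_i\in F^{\perp}$ and $\delta_i\in F$ as in Lemma~\ref{projections}. Writing a point of $W$ in the decomposition $E^{\perp}=F^{\perp}\oplus F$, its $F$-component is $\sum_i t_i\delta_i=(t_1-t_3+t_5)A+(t_2-t_4+t_6)B$. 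Therefore the $F$-component of a point of $W$ lies in the square $[-1,2]\times[-1,2]$ (in the $\{A,B\}$ coordinates), and the lattice $\Delta=\mathbb ZA\oplus\mathbb ZB$ (Corollary~\ref{cor-groupe-delta}) meets this square in exactly the $16$ points $(p,q)$ with $p,q\in\{-1,0,1,2\}$. This already matches the $16$ planes and Figure~\ref{polytope-coupe}.

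Next, for a fixed lattice point $\delta=(p,q)\in\Delta$ in that square, the intersection $(F^{\perp}+\delta)\cap W$ is, after projecting to $F^{\perp}$, the set
$$
Q_{p,q}=\Bigl\{\ \sum_{i=1}^{6}t_if_i\ :\ t_i\in[0,1],\ \ t_1-t_3+t_5=p,\ \ t_2-t_4+t_6=q\ \Bigr\}.
$$
This is the image under the linear map $(t_i)\mapsto\sum t_if_i$ of a face (a slice by two coordinate-type hyperplanes) of the cube $[0,1]^6$, hence a convex polygon; its edges are supported on lines directed by the $f_i$. The key step is then a finite, essentially combinatorial case analysis: for each of the $16$ admissible $(p,q)$ one determines the fibre polytope $\{t\in[0,1]^6:\ t_1-t_3+t_5=p,\ t_2-t_4+t_6=q\}$ and its image. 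By the symmetry of the configuration of the $f_i$ (they are $e^{i5\pi k/6}/\sqrt3$ by Corollary~\ref{c:fvsx}, so invariant under the order-$12$ rotation, and the relations $f_5=f_3-f_1$, $f_6=f_4-f_2$ of Lemma~\ref{rem-simplif-gamma} let one eliminate $t_5,t_6$ or reorganize the constraints), one reduces the $16$ cases to three representative ones: the ``corner'' cases such as $(p,q)=(-1,-1)$ force many $t_i$ to be $0$ or $1$ and yield a single point; the ``edge'' cases such as $(p,q)=(-1,0)$ yield a triangle; and the ``central'' cases such as $(p,q)=(0,0),(1,1),(0,1),(1,0)$ yield a hexagon. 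Concretely, for $(0,0)$ one has $t_5=t_3-t_1$, $t_6=t_4-t_2$ with all six in $[0,1]$; eliminating, the admissible $(t_1,t_2,t_3,t_4)$ region maps onto a hexagon in $F^{\perp}$ spanned by $\pm f_1,\pm f_3,\pm(f_3-f_1)$, i.e. a regular-type hexagon, and similarly for the other central planes after an affine shift.

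The main obstacle, and the only real work, is the bookkeeping in this case analysis: verifying that in each of the $16$ cases no extra vertices appear (so that one genuinely gets a point, a triangle, or a hexagon and not, say, a pentagon or a larger polygon), and that the three ``types'' are distributed over the $16$ planes exactly as in the black/red/green colouring of Figure~\ref{polytope-coupe} ($4$ points, $8$ triangles, $4$ hexagons). I expect to handle this by exploiting the dihedral symmetry group of order $24$ acting on the index set $\{1,\dots,6\}$ and on $\{A,B\}$-coordinates, which permutes the $16$ planes and collapses the verification to the three orbit representatives above; the remaining per-case check is then a short linear-programming-type argument listing the vertices of each fibre polytope and their images under $(t_i)\mapsto\sum t_if_i$. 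Once the three model computations are done, the lemma follows, and — as a byproduct — one reads off the vertex/edge/face counts claimed in Proposition~\ref{Prop-W} by summing the contributions of the $16$ slices and accounting for shared boundary faces.
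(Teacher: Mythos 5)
Your plan is essentially the paper's proof: both reduce to the $16$ lattice values of $(t_1-t_3+t_5,\,t_2-t_4+t_6)$ and settle each of the $16$ planes by an explicit finite computation, the corner/edge/central cases giving a point, a triangle and a hexagon respectively (the paper works with the convex hull of the projected cube vertices lying on each plane, which agrees with your fibre-polytope image because for integer $(p,q)$ the slice of $[0,1]^6$ has all its vertices at cube vertices). The only slip is your parenthetical description of the central hexagon: it is the Minkowski sum of the two triangles coming from the $(t_1,t_3,t_5)$- and $(t_2,t_4,t_6)$-constraints, hence a truncated-triangle-shaped hexagon as in Figure~\ref{fig-hexa-9pts} rather than a centrally symmetric one with vertices $\pm f_1,\pm f_3,\pm(f_3-f_1)$ --- this does not affect the argument, since the per-case check you defer to would produce the correct hexagon anyway.
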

\begin{proof}
  Let us denote by $P_{\delta}$ the affine plane
  $F^{\perp}+\delta$. Throughout this proof we will follow the
  conventions established in Sections~\ref{subcohom}, \ref{subnfold},
  and \ref{conventions}.

By Corollary \ref{cor-groupe-delta}, $\Delta$ is isomorphic to $\mathbb Z^{2}$ with generators $\{A,B\}$. So, the family $P$ consists of all translates of $F^{\perp}$ by an integer linear combination of $A$ and $B$. The vertices of $W$ are of the form
\begin{equation}\label{e:proj_cube}
n_1g_1+\dots +n_6g_6=n_1f_1+\dots+ n_6f_6+(n_1-n_3+n_5)A+(n_2-n_4+n_6)B,
\end{equation}
where $n_i\in\{0,1\}$. Since the expressions $n_1+n_5-n_3$ and $n_2-n_4+n_6$ take only 4 different values, the numbers $\{-1,0,1,2\}$, it follows that the vertices of $W$ project onto $F$ onto 16 different points of the lattice $\Delta$. The first step will be to understand how many of these vertices are projected onto each of these 16 lattice points. To this end, in the next array we collect in how many different ways the above mentioned numbers are obtained:
$${\renewcommand\arraystretch{1.3}
\begin{array}{|c|c|c|c|c|}
\hline
n_i+n_{i+4}-n_{i+2}&-1&0&1&2\\
\hline
i=1&1&3&3&1\\
\hline
i=2&1&3&3&1\\
\hline
\end{array}}$$
It follows that on the plane $P_{(n_1+n_5-n_3,n_2-n_4+n_6)}$ the number of vertices of $W$ is at most the product of the corresponding entries on the second and third lines in the above array. (Notice that not all the 64 projections of the vertices of the unit cube in $\mathbb R^{6}$ need to be vertices of $W$.) After a straightforward computation, we obtain: 
\begin{itemize}
\item In each of $P_{(-1,-1)},P_{(-1,2)},P_{(2,-1)},P_{(2,2)}$ there is one single vertex of $W$.
\item In each of $P_{(-1,0)},P_{(-1,1)},P_{(0,-1)},P_{(0,2)},
P_{(1,-1)},P_{(1,2)},P_{(2,0)},P_{(2,1)}$ there are at most 3 vertices of $W$. It follows that the intersection of $W$ with any of these planes is the convex hull of the 3 points obtained projecting the canonical basis of $\mathbb R^{6}$. These points turn out to not be aligned in any of the 8 planes, so in this case the intersection of $W$ and these planes is a collection of 8 triangles. 
\item In each of $P_{(0,0)},P_{(0,1)},P_{(1,0)},P_{(1,1)}$ there are at most 9 vertices of $W$. We list the 9 points in each plane and compute their convex hull. It turns out that in each case the convex hull is a hexagon drawn in Figure \ref{fig-hexa-9pts}. In each of the 4 planes, the vertices of the corresponding hexagon in cyclic order are given by:

\begin{itemize}
\item Hexagon in the plane $P_{(0,0)}$:
$${\tiny \renewcommand\arraystretch{1.3}\begin{array}{|c|c|c|c|c|c|}
\hline
f_2+f_4&f_4+f_6&f_1+f_3+f_4+f_6&f_1+f_3&f_5+f_3&f_2+f_3+f_4+f_5\\
\hline
\end{array}}$$
\item Hexagon in the plane $P_{(1,0)}$:
$${\tiny \renewcommand\arraystretch{1.3}\begin{array}{|c|c|c|c|c|c|}
\hline
f_5+f_2+f_4&f_1+f_2+f_4&f_1+f_4+f_6&f_1+f_5+f_3+f_4+f_6&f_1+f_5+f_3&f_5\\
\hline\end{array} }$$
\item  Hexagon in the plane $P_{(0,1)}$
$${\tiny \renewcommand\arraystretch{1.3}\begin{array}{|c|c|c|c|c|c|}
\hline
f_2+f_4+f_6&f_2&f_3+f_2+f_5&f_6+f_5+f_3&f_1+f_6+f_3&f_1+f_6+f_3+f_2+f_4\\
\hline
\end{array}}$$
\item  Hexagon in the plane $P_{(1,1)}$:
$${\tiny \renewcommand\arraystretch{1.3}\begin{array}{|c|c|c|c|c|c|}
\hline
f_1+f_6&f_1+f_2+f_4+f_6&f_2+f_4+f_5+f_6&f_2+f_5&f_1+f_2+f_3+f_5&f_1+f_3+f_5+f_6\\ 
\hline\end{array}} $$
\end{itemize}
\end{itemize}
\end{proof}

\begin{figure}
\begin{tikzpicture}
\fill[right] (-0.933, 0.933) circle(.1); 
\fill[right] ( 0.067, 0.933) circle(.1); 
\fill[right] ( 0.933, 0.433) circle(.1); 
\fill[right] ( 0.433,-0.433) circle(.1); 
\fill[right] (-0.433,-0.933) circle(.1); 
\fill[right] (-0.933,-0.067) circle(.1); 
\fill[right] (-0.067, 0.433) circle(.1); 
\fill[right] (-0.433, 0.067) circle(.1); 
\fill[right] ( 0.067,-0.067) circle(.1); 
\draw (-0.933, 0.933) --
      ( 0.067, 0.933) --
      ( 0.933, 0.433) --
      ( 0.433,-0.433) --
      (-0.433,-0.933) --
      (-0.933,-0.067) -- cycle;
\end{tikzpicture}
\quad
\begin{tikzpicture}
\fill[right] ( 0.933, 0.933) circle(.1); 
\fill[right] (-0.067, 0.933) circle(.1); 
\fill[right] (-0.933, 0.433) circle(.1); 
\fill[right] (-0.433,-0.433) circle(.1); 
\fill[right] ( 0.433,-0.933) circle(.1); 
\fill[right] ( 0.933,-0.067) circle(.1); 
\fill[right] ( 0.067, 0.433) circle(.1); 
\fill[right] ( 0.433, 0.067) circle(.1); 
\fill[right] (-0.067,-0.067) circle(.1); 
\draw ( 0.933, 0.933) --
      (-0.067, 0.933) --
      (-0.933, 0.433) --
      (-0.433,-0.433) --
      ( 0.433,-0.933) --
      ( 0.933,-0.067) -- cycle;
\end{tikzpicture}
\quad
\begin{tikzpicture}
\fill[right] (-0.933,-0.933) circle(.1); 
\fill[right] ( 0.067,-0.933) circle(.1); 
\fill[right] ( 0.933,-0.433) circle(.1); 
\fill[right] ( 0.433, 0.433) circle(.1); 
\fill[right] (-0.433, 0.933) circle(.1); 
\fill[right] (-0.933, 0.067) circle(.1); 
\fill[right] (-0.067,-0.433) circle(.1); 
\fill[right] (-0.433,-0.067) circle(.1); 
\fill[right] ( 0.067, 0.067) circle(.1); 
\draw (-0.933,-0.933) --
      ( 0.067,-0.933) --
      ( 0.933,-0.433) --
      ( 0.433, 0.433) --
      (-0.433, 0.933) --
      (-0.933, 0.067) -- cycle;
\end{tikzpicture}
\quad
\begin{tikzpicture}
\fill[right] ( 0.933,-0.933) circle(.1); 
\fill[right] (-0.067,-0.933) circle(.1); 
\fill[right] (-0.933,-0.433) circle(.1); 
\fill[right] (-0.433, 0.433) circle(.1); 
\fill[right] ( 0.433, 0.933) circle(.1); 
\fill[right] ( 0.933, 0.067) circle(.1); 
\fill[right] ( 0.067,-0.433) circle(.1); 
\fill[right] ( 0.433,-0.067) circle(.1); 
\fill[right] (-0.067, 0.067) circle(.1); 
\draw ( 0.933,-0.933) --
      (-0.067,-0.933) --
      (-0.933,-0.433) --
      (-0.433, 0.433) --
      ( 0.433, 0.933) --
      ( 0.933, 0.067) -- cycle;
\end{tikzpicture}
\caption{The four hexagons which appear in the planes $P_{(0,0)},P_{(1,0)},P_{(0,1)},P_{(1,1)}$.}
\label{fig-hexa-9pts}
\end{figure}
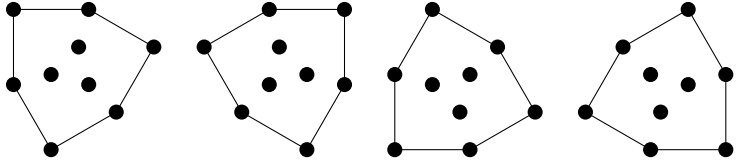

From this last lemma we deduce several results on the structure of $W$. 
We state them here without proof. They are all the result of simple observations or direct computations. In what follows, we will refer to the points of intersection between $W$ and the planes $P_{(-1,-1)},P_{(-1,2)},P_{(2,-1)},P_{(2,2)}$ as \emph{isolated points} or \emph{isolated vertices}.

\begin{corollary}\label{cor-direc-delta-nul}
The edges of the polygons in the statement of Lemma~\ref{lem-cut} are parallel to  the directions $f_i\subset F^{\perp}, i=1,\dots, 6$.
\end{corollary}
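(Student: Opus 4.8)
The claim is that the edges of the triangles and hexagons arising as $W\cap P_\delta$ (the polygons described in Lemma~\ref{lem-cut}) are parallel to the six vectors $f_1,\dots,f_6\in F^\perp$. Since each of these polygons lies in a plane $P_\delta=F^\perp+\delta$ parallel to $F^\perp$, it suffices to show that every edge of each polygon, as a difference of two of its vertices, is a scalar multiple of some $f_i$. The strategy is to reduce everything to the explicit vertex lists already produced in the proof of Lemma~\ref{lem-cut} and then do a finite (and in fact very short) check.

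\emph{First}, I would recall from Lemma~\ref{lem-cut} (equation~\eqref{e:proj_cube}) that every vertex of $W$ is of the form $\sum_i n_i f_i + (n_1-n_3+n_5)A + (n_2-n_4+n_6)B$ with $n_i\in\{0,1\}$, and that on a fixed plane $P_\delta$ the $F$-component is constant. Hence if $p,q$ are two vertices of $W$ lying on the same $P_\delta$, their difference $p-q$ lies in $F^\perp$ and equals $\sum_i (n_i - n_i') f_i$ with coefficients in $\{-1,0,1\}$. So the edges are automatically $\mathbb Z$-combinations of the $f_i$; what must be shown is that consecutive vertices of the polygons differ by a \emph{single} $f_i$ (up to sign).

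\emph{Second}, I would dispatch the triangles: in each of the eight relevant planes the polygon is the convex hull of exactly the three points $g_a,g_b,g_c$ obtained by projecting three distinct canonical basis vectors (this is exactly what the proof of Lemma~\ref{lem-cut} records). Each edge is then $g_a-g_b$, whose $F^\perp$-component is $f_a-f_b$; but one checks directly from Lemma~\ref{rem-simplif-gamma} (the relation $f_i+f_{i+2}=-\sqrt3\,f_{i+1}$, equivalently $f_i - f_{i+?}$ reductions, and $f_5=f_3-f_1$, $f_6=f_4-f_2$) that for the specific triples occurring, each such difference is a multiple of one of the $f_i$. Concretely, since the three basis vectors landing on a given triangular plane are forced by the constraint "$n_a+n_{a+4}-n_{a+2}$ takes the extreme-but-one values", the triples are of a very restricted shape and the differences reduce to $\pm f_i$; this is a finite check over the eight planes. \emph{Third}, for the four hexagons I would simply read off the cyclic vertex lists given explicitly in the proof of Lemma~\ref{lem-cut}, subtract consecutive entries, and verify term by term that each difference equals $\pm f_i$ for some $i$: e.g.\ in $P_{(0,0)}$ one has $(f_4+f_6)-(f_2+f_4)=f_6-f_2=-f_6+f_6\ldots$, i.e.\ using $f_6=f_4-f_2$ one gets $f_6-f_2 = f_4 - 2f_2$? — here one must instead use the correct consecutive pair; the point is that each of the six differences around each hexagon collapses, via the linear relations of Lemma~\ref{rem-simplif-gamma}, to a single $\pm f_i$, and the six directions that appear are exactly $f_1,\dots,f_6$. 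This is again a bounded computation, four hexagons times six edges.

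\emph{The main obstacle} is purely bookkeeping: one must make sure that the "consecutive vertices" in the given cyclic orderings really are adjacent on the hexagon (so that the relevant differences are genuine edges and not diagonals), and that after applying the relations from Lemma~\ref{rem-simplif-gamma} each difference truly simplifies to a \emph{single} $f_i$ rather than a longer combination. Since the excerpt states this corollary "without proof" as following from "simple observations or direct computations", the expected write-up is just to note that the vertex coordinates in the proof of Lemma~\ref{lem-cut} make each edge vector manifestly a difference of at most a few $f_i$'s which, by Lemma~\ref{rem-simplif-gamma}, is proportional to one $f_i$; no deeper idea is needed. If one wanted to avoid even this, one could alternatively argue structurally: the window $W$ is the projection of a cube, so every edge of $W$ (and hence, after intersecting with the family of parallel planes $P$, every edge of the section polygons, which are sections of $3$-faces of $W$) is parallel to a projected edge of the cube, i.e.\ to some $g_i$, and its $F^\perp$-part is $f_i$ — but this needs the description of the $3$-faces promised in the next section, so the hands-on vertex computation is the cleaner route here.
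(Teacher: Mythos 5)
Your overall plan coincides with what the paper does: Corollary~\ref{cor-direc-delta-nul} is stated there without proof, as a direct computation from the explicit vertex lists in the proof of Lemma~\ref{lem-cut}, and your reduction (on a fixed plane $P_\delta$ the $F$-component is constant, so every edge is $\sum_i(n_i-n_i')f_i$ with coefficients in $\{-1,0,1\}$, and one checks the finitely many consecutive differences against the relations of Lemma~\ref{rem-simplif-gamma}) is exactly that computation. However, two points in your execution need repair. First, your description of the triangles is inaccurate: each triangular plane contains the projection of exactly \emph{one} canonical basis vector, not of three distinct ones; for instance the triangle in $P_{(-1,0)}$ has vertices $g_3$, $g_2+g_3+g_4$ and $g_3+g_4+g_6$. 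Consequently the edges are not differences $f_a-f_b$ of arbitrary projected basis vectors, and the claim that such a difference is always a multiple of some $f_i$ is false in general: $f_a-f_b$ is parallel to an $f_i$ only when $a$ and $b$ have the same parity (e.g.\ $f_1-f_2$ points in a $24$th-root-of-unity direction and is parallel to none of the $f_i$). The actual triangle edges are of the forms $\pm(f_a+f_{a+2})=\mp\sqrt3\,f_{a+1}$ and $\pm(f_a-f_{a+4})$, and these do reduce to single directions.

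Second, your sample hexagon computation stalls exactly where it should close: $(f_4+f_6)-(f_2+f_4)=f_6-f_2$ \emph{is} a genuine edge of the hexagon in $P_{(0,0)}$ (those two vertices are consecutive), and it equals $\sqrt3\,f_1$ by the relation $f_2-f_6=-\sqrt3\,f_1$ of Lemma~\ref{rem-simplif-gamma}; you reached for $f_6=f_4-f_2$ instead and then left the verification hanging. Going around that hexagon, the six consecutive differences are $\sqrt3 f_1$, $-\sqrt3 f_2$, $\sqrt3 f_5$, $-\sqrt3 f_6$, $-\sqrt3 f_3$, $\sqrt3 f_4$, so all six directions occur, and the other three hexagons behave identically. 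With these two corrections your finite check goes through and is the same argument the paper leaves to the reader.
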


\begin{corollary}\label{cor-sts-aretes}
The 1--dimensional complex of $W$, that is, the set of projections of the edges of the unit cube in $\mathbb R^{6}$ onto $F^{\perp}\oplus F$, has the following properties:

\begin{itemize}
\item No two vertices in one of the planes $P_{\delta}$ are connected by an edge.
\item Every isolated vertex has valency $6$.
\item Every vertex in a triangle has valency 5. The $5$ edges join the vertex in the triangle with $2$ vertices on the hexagons, with $2$ vertices on the closest triangle, and with the closest isolated vertex. 
\item In every hexagon, vertices of valency $6$ alternate with vertices of valency 4. The latter vertices connect to vertices in the planes at distance 1, the former ones exhibit the same pattern plus two edges connecting to vertices in other hexagons.
\end{itemize} 
\end{corollary}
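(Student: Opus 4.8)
The plan is to exploit that $W=\pi^{\perp}([0,1]^6)=\sum_{i=1}^{6}[0,g_i]$ is a \emph{zonotope} in $E^{\perp}$ (a linear image of the cube, which is itself a Minkowski sum of segments). For a zonotope the combinatorics is governed by the central hyperplane arrangement $\mathcal A=\{g_i^{\perp}\}_{i=1}^{6}$ in $E^{\perp}$: the point $w_S:=\sum_{i\in S}g_i$ (for $S\subseteq\{1,\dots,6\}$) is a vertex of $W$ precisely when there is a linear form $c$ on $E^{\perp}$ that is positive on $\{g_i:i\in S\}$ and negative on $\{g_i:i\notin S\}$; the cube edge $\{w_S,w_{S\triangle\{j\}}\}$ maps to an edge of $W$ precisely when such a $c$ can be chosen with $\langle c,g_j\rangle=0$ instead; and, consequently, the valency of the vertex $w_S$ equals the number of the six hyperplanes $g_i^{\perp}$ that support the chamber $C_S=\{c:\langle c,g_i\rangle>0\Leftrightarrow i\in S\}$. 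Together with Corollary~\ref{cor-direc-delta-nul} this turns the statement into a finite verification over the list of relevant $S$, which the proof of Lemma~\ref{lem-cut} has already produced (the isolated vertices $w_{\{3,4\}}$ etc., the triples giving each triangle, and the explicit hexagon vertex lists), bearing in mind that the slice polygons $W\cap P_{\delta}$ are cross-sections of $W$, \emph{not} faces of $W$, so their sides are chords rather than edges of $W$.

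\textbf{The first bullet.}
Writing $g_j=f_j+\delta_j$ as in Lemma~\ref{projections} and using~\eqref{e:proj_cube}, any cube edge $\{w_S,w_{S\triangle\{j\}}\}$ has its two endpoints in the planes $P_{\delta}$ and $P_{\delta\pm\delta_j}$; since every $\delta_j\in\{\pm A,\pm B\}$ is nonzero, these planes are distinct, so no edge of $W$ joins two vertices lying in the same plane $P_{\delta}$. Moreover $\delta_j$ has lattice norm $1$, so the two planes are \emph{adjacent} in Figure~\ref{polytope-coupe}; this already records that every edge of $W$ runs between planes ``at distance~$1$'', which is the geometric content of the incidences asserted in the third and fourth bullets.

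\textbf{The valencies.}
For an isolated vertex, say $w_{\{3,4\}}$ over $P_{(-1,-1)}$, flipping each of the six coordinates gives (exactly as computed in the proof of Lemma~\ref{lem-cut}) the three vertices of the triangle in $P_{(0,-1)}$ and the three vertices of the triangle in $P_{(-1,0)}$, and $\mathrm{conv}(w_{\{3,4\}}\cup\text{triangle})$ is in each case a tetrahedron, so all six segments are genuine $1$-faces: valency $6$. This also establishes the clause ``the closest isolated vertex'' in the third bullet, each triangle plane being adjacent to exactly one isolated plane. For a triangle vertex $w_S$ the six cube-neighbours split, by the $\Delta$-coordinate computation above, into two vertices of the adjacent triangle, one isolated vertex, and three of the nine candidate points of the adjacent hexagon plane; comparing with the explicit hexagon vertex lists of Lemma~\ref{lem-cut} one sees that exactly one of these three candidates is an interior point of the hexagon (e.g.\ for the neighbour $w_{\{4\}}$ over $P_{(0,-1)}$ the offending one is $w_{\emptyset}$, which is not a vertex of $W$ at all), so that segment is a chord, while for the remaining five the required form $c$ exists — this is where one uses the explicit positions of the $f_i$ as sixth powers of $x=e^{i\pi/6}$ (Figure~\ref{fig-complexe}) together with the $\pm A,\pm B$ pattern of the $\delta_i$ to exhibit a separating functional. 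This gives valency $5$ with the stated incidences. For a hexagon vertex the six cube-neighbours split into four in adjacent triangle planes and two in the one adjacent hexagon plane; running the same face-versus-chord test shows that the two hexagon-bound segments survive or not according to the parity class of $w_S$ along the hexagon (the two classes already distinguished in the proof of Lemma~\ref{lem-cut}), producing the alternating pattern of valencies $4$ and $6$ and the extra two inter-hexagon edges at the valency-$6$ vertices. A final consistency check is that the resulting edge count, $\tfrac12(4\cdot 6+24\cdot 5+4\cdot(3\cdot 6+3\cdot 4))=132$, agrees with Proposition~\ref{Prop-W}.

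\textbf{Main obstacle.}
The only real work is this face-versus-chord determination for the triangle and hexagon vertices: knowing that both endpoints of a cube edge are vertices of $W$ is \emph{not} enough to conclude the segment is an edge of $W$, so for each surviving edge one must genuinely produce (or for each non-edge, rule out) a supporting functional, equivalently decide which of the six walls $g_i^{\perp}$ bound the corresponding chamber of $\mathcal A$. Organised around the $12$th-root-of-unity description of the $f_i$ and the small symmetry group permuting the generators $g_i$, this collapses to a handful of short linear-algebra checks, which is presumably why the authors can record the result as a direct computation.
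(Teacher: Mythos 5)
Your zonotope framework is sound: $W=\sum_i[0,g_i]$ is indeed a zonotope, its normal fan is the arrangement $\{g_i^{\perp}\}$, the valency of $w_S$ is the number of walls of the chamber $C_S$, and your argument for the first bullet (each cube edge changes the $\Delta$-coordinate by $\delta_j\in\{\pm A,\pm B\}\neq 0$, so its endpoints lie in adjacent, hence distinct, planes $P_\delta$) is correct. The paper itself records the corollary as a direct computation from Lemma~\ref{lem-cut} and gives no proof, so your route is a legitimate alternative in principle. However, two steps of your execution do not hold up. First, for the isolated vertices the clause ``$\mathrm{conv}(w_{\{3,4\}}\cup\text{triangle})$ is a tetrahedron, so all six segments are genuine $1$-faces'' is a non sequitur: affine independence of four vertices of $W$ does not make the connecting segments edges of $W$; you still need exactly the wall/supporting-functional verification that you yourself flag as the main obstacle (or, alternatively, the cube description of Section~\ref{cubes}, where the eight parallelepiped facets at an isolated vertex realize all six edge directions).

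Second, and more seriously, your description of the hexagon vertices is factually wrong, and with it the proposed mechanism for the $4/6$ alternation. A vertex of a hexagon, say in $P_{(0,0)}$, has its six cube-neighbours split as \emph{two} in the adjacent triangle planes and \emph{four} in the \emph{two} adjacent hexagon planes $P_{(1,0)}$ and $P_{(0,1)}$ (not four in triangle planes and two in a single hexagon plane): e.g.\ for $w=g_2+g_4$ the neighbours are $g_4$ and $g_2+g_3+g_4$ (triangles) together with $g_1+g_2+g_4$, $g_2+g_4+g_5$, $g_2$, $g_2+g_4+g_6$ (hexagon vertices). Moreover the drop from valency $6$ to valency $4$ is not decided by a face-versus-chord test on segments joining genuine vertices: for a valency-$4$ vertex such as $g_1+g_3$, two of its four hexagon-plane cube-neighbours ($g_1$ and $g_1+g_2+g_3$) are among the nine candidate points but are \emph{not vertices of $W$ at all}, so those segments are excluded for the same trivial reason as $w_{\emptyset}$ at a triangle vertex, while at a valency-$6$ vertex all six neighbours are vertices. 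In particular, valency-$6$ hexagon vertices carry four edges to hexagon vertices and two to triangle vertices, whereas your ``parity of the two hexagon-bound segments'' picture cannot reproduce these incidences. As written, the fourth bullet is therefore not proved, and the remaining bullets still rest on the deferred finite wall checks; the edge-count consistency $\tfrac12(4\cdot6+24\cdot5+12\cdot6+12\cdot4)=132$ with Proposition~\ref{Prop-W} is a useful sanity check but not a substitute for them.
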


In the following corollary we introduce the notion of a \textbf{cube}, which is our shortcut notation for a ``3-dimensional face of the polytope $W$''. These cubes will play an essential role in this article and the next subsection is devoted to their explicit description.

\begin{corollary}\label{cor-valencyandcubes}
The window $W$ has $40$ faces of dimension $3$. Every three dimensional face comprises $8$ vertices and will be referred to as a ``cube''. The 52 vertices of $W$ and the 320 vertices of the cubes satisfy:
\begin{itemize}
\item Each vertex of valency $6$ in $W$ belongs to $8$ cubes.
\item Each vertex of valency $5$ in $W$ belongs to $6$ cubes.
\item Each vertex of valency $4$ in $W$ belongs to $4$ cubes.
\end{itemize}
\end{corollary}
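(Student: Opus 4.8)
The plan is to exploit the fact that $W=\pi^{\perp}([0,1]^{6})=\sum_{i=1}^{6}[0,g_{i}]$ is a $4$-dimensional \emph{zonotope} generated by the six vectors $g_{1},\dots,g_{6}$ of Lemma~\ref{projections}, and to read off its $3$-faces from the standard combinatorial description of zonotopes (a face of $\sum[0,g_i]$ is where some linear functional $\phi$ is maximised, and it equals $z_{0}+\sum_{i\in T}[0,g_{i}]$, where $T$ is the set of $i$ with $\langle\phi,g_{i}\rangle=0$). The one piece of input that makes this description clean here is a genericity statement: \emph{no nonzero vector of the plane $E=\ker\pi^{\perp}\subset\mathbb R^{6}$ has more than two vanishing coordinates}. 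Indeed, $w=au+bv\in E$ has $w_{i}=w_{j}=0$ for $i\neq j$ only if the rows $(u_{i},v_{i})$, $(u_{j},v_{j})$ are proportional; but the six vectors $(u_{i},v_{i})$ are pairwise non-proportional (they point in the six directions $k\pi/6$, $k=0,\dots,5$, as is evident from the explicit basis $\{u,v\}$ fixed in Section~\ref{conventions}). Since a linear relation $\sum c_{i}g_{i}=0$ is the same thing as $\sum c_{i}e_{i}\in E$, this means \emph{any four of the $g_{i}$ are linearly independent}; in particular any three of them span a $3$-plane containing no fourth generator.

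From here the first two bullet points are immediate. The $3$-faces of $W$ are the faces $z_{0}+\sum_{i\in T}[0,g_{i}]$ with $T\subset\{1,\dots,6\}$ a $3$-element subset; by the genericity just established, for each of the $\binom{6}{3}=20$ triples $T$ the functionals with zero set $T$ form the line orthogonal to $\mathrm{span}(g_{i}:i\in T)$, giving the two choices $\pm\phi_{T}$, and every $g_{i}$ with $i\notin T$ lies off the associated hyperplane, so distinct $(T,\pm)$ give distinct faces. Hence $W$ has exactly $2\binom{6}{3}=40$ faces of dimension three (in agreement with Proposition~\ref{Prop-W}), and each is a parallelepiped $z_{0}+\sum_{i\in T}[0,g_{i}]$ with $\{g_{i}\}_{i\in T}$ independent, hence a combinatorial $3$-cube with exactly $2^{3}=8$ vertices — this is our \emph{cube} — accounting for $40\cdot 8=320$ cube-vertices.

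For the multiplicities the key observation is that each cube is a \emph{simple} $3$-polytope (three edges at every vertex). Consequently, for any vertex $v$ of $W$ the vertex figure $Q_{v}$ — a $3$-polytope whose facets are the vertex figures at $v$ of the cubes through $v$ — has all its $2$-faces triangular, i.e.\ $Q_{v}$ is simplicial. A simplicial $3$-polytope with $V_{0}$ vertices has $F_{2}=2V_{0}-4$ facets, by Euler's relation together with $2F_{1}=3F_{2}$. Applying this with $V_{0}$ equal to the valency of $v$ in the $1$-skeleton of $W$ shows that a vertex of valency $6$, $5$, $4$ lies in exactly $8$, $6$, $4$ cubes respectively. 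The valency census comes from Corollary~\ref{cor-sts-aretes} and Lemma~\ref{lem-cut}: the $4$ isolated vertices together with the $3$ valency-$6$ vertices of each of the $4$ hexagons give $16$ vertices of valency $6$; the $3$ vertices of each of the $8$ triangles give $24$ vertices of valency $5$; the $3$ valency-$4$ vertices of each hexagon give $12$ vertices of valency $4$. As a consistency check, $16\cdot 8+24\cdot 6+12\cdot 4=320=8\cdot 40$.

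There is no serious obstacle here: the load-bearing steps are the zonotope face description and the remark that the vertex figures of $W$ are simplicial because its $3$-faces are parallelepipeds. The only genuine verification is that the $\binom{6}{3}$ triples of the $g_{i}$ really produce $40$ \emph{distinct} facets with no hidden coincidences, which is precisely what the ``at most two vanishing coordinates'' property of $E$ guarantees; after that the counts reduce to Euler's formula and the vertex data already established in Section~\ref{delta=0}.
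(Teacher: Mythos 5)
Your proof is correct, but it takes a genuinely different route from the paper, which states this corollary without proof (as a ``direct computation'') and in effect verifies it in Section~\ref{cubes} by explicitly enumerating the $40$ cubes attached to the isolated vertices, the triangles and the hexagons. You instead exploit the zonotope structure $W=\sum_{i}[0,g_i]$: the genericity of the generators pins down the $3$-faces as the $2\binom{6}{3}=40$ parallelepipeds $z_0+\sum_{i\in T}[0,g_i]$ with $|T|=3$, each a combinatorial cube with $8$ vertices, and the incidence counts then drop out of Euler's relation applied to the vertex figures, which are simplicial $3$-polytopes precisely because the facets of $W$ are simple; this gives $2k-4$ cubes through a vertex of valency $k$, hence $8,6,4$ for $k=6,5,4$. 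This buys uniformity (one formula covers all three bullet points, with the valency census of Corollary~\ref{cor-sts-aretes} needed only to know which valencies occur, plus the consistency check $16\cdot8+24\cdot6+12\cdot4=320$), whereas the paper's enumeration, though less conceptual, is what later sections actually consume: the explicit lists of cubes and their edge directions in Section~\ref{cubes} feed directly into Corollary~\ref{cor-edges} and Proposition~\ref{prop-intersections-cubes}, so your argument complements rather than replaces that material. One small repair: your genericity statement should say that no nonzero vector of $E$ has \emph{two or more} vanishing coordinates (equivalently, any four of the $g_i$ are linearly independent); as phrased, ``no more than two'' is too weak to rule out a dependency among four of the $g_i$, but the row-proportionality argument you actually give does prove the stronger statement, so nothing downstream changes.
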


 In what follows we will need a more precise description of these cubes. Notice that, in order to completely determine a cube it suffices to know one of its vertices and the three edges of the cube intersecting at it. A vertex of a cube of $W$ is a point in $F^{\perp}\oplus F$ and we will express it as a linear combination of the vectors $g_{1},\dots,g_{6}$.
To describe the edges of the cube we start by remarking the following: in the plane $F$ there is a ``rightwards'' direction, given by the vector $A$ and an ``upwards'' direction, given by the vector $B$ (see Figure~\ref{polytope-coupe}). From equation~\eqref{e:proj_cube} it follows that an edge of a cube directed by 
\begin{itemize}
\item the vector $g_{1}$ or $g_{5}$: is horizontal and rightwards when projected onto $F$.
\item the vector $g_{2}$ or $g_{6}$: is vertical and upwards when projected onto $F$.
\item the vector $g_{3}$: is horizontal and leftwards when projected onto $F$.
\item the vector $g_{4}$: is vertical and downwards when projected onto $F$.
\end{itemize}
Therefore, it is enough to recall the projections onto $F^{\perp}$, given by vectors $f_{i}$, of the edges of the cube we want to describe, since it is immediate from this information and the above remarks how to recover the cube. Furthermore, instead of the $f_{i}$, since we are only interested at this point in the directions of the edges, we can use the complex vector $x=e^{\frac{i\pi}{6}}$ and its multiples $x^{i}, i=0,\dots,11$ to describe the edges of a cube (cf.\ Equation~\eqref{eq-fx}). With this notation the following holds:
\begin{itemize}
\item the vectors $x^{i}$ with $i$ even represent edges of the cube which are horizontal in $F$. More specifically, $x^{i}$ with $i=0,4,8$ are edges of cubes that project onto rightwards vectors on $F$ while $x^{i}$ with $i=2,6,10$ project onto leftwards vectors.
\item the vectors $x^{i}$ with $i$ odd represent edges of the cube which are vertical in $F$. More specifically, $x^{i}$ with $i=1,5,9$ are edges of cubes that project onto upwards vectors on $F$ while $x^{i}$ with $i=3,7,11$ project onto downwards vectors.
\end{itemize}

\begin{convention}
In the rest of the paper: A cube is given by a vertex $g$ and three vectors $x^i, x^j ,x^k$ which support the edges of the cubes containing the vertex. We denote the cube by
$$
\{g, x^i, x^j, x^k\}.
$$
\end{convention}

\begin{example}
As an example, the set $\{g_{3}+g_{4}, x^{1},x^{5},x^{9}\}$ determines a cube in $W$ with a vertex in $g_{3}+g_{4}\in P_{(-1,-1)}$ and 3 edges which go upwards in the plane $F$ and thus connect the vertex $g_{3}+g_{4}$ with 3 vertices in the plane $P_{(-1,0)}$. Each of these vertices is itself connected to 2 vertices in the plane $P_{(-1,1)}$ with edges that can be again encoded by the vectors $x^{1},x^{5},x^{9}$. Finally, the 3 vertices in $P_{(-1,1)}$ (each of which connects to 2 vertices in $P_{(-1,0)}$) are also connected with the single vertex in $P_{(-1,2)}$. This is a complete description of the 6 vertices and 12 edges of a cube in $W$.
\end{example}

\subsection{Cubes}\label{cubes}

With the notation developed above, we now proceed to describe all the cubes in $W$. The details of the computation will not be needed in the rest of the paper; however, the last results of this section Proposition \ref{prop-listecubes} and Corollary \ref{cor-edges}, summarizing the findings, are essential.

According to Corollary~\ref{cor-sts-aretes} each of the 4 isolated vertices has valency 6 and from Corollary~\ref{cor-valencyandcubes} we learn that these 6 edges belong to 8 different cubes. After an easy computation we find out that if we fix any one of the isolated vertices, precisely two of the 8 cubes which contain it contain also another isolated vertex. It follows that the number of different cubes containing isolated vertices is $8+7+7+6=28$. The combinatorics of these cubes is as follows:
\begin{itemize}
\item The cubes containing two isolated vertices contain also the $6$ vertices of the two triangles on a line (or column) in Figure \ref{polytope-coupe}. There are $4$ such cubes. Following our convention, the two vertical cubes are given by the sets $\{g_{3}+g_{4},x^1, x^{5}, x^{9}\}$ and $\{g_{1}+g_{4}+g_{5},x^1, x^{5}, x^{9}\}$ while the two horizontal ones correspond to the sets $\{g_{3}+g_{4}, x^0, x^{4}, x^{8}\}$ and $\{g_{2}+g_{3}+g_{6}, x^0, x^{4}, x^{8}\}$. 
\item The remaining cubes containing one isolated vertex, which are 24 in number, have all two edges going in the same direction on $F$ and a third edge going in the perpendicular direction. The complete list follows:
	\begin{itemize}
		\item Cubes containing $v_{1}=g_{3}+g_{4}$ (bottom left isolated vertex): 
		$$
		\{v_{1}, x^i, x^{i+3}, x^{i+4}\},\ \ i=1,5,9\quad \mathrm{and}\quad \{v_{1}, x^i, x^{i+1}, x^{i+4}\},\ \ i=0,4,8.
		$$ 
		\item Cubes containing $v_{2}=g_{2}+g_{3}+g_{6}$ (top left isolated vertex): 
		$$
		\{v_{2}, x^i, x^{i+1}, x^{i+4}\},\ \ i=3,7,11\quad \mathrm{and}\quad \{v_{2}, x^i, x^{i+3}, x^{i+4}\},\ \ i=0,4,8.
		$$
		\item Cubes containing $v_{3}=g_{1}+g_{2}+g_{5}+g_{6}$ (top right isolated vertex): 
		$$
		\{v_{3}, x^i, x^{i+3}, x^{i+4}\},\ \ i=3,7,11\quad \mathrm{and}\quad \{v_{3}, x^i, x^{i+1}, x^{i+4}\},\ \ i=2,6,10.
		$$
		\item Cubes containing $v_{4}=g_{1}+g_{4}+g_{5}$ (bottom right isolated vertex): 
		$$
		\{v_{4}, x^i, x^{i+1}, x^{i+4}\},\ \ i=1,5,9\quad \mathrm{and}\quad \{v_{4}, x^i, x^{i+3}, x^{i+4}\},\ \ i=2,6,10.
		$$
	\end{itemize}
\end{itemize}

\begin{example} If we focus on the isolated vertex in the plane $P_{(-1,-1)}$ it belongs to 8 cubes which, when projected to $F$, can be described as: a cube developing completely to the right, with a vertex in the isolated point in $P_{(2,-1)}$; a cube developing completely vertically with a vertex in the isolated point $P_{(-1,2)}$; 3 cubes that have two edges towards the right direction, with their furthermost vertices in the planes $P_{(1,-1)}$ and $P_{(0,-1)}$ and one edge upwards; and finally 3 cubes with two upward edges, leaving the furthermost vertices in this direction on the planes $P_{(-1,1)}$ and $P_{(0,1)}$, and one rightward edge. The cubes around the other 3 isolated points have a completely analogous symmetric configuration.
\end{example}

The cubes which contain no isolated vertex have vertices on the triangles and hexagons described in Lemma~\ref{lem-cut}. Moreover, by Corollaries~\ref{cor-sts-aretes} and~\ref{cor-valencyandcubes} we know that each vertex on a triangle has valency 5 and thus it belongs to 6 different cubes. From the description above we have that 5 of these cubes contain an isolated vertex. The cube that we are missing connects the vertex in the triangle to two vertices in the closest hexagon and to one vertex in the closest triangle. There is a total of 12 such cubes, 3 for each pair of nearby triangles in Figure~\ref{polytope-coupe}. We will not give a description as precise as the ones above for these cubes since for our purposes the precise vertex on the triangle that these cubes contain will be irrelevant. The information we need from these cubes is the following:
	\begin{itemize}
		\item Each vertex $v_{1},v_{2},v_{3}$ in the triangle in the plane $P_{(-1,0)}$ belongs to a cube that contains no isolated vertex of $W$. These three cubes are given by $\{v_{j},x^{i},x^{i+5},x^{i+4}\}$, $i=0,4,8$.
		\item Each vertex $v_{1},v_{2},v_{3}$ in the triangle in the plane $P_{(0,-1)}$ belongs to a cube that contains no isolated vertex of $W$. These three cubes are given by $\{v_{j},x^{i},x^{i-1},x^{i+4}\}$, $i=1,5,9$.
		\item Each vertex $v_{1},v_{2},v_{3}$ in the triangle in the plane $P_{(2,1)}$ belongs to a cube that contains no isolated vertex of $W$. These three cubes are given by $\{v_{j},x^{i},x^{i+5},x^{i+4}\}$, $i=2,6,10$.
		\item Each vertex $v_{1},v_{2},v_{3}$ in the triangle in the plane $P_{(1,2)}$ belongs to a cube that contains no isolated vertex of $W$. These three cubes are given by $\{v_{j},x^{i},x^{i-1},x^{i+4}\}$, $i=3,7,11$.
	\end{itemize}

From this description we learn that we there are two different types of cubes in the polytope $W$: 
\begin{definition}\leavevmode
\begin{itemize}
\item
\emph{Standard cubes}, which when projected onto $F$ have two edges in the same direction and a third edge in the perpendicular one; 
\item \emph{Long cubes}, whose 3 defining edges project onto the same direction on $F$. 
\end{itemize}
These two types of cubes can be \emph{horizontal} or \emph{vertical}, depending on which is their longest direction when projected onto $F$, see Figure~\ref{fig-cubes-reseau}.
\end{definition}

We summarize our findings in the next proposition. 

\begin{proposition}\label{prop-listecubes}
The polytope $W$ has $40$ cubes.
\begin{itemize}
\item $2$ vertical and 2 horizontal long cubes each containing two isolated points.
\item $24$ standard cubes each containing precisely one isolated point.
\item $12$ standard cubes containing only points in the triangles and hexagons.
\end{itemize}
\end{proposition}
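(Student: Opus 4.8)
The plan is to assemble Proposition~\ref{prop-listecubes} as a direct bookkeeping consequence of the structural results already established in Section~\ref{delta=0} and of the explicit enumeration carried out in the present subsection. Since Corollary~\ref{cor-valencyandcubes} already asserts that $W$ has exactly $40$ three-dimensional faces (``cubes''), all that remains is to show that the $40$ cubes split into the three announced classes with the stated cardinalities $4$, $24$ and $12$. The strategy is therefore a clean partition argument: first enumerate the cubes meeting an isolated vertex, then show every remaining cube is one of the $12$ triangle--hexagon cubes, and finally check $4+24+12=40$, so that no cube is counted twice and none is omitted.

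First I would treat the cubes containing an isolated vertex. By Corollary~\ref{cor-sts-aretes} each of the four isolated vertices has valency $6$, and by Corollary~\ref{cor-valencyandcubes} these six edges lie in $8$ distinct cubes, so there are $4\times 8=32$ (cube, isolated vertex) incidences. The easy computation recorded above shows that, for each fixed isolated vertex, exactly two of its $8$ cubes contain a \emph{second} isolated vertex; these are the four ``long'' cubes, and each is counted twice among the $32$ incidences, once for each of its two isolated vertices. Hence the number of \emph{distinct} cubes meeting at least one isolated vertex is $32 - 4 = 28$, and the explicit lists given above (the four long cubes $\{g_3+g_4,x^1,x^5,x^9\}$, etc., together with the $24$ standard cubes $\{v_k,x^i,x^{i+3},x^{i+4}\}$ and $\{v_k,x^i,x^{i+1},x^{i+4}\}$ around $v_1,\dots,v_4$) exhibit all $28$ of them, verifying that the projection pattern onto $F$ is as claimed: the four long cubes project onto three collinear edges, and the twenty-four remaining ones have two edges in one direction of $F$ and one in the perpendicular direction, i.e.\ they are standard.

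Next I would account for the cubes disjoint from the isolated vertices. By Corollaries~\ref{cor-sts-aretes} and~\ref{cor-valencyandcubes}, each of the $12$ triangle vertices of $W$ has valency $5$ and lies in $6$ cubes; the lists above show that $5$ of these $6$ cubes contain an isolated vertex, so there is exactly one cube through each triangle vertex avoiding all isolated vertices. Each such cube connects its triangle vertex to two vertices of the nearest hexagon and one vertex of the nearest triangle, hence contains exactly two triangle vertices (from the same pair of adjacent triangles in Figure~\ref{polytope-coupe}); this gives $12/2\times ?$—more precisely, grouping by the four pairs of adjacent triangles and using the explicit parametrizations $\{v_j,x^i,x^{i\pm 5},x^{i+4}\}$ (resp.\ $\{v_j,x^i,x^{i\pm 1},x^{i+4}\}$) with $i$ ranging over one residue class mod~$4$, one gets exactly $3$ such cubes per pair, i.e.\ $12$ in total, all standard with all three defining edges projecting onto the same direction of $F$—wait, that is the long type; one should read off from the parametrization that two of the three edges coincide in $F$-direction and the third is perpendicular, so these are standard. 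Finally, $28+12=40$ matches Corollary~\ref{cor-valencyandcubes}, so the three families are disjoint and exhaustive, and the cardinalities $4$, $24$, $12$ follow.

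The main obstacle is not conceptual but combinatorial care: one must be certain that the ``two of the $8$ cubes contain a second isolated vertex'' count is correct (so that the inclusion--exclusion step gives $28$ and not some other number), and that the $12$ triangle--hexagon cubes are genuinely distinct from all $28$ cubes meeting an isolated vertex. Both points are guaranteed by the explicit vertex-and-direction descriptions already displayed, since two cubes coincide iff they share the same $8$ vertices; a cube listed as containing an isolated vertex cannot equal one listed as containing only triangle and hexagon vertices. Thus the proof is essentially the observation that the enumeration above is complete and non-redundant, combined with the arithmetic identity $4+24+12=40$ from Corollary~\ref{cor-valencyandcubes}.
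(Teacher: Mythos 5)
Your proposal follows essentially the same route as the paper: take the total of $40$ three-dimensional faces from Corollary~\ref{cor-valencyandcubes}, count the cubes meeting an isolated vertex by the valency data ($4\times 8$ incidences, with the four long cubes counted twice, giving $28 = 4$ long $+\,24$ standard), and then obtain the remaining $12$ triangle--hexagon cubes from the fact that each triangle vertex lies in exactly one cube avoiding the isolated vertices, these being the cubes listed with three per pair of nearby triangles. The paper's own count $8+7+7+6=28$ is just another bookkeeping of your inclusion--exclusion, so there is no methodological difference.

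One slip should be fixed: $W$ has $24$ triangle vertices, not $12$ (eight triangles, three vertices each, consistent with $52 = 4 + 24 + 24$ in Proposition~\ref{Prop-W}). With your figure of $12$, the double count ``one isolated-free cube per triangle vertex, two triangle vertices per cube'' would yield $6$ cubes and a total of $34\neq 40$, which is exactly the inconsistency you flag with ``$12/2\times ?$'' before switching to the parametrization argument. With the correct $24$, the per-vertex count gives $24/2=12$ directly and agrees with the ``three cubes per pair of adjacent triangles, four pairs'' count that both you and the paper use; your parenthetical hesitation about these cubes being long is also resolved correctly, since each parametrization $\{v_j,x^i,x^{i\pm5},x^{i+4}\}$ (resp.\ $\{v_j,x^i,x^{i\pm1},x^{i+4}\}$) has two exponents of one parity and one of the other, hence is standard.
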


\begin{corollary}\label{cor-edges}
The sets of directions of the edges of the $40$ cubes in $W$ are: 
\begin{itemize}
\item the long cubes yield the sets $\{x^{0},x^{2},x^{4}\}$ and $\{x^{1},x^{3},x^{5}\}$.
\item The $24$ standard cubes with one isolated point yield 
$$
\{x^{i},x^{i+4},x^{i+1}\}\quad\mathrm{and}\quad\{x^{i},x^{i+4},x^{i+3}\}\quad\mathrm{with}\quad i=0,\dots,11.
$$
\item The $12$ remaining standard cubes yield 
$$
\{x^{i},x^{i+4},x^{i+5}\}, i=0,2,4,6,8,10\quad\mathrm{and}\quad\{x^{i},x^{i+4},x^{i-1}\}, i=1,3,5,7,9,11.
$$
\end{itemize}

\end{corollary}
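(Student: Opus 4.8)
The plan is to read this corollary off directly from the explicit enumeration of the $40$ cubes carried out in the paragraphs preceding Proposition~\ref{prop-listecubes}. For a cube written in the form $\{g,x^i,x^j,x^k\}$ the \emph{set of directions of its edges} is by definition $\{x^i,x^j,x^k\}$, so the statement is just a matter of collecting those triples and rewriting them in a uniform normal form. The only algebraic ingredient is the relation $x^6=-1$ from Lemma~\ref{degree1}: since $x^{i+6}=-x^i$, the powers $x^i$ and $x^{i+6}$ span the same line, and hence any exponent may be chosen modulo $6$ (equivalently, any representative in $\{0,\dots,11\}$) without affecting the direction.

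First I would handle the four long cubes. From the list above, the two vertical long cubes are $\{g_3+g_4,x^1,x^5,x^9\}$ and $\{g_1+g_4+g_5,x^1,x^5,x^9\}$, and the two horizontal ones are $\{g_3+g_4,x^0,x^4,x^8\}$ and $\{g_2+g_3+g_6,x^0,x^4,x^8\}$; applying $x^9=-x^3$ and $x^8=-x^2$ turns these into the two direction sets $\{x^1,x^3,x^5\}$ and $\{x^0,x^2,x^4\}$. Next I would collect the $24$ standard cubes with one isolated vertex: the four sublists associated with $v_1,v_2,v_3,v_4$ only produce triples of the two shapes $\{x^i,x^{i+1},x^{i+4}\}$ and $\{x^i,x^{i+3},x^{i+4}\}$, and the point to verify is that, as the isolated vertex runs through its four cases, the exponent $i$ for the first shape ranges over $\{0,4,8\}\cup\{3,7,11\}\cup\{2,6,10\}\cup\{1,5,9\}=\{0,\dots,11\}$, and similarly for the second shape; this gives exactly $\{x^i,x^{i+4},x^{i+1}\}$ and $\{x^i,x^{i+4},x^{i+3}\}$ with $i=0,\dots,11$. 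Finally, the $12$ remaining standard cubes come in four sublists of three (one per adjacent pair of triangles in Figure~\ref{polytope-coupe}), contributing the sets $\{x^i,x^{i+4},x^{i+5}\}$ for $i\in\{0,4,8\}\cup\{2,6,10\}=\{0,2,4,6,8,10\}$ and $\{x^i,x^{i+4},x^{i-1}\}$ for $i\in\{1,5,9\}\cup\{3,7,11\}=\{1,3,5,7,9,11\}$, which is the last block of the statement.

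I expect no genuine mathematical obstacle here; the only thing requiring care is the bookkeeping, namely checking that the four symmetric families of cubes around the isolated vertices (and the four families around the triangles) have exponent ranges that partition the residues modulo $12$, so that every cube of Proposition~\ref{prop-listecubes} is accounted for and the index sets in the statement are exactly right. A useful consistency check at the end is that the $4+24+12=40$ cubes give rise to $2+24+12=38$ direction triples in the list, the discrepancy of $2$ being accounted for precisely by the two triples each shared by a pair of long cubes.
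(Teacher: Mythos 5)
Your proposal is correct and follows essentially the same route as the paper: Corollary~\ref{cor-edges} is obtained there by simply reading the direction triples off the explicit enumeration of the $40$ cubes in Section~\ref{cubes} (summarized in Proposition~\ref{prop-listecubes}), with the identification $x^{i+6}=-x^{i}$ used exactly as you do for the long cubes. Your bookkeeping of the exponent ranges for the four families around the isolated vertices and the four families around the triangles matches the paper's lists, so nothing further is needed.
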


\begin{figure}
\begin{tikzpicture}[scale=.7]

\fill[green] (0,1) circle(.1);
\fill[green] (1,0) circle(.1);
\fill[green] (1,1) circle(.1);
\fill[green] (0,0) circle(.1);

\fill[red] (-1,0) circle(.1);
\fill[red] (2,0) circle(.1);
\fill[red] (0,-1) circle(.1);
\fill[red] (0,2) circle(.1);

\fill (-1,-1) circle(.1);
\fill (-1,2) circle(.1);
\fill (2,2) circle(.1);
\fill (2,-1) circle(.1);

\fill[red] (1,2) circle(.1);
\fill[red] (2,1) circle(.1);
\fill[red] (1,-1) circle(.1);
\fill[red] (-1,1) circle(.1);

\draw (-1,-1)--(-1,0)--(.2,.2)--(1,0)--(-.2,-.2)--(-1,0);
\draw (-.2,-.2)--(-.2,-1.2)--(-1,-1);
\draw (-.2,-1.2)--(1,-1)--(1,0);
\draw[dashed] (-1,-1)--(.2,-.8)--(.2,.2);
\draw[dashed] (1,-1)--(.2,-.8);
\end{tikzpicture}
\hspace{4mm}
\begin{tikzpicture}[scale=.7]

\fill[green] (0,1) circle(.1);
\fill[green] (1,0) circle(.1);
\fill[green] (1,1) circle(.1);
\fill[green] (0,0) circle(.1);

\fill[red] (-1,0) circle(.1);
\fill[red] (2,0) circle(.1);
\fill[red] (0,-1) circle(.1);
\fill[red] (0,2) circle(.1);

\fill (-1,-1) circle(.1);
\fill (-1,2) circle(.1);
\fill (2,2) circle(.1);
\fill (2,-1) circle(.1);

\fill[red] (1,2) circle(.1);
\fill[red] (2,1) circle(.1);
\fill[red] (1,-1) circle(.1);
\fill[red] (-1,1) circle(.1);

\draw (-1,-1)--(-1.2,-.2)--(-1,1)--(0,1)--(.2,.2)--(0,-1)--(-1,-1);
\draw (-1.2,-.2)--(-.2,-.2)--(0,-1);
\draw (0,1)--(-.2,-.2);
\end{tikzpicture}
\hspace{4mm}
\begin{tikzpicture}[scale=.6]

\fill[green] (0,1) circle(.1);
\fill[green] (1,0) circle(.1);
\fill[green] (1,1) circle(.1);
\fill[green] (0,0) circle(.1);

\fill[red] (-1,0) circle(.1);
\fill[red] (2,0) circle(.1);
\fill[red] (0,-1) circle(.1);
\fill[red] (0,2) circle(.1);

\fill (-1,-1) circle(.1);
\fill (-1,2) circle(.1);
\fill (2,2) circle(.1);
\fill (2,-1) circle(.1);

\fill[red] (1,2) circle(.1);
\fill[red] (2,1) circle(.1);
\fill[red] (1,-1) circle(.1);
\fill[red] (-1,1) circle(.1);

\draw (-1,-1)--(0,-1)--(1,-1.5)--(2,-1)--(1,-.5)--(0,-1);
\draw (1,-.5)--(0,-.5)--(-1,-1)--(0,-1.5)--(1,-1.5);
\end{tikzpicture}
\hspace{4mm}
\begin{tikzpicture}[scale=.7]

\fill[green] (0,1) circle(.1);
\fill[green] (1,0) circle(.1);
\fill[green] (1,1) circle(.1);
\fill[green] (0,0) circle(.1);

\fill[red] (-1,0) circle(.1);
\fill[red] (2,0) circle(.1);
\fill[red] (0,-1) circle(.1);
\fill[red] (0,2) circle(.1);

\fill (-1,-1) circle(.1);
\fill (-1,2) circle(.1);
\fill (2,2) circle(.1);
\fill (2,-1) circle(.1);

\fill[red] (1,2) circle(.1);
\fill[red] (2,1) circle(.1);
\fill[red] (1,-1) circle(.1);
\fill[red] (-1,1) circle(.1);

\draw (-1,-1)--(-1,0)--(-1.5,1)--(-1,2)--(-.5,1)--(-1,0);
\draw (-.5,1)--(-.5,0)--(-1,-1)--(-1.5,0)--(-1.5,1);
\end{tikzpicture}
\caption{standard horizontal, standard vertical, long horizontal and long vertical cubes. Remark that all vertices in the long cubes are projected onto the grid points in a horizontal or vertical line: one vertex on each black dot and three vertices on each red dot.}\label{fig-cubes-reseau}
\end{figure}
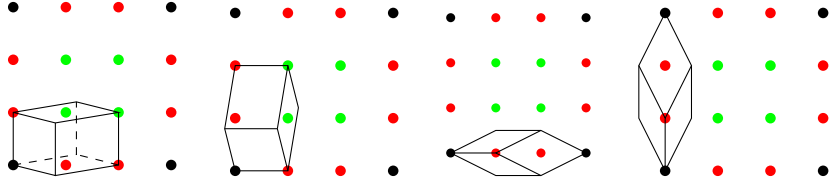

\section{Cubes, lines of intersection and $H^1(\Omega_{E_{12}^\gamma})$ }\label{sec-droites}
\subsection{Intersection of a cube and a plane}\label{sec-cubes-droites}
We want to apply the method described in Section~\ref{practical} to compute the cohomology groups of the generalized 12-fold tilings. The first step is to intersect the polytope $W$ by the families of parallel planes $P^{\gamma}=\bigcup_{\delta\in\Delta}F^{\perp}+\gamma+\delta$ with $\gamma=(\gamma_{1},\gamma_{2})\in F$ (expressed with respect to the fixed orthogonal basis $\{A,B\}$). Due to the periodicity of $\Delta$, restricting $\gamma_1, \gamma_2$ to vary in the range $[0,1)\times[0,1)$ is enough to obtain all the different families $P^{\gamma}$. The intersection of $W$ with $P^{\gamma}$ when $\gamma=(0,0)$ has been treated in Proposition \ref{Prop-W} and we now proceed to study the intersection when $\gamma\neq (0,0)$. To this end, we start noticing that a plane intersects $W$ if and only if it intersects one of its 3 dimensional faces. Indeed, since the $4$-dimensional polytope $W$ has a boundary consisting of $3$--dimensional faces and it sits in the $4$--dimensional space $F^{\perp}\oplus F$, a plane cannot intersect it without intersecting its boundary. (Notice that a plane could intersect $W$ without intersecting an edge or a 2 dimensional face.) Thus, our next goal is to intersect the 40 cubes described in Proposition~\ref{prop-listecubes} with the planes in the families $P^{\gamma}=\bigcup_{\delta\in\Delta}F^{\perp}+\gamma+\delta$ and $\gamma\in(0,1)\times(0,1)$.

Inside the family of planes $P^{\gamma}$ very few of them do actually intersect the window $W$. Indeed, it is evident   
from Figure~\ref{fig:decalage-axe} that if $\gamma_{i}\neq 0$ for $i=1,2$ then there are precisely 9 planes in $P^{\gamma}$ with non trivial intersection: the ones that project into the squares delimited by the colored dots in the figure. On the other hand, if $\gamma\neq (0,0)$ but $\gamma_{1}= 0$ or $\gamma_{2}= 0$, then there are 12 such planes. If we want to distinguish one plane in the family $P^{\gamma}$ we will use the same notation as in the preceding section, that is, $P^{\gamma}_{\delta}$ is defined as the plane $F^{\perp}+\gamma+\delta$. Notice that in what follows, when we refer to a cube we will always consider it `parametrized' as in the explicit list of the 40 cubes which constitute the boundary of $W$ described in Section~\ref{cubes}.

\begin{lemma}\label{lem-intersection-schema}
Consider a plane $P^{\gamma}_{\delta}=F^{\perp}+\gamma+\delta$ intersecting a cube of $W$.

\begin{itemize}
\item If the cube is standard and horizontal, then the intersection is a segment directed by the difference of the two horizontal vectors in the cube.
\item If the cube is standard and vertical, then the intersection is a segment directed by the difference of the two vertical vectors.
\item If the cube is long, then $P^{\gamma}_{\delta}$ is orthogonal to a diagonal of the cube and thus $P^{\gamma}_{\delta}\cap W$ is either a triangle or a hexagon. 
\end{itemize}
\end{lemma}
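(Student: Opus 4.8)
The plan is to treat the three cases separately, exploiting the explicit combinatorial description of the cubes obtained in Corollary \ref{cor-edges} together with the fact, established in Section \ref{cubes}, that a cube is the projection under $\pi^\perp$ of a 3-face of the unit cube $[0,1]^6$, and that the plane $P^\gamma_\delta$ is a translate of $F^\perp$. The crucial observation underlying all three cases is that a 3-face of $[0,1]^6$ spanned by the directions $g_i, g_j, g_k$ projects, under $\pi^\perp$, to a 3-dimensional cube whose edge directions $f_i, f_j, f_k$ lie in $F^\perp$ while the translational displacements of the ``floors'' (the parallel copies of $F^\perp$ through which the cube is sliced) are the associated vectors $\delta_i, \delta_j, \delta_k \in F$. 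Since $P^\gamma_\delta$ is parallel to $F^\perp$, its intersection with the cube is controlled entirely by how the $F$-components $\delta_i,\delta_j,\delta_k$ of the three edge directions position the cube relative to the fixed affine plane $F^\perp + \gamma + \delta$.

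For the standard horizontal case, I would argue as follows. By Corollary \ref{cor-edges} a standard horizontal cube has edge directions $x^i, x^{i+4}, x^{i+\epsilon}$ with $i$ even and $\epsilon \in \{1,3,5,-1\}$ odd; the two even-indexed directions $x^i, x^{i+4}$ project onto horizontal vectors in $F$ (both rightward, or both leftward, by the dictionary set up before the Convention), while the odd one projects onto a vertical vector in $F$. Hence, projecting the cube onto $F$, the two horizontal edges collapse the cube onto a single vertical segment of length one: the plane $F^\perp + \gamma + \delta$ meets the cube exactly when its $F$-coordinate falls in this segment, and when it does, the slice is the full 2-dimensional ``horizontal face'' of the cube spanned by $f_i$ and $f_{i+4}$ — but this is a 2-dimensional parallelogram sitting in $F^\perp$, not yet a segment. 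The extra collapse to a segment comes from the third direction: the vertical edge $f_{i+\epsilon}$ moves the cube through the one-parameter family of such parallelograms, and intersecting with the single plane $P^\gamma_\delta$ picks out a 1-dimensional affine slice of the 2-dimensional face. A direct computation in coordinates (using the explicit $f_i$ of Lemma \ref{projections}, or equivalently the complex description $f_i = \tfrac{1}{\sqrt3}x^{5(i-1)}$) shows this slice is a segment parallel to $f_i - f_{i+4}$ — the direction in which the two horizontal floors of the cube are offset from one another in $F^\perp$. The standard vertical case is identical after exchanging the roles of the two coordinates of $F$, with the segment directed by the difference of the two vertical edge vectors.

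For the long case, Corollary \ref{cor-edges} gives edge directions $\{x^0,x^2,x^4\}$ or $\{x^1,x^3,x^5\}$, all three of which project onto the \emph{same} direction (horizontal, resp.\ vertical) in $F$; thus all eight vertices of the long cube project onto the four collinear lattice points of one row (or column) of Figure \ref{polytope-coupe}, as already noted in the caption of Figure \ref{fig-cubes-reseau}. Consequently the three $F$-displacements $\delta_{i}$ of the edges are parallel, and the diagonal $g_i + g_j + g_k$ of the cube projects onto an $F$-vector of length $3$ in that single direction; meanwhile the three $F^\perp$-edge-vectors are three of the six vectors $f_\bullet$, which (by Lemma \ref{rem-simplif-gamma}, e.g.\ $f_i + f_{i+2} = -\sqrt3 f_{i+1}$) are coplanar and in fact span all of $F^\perp$ — they are three equally-spaced $12$th-root directions at $60^\circ$ to one another. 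Hence the long cube, viewed in the $4$-space $F^\perp\oplus F$, is a genuine $3$-dimensional cube whose ``long axis'' is the $F$-direction and whose orthogonal cross-sections are the regular configurations spanned by $f_i, f_j, f_k$; a plane parallel to $F^\perp$ is then orthogonal to this long axis, i.e.\ to the diagonal, and slices the $3$-cube in a plane perpendicular to a main diagonal. Such a slice of a cube is, as is classical, either a triangle (near a corner) or a hexagon (near the center), which is exactly the dichotomy already seen for $\gamma = 0$ in Lemma \ref{lem-cut}.

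The main obstacle I anticipate is not conceptual but bookkeeping: one must be careful that the ``difference of the two horizontal (resp.\ vertical) vectors'' is genuinely the direction of the segment for \emph{every} one of the $24$ standard cubes and every admissible $\delta$, and in particular that degenerate intersections (a single point, or an edge/face of the cube lying inside $P^\gamma_\delta$) do not occur for $\gamma$ with $\gamma_1,\gamma_2 \notin \mathbb Z$ — which is where the hypothesis that $\gamma$ is chosen off the lattice, implicit from the reduction to $\gamma\in(0,1)\times(0,1)$, enters. I would handle this uniformly by writing a generic point of the cube as $v + s f_i + t f_{i+4} + w f_{i+\epsilon}$ with $(s,t,w)\in[0,1]^3$, imposing that its $F$-component equal $\gamma + \delta$ (two linear equations in $s,t,w$ coming from the $\delta_\bullet$), and reading off that the solution set is a segment in the $(s,t,w)$-cube whose image in $F^\perp$ is parallel to $f_i - f_{i+4}$; the long case is the analogous computation with all three $F$-equations degenerating to constraints along one line, forcing the complementary two-parameter slice.
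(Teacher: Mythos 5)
Your proposal is correct and follows essentially the same route as the paper: you parametrize a point of the cube by its three edge coefficients, impose that its $F$-component equal $\gamma+\delta$, and read off that for a standard cube the two resulting equations fix the odd (resp.\ even) parameter and leave a one-parameter segment directed by the difference of the two horizontal (resp.\ vertical) edge vectors, while for a long cube the constraints collapse to the single equation $\alpha_1+\alpha_2+\alpha_3=\mathrm{const}$, so the plane lies in the cube's affine span, is perpendicular to the main diagonal, and cuts a triangle or hexagon. The only blemish is the heuristic middle passage, which is inaccurate as stated (the $F$-projection of a standard horizontal cube is a $2\times 1$ rectangle, not a vertical segment of length one, and the planar slice is never a $2$-dimensional face of the cube); these remarks are not used by, and are superseded by, the coordinate computation you sketch in the final paragraph, which is exactly the paper's argument.
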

\begin{proof}
An arbitrary point $p$ in a cube determined by the tuple $\{v,x^{i},x^{j},x^{k}\}$ has coordinates in $E^{\perp}=F^{\perp}\oplus F$ given by $p=v+\alpha_{1} g_{r}+\alpha_{2} g_{s}+\alpha_{3} g_{t}$ where $\alpha_\ell\in[0,1]$ and $g_{r},g_{s}$ and $g_{t}$ are the projections of the basis vectors in $\mathbb R^{6}$ onto $F^{\perp}\oplus F$ or their opposites which are determined from $x^{i},x^{j}$ and $x^{k}$ as previously explained. We are interested in the projection onto $F$ of this point, which we will denote $\pi_{F}(p):=\pi^{\perp}(p)\cap F$. 

If the cube is horizontal, then two of the exponents defining it will be even, say they are the two first ones, and therefore $\pi_{F}(p)=\pi_{F}(v)+(\alpha_{1}+\alpha_{2}, \alpha_{3})$. On the other hand, if the cube were vertical, we might assume that again the two first exponents defining it are odd and therefore in this case $\pi_{F}(p)=\pi_{F}(v)+(\alpha_{3},\alpha_{1}+\alpha_{2})$. It follows that if $p\in P^{\gamma}_{\delta}$ then, 
\begin{itemize}
\item in the horizontal case we have $\pi_{F}(v)+(\alpha_{1}+\alpha_{2}, \alpha_{3})=(\delta_{1}+\gamma_{1},\delta_{2}+\gamma_{2})$;
\item in the vertical case $\pi_{F}(v)+(\alpha_{3},\alpha_{1}+\alpha_{2})=(\delta_{1}+\gamma_{1},\delta_{2}+\gamma_{2})$ must hold.
\end{itemize}
So, for a point in a horizontal cube to be also in $P^{\gamma} _{\delta}$ its $\alpha_{3}$ component, the vertical one in this case, is completely determined by the above equality while $\alpha_{1}$ and $\alpha_{2}$ are subject to the linear condition $\alpha_{1}+\alpha_{2}=$ some constant. As $\alpha_{1}$ and $\alpha_{2}$ vary in $[0,1]$, this condition determines a segment in $P^{\gamma}_{\delta}$. Indeed, the projection of $p$ onto this plane can be written as $\pi_{P^{\gamma}_{\delta}}(v)+\alpha_{1} f_{r}+\alpha_{2} f_{s}+\alpha_{3} f_{t}$ and the above conditions found on the $\alpha_{i}$'s allow us to rewrite the expression as:
\begin{align}\label{eq-segment}
&\pi_{P^{\gamma}_{\delta}}(v)+(\delta_{1}+\gamma_{1}-\pi_{F}(v)_{1})f_{r}-\alpha_{2}f_{r}+\alpha_{2} f_{s}+(\delta_{2}+\gamma_{2}-\pi_{F}(v)_{2}) f_{t}\nonumber\\
=&\pi_{P^{\gamma}_{\delta}}(v)+(\delta_{1}+\gamma_{1}-\pi_{F}(v)_{1})f_{r}+\alpha_{2}(f_{s}-f_{r})+(\delta_{2}+\gamma_{2}-\pi_{F}(v)_{2}) f_{t}
\end{align}
Notice that in this last expressions all parameters are fixed except for $\alpha_{2}$ and as it varies it defines the segment of intersection between $P^{\gamma}_{\delta}$ and the cube $\{v,x^{i},x^{j},x^{k}\}$. As claimed in the statement, this segment is directed by the vector $f_{s}-f_{r}$. This vector is a multiple of $x^{i}-x^{j}$, the difference of the two even powers of $x$ defining the cube. The argument is completely analogous in the vertical case. See Figure~\ref{fig-intersection-cubes}.

We now proceed to examine the intersection of $P^{\gamma}_{\delta}$ with a long cube. For the sake of concreteness we will assume that the cube is horizontal, which means that it is directed by three even powers of $x$ (the vertical case is completely analogous with three odd powers instead). Just as before, a point in this cube is given by $p=v+\alpha_{1} g_{r}+\alpha_{2} g_{s}+\alpha_{3} g_{t}$ and this time its projection onto $F$ reads $\pi_{F}(p)=\pi_{F}(v)+(\alpha_{1}+\alpha_{2}+\alpha_{3},0)$. If we assume $p\in P^{\gamma}_{\delta}$ we obtain the equality $\pi_{F}(v)+(\alpha_{1}+\alpha_{2}+\alpha_{3},0)=(\delta_{1}+\gamma_{1},\delta_{2}+\gamma_{2})$. This means that the plane and the cube are in the same affine space of dimension $3$. In this space the equation of the plane is $\alpha_{1}+\alpha_{2}+\alpha_{3}=\delta_1+\gamma_{1}-\pi_{F}(v)_{1}$ and the cube is $[0,1]^3$. Thus the plane is orthogonal to a diagonal of the cube. The intersection is a polygon, more precisely, a triangle or a hexagon depending on the value of $\delta_1+\gamma_{1}-\pi_{F}(v)_{1}$. See Figure~\ref{fig-intersection-cubes}.
\end{proof}

Using the proof of Lemma~\ref{lem-intersection-schema}, we could now describe the explicit equations of the intersection between the family of planes $P^{\gamma}$ and the cubes in $W$. Let us go through what would this entail in the case of a value of $\gamma$ with $\gamma_{i}\neq 0$ for $i=1,2$. In this scenario, we are looking at $9$ different planes in the family $P^{\gamma}$ that effectively intersect the cubes in $W$ and the number of cubes intersecting each plane varies depending on where in F, see Figure~\ref{fig:decalage-axe}, these planes are projected to. For example, as further discussed in the caption of Figure~\ref{fig:decalage-axe}, if a plane $P^{\gamma}_{\delta}\in P^{\gamma}$ projects to the center square, then it intersects $12$ cubes of $W$ whereas if it projects onto a corner square it meets only 6 cubes. Since we are considering the case $\gamma_{i}\neq 0$ for $i=1,2$, none of the planes in $P^{\gamma}$ intersect any long cubes. It follows, from Lemma~\ref{lem-intersection-schema}, that every intersection between a plane in $P^{\gamma}$ and $W$ is a segment. Adding up all the numbers in the grid of the leftmost picture in Figure~\ref{fig:decalage-axe} we conclude that the intersection of $P^{\gamma}$ and the cubes in $W$, when $\gamma_{i}\neq 0$ for $i=1,2$, is a collection of 72 segments.

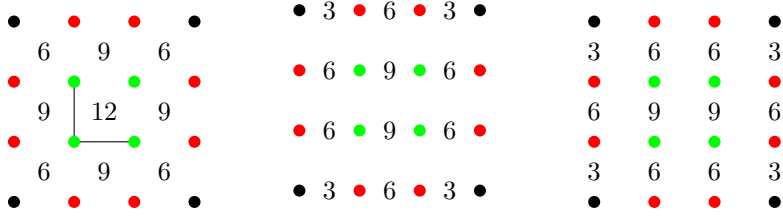
\begin{figure}
\begin{tikzpicture}[scale=.8]
\draw (0,0)--(1,0);
\draw (0,0)--(0,1);
 
\fill[green] (0,1) circle(.1);
\fill[green] (1,0) circle(.1);
\fill[green] (1,1) circle(.1);
\fill[green] (0,0) circle(.1);
\fill[green] (0,1) circle(.1);

\fill[red] (-1,0) circle(.1);
\fill[red] (2,0) circle(.1);
\fill[red] (0,-1) circle(.1);
\fill[red] (0,2) circle(.1);

\fill (-1,-1) circle(.1);
\fill (-1,2) circle(.1);
\fill (2,2) circle(.1);
\fill (2,-1) circle(.1);

\fill[red] (1,2) circle(.1);
\fill[red] (2,1) circle(.1);
\fill[red] (1,-1) circle(.1);
\fill[red] (-1,1) circle(.1);
\draw (.5,.5)node{$12$};
\draw (1.5,.5)node{$9$};
\draw (.5,1.5)node{$9$};
\draw (-.5,.5)node{$9$};
\draw (.5,-.5)node{$9$};

\draw (-.5,1.5)node{$6$};

\draw (-.5,-.5)node{$6$};

\draw (1.5,1.5)node{$6$};

\draw (1.5,-.5)node{$6$};
\end{tikzpicture}
\hspace{1cm}
\begin{tikzpicture}[scale=.8]

\fill[green] (0,1) circle(.1);
\fill[green] (1,0) circle(.1);
\fill[green] (1,1) circle(.1);
\fill[green] (0,0) circle(.1);

\fill[red] (-1,0) circle(.1);
\fill[red] (2,0) circle(.1);
\fill[red] (0,-1) circle(.1);
\fill[red] (0,2) circle(.1);

\fill (-1,-1) circle(.1);
\fill (-1,2) circle(.1);
\fill (2,2) circle(.1);
\fill (2,-1) circle(.1);

\fill[red] (1,2) circle(.1);
\fill[red] (2,1) circle(.1);
\fill[red] (1,-1) circle(.1);
\fill[red] (-1,1) circle(.1);

\draw (-.5,0) node{$6$};
\draw (.5,0) node{$9$};
\draw (1.5,0) node{$6$};
\draw (-.5,1) node{$6$};
\draw (.5,1) node{$9$};
\draw (1.5,1) node{$6$};
\draw (-.5,-1) node{$3$};
\draw (.5,-1) node{$6$};
\draw (1.5,-1) node{$3$};
\draw (-.5,2) node{$3$};
\draw (.5,2) node{$6$};
\draw (1.5,2) node{$3$};
\end{tikzpicture}
\hspace{1cm}
\begin{tikzpicture}[scale=.8]
\fill[green] (0,1) circle(.1);
\fill[green] (1,0) circle(.1);
\fill[green] (1,1) circle(.1);
\fill[green] (0,0) circle(.1);

\fill[red] (-1,0) circle(.1);
\fill[red] (2,0) circle(.1);
\fill[red] (0,-1) circle(.1);
\fill[red] (0,2) circle(.1);

\fill (-1,-1) circle(.1);
\fill (-1,2) circle(.1);
\fill (2,2) circle(.1);
\fill (2,-1) circle(.1);

\fill[red] (1,2) circle(.1);
\fill[red] (2,1) circle(.1);
\fill[red] (1,-1) circle(.1);
\fill[red] (-1,1) circle(.1);
\draw (-1,-.5) node{$3$};
\draw (-1,.5) node{$6$};
\draw (-1,1.5) node{$3$};
\draw (2,-.5) node{$3$};
\draw (2,.5) node{$6$};
\draw (2,1.5) node{$3$};
\draw (0,-.5) node{$6$};
\draw (0,.5) node{$9$};
\draw (0,1.5) node{$6$};
\draw (1,-.5) node{$6$};
\draw (1,.5) node{$9$};
\draw (1,1.5) node{$6$};
\end{tikzpicture}
\caption{Each of these colored grid points represent the projection on $F$ of a plane $F^{\perp}$ containing some vertices of the polytope $W$. Recall that the plane $F$ has basis $\{A,B\}$ highlighted on the left-most grid. The numbers in these grids represent the number of cubes in $W$ that the planes $P^{(\gamma_{1},\gamma_{2})}$ meet. The first grid collects the numbers for $\gamma_{1},\gamma_{2}\neq 0$, the second for $\gamma_{2}=0$ and the last one for $\gamma_{1}=0$.}
\label{fig:decalage-axe}
\end{figure}

Thankfully, for the computation we are interested in, we do not need to go into this level of detail. In fact, the segments of intersection define lines in the space $E^{\perp}=F^{\perp}\oplus F$ and we are interested in them only up to the action of $\Gamma=\pi(\mathbb Z^{6})$. In Lemma~\ref{lem-intersection-schema} we have described these segments/lines as points of the form $v\pm\alpha_{1}g_{i}\pm\alpha_{2}g_{j}\pm\alpha_{3}g_{k}$ where $v$ is a vertex of a cube and the $\alpha_{i}$'s are real numbers in the interval $[0,1]$ subject to some constraints. Since $v$ is a vertex of $W$ it is also an element of the group $\Gamma$ and therefore by a translation of vector $-v$ we might simply consider the expression $\alpha_{1}(\pm g_{i})+\alpha_{2}(\pm g_{j})+\alpha_{3}(\pm g_{k})$. Geometrically, this amounts to look at all the lines we are interested in together in the single plane $P_{(0,0)}^{\gamma}=P_{0}^{\gamma}$. At this point, since all the information about these lines on the $F$ component is summarized in the point $\gamma$, we might look at their expressions in the plane $P_{0}^{\gamma}$, which we recall is parallel to $F^{\perp}$, obtaining the expressions $\alpha_{1}(\pm f_{i})+\alpha_{2}(\pm f_{j})+\alpha_{3}(\pm f_{k})$. Our description of the cubes in $W$ is in terms of the powers of the complex vector $x=e^{\frac{i\pi}{6}}$ instead of the vectors $f_{i}$. In this language, the lines we are interested in are of the form 
$$
\alpha_{1}(\frac{1}{\sqrt 3} x^{r})+\alpha_{2}(\frac{1}{\sqrt 3} x^{s})+\alpha_{3}(\frac{1}{\sqrt 3} x^{t})
$$ 
with the necessary constraints on the parameters $\alpha_{i}$. 

As explained in Section~\ref{practical}, to compute the cohomology groups of the hull of the tiling, we need to know the quantity $L_{1}$, which is the number of orbits of lines in the intersection between $P^{\gamma}$ and $W$ under the action of $\Gamma$. With all the previous conventions in place, we are now ready to give a convenient description of these lines which will be handy to later compute $L_{1}$. We will use the standard convention denoting by $A+\lambda u$ the line of direction $u$ passing through the point $A$ with $\lambda\in\mathbb R$.

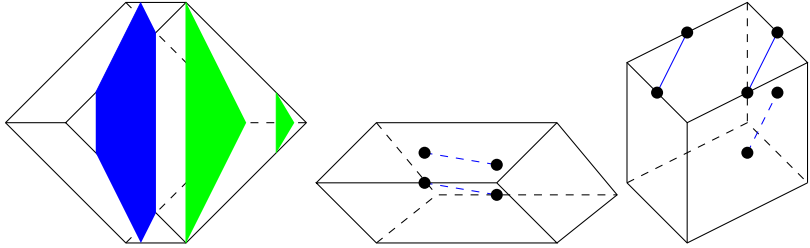
\begin{figure}
\begin{tikzpicture}[scale=.8]
\draw (0,0)--(1,0)--(3,2)--(5,0)--(3,-2)--(1,0);
\draw (0,0)--(2,2)--(3,2);
\draw (0,0)--(2,-2)--(3,-2);
\draw[dashed] (2,-2)--(4,0)--(5,0);
\draw[dashed] (2,2)--(4,0);
\draw[green] (3,2)--(3,-2);
\fill[green, opacity=10] (3,2)--(4,0)--(3,-2)--cycle;

\draw[green] (4.5,-.5)--(4.5,.5);
\fill[green, opacity=5] (4.5,.5)--(4.5,-.5)--(4.8,0)--cycle;

\draw[blue] (1.5,.5)--(2.25,2);
\fill[blue, opacity=10] (1.5,.5)--(1.5,-.5)--(2.25,-2)--(2.5,-1.5)--(2.5,1.5)--(2.25,2)--cycle;
\end{tikzpicture}
\begin{tikzpicture}[scale=.8]
\draw (0,0)--(3,0)--(4,-1)--(1,-1)--cycle;
\draw (0,0)--(1,1)--(4,1)--(5,-.2)--(4,-1);
\draw (3,0)--(4,1);
\draw[dashed] (1,-1)--(2,-.2)--(5,-.2);
\draw[dashed] (1,1)--(2,-.2);
\draw[blue, dashed] (1.8,0)--(3,-.2);
\draw[blue, dashed] (1.8,.5)--(3,.3);
\fill  (1.8, 0) circle(.1);
\fill  (1.8, .5) circle(.1);
\fill  (3,-.2) circle(.1);
\fill  (3,.3) circle(.1);
\end{tikzpicture}
\begin{tikzpicture}[scale=.8]
\draw (0,0)--(2,1)--(3,0)--(1,-1)--cycle;
\draw (0,-2)--(1,-3)--(3,-2);
\draw (0,0)--(0,-2);
\draw (3,0)--(3,-2);
\draw (1,-1)--(1,-3);
\draw[dashed] (0,-2)--(2,-1)--(3,-2);
\draw[dashed] (2,1)--(2,-1);
\draw[blue] (.5,-.5)--(1,.5);
\draw[blue] (2,-.5)--(2.5,.5);
\fill  (.5,-.5) circle(.1);
\fill  (1,.5) circle(.1);
\fill  (2,-.5) circle(.1);
\fill  (2.5,.5) circle(.1);
\draw[blue, dashed] (2,-1.5)--(2.5,-.5);
\fill  (2,-1.5) circle(.1);
\fill  (2.5,-.5) circle(.1);
\end{tikzpicture}
\caption{The left figure represents the intersection between a plane and a long cube. The plane and the cube live in the same 3-dimensional space and the intersection is either a hexagon (pink) or a triangle (green). The two other figures depict the intersection of the planes and the standard cubes. Each plane intersects a given cube in one line whose direction is determined by the cube. In the figure dotted lines are meant to be inside the cubes and solid ones on the faces.}\label{fig-intersection-cubes}
\end{figure}

\begin{proposition}\label{prop-intersections-cubes}
The intersection of the family of planes $P^\gamma$ with the cubes in $W$ yields a family of segments and polygons. Each of these segments and each side of a polygon belongs to a line that, when translated to the plane $P^\gamma_{0}$ via a vector in $\Delta\subset F$, can be described by one of the following $36$ equations:
$$
\pm(\frac{\gamma_1 }{\sqrt3}\begin{cases} x^{i}\\ x^{i+2}\\ x^{i+4}\end{cases}+\frac{\gamma_2}{\sqrt3} x^{i+1})+\lambda x^i\quad\mathrm{for}\ i\in\{0,2,4\}
$$
$$
\pm(\frac{\gamma_2}{\sqrt3}\begin{cases} x^{i+2}\\ x^{i+4}\\ x^{i+6}\end{cases}+\frac{\gamma_1}{\sqrt3} x^{i+1})+\lambda x^i\quad\mathrm{for}\ i\in\{1,3,5\}.
$$

\end{proposition}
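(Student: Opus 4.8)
The plan is to combine the local description of cube–plane intersections from Lemma~\ref{lem-intersection-schema} with the explicit list of edge-direction triples from Corollary~\ref{cor-edges}, and then to read off, for each triple, which line the segment of intersection lies on. First I would set up notation: a standard horizontal cube has two of its three defining exponents even and one odd; by Corollary~\ref{cor-edges} the triples are exactly $\{x^{i},x^{i+4},x^{i+j}\}$ with $i$ even and $j\in\{1,3,5\}$ (the third family $\{x^i,x^{i+4},x^{i+5}\}$, $i$ even, being absorbed after relabeling since $i+4$ ranges over all residues), and symmetrically for vertical cubes with $i$ odd. So there are essentially two even "base" exponents $x^i, x^{i+4}$ together with one odd exponent, and the vertical case is obtained by a $\pm\frac{\pi}{6}$ rotation, i.e.\ by shifting all exponents by $1$.

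Next, for a standard horizontal cube $\{v,x^i,x^{i+4},x^{i+k}\}$ with $i$ even and $k$ odd, Lemma~\ref{lem-intersection-schema} and its proof (Equation~\eqref{eq-segment}) tell us the intersection segment, translated to $P^{\gamma}_0$, has the form
$$
\tfrac{1}{\sqrt3}(\delta_1+\gamma_1-\pi_F(v)_1)\,x^{i}+\tfrac{1}{\sqrt3}(\delta_2+\gamma_2-\pi_F(v)_2)\,x^{i+k}+\alpha_2\cdot\tfrac{1}{\sqrt3}(x^{i+4}-x^{i}),
$$
where I have used $f_r=\tfrac{1}{\sqrt3}x^i$, $f_s=\tfrac{1}{\sqrt3}x^{i+4}$, $f_t=\tfrac{1}{\sqrt3}x^{i+k}$. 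The key computational point is that $x^{i+4}-x^i$ is an explicit nonzero real multiple of $x^{i+2}$: indeed from Lemma~\ref{rem-simplif-gamma}, rewritten in terms of $x$ via Corollary~\ref{c:fvsx}, one gets $x^{i}+x^{i+4}=\sqrt3\,x^{i+2}$ for suitable sign conventions — more precisely $x^4-1=(\sqrt3 x-2)-1=\sqrt3 x-3=\sqrt3(x-\sqrt3)=\sqrt3 x^{5}$-type relations from the chart in Lemma~\ref{degree1} — so the direction of every such segment is (a multiple of) $x^{i+2}$, again an \emph{even} power. Since the base vertex $v$ is an integer point and $\pi_F(v)\in\Delta$, the coefficient $(\delta_1+\gamma_1-\pi_F(v)_1)$ reduces modulo $\Delta$ to $\gamma_1$ and likewise for $\gamma_2$; translating by the appropriate element of $\Delta\subset F$ (which does not change the line, only its representative plane), the line becomes $\tfrac{\gamma_1}{\sqrt3}x^{i'}+\tfrac{\gamma_2}{\sqrt3}x^{i'+1}+\lambda x^{i'+2}$ for the relevant even $i'$, and re-indexing the even exponent as $i\in\{0,2,4\}$ gives exactly the first displayed family, the three choices $x^i, x^{i+2}, x^{i+4}$ corresponding to which of the three even exponents plays the role of the "point" vs.\ which two produce the direction. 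The overall $\pm$ reflects the two orientations $\pm g_i$ allowed in the parametrization. The vertical case is identical after the global exponent shift by $1$, which sends the even index set $\{0,2,4\}$ to the odd set $\{1,3,5\}$ and swaps the roles of $\gamma_1,\gamma_2$ (because the vertical cube has its long direction along $B$), producing the second family. Finally, the long cubes contribute triangles and hexagons rather than segments, but each \emph{side} of such a polygon is, by Lemma~\ref{lem-intersection-schema} and Corollary~\ref{cor-edges}, parallel to one of $x^0,x^2,x^4$ (horizontal long cube) or $x^1,x^3,x^5$ (vertical one), so these sides lie on lines already in the list; I would note this and check that the point they pass through also reduces, modulo $\Delta$, to one of the listed forms with $\gamma_1$ or $\gamma_2$ possibly zero.

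Counting: for each $i\in\{0,2,4\}$ there are $3$ choices of which even exponent is the "base" and a sign $\pm$, giving $3\cdot 3\cdot 2=18$ equations; the odd side contributes another $18$, for $36$ total, matching the statement. The main obstacle I anticipate is purely bookkeeping rather than conceptual: one must be scrupulous about sign conventions — the vectors $g_i$ versus $-g_i$, the identification $f_i=\tfrac{1}{\sqrt3}x^{5(i-1)}$ from Corollary~\ref{c:fvsx} (note the exponent $5(i-1)$, not $i-1$), and the fact that Corollary~\ref{cor-edges} lists direction \emph{sets} up to relabeling — so that the three variants $\{x^i,x^{i+2},x^{i+4}\}$ in the "choose one of three" brace really do all occur and no spurious ones appear. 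I would organize this as a short case table over the triples in Corollary~\ref{cor-edges}, verifying in each case that $x^a-x^b$ lands (up to a real scalar) on the third even/odd power and that the constant term collapses to $\tfrac{\gamma_1}{\sqrt3}$ or $\tfrac{\gamma_2}{\sqrt3}$ after the $\Delta$-translation, and leave the routine arithmetic to the reader.
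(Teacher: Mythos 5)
Your overall strategy (combine Lemma~\ref{lem-intersection-schema} with the direction triples of Corollary~\ref{cor-edges}, translate away the vertex, and rewrite in powers of $x$) is the paper's, but there is a concrete error at the crux of the computation. For a standard horizontal cube, whose parallel pair of edges is $x^{i},x^{i+4}$ with $i$ even, the segment direction is the \emph{difference} $x^{i+4}-x^{i}=x^{i}(x^{4}-1)=\sqrt3\,x^{i+5}=-\sqrt3\,x^{i-1}$, an \emph{odd} power --- exactly what the identity $x^{4}-1=\sqrt3\,x^{5}$ that you yourself quote from Lemma~\ref{degree1} gives. You nonetheless conclude that the direction is the even power $x^{i+2}$ (apparently conflating the difference with the sum $x^{i}+x^{i+4}=x^{i+2}$). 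This propagates: your claimed normal form $\frac{\gamma_1}{\sqrt3}x^{i'}+\frac{\gamma_2}{\sqrt3}x^{i'+1}+\lambda x^{i'+2}$ is not among the $36$ listed equations (in the even-direction family $\gamma_2$ multiplies $x^{i+1}$, one step \emph{above} the direction $x^{i}$, and $\gamma_1$ multiplies $x^{i},x^{i+2}$ or $x^{i+4}$ measured from the direction), and the assignment of cube types to families is inverted: horizontal standard cubes produce the \emph{odd}-direction family and vertical ones the even-direction family, as in the paper. Relatedly, your reading of the brace $x^{i},x^{i+2},x^{i+4}$ as ``which of the three even exponents plays the role of the point versus which two produce the direction'' misdescribes the combinatorics: a standard cube has only \emph{two} exponents in the parallel class ($x^{i},x^{i+4}$; the triple $\{x^{i},x^{i+2},x^{i+4}\}$ is a long cube). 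The three brace options arise from the three possible transverse edges ($k=i+1,i+3,i+5$), i.e.\ from three different cubes sharing the same parallel pair, together with the choice of which parallel edge carries the $\gamma$-coefficient.

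The long-cube discussion has the same parity slip: the vertical long cube (edges $x^{1},x^{5},x^{9}$) is cut in polygons whose sides lie in \emph{even} directions, and the horizontal one in odd directions, the opposite of what you state; moreover ``parallel to a listed direction'' is not by itself enough --- one must check, as the paper does, that the offsets of these sides reduce modulo the lattice to lines already produced by the standard cubes, and note that long cubes are only met by $P^{\gamma}$ when $\gamma_1=0$ or $\gamma_2=0$. A consistently corrected version of your computation would reproduce the paper's proof, but as written the central direction computation, and hence the matching of the cube list to the stated $36$ equations, does not go through.
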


\begin{proof}
The list of the 40 cubes which constitute the boundary of the window is explicit in the discussion before Proposition~\ref{prop-listecubes}. Since we are working modulo the translations of $\Delta$ we might ignore the vertices of the cubes and use only the information in Corollary~\ref{cor-edges}. 

We start considering standard cubes, which are all of the form $\{x^{i},x^{i+4},x^{k}\}$ for $i=1,\cdots,11$ and $k$ as in Corollary~\ref{cor-edges}. By  Lemma~\ref{lem-intersection-schema} the intersection of a standard horizontal cube and a single plane in $P^{\gamma}$ is a segment as in Equation~\eqref{eq-segment} directed by the
vector $x^{i+4}-x^{i}$, which is a multiple of the vector $x^{i-1}$. If the cube were vertical, the roles of $\gamma_{1}$ and $\gamma_{2}$ are swapped. For the sake of clarity, let us consider horizontal cubes. Since we are now working modulo the action of $\Delta$ and are interested in the whole line defined by the segment, the relevant part of the expression~\eqref{eq-segment} which we need reads  
$$
\gamma_{1}f_{r}+\alpha_{2}(f_{s}-f_{r})+\gamma_{2}f_{t}\quad\mathrm{where}\quad\alpha_{2}\in\mathbb R.
$$
This expression, rewritten in terms of the powers of $x$ turns into
$$
\gamma_{1}\frac{1}{\sqrt3}x^{i}+\gamma_{2}\frac{1}{\sqrt3}x^{k}+\alpha_{2}\frac{1}{\sqrt3}x^{i-1}.
$$
Finally, rewriting $\alpha_{2}\frac{1}{\sqrt3}$ as $\lambda\in\mathbb R$ we obtain the general expression of the lines we are looking for. To compile the list in the statement of the proposition we simply need to write down all the different lines obtained by this procedure from the standard cubes in Corollary~\ref{cor-edges}. To ease future computations we have adopted the convention that the lines will be written in such a way that the direction is given by the $i$-th power of $x$ yielding the expressions:
\begin{align*}
\mathrm{horizontal\ cubes:}&\ \ \gamma_{1}\frac{1}{\sqrt3}x^{i+1}+\gamma_{2}\frac{1}{\sqrt3}x^{k}+\lambda x^{i}\ \mathrm{for\ } 
\begin{cases}
i\ \mathrm{odd\ mod\ 12}\\
k=i+2,i+4,i+6
\end{cases}
\\
\mathrm{vertical\ cubes:}&\ \ \gamma_{2}\frac{1}{\sqrt3}x^{i+1}+\gamma_{1}\frac{1}{\sqrt3}x^{k}+\lambda x^{i}\ \mathrm{for\ }
\begin{cases}
 i\ \mathrm{even\ mod\ 12}\\
k=i,i+2,i+4
\end{cases}
\end{align*}
Note that we can make the notation a bit more compact taking into account that $x^{i}=-x^{i+6\ \mathrm{mod}\ 12}$. Adjusting the signs of $\gamma_{1}$ and $\gamma_{2}$ we obtain the first point in the statement of the proposition. Indeed, for $\gamma_{1},\gamma_{2}\neq 0$ 
all and only standard cubes are intersected by the planes in $P^{\gamma}$ since the long cubes are projected onto lines where one $\gamma_{i}$ is constantly equal to zero (see Figure~\ref{fig-cubes-reseau}).

We now consider the case of $\gamma=(0,\gamma_{2})$. The intersection of any plane in $P^{\gamma}$ with a standard horizontal cube will be either reduced to a point or a segment as described in Lemma~\ref{lem-intersection-schema}. Furthermore, the intersection with a standard vertical cube will be a segment as described in Lemma~\ref{lem-intersection-schema}. The reader can ``visualize'' this in Figure~\ref{fig-cubes-reseau}: for $\gamma=(0,\gamma_{2})$ the planes intersecting standard horizontal cubes might find their intersection on an edge of the cube or in the ``middle'', while the intersection with a vertical cube for these values of $\gamma$ happens on a ``lateral face''. We obtain thus all the lines in the first point of this proposition with $\gamma_{2}=0$. Furthermore, for these values of $\gamma$ the intersection with the two vertical long cubes is non trivial. By Lemma~\ref{lem-intersection-schema} the intersection consists of triangles or hexagons perpendicular to the diagonal of the cube. Since we are interested on the lines defined by the sides of the polygones only up to the action of $\Delta$, it is enough to consider the intersection with just one of the two cubes. We choose $\{g_{3}+g_{4},x^{1},x^{5},x^{9}\}$. The three planes in the family $P^{(0,\gamma_{2})}$ which intersect this long vertical cube are $P^{(0,\gamma_{2})}_{(-1,j)}$ with $j=-1,0,1$. 

If $j=-1$ we obtain that the intersection is a triangle whose edges, when considered modulo $\Delta$, that is, translated to the plane $P^{\gamma}_{0}$, can be described as
$$
\gamma_{2}\frac{1}{\sqrt3} x^{i}+\lambda x^{i-1}\quad\mathrm{for}\ i=1,5,9\ \mathrm{and}\ \lambda\in\mathbb R.
$$
(Recall that the vector $x^{i+4}-x^{i}$ is a multiple of $x^{i-1}$.)  Following the convention of writing the direction of the line as the $i$-th power we obtain the lines
$$
\gamma_{2}\frac{1}{\sqrt3} x^{i+1}+\lambda x^{i}\quad\mathrm{for}\ i=0,4,8\ \mathrm{and}\ \lambda\in\mathbb R.
$$
Now, if $j=0$ the intersection is a hexagon with parallel opposite sides. The lines they define have the same directions as the ones obtained for $j=-1$ and modulo $\Delta$ they coincide. Indeed, expressing the equations of these lines as $A+\lambda v$ we have that $A$ is a point of coordinates $g_{3}+g_{4}$ plus some integer multiples of some basis vectors $g_{i}$ (yielding the coordinates of another vertex of the cube) plus $\gamma_{2}\frac{1}{\sqrt3}x^{i}$ with $i=1,5$ or 9; and the vector $v$ is given by a difference of the form $x^{i+4}-x^{i}$. A completely analogous argument can be made to justify that modulo $\Delta$ we do not obtain new lines when considering the case $j=1$. Finally, notice that the 3 lines we have found for $j=-1$ coincide with lines obtained from the intersection with standard vertical cubes so they have been already been accounted for.

The case of the long horizontal cubes is completely analogous to the one of the vertical ones, we only need to swap the roles of $\gamma_{1}$ and $\gamma_{2}$ and of the even and odd powers of $x$.

Finally, the intersection $P^{\gamma}\cap W$ when $\gamma=(0,0)$ was studied in Lemma~\ref{lem-cut}. The lines we obtain in this case are supported by the sides of the triangles and hexagons found in this intersection. According to Corollary~\ref{cor-direc-delta-nul} these lines have all direction $x^{i}$ for some $i=0,\dots,5$. Moreover, since they all go through vertices of $W$, it is evident that they can be translated to lines going through $(0,0)\in P^{0}_{0}$ via an element in $\Delta$.
\end{proof}

\subsection{Orbits of lines}\label{sect:lines-gamma}

Now that we have a complete list of lines explicit in Proposition~\ref{prop-intersections-cubes}, we need to count the number of orbits of these lines under the action of $\Gamma=\Delta\oplus\langle f_1,\dots,f_6\rangle_\mathbb Z$ (see Corollary~\ref{cor-groupe-delta}). 
From Proposition~\ref{prop-intersections-cubes} we know that the lines appearing in $W\cap P^{\gamma}$ have direction $x^{i}$ with $i=0,\dots,5$ and, depending on the value of $\gamma$, for each fixed direction $x^{i_{0}}$ there are a certain number of lines when we consider them translated to the plane $P_{0}^{\gamma}$ via an element in $\Delta$. Since we are working in the plane $P_{0}^{\gamma}$ the action of $\Gamma$ is simplified to the action of $\Delta_{0}:=\langle f_1,\dots,f_6\rangle_\mathbb Z$. 
For computational reasons, it is better to use the powers of $x$ than the $f_{i}$, so we remind the reader that Equation~\eqref{eq-fx} tells us that $\Delta_{0}\sim\frac{1}{\sqrt 3}\mathbb Z[x]$.
The number of orbits will depend on the value of $\gamma$, however we can do a first overall simplification. 

\begin{lemma}\label{lem-4directions}
Consider the lines described by the following 24 equations.
$$
\frac{\gamma_1 }{\sqrt3}\begin{cases} x^{i+2}\\ x^{i+4}\end{cases}+\frac{\gamma_2}{\sqrt3} x^{i+1}+\lambda x^i,\ -(\frac{\gamma_1 }{\sqrt3}\begin{cases} x^{i+2}\\ x^{i+4}\end{cases}+\frac{\gamma_2}{\sqrt3} x^{i+1})+\lambda x^i,\ \ i\in\{0,2,4\}
$$
$$
\frac{\gamma_2}{\sqrt3}\begin{cases} x^{i+2}\\ x^{i+4}\end{cases}+\frac{\gamma_1}{\sqrt3} x^{i+1}+\lambda x^i,\ -(\frac{\gamma_2}{\sqrt3}\begin{cases} x^{i+2}\\ x^{i+4}\end{cases}+\frac{\gamma_1}{\sqrt3} x^{i+1})+\lambda x^i,\ \ i\in\{1,3,5\}.
$$
The two lines displayed together in each brace
are in the same $\Delta_{0}$ orbit.
\end{lemma}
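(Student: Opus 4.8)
The plan is to prove the slightly stronger statement that the two lines paired in each brace are in fact the \emph{same} line in $P_0^\gamma$; this trivially places them in the same $\Delta_0$-orbit (via the zero translation). The whole argument reduces to a single algebraic identity among powers of $x$.

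First I would record the relation I need. Using the chart in Lemma~\ref{degree1} we have $x^2 = \sqrt3\,x - 1$ and $x^4 = \sqrt3\,x - 2$, so that $x^4 - x^2 + 1 = 0$ (as expected, since $x = e^{i\pi/6}$ is a primitive $12$th root of unity and $\Phi_{12}(X) = X^4 - X^2 + 1$). Multiplying by $x^i$ yields the key identity
$$
x^{i+2} - x^{i+4} = x^i \qquad \text{for all } i.
$$

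Next I would compare the two lines inside a brace. Take the first family ($i\in\{0,2,4\}$, $+$ sign): the lines are
$$
\ell = \frac{\gamma_1}{\sqrt3}x^{i+2} + \frac{\gamma_2}{\sqrt3}x^{i+1} + \lambda x^i
\quad\text{and}\quad
\ell' = \frac{\gamma_1}{\sqrt3}x^{i+4} + \frac{\gamma_2}{\sqrt3}x^{i+1} + \lambda x^i.
$$
They share the common direction $x^i$, and by the identity above their base points differ by $\frac{\gamma_1}{\sqrt3}\bigl(x^{i+2}-x^{i+4}\bigr) = \frac{\gamma_1}{\sqrt3}x^i$, a real multiple of that direction. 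Since translating a line along its own direction leaves it unchanged, $\ell$ and $\ell'$ describe the same line in $P_0^\gamma$.

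Finally I would dispose of the remaining cases by the identical computation. The $-$ sign merely negates the base-point difference, which stays parallel to $x^i$; and for the second family ($i$ odd) the brace carries $\gamma_2$ in place of $\gamma_1$, so the difference becomes $\pm\frac{\gamma_2}{\sqrt3}x^i$, again a multiple of the direction. In every brace the two lines coincide and are therefore (trivially) in the same $\Delta_0$-orbit. I do not expect any real obstacle here: the only thing to get right is the cyclotomic identity $x^{i+2}-x^{i+4}=x^i$ together with the observation that the resulting shift lies \emph{along} each line rather than transverse to it, which is precisely why the two brace choices collapse to a single line.
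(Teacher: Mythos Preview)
Your proof is correct and follows essentially the same route as the paper: both hinge on the identity $x^{i+2}-x^{i+4}=x^i$ to see that the base points of the two braced lines differ by a real multiple of the direction $x^i$. The paper phrases the conclusion as ``same $\Delta_0$-orbit via the zero translation'' (choosing $\mu=\mp\gamma_1/\sqrt3$), whereas you state the slightly sharper fact that the two lines literally coincide; these are the same observation.
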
 
\begin{proof}
Since the argument is completely analogous for all the cases, in order to ease the notation we will write the details of the proof only for the lines:
$$
\frac{\gamma_1 }{\sqrt3}x^{i+2}+\frac{\gamma_2}{\sqrt3} x^{i+1}+\lambda x^i\quad\mathrm{and}\quad
\frac{\gamma_1 }{\sqrt3}x^{i+4}+\frac{\gamma_2}{\sqrt3} x^{i+1}+\lambda x^i\quad\mathrm{for\ }i\in\{0,2,4\}.
$$
The difference of two arbitrary points in these lines can be expressed as
$$
\pm\frac{\gamma_1 }{\sqrt3}(x^{i+2}-x^{i+4})+\mu x^i,\ \mathrm{for\ some\ } \mu\in\mathbb R.
$$
Now, it holds that $\pm(x^{i+2}-x^{i+4})=\pm x^{i}$, and therefore
$$
\pm\frac{\gamma_1 }{\sqrt3}(x^{i+2}-x^{i+4})+\mu x^i\in\Delta_{0}\iff (\pm\frac{\gamma_1}{\sqrt 3} +\mu)x^{i}\in\mathbb Z[x].
$$
By Lemma~\ref{lem-algebre-simple} this last condition holds if and only if $\pm\frac{\gamma_1}{\sqrt 3} +\mu\in\ G$, which is always satisfied for, say, $\mu=\mp\frac{\gamma_1}{\sqrt3}$ and thus the two lines we started with are in the same $\Delta_{0}$ orbit.
\end{proof}

\begin{corollary}\label{cor-simplification}
The lines in Proposition~\ref{prop-intersections-cubes} in different $\Delta_{0}$ orbits are at most the following $24$:
$$
\pm(\frac{\gamma_1 }{\sqrt3}\begin{cases} x^{i}\\ x^{i+2}\end{cases}+\frac{\gamma_2}{\sqrt3} x^{i+1})+\lambda x^i\quad\mathrm{for}\ i\in\{0,2,4\}
$$
$$
\pm(\frac{\gamma_2}{\sqrt3}\begin{cases} x^{i+4}\\ x^{i+6}\end{cases}+\frac{\gamma_1}{\sqrt3} x^{i+1})+\lambda x^i\quad\mathrm{for}\ i\in\{1,3,5\}.
$$

\end{corollary}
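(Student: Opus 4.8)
The plan is to read Corollary~\ref{cor-simplification} off from Proposition~\ref{prop-intersections-cubes} and Lemma~\ref{lem-4directions} by a short count; no new geometry or algebra is required. Proposition~\ref{prop-intersections-cubes} produces $36$ lines. Grouping them by direction $x^i$ and by overall sign, there are, for each $i\in\{0,2,4\}$ and each sign, exactly three lines, distinguished by whether the $x$-power attached to the coefficient $\gamma_1/\sqrt3$ is $x^{i}$, $x^{i+2}$ or $x^{i+4}$; symmetrically, for each $i\in\{1,3,5\}$ and each sign there are three lines, the power attached to $\gamma_2/\sqrt3$ ranging over $x^{i+2}$, $x^{i+4}$, $x^{i+6}$ (with the roles of $\gamma_1$ and $\gamma_2$ interchanged). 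This accounts for $2\cdot 3\cdot(3+3)=36$ lines.

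First I would apply Lemma~\ref{lem-4directions}: for each fixed $i$ and each fixed overall sign it identifies, up to the $\Delta_0$-action, the line carrying $x^{i+2}$ with the line carrying $x^{i+4}$. The mechanism is the identity $x^{i+2}-x^{i+4}=x^i$ --- immediate from $x^2=\sqrt3 x-1$ and $x^4=\sqrt3 x-2$ in Lemma~\ref{degree1} --- so that the difference of two points on these two lines is a scalar multiple of $x^i$ and can be forced into $\Delta_0$ by Lemma~\ref{lem-algebre-simple}. In the first family I would keep the $x^{i+2}$-line as the representative of the merged pair $\{x^{i+2},x^{i+4}\}$, and in the second family the $x^{i+4}$-line; each such merger deletes one line, and there are $3\cdot 2=6$ of them in each family. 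Hence the $36$ lines of Proposition~\ref{prop-intersections-cubes} fall into at most $36-12=24$ distinct $\Delta_0$-orbits, and listing the survivors reproduces exactly the two displays in the statement.

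The only point needing care is the bookkeeping: keeping the brace conventions of Proposition~\ref{prop-intersections-cubes}, Lemma~\ref{lem-4directions} and Corollary~\ref{cor-simplification} mutually consistent (so that for $i\in\{0,2,4\}$ the surviving powers are $\{x^{i},x^{i+2}\}$ while for $i\in\{1,3,5\}$ they are $\{x^{i+4},x^{i+6}\}$), and carrying out the index arithmetic modulo $12$. There is no genuine obstacle here. In particular the statement asserts only an upper bound (``at most''): it is neither necessary nor, at this stage of the argument, yet possible to decide which of these $24$ orbits actually coincide --- that finer question is exactly what the subsequent analysis, culminating in Proposition~\ref{orbits-line}, is devoted to.
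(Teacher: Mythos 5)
Your proposal is correct and takes essentially the same route as the paper: the corollary is stated there as an immediate consequence of Lemma~\ref{lem-4directions}, which identifies, for each direction and each sign, the $x^{i+2}$- and $x^{i+4}$-lines of Proposition~\ref{prop-intersections-cubes}, so the $36$ lines collapse to at most $36-12=24$ orbit representatives exactly as you count. Your bookkeeping of representatives ($x^{i+2}$ in the even family, $x^{i+4}$ in the odd one) and the underlying identity $x^{i+2}-x^{i+4}=x^i$ match the paper's argument.
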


\begin{proposition}\label{orbits-line}
The number $L_1$ of different orbits of lines under the action of $\Gamma$ depends on $\gamma=(\gamma_1, \gamma_2)$ and belongs to the set $\{6,9,12,15,18,21,24\}$. The precise value depends on various constraints on $\gamma$ and it can be found 
in Figure~\ref{fig-orbit-tout}. 

When $L_{1}=24$ we have that the 1-singularities have 6 different directions and each direction has $4$ representatives under the action of $\Gamma$. The other values of $L_{1}$ are obtained when some of the parallel representatives of the 1-singularities are identified under the action of $\Gamma$. The other extreme case, $L_{1}=6$, arrives when $\gamma_{1},\gamma_{2}\in G$ and there is only one orbit per direction.
\end{proposition}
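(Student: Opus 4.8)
\emph{Proof idea.} The plan is to turn the count into a per-direction computation and then exploit a threefold symmetry. By Corollary~\ref{cor-simplification}, every line occurring in $W\cap P^{\gamma}$ is, after translating into $P^{\gamma}_{0}$, $\Delta_{0}$-equivalent to one of the $24$ explicit lines listed there, four for each of the six directions $x^{i_{0}}$, $i_{0}\in\{0,\dots,5\}$; here $\Delta_{0}=\langle f_{1},\dots,f_{6}\rangle_{\mathbb Z}\sim\tfrac1{\sqrt3}\mathbb Z[x]$ is the group through which $\Gamma$ acts on the lines of $P^{\gamma}_{0}$. Since $\Gamma$ acts by translations it preserves directions, so $L_{1}=\sum_{i_{0}=0}^{5}m_{i_{0}}$, where $m_{i_{0}}\in\{1,2,3,4\}$ is the number of $\Delta_{0}$-orbits among the four lines of direction $x^{i_{0}}$. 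Two such lines, with base points $p$ and $q$, lie in the same orbit exactly when $p-q\in\Delta_{0}+\mathbb R x^{i_{0}}$, i.e.\ (clearing the factor $\tfrac1{\sqrt3}$) when $\sqrt3(p-q)$ lies in $\mathbb Z[x]$ up to a real multiple of $x^{i_{0}}$, a membership question answered by Lemma~\ref{degree1} and Lemma~\ref{lem-algebre-simple}.

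First I would record the threefold symmetry. Multiplication by $x^{2}$ is a ring automorphism of $\mathbb Z[x]$ — $x$ is a unit, with $x^{-1}=\sqrt3-x$ by Lemma~\ref{degree1} — hence an automorphism of $\Delta_{0}=\tfrac1{\sqrt3}\mathbb Z[x]$ seen inside $F^{\perp}\cong\mathbb C$ (it is the rotation by $\pi/3$). Inspecting the list in Corollary~\ref{cor-simplification}, this rotation carries the four lines of direction $x^{i_{0}}$ bijectively onto the four lines of direction $x^{i_{0}+2}$, for the same value of $\gamma$; hence it identifies their orbit spaces and $m_{0}=m_{2}=m_{4}=:n_{\mathrm e}$, $m_{1}=m_{3}=m_{5}=:n_{\mathrm o}$. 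Therefore $L_{1}=3(n_{\mathrm e}+n_{\mathrm o})$ with $n_{\mathrm e},n_{\mathrm o}\in\{1,2,3,4\}$, which already pins $L_{1}$ down to $\{6,9,12,15,18,21,24\}$. Interchanging $\gamma_{1}$ and $\gamma_{2}$ in Corollary~\ref{cor-simplification} turns the even-direction lines into the odd-direction ones, so $n_{\mathrm o}(\gamma_{1},\gamma_{2})=n_{\mathrm e}(\gamma_{2},\gamma_{1})$ and it is enough to analyse $n_{\mathrm e}$.

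For $n_{\mathrm e}$ I would take $i_{0}=0$: the four base points are $\pm(\tfrac{\gamma_{1}}{\sqrt3}+\tfrac{\gamma_{2}}{\sqrt3}x)$ and $\pm(\tfrac{\gamma_{1}}{\sqrt3}x^{2}+\tfrac{\gamma_{2}}{\sqrt3}x)$. Using $x^{2}=\sqrt3x-1$ and reducing the six pairwise differences modulo $\mathbb R\cdot1$ via Lemma~\ref{lem-algebre-simple}, each coincidence becomes one of four explicit conditions on $(\gamma_{1},\gamma_{2})$ — of the shapes $2\gamma_{2}\in G$, $\sqrt3\gamma_{1}\in G$, $2\sqrt3\gamma_{1}+2\gamma_{2}\in G$ and $\sqrt3\gamma_{1}+2\gamma_{2}\in G$ — precisely of the type studied in Lemma~\ref{lem-intersection-ensemble}. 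A few short implications between membership in $G$ of $\mathbb Z$-combinations of $\gamma_{1},\sqrt3\gamma_{1},\gamma_{2}$ then show that the only possible orbit partitions of the four lines are: all four together ($n_{\mathrm e}=1$, exactly when $\sqrt3\gamma_{1}\in G$ and $2\gamma_{2}\in G$, which contains $\gamma_{1},\gamma_{2}\in G$); two pairs ($n_{\mathrm e}=2$); one pair plus two singletons ($n_{\mathrm e}=3$); four singletons ($n_{\mathrm e}=4$, when none of the four conditions holds, i.e.\ the generic case). Plugging this into $L_{1}=3\big(n_{\mathrm e}(\gamma_{1},\gamma_{2})+n_{\mathrm e}(\gamma_{2},\gamma_{1})\big)$ and partitioning the $(\gamma_{1},\gamma_{2})$-plane by which of these conditions — and their mirror images under $\gamma_{1}\leftrightarrow\gamma_{2}$ — hold yields Figure~\ref{fig-orbit-tout}; testing a representative in each resulting region shows all seven values are attained. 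In particular $L_{1}=6$ iff $n_{\mathrm e}=n_{\mathrm o}=1$ iff $\gamma_{1},\gamma_{2}\in G$ (using $\tfrac1{\sqrt3}G\cap\tfrac12G=G$), and $L_{1}=24$ iff $n_{\mathrm e}=n_{\mathrm o}=4$, i.e.\ there are $4$ distinct orbits in each of the $6$ directions; this gives the last two sentences of the statement.

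The main obstacle is the bookkeeping in the last step. There are several conditions; they interact (various conjunctions force the partition to collapse further, while some pairs are incompatible), and one must check both that orbit-equivalence is consistently transitive inside each region and that every region drawn in Figure~\ref{fig-orbit-tout} is non-empty. This is exactly what Lemma~\ref{lem-intersection-ensemble} was isolated to handle — the emptiness statements $A\cap B=\emptyset$, the inclusions $B\cap S'\subset F$ and $E\cap S\subset C$, and the precise description of $A\cap D$ — so once that lemma is in hand the enumeration, while lengthy, is routine.
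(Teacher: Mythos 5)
Your proposal is correct, and its computational core is the same as the paper's: you reduce the count to deciding, for the four parallel lines per direction listed in Corollary~\ref{cor-simplification}, when a difference of base points lies in $\Delta_0$ modulo the direction, you apply Lemma~\ref{lem-algebre-simple} to get exactly the four conditions $\sqrt3\gamma_1\in G$, $2\gamma_2\in G$, $2(\sqrt3\gamma_1+\gamma_2)\in G$, $\sqrt3\gamma_1+2\gamma_2\in G$ (and their $\gamma_1\leftrightarrow\gamma_2$ counterparts for odd directions), and you combine the even and odd counts via Lemma~\ref{lem-intersection-ensemble} to recover Figure~\ref{fig-orbit-tout}. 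Where you differ is organizational, and your shortcuts are legitimate: you get $m_0=m_2=m_4$ and $m_1=m_3=m_5$ a priori because multiplication by $x^2$ is an automorphism of $\Delta_0$ permuting the line families, whereas the paper notes a posteriori that its conditions do not depend on the chosen direction; and you treat the degenerate parameters ($\gamma_1=0$, $\gamma_2=0$, or $\gamma=(0,0)$) uniformly through the $24$-line list, which works because coinciding lines are merged automatically by the conditions (e.g.\ $\sqrt3\gamma_1=0\in G$), while the paper handles these as separate cases with the long cubes treated explicitly. Two spots deserve a line of justification. First, the swap $n_{\mathrm o}(\gamma_1,\gamma_2)=n_{\mathrm e}(\gamma_2,\gamma_1)$ is not literally visible in Corollary~\ref{cor-simplification}, since the even list uses the offsets $x^{i},x^{i+2}$ and the odd list $x^{i+4},x^{i+6}$; you need $x^{j+6}=-x^{j}$ and $x^{j+2}-x^{j+4}=x^{j}$ (the identity already used in the proof of Lemma~\ref{lem-4directions}) to see that, modulo the direction, the swapped even-type formulas define the same four lines as the odd-type ones --- or else verify the odd conditions directly, as the paper does, observing only afterwards that they are the swapped ones (the green diagonal in Figure~\ref{fig-orbit-tout}). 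Second, in asserting $n_{\mathrm e}=1$ exactly when $\sqrt3\gamma_1\in G$ and $2\gamma_2\in G$, you should record that every other combination of conditions making all four lines equivalent already implies these two (for instance $2\gamma_2\in G$ together with $\sqrt3\gamma_1+2\gamma_2\in G$ forces $\sqrt3\gamma_1\in G$); with that, your identity $L_1=3(n_{\mathrm e}+n_{\mathrm o})$, the characterization $L_1=6\iff\gamma_1,\gamma_2\in G$ via $\tfrac1{\sqrt3}G\cap\tfrac12G=G$, and the extreme case $L_1=24$ all match the paper's conclusions.
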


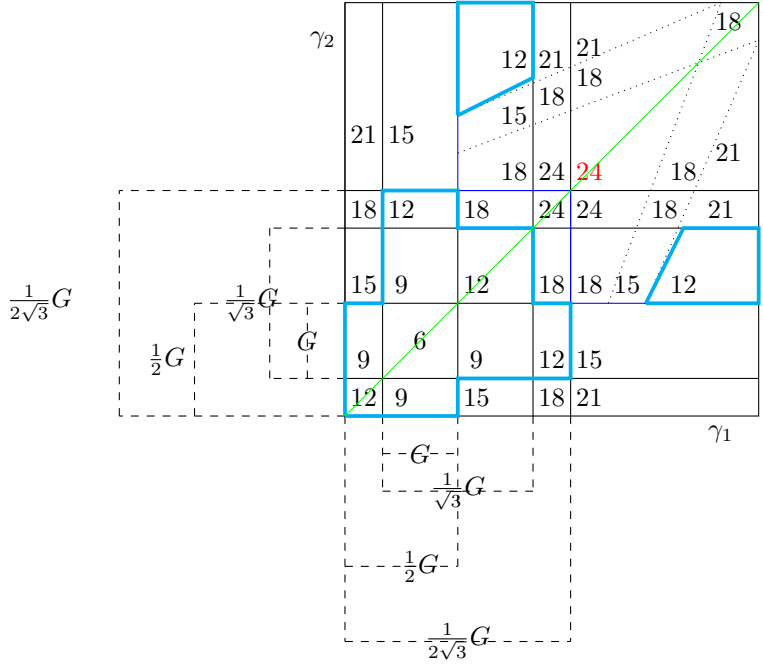
\begin{figure}
\begin{tikzpicture}[scale=.5]
\draw[-] (0,0)--(11,0)--(11,11)--(0,11)--cycle;
\draw[-] (0,0)--(0,11);
\draw[dashed] (1,-1)--(3,-1);
\draw[dashed] (1,-2)--(5,-2);
\draw[dashed] (0,-4)--(3,-4);
\draw[dashed] (0,-6)--(6,-6);
\draw (1,0)--(1,11);
\draw (3,0)--++(0,6);
\draw (5,0)--++(0,11);
\draw (6,0)--++(0,11);

\draw[dashed] (-1,1)--++(0,2);
\draw[dashed] (-2,1)--++(0,4);
\draw[dashed] (-4,0)--++(0,3);
\draw[dashed] (-6,0)--++(0,6);
\draw (0,1)--++(11,0);
\draw (0,5)--++(11,0);
\draw (0,3)--++(6,0);
\draw (0,6)--++(11,0);

\draw[dashed] (0,0)--++(0,-6);
\draw[dashed](1,0)--++(0,-2);
\draw[dashed] (3,0)--++(0,-4);
\draw[dashed] (5,0)--++(0,-2);
\draw[dashed] (6,0)--++(0,-6);

\draw[dashed] (0,0)--++(-6,0);
\draw[dashed] (0,1)--++(-2,0);
\draw[dashed] (0,5)--++(-2,0);
\draw[dashed] (0,6)--++(-6,0);
\draw[dashed] (0,3)--++(-4,0);

\draw[dotted] (3,8)--++(7,3);
\draw[dotted] (3,7)--++(8,3);
\draw[dotted] (8,3)--++(3,7);
\draw[dotted] (7,3)--++(3,8);

\draw[blue] (3,11)--(3,6)--(6,6)--(6,3)--(11,3);
\draw[line width=0.5mm, cyan]  (0,0)--(3,0)--(3,1)--(6,1)--(6,3)--(5,3)--(5,5)--(3,5)--(3,6)--(1,6)--(1,3)--(0,3)--(0,0);
\draw[line width=0.5mm, cyan]  (8,3)--(11,3)--(11,5)--(9,5)--(8,3);
\draw[line width=0.5mm, cyan]  (3,8)--(3,11)--(5,11)--(5,9)--(3,8);

\draw (2,-1) node[]{$G$};
\draw (-1,2) node[]{$G$};
\draw (3,-2) node[]{$\frac{1}{\sqrt 3}G$};
\draw (-2.5,3) node[]{$\frac{1}{\sqrt 3}G$};
\draw (2,-4) node[]{$\frac{1}{2}G$};
\draw (-4,1.5) node[left]{$\frac{1}{2}G$};
\draw (3,-6) node[]{$\frac{1}{2\sqrt 3}G$};
\draw (-7,3) node[left]{$\frac{1}{2\sqrt 3}G$};

\draw (10,0) node[below]{$\gamma_1$};
\draw (0,10) node[left]{$\gamma_2$};

\draw (1/2,1/2) node{$12$};
\draw (3/2,1/2) node{$9$};
\draw (1/2,3/2) node{$9$};
\draw (2,2) node{$6$};
\draw (1/2,7/2) node{$15$};
\draw (1/2,11/2) node{$18$};
\draw (3/2,7/2) node{$9$};
\draw (3/2,11/2) node{$12$};

\draw (7/2,1/2) node{$15$};
\draw (7/2,3/2) node{$9$};
\draw (7/2,7/2) node{$12$};
\draw (7/2,11/2) node{$18$};

\draw (11/2,11/2) node{$24$};
\draw (1/2,15/2) node{$21$};
\draw (3/2,15/2) node{$15$};

\draw (6.5,3.5) node{$18$};
\draw (7.5,3.5) node{$15$};
\draw (9,3.5) node{$12$};

\draw (6.5,5.5) node{$24$};
\draw (8.5,5.5) node{$18$};
\draw (10,5.5) node{$21$};

\draw (6.5,6.5) node{${\color{red}24}$};
\draw (6.5,9) node{$18$};
\draw (6.5,9.8) node{$21$};

\draw (5.5,6.5) node{$24$};
\draw (5.5,8.5) node{$18$};
\draw (5.5,9.5) node{$21$};

\draw (4.5,6.5) node{$18$};
\draw (4.5,8)   node{$15$};
\draw (4.5,9.5) node{$12$};

\draw (5.5,3.5) node{$18$};
\draw (5.5,1.5) node{$12$};
\draw (5.5,.5) node{$18$};

\draw (6.5,1.5) node{$15$};
\draw (6.5,.5) node{$21$};

\draw (9,6.5) node{$18$};
\draw (10.2,7) node{$21$};

\draw (10.2,10.5) node{$18$};
\draw[green] (0,0)--(11,11);
\end{tikzpicture}
\caption{The number of 1-singularities depending on  $\gamma=(\gamma_{1},\gamma_{2})\in\mathbb R^{2}$. As elsewhere in the paper, $G=\mathbb Z[\sqrt 3]$. Each axis in the figure represents the set $\mathbb R$ (unordered) divided into the non-disjoint subsets $G,\frac{1}{\sqrt 3}G,\frac{1}{2}G,\frac{1}{2\sqrt 3}G$ and $\mathbb R\setminus \frac{1}{2\sqrt 3}G$. The top right part of the chart is to be understood as the superposition of the regions labelled $A,B,C,D,E$ and $F$ in Figure~\ref{fig-orbit-line-pair-impair}. Moreover, adding 1 to each of the values on the chart we obtain the rank of $H^1(\Omega_{E_{12}^\gamma})$ (cf.\ Proposition~\ref{rank}). Finally, the regions enclosed in blue are the ones for which the second cohomology groups have been completely determined in Section~\ref{2lines}. }
\label{fig-orbit-tout}
\end{figure}
\begin{proof}
To start with, recall that we are working with the simplifying assumption that $\gamma$ belongs to $[0,1)^2$ (see beginning of Section~\ref{sec-cubes-droites}). Moreover, the orbits of lines under the action of $\Gamma$ will be computed via de action of the group $\Delta_{0}$ on translates by $\Delta$ of the lines to the plane $P^{\gamma}_{0}$ (see beginning of Section~\ref{sect:lines-gamma}).

$\bullet$ If the value of $\gamma=(0,0)$, then by Proposition~\ref{prop-intersections-cubes} we know that we have a total of 6 lines, one for each direction $x^{i}$, and these lines are evidently in different $\Delta_{0}$ orbits.

$\bullet$ If precisely one between $\gamma_{1}$ and $\gamma_{2}$ is zero then, by Proposition~\ref{prop-intersections-cubes}, there are 18 lines to be considered. We start considering the case $\gamma_{1}=0$. In this case there are six lines with 3 different directions $x^{i}$, $i\in\{0,2,4\}$ and 12 lines with direction $x^{i}$, $i\in\{1,3,5\}$. We want to understand under which conditions on $\gamma_{2}$ two parallel lines are on the same orbit under the action of $\Delta_{0}$. For the even directions, the equations of the lines under consideration are
$$
\frac{\gamma_{2}}{\sqrt3}x^{i+1}+\lambda x^{i}\quad\mathrm{and}\quad -\frac{\gamma_{2}}{\sqrt3}x^{i+1}+\lambda' x^{i}
$$
and they will be in the same $\Delta_{0}$ orbit if and only if there exist $\lambda,\lambda'\in\mathbb R$ such that
$$
\frac{\gamma_{2}}{\sqrt3}x^{i+1}+\lambda x^{i} -
(-\frac{\gamma_{2}}{\sqrt3}x^{i+1}+\lambda'x^{i})\in\Delta_{0}.
$$
Reorganizing we obtain that this condition is equivalent to
$$
\gamma_{2}\frac{2}{\sqrt3}x^{i+1}+x^{i}(\lambda-\lambda')\in\Delta_{0}\quad\mathrm{for\ some\ }\lambda,\lambda'\in\mathbb R.
$$
Finally, relabelling adequately the real parameters, this last condition reads
$$
2\gamma_{2} x^{i+1}+\mu x^{i}\in\mathbb Z[x]\quad\mathrm{for\ some\ }\mu\in\mathbb R,
$$
and by Lemma~\ref{lem-algebre-simple} it will be fulfilled if and only if $2\gamma_{2}, \mu\in G$. 

For the odd directions, $i\in\{1,3,5\}$, the equations of the lines under consideration are
$$
\frac{\gamma_{2}}{\sqrt3}x^{i+4}+\lambda_{1} x^{i},\ -\frac{\gamma_{2}}{\sqrt3}x^{i+4}+\lambda_{2} x^{i},\  \frac{\gamma_{2}}{\sqrt3}x^{i+6}+\lambda_{3} x^{i}\ \ \mathrm{and}\ \ -\frac{\gamma_{2}}{\sqrt3}x^{i+6}+\lambda_{4} x^{i}.
$$
Following the same arguments as before, we conclude that the two first lines will be in the same $\Delta_{0}$ orbit if 
$$
2\gamma_{2} x^{i+4}+\mu x^{i}\in\mathbb Z[x]\quad\mathrm{for\ some\ }\mu\in\mathbb R,
$$
which holds, via Lemma~\ref{lem-algebre-simple}, if $2\gamma_{2}\in\frac{1}{\sqrt3}G$.
Noticing that $x^{i+6}=-x^{i}$ we conclude that the last two lines are coincident and in fact one only line.

Finally we need to analyze the possibility of the lines passing through the points $\pm\frac{\gamma_{2}}{\sqrt3}x^{i+4}$ and $\frac{\gamma_{2}}{\sqrt3}x^{i}$ being in the same $\Delta_{0}$ orbit. This will be the case if
$$
\pm\frac{\gamma_{2}}{\sqrt3}x^{i+4}+x^{i}(\lambda-\lambda'-\frac{\gamma_{2}}{\sqrt3})\in\Delta_{0}\quad\mathrm{for\ some\ }\lambda,\lambda'\in\mathbb R,
$$
or equivalently, if
$$
\pm\gamma_{2}x^{i+4}+\mu x^{i}\in\mathbb Z[x]\quad\mathrm{for\ some\ }\mu\in\mathbb R.
$$
By Lemma~\ref{lem-algebre-simple} we conclude this is the case if $\gamma_{2}\in\frac{1}{\sqrt3}G$. Summing up, since $\frac{1}{\sqrt3}G\subset\frac{1}{2\sqrt3}G$ we have:
	\begin{itemize}
		\item[$*$] For each even direction there is one $\Delta_{0}$ orbit if $\gamma_{2}\in\frac{1}{2}G$ and two if $\gamma_{2}\not\in\frac{1}{2}G$. 
		\item[$*$] For each odd direction we can have:
			\begin{itemize}
				\item[$-$] three $\Delta_{0}$ orbits if $\gamma_{2}\not\in\frac{1}{2\sqrt3}G$.
				\item[$-$] two $\Delta_{0}$ orbits if $\gamma_{2}\in\frac{1}{2\sqrt3}G\setminus\frac{1}{\sqrt 3}G$ with representatives passing through the points $\frac{\gamma_{2}}{\sqrt3}x^{i+4}$ and $\frac{\gamma_{2}}{\sqrt3}x^{i}$.
				\item[$-$] one $\Delta_{0}$ orbit if $\gamma_{2}\in\frac{1}{\sqrt 3}G$.
			\end{itemize}
	\end{itemize}

The total number of $\Delta_{0}$ orbits is presented in Figure~\ref{fig-orbit-line-pair-impair}, where the right hand side has the even directions with values 3 and 6 and the left hand side the odd directions, with values 3,6 and 9. The quantity $L_{1}$ is computed by combining together the information on the lines with even and odd directions. This information can be read in Figure~\ref{fig-orbit-tout}.

The arguments and calculations needed to arrive to the computation of the number of $L_{0}$ when $\gamma=(\gamma_{1},0)$ are analogous to the ones presented for the case $\gamma=(0,\gamma_{2})$. Indeed, in the former case there will be 3 lines for each even direction since two of the ones in Corollary~\ref{cor-simplification} will be coincident and the usage of Lemma~\ref{lem-algebre-simple} will yield the same conclusions. All the relevant results are again contained in Figures~\ref{fig-orbit-tout} and~\ref{fig-orbit-line-pair-impair}.
\medskip

$\bullet$ We now proceed to analyze the case $\gamma_{i}\neq 0$ for $i=1,2$. To start with notice that a point in any of these lines is of the form $A+\lambda v$ where $A$ is a point in $P_{0}^{\gamma}$, $\lambda\in\mathbb R$ and $v$ is the direction of the line. If two lines $\ell$ and $\ell'$ are in the same $\Delta_{0}$-orbit we have that
$$
A+\lambda v - (A'+\lambda' v)\in\Delta_{0}\quad\mathrm{for\ some\ \lambda,\lambda'\in\mathbb R},
$$
which implies
$$
(A-A')+(\lambda-\lambda')v\in\Delta_{0},
$$
and relabeling the real parameter it is equivalent to
$$
A-A'+\mu v\in\Delta_{0}\quad\mathrm{for\ some\ }\mu\in\mathbb R.
$$
The vector $v=x^{i}$ and the power $i$ might be even or odd. We will write down the full details for the $i$ even case and the results for the odd case will be given at the end, without details. For each even power there are 4 different lines and our aim is to understand the action of $\Delta_{0}$ on them.

From Corollary~\ref{cor-simplification}, we know that for a fixed even direction $x^{i_{0}}$ the points $A,A'$ we need to consider are of the form
$$
\pm(\frac{\gamma_1 }{\sqrt3}\begin{cases} x^{i_{0}}\\ x^{i_{0}+2}\end{cases}+\frac{\gamma_2}{\sqrt3} x^{i_{0}+1})
$$
and, depending on the choices of signs, the difference $A-A'$ can be written as
\begin{itemize}[leftmargin=2cm]
\item[Case 1.] $\pm\frac{1}{\sqrt3}\gamma_{1}(x^{j}-x^{k})\quad j,k\in\{i_{0},i_{0}+2\},\ j\neq k,$ or
\item[Case 2.] $\pm\frac{1}{\sqrt3}\left(\gamma_{1}(x^{j}+x^{k})+2\gamma_{2}x^{i_{0}+1}\right)\quad j,k\in\{i_{0},i_{0}+2\}.$
\end{itemize}
We start considering the first of these two cases. Under these circumstances $A-A'+\mu x^{i_{0}}\in\Delta_{0}$ if and only if
$\pm\gamma_{1}(x^{j}-x^{k})+\mu' x^{i_{0}}\in\mathbb Z[x]$ for some $\mu'\in\mathbb R$. Due to the $\pm$ sign in front of this expression, we can assume that $j>k$, so $k=i_{0}$ and $j=i_{0}+2$, which implies $x^j-x^{k}=x^{i_{0}+4}$. It follows that in this first case the condition we need to determine on $\gamma_{1}$ is when does
$$
\gamma_{1}x^{i_{0}+4}+\mu' x^{i_{0}}\in\mathbb Z[x]\quad\mathrm{for\ some\ }\mu'\in\mathbb R
$$
hold. By Lemma~\ref{lem-algebre-simple} we conclude that this is the case if and only if 
\begin{equation}\label{eq4}
\sqrt 3\gamma_{1}\in G.
\end{equation}

\medskip

We move on to consider Case 2 above. Now $A-A'+\mu x^{i_{0}}\in\Delta_{0}$ if and only if 
$
\gamma_{1}(x^{j}+x^{k})+2\gamma_{2}x^{i_{0}+1}+\mu' x^{i_{0}}\in\mathbb Z[x],
$  
for some $\mu'\in\mathbb R$. With the appropriate substitutions of $x^{j}+x^{k}$, depending on whether $j=k$ or $j\neq k$, what we need to understand is for which values of $\gamma_{i}$ do the conditions
\begin{align}
&2(\gamma_{1}x^{i_{0}}+\gamma_{2}x^{i_{0}+1})+\mu'x^{i_{0}}\in\mathbb Z[x]\iff (2\gamma_{1}+\mu')x^{i_{0}}+2\gamma_{2}x^{i_{0}+1}\in\mathbb Z[x]\label{eq5}\\
&2(\gamma_{1}x^{i_{0}+2}+\gamma_{2}x^{i_{0}+1})+\mu'x^{i_{0}}\in\mathbb Z[x]\iff 2(\gamma_1 x^2+\gamma_2x)+\mu'\in \mathbb Z[x]\label{eq6}\\
&\gamma_{1}\sqrt3x^{i_{0}+1}+2\gamma_{2}x^{i_{0}+1}+\mu'x^{i_{0}}\in\mathbb Z[x]\iff(\gamma_{1}\sqrt3+2\gamma_{2})x^{i_{0}+1}+\mu'x^{i_{0}}\in\mathbb Z[x]\label{eq7}
\end{align}

With the help of Lemma~\ref{lem-algebre-simple} we conclude that condition~\eqref{eq5} is fulfilled if and only if $\gamma_{2}\in\frac{1}{2} G$, while condition~\eqref{eq7} is fulfilled if and only if $\gamma_{1}\sqrt3+2\gamma_{2}\in G$. Now, condition \eqref{eq6} is equivalent to $(2\sqrt 3\gamma_1+2\gamma_2)x+\mu-\gamma_1\in \mathbb Z[x]$. Thus we obtain the condition $2(\gamma_{1}\sqrt3+\gamma_{2})\in G$.

To help the reader visualize the results obtained so far, we propose the following figure:

\begin{center}
\begin{tikzpicture}
\draw[red] (0,0)--(3,0);
\draw[red] (0,3)--(3,3);
\draw[green] (0,0)--(3,3);
\draw[green] (3,0)--(0,3);
\draw[blue] (3,0)--(3,3);
\draw (0,0)--(0,3);
\fill (0,0) circle(.1);
\fill (3,0) circle(.1);
\fill (0,3) circle(.1);
\fill (3,3) circle(.1);
\draw (0,0) node[left]{$i$};
\draw (0,3) node[left]{$-i$};
\draw (3,0) node[right]{$i+2$};
\draw (3,3) node[right]{$-(i+2)$};
\draw (1.5,0) node[below]{$\sqrt 3\gamma_1$};
\draw (0,1.5) node[left]{$2\gamma_2$};
\draw (3,1.5) node[right]{$2(\gamma_{1}\sqrt3+\gamma_{2})$};
\draw (1.5,1.6) node[above]{$\gamma_{1}\sqrt3+2\gamma_{2}$};
\end{tikzpicture}
\end{center}
After fixing one even direction, every vertex labelled  $i, i+2, -i, -(i+2)$ in the figure represents one of the four parallel lines in this direction. The labelling corresponds to the power of $x$ with coefficient $\gamma_1$ in the description of the line in Corollary~\ref{cor-simplification}. The conditions we have studied above identify parallel lines as follows:
\begin{itemize}
\item Condition \ref{eq4} identifies lines labelled $\pm i$ and $\pm(i+2)$  if $\sqrt3\gamma_1\in G$.
\item Condition \ref{eq5} identifies lines labelled $i$ and $-i$  if $2\gamma_2\in G$.
\item Condition \ref{eq6} identifies lines labelled $i+2$ and $-(i+2)$ if $2(\gamma_{1}\sqrt3+\gamma_{2})\in G$.
\item Condition \ref{eq7} identifies lines labelled $\pm i$ and $\mp(i+2)$  if $\gamma_{1}\sqrt3+2\gamma_{2}\in G$.
\end{itemize}

To finish the analysis and obtain the number of orbits of lines under the action of $\Gamma$, we need to understand when do the above conditions arrive simultaneously. The results follow.
\begin{itemize}
\item If $\sqrt3\gamma_{1}\in G$ and
	\begin{itemize}
		\item $2\gamma_{2}\in G$, then there is 1 orbit.
		\item $2\gamma_{2}\not\in G$, then there are 2 orbits with representatives $i$ and $-i$.
	\end{itemize}
\item If $\sqrt3\gamma_{1}\in\frac{1}{2}G\setminus G$ and
	\begin{itemize}
		\item $2\gamma_{2}\in G$, then there are 2 orbits with representatives $i$ and $i+2$.
		\item $2\gamma_{2}\not\in G$, then there are 4 different orbits.
	\end{itemize}
\item If $\sqrt3\gamma_{1}\not\in\frac{1}{2}G$ and
	\begin{itemize}
		\item $2\gamma_{2}\in G$, then there are 3 orbits with representatives $i,\pm(i+2)$.
		\item $2\gamma_{2}\not\in G$ and
			\begin{itemize}
				\item[(A)] $2\gamma_{1}\sqrt3+2\gamma_{2}\in G$, then 3 orbits with repr. $\pm i$ and $i+2$.
				\item[(B)] $\gamma_{1}\sqrt3+2\gamma_{2}\in G$, then 2 orbits with repr. $i$ and $i+2$.
				\item[(C)] $2\gamma_{1}\sqrt3+2\gamma_{2}, \gamma_{1}\sqrt3+2\gamma_{2}\not\in G$, then 4 orbits.
			\end{itemize}	
  		\end{itemize}
\end{itemize}

Since the above results do not depend on the even direction fixed, to obtain the total number of orbits for the even directions we need to simply multiply the above numbers by $3$. The labelling $A,B,C$ in the last case corresponds to the sets and notation in Lemma~\ref{lem-intersection-ensemble}. The results up to here have been summarized in Figure~\ref{fig-orbit-line-pair-impair}.

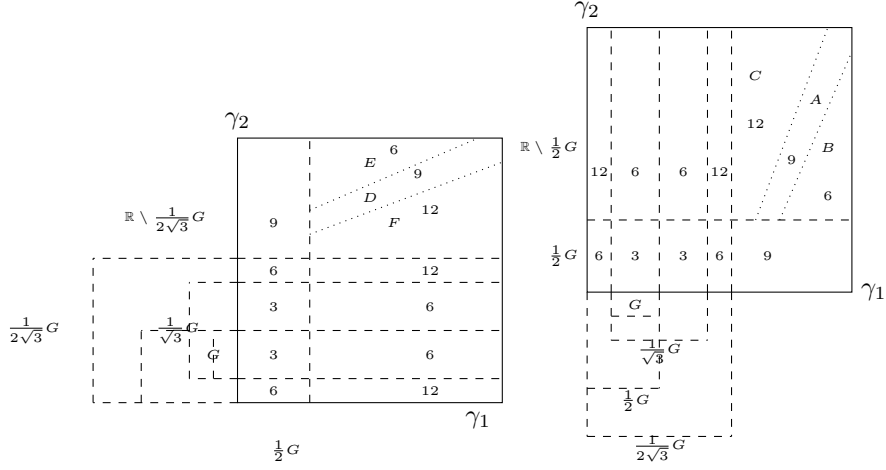
\begin{figure}
\begin{tikzpicture}[scale=.32]
\draw[-] (0,0)--(11,0)--(11,11)--(0,11)--cycle;

\draw[dashed] (3,0)--++(0,11);
\draw[dashed] (0,3)--++(11,0);
\draw[dashed] (0,1)--++(11,0);
\draw[dashed] (0,5)--++(11,0);
\draw[dashed] (0,6)--++(11,0);
\draw [dotted] (3,8)--(10,11);
\draw [dotted] (3,7)--(11,10);

\draw (10,0) node[right, below]{$\gamma_1$};
\draw (0,11) node[above]{$\gamma_2$};

\draw (-3,8.5) node[below]{{\tiny $\mathbb R\setminus \frac{1}{2\sqrt 3}G$}};

\draw (1.5,.5) node{{\tiny $6$}};
\draw (1.5,2) node{{\tiny$3$}};
\draw (1.5,4) node{{\tiny$3$}};
\draw (1.5,5.5) node{{\tiny $6$}};
\draw (8,2) node{{\tiny$6$}};
\draw (8,8) node{{\tiny$12$}};
\draw (1.5,7.5) node{{\tiny$9$}};
\draw (8,4) node{{\tiny$6$}};
\draw (8,.5) node{{\tiny $12$}};
\draw (8,5.5) node{{\tiny $12$}};
\draw (6.5,10.5) node{{\tiny$6$}};

\draw (5.5,10) node{{\tiny$E$}};
\draw (5.5,8.55) node{{\tiny$D$}};
\draw (6.5,7.5) node{{\tiny$F$}};

\draw (7.5,9.5) node{{\tiny $9$}};

\draw (-1,2) node[]{{\tiny $G$}};
\draw (2,-2) node[]{{\tiny $\frac{1}{2}G$}};
\draw (-2.5,3) node[]{{\tiny $\frac{1}{\sqrt 3}G$}};

\draw (-7,3) node[left]{{\tiny $\frac{1}{2\sqrt 3}G$}};
\draw[dashed] (-1,1)--++(0,2);
\draw[dashed] (-2,1)--++(0,4);
\draw[dashed] (-4,0)--++(0,3);
\draw[dashed] (-6,0)--++(0,6);
\draw[dashed] (0,0)--++(-6,0);
\draw[dashed] (0,1)--++(-2,0);
\draw[dashed] (0,5)--++(-2,0);
\draw[dashed] (0,6)--++(-6,0);
\draw[dashed] (0,3)--++(-4,0);
\end{tikzpicture}
\begin{tikzpicture}[scale=.32]

\draw[] (0,0)--(11,0)--(11,11)--(0,11)--cycle;
\draw[dashed] (3,0)--++(0,11);
\draw[dashed] (5,0)--++(0,11);
\draw[dashed] (1,0)--++(0,11);
\draw[dashed] (6,0)--++(0,11);
\draw[dashed] (0,3)--++(11,0);
\draw [dotted] (7,3)--(10,11);
\draw [dotted] (8,3)--(11,10);

\draw (11,0) node[right]{$\gamma_1$};
\draw (0,11) node[above]{$\gamma_2$};

\draw (2,-.5) node[]{{\tiny $G$}};

\draw (3,-2.5) node[]{{\tiny $\frac{1}{\sqrt 3}G$}};
\draw (2,-4.5) node[]{{\tiny $\frac{1}{2}G$}};

\draw (3,-6.5) node[]{{\tiny $\frac{1}{2\sqrt 3}G$}};

\draw[dashed] (1,-1)--(3,-1);
\draw[dashed] (1,-2)--(5,-2);
\draw[dashed] (0,-4)--(3,-4);
\draw[dashed] (0,-6)--(6,-6);

\draw[dashed] (0,0)--++(0,-6);
\draw[dashed](1,0)--++(0,-2);
\draw[dashed] (3,0)--++(0,-4);
\draw[dashed] (5,0)--++(0,-2);
\draw[dashed] (6,0)--++(0,-6);

\draw (0,1.5) node[left]{{\tiny $\frac{1}{2}G$}};
\draw (0,6) node[left]{{\tiny $\mathbb R\setminus \frac{1}{2}G$}};
\draw (.5,1.5) node{{\tiny$6$}};
\draw (5.5,1.5) node{{\tiny$6$}};

\draw (2,1.5) node{{\tiny$3$}};
\draw (2,5) node{{\tiny$6$}};
\draw (4,1.5) node{{\tiny$3$}};
\draw (4,5) node{{\tiny$6$}};
\draw (5.5,5) node{{\tiny $12$}};
\draw (.5,5) node{{\tiny $12$}};
\draw (7.5,1.5) node{{\tiny$9$}};
\draw (7,7) node{{\tiny$12$}};

\draw (7,9) node{{\tiny$C$}};
\draw (9.5,8) node{{\tiny$A$}};
\draw (10,6) node{{\tiny$B$}};

\draw (8.5,5.5) node{{\tiny$9$}};
\draw (10,4) node{{\tiny$6$}};
\end{tikzpicture}
\caption{The number of different orbits of lines depending on $(\gamma_1,\gamma_2)$. On the left picture we consider lines directed by $x^{i}$ with $i$ odd. The right picture shows the result for $i$ even.}
\label{fig-orbit-line-pair-impair}
\end{figure}
Next, we must consider the case of lines directed by $v=x^i$ with $i$ an odd number. Following the same arguments as in the even case, we obtain the analogous 4 conditions:

\begin{itemize}
\item Lines labelled $\pm (i+4)$ and $\pm(i+6)$ are in the same $\Delta_{0}$-orbit if and only if for some $\mu'\in\mathbb R$, it holds
$$
-\gamma_{2}x^{i_{0}+2}+\mu' x^{i_{0}}\in\mathbb Z[x]\iff\sqrt3\gamma_{2}\in G
$$

\item Lines labelled $i+4$ and $-(i+4)$ are in the same $\Delta_{0}$-orbit if and only if for some $\mu'\in\mathbb R$, it holds
$$
2(\gamma_{2}x^{i_{0}+4}+\gamma_{1}x^{i_{0}+1})+\mu'x^{i_{0}}\in\mathbb Z[x]\iff (2\sqrt3\gamma_{2}+2\gamma_{1})x-(4\gamma_{2}+\mu')\in\mathbb Z[x]
$$
and this last condition is equivalent to $2(\sqrt3\gamma_{2}+\gamma_{1})\in G$. This should be compared with set $D$ in Lemma~\ref{lem-intersection-ensemble}.
\item Lines labelled $i+6$ and $-(i+6)$ are in the same $\Delta_{0}$-orbit if and only if for some $\mu'\in\mathbb R$, it holds
$$
2(-\gamma_{2}x^{i_{0}}+\gamma_{1}x^{i_{0}+1})+\mu'x^{i_{0}}\in\mathbb Z[x]\iff 2\gamma_{1}x^{i_{0}+1}+(-2\gamma_2+\mu')x^{i_{0}}\in \mathbb Z[x]
$$
and this last condition is equivalent to $2\gamma_{1}\in G$. 
\item Lines labelled $\pm(i+4)$ and $\mp(i+6)$ are in the same $\Delta_{0}$-orbit if and only if for some $\mu'\in\mathbb R$, it holds
$$
\gamma_{2}\sqrt3x^{i_{0}+5}+2\gamma_{1}x^{i_{0}+1}+\mu'x^{i_{0}}\in\mathbb Z[x]\iff (\gamma_{2}\sqrt3+2\gamma_{1})x+(\mu'-3\gamma_{2})\in\mathbb Z[x]
$$
and this last condition is equivalent to $\gamma_{2}\sqrt3+2\gamma_{1}\in G$. This should be compared with set $E$ in Lemma~\ref{lem-intersection-ensemble}.
\end{itemize}
In the case of lines directed by an odd power of $x$ the `square' we obtain is:

\begin{center}
\begin{tikzpicture}
\draw[red] (0,0)--(3,0);
\draw[red] (0,3)--(3,3);
\draw[green] (0,0)--(3,3);
\draw[green] (3,0)--(0,3);
\draw[blue] (3,0)--(3,3);
\draw (0,0)--(0,3);
\fill (0,0) circle(.1);
\fill (3,0) circle(.1);
\fill (0,3) circle(.1);
\fill (3,3) circle(.1);
\draw (0,0) node[left]{$i+4$};
\draw (0,3) node[left]{$-(i+4)$};
\draw (3,0) node[right]{$i+6$};
\draw (3,3) node[right]{$-(i+6)$};
\draw (1.5,0) node[below]{$\sqrt 3\gamma_2$};
\draw (0,1.5) node[left]{$2(\sqrt3\gamma_2+\gamma_{1})$};
\draw (3,1.5) node[right]{$2\gamma_{1}$};
\draw (1.5,1.6) node[above]{$\gamma_{2}\sqrt3+2\gamma_{1}$};
\end{tikzpicture}
\end{center}

A summary of our findings on the number of orbits of lines directed by odd powers of $x$ is summarized in Figure~\ref{fig-orbit-line-pair-impair}.

Finally, to obtain $L_{1}$, the total number of orbits of lines under the action of $\Gamma$, we need to combine the information on the lines directed by odd and even powers of $x$. The considerations in Lemma~\ref{lem-intersection-ensemble} together with the analysis carried out in this proof yield the final result, which we have encapsulated in Figure~\ref{fig-orbit-tout}.  
\end{proof}

\subsection{First cohomology group $H^1(\Omega_{E_{12}^\gamma})$}\label{sec-first-group}
In order to compute the rank of the group $H^1(\Omega_{E_{12}^\gamma})$ via Theorem~\ref{ref-rank-cohomologie} we need one more ingredient: the value of $R$, which is the rank of the $\mathbb Z$-module generated by $\Lambda^2\Gamma^i, i=1,\dots, n$. Recall that $\Gamma^i\leq\Gamma$ is the stabilizer under the action of $\Gamma$ of the vector space spanned by $f_i$. 

\begin{lemma}\label{Stab-delta}
For all $i=1,\dots, 6$, the rank 2 groups $\Gamma^{i}$ are explicit in the following chart:
$$
{\renewcommand\arraystretch{1.3}
\begin{array}{|c|c|c|}
\hline
\Gamma^1&\Gamma^2&\Gamma^3\\
\hline
\langle f_1, f_6-f_2\rangle&\langle f_2,f_1+f_3\rangle&\langle f_3, f_2+f_4\rangle\\
\hline
\end{array}}
$$
$$
{\renewcommand\arraystretch{1.3}
\begin{array}{|c|c|c|}
\hline
\Gamma^4&\Gamma^5&\Gamma^6\\
\hline
\langle f_4, f_3+f_5\rangle&\langle f_5, f_4+f_6\rangle &\langle f_6,f_1-f_5\rangle \\
\hline
\end{array}}
$$ 

\end{lemma}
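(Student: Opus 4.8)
The plan is to reduce the stabilizer $\Gamma^{i}$ to an intersection of lattices inside $F^{\perp}$ and then to apply the normal form of Lemma~\ref{degree1}. First I would make the relevant action explicit: once all the singular lines have been translated into the single plane $P_{0}^{\gamma}\cong F^{\perp}$, the action of $\Gamma$ on them reduces, exactly as in Section~\ref{sect:lines-gamma}, to the translation action of $\Delta_{0}=\langle f_{1},\dots,f_{6}\rangle_{\mathbb Z}\cong\tfrac{1}{\sqrt3}\mathbb Z[x]$. A translation of $F^{\perp}$ sends the vector space $\mathbb R f_{i}$ to itself precisely when its translation vector already lies in $\mathbb R f_{i}$, so
$$
\Gamma^{i}=\Delta_{0}\cap\mathbb R f_{i},
$$
and this identity is where I would start.

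Then I would compute this intersection through the complex description $f_{i}=\tfrac{1}{\sqrt3}x^{5(i-1)}$ with $x=e^{i\pi/6}$ (Corollary~\ref{c:fvsx}). An element $\tfrac{1}{\sqrt3}p(x)$ of $\Delta_{0}$, with $p\in\mathbb Z[x]$, lies on the line $\mathbb R f_{i}$ if and only if $q(x):=p(x)x^{-5(i-1)}$ is real; since $x^{12}=1$, this $q$ is again an element of $\mathbb Z[x]$, so by Lemma~\ref{degree1} we may write $q(x)=ax+b$ with $a,b\in G=\mathbb Z[\sqrt3]$, and since $\operatorname{Im}x=\tfrac12\neq0$ this is real exactly when $a=0$, i.e.\ exactly when $p(x)\in x^{5(i-1)}G$. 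The one small verification needed here is that $x^{5(i-1)}G\subseteq\mathbb Z[x]$, which holds because $G\subset\mathbb Z[x]$ (for instance $\sqrt3=x-x^{5}$ by Lemma~\ref{degree1}); this yields the reverse containment and hence
$$
\Gamma^{i}=f_{i}\,G=\mathbb Z[\sqrt3]\,f_{i}=\langle f_{i},\ \sqrt3\,f_{i}\rangle_{\mathbb Z},
$$
which is free of rank two because $\sqrt3\notin\mathbb Q$ forces $f_{i}$ and $\sqrt3 f_{i}$ to be $\mathbb Q$-linearly independent.

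It then only remains to rewrite the generator $\sqrt3 f_{i}$ in terms of the $f_{j}$'s via Lemma~\ref{rem-simplif-gamma}: for $i=1$ one has $\sqrt3 f_{1}=f_{6}-f_{2}$, for $i=6$ one has $\sqrt3 f_{6}=f_{1}-f_{5}$, and for $i=2,3,4,5$ the relation $f_{i-1}+f_{i+1}=-\sqrt3 f_{i}$ gives $\langle f_{i},\sqrt3 f_{i}\rangle_{\mathbb Z}=\langle f_{i},\ f_{i-1}+f_{i+1}\rangle_{\mathbb Z}$; reading this off index by index produces exactly the six entries of the chart. Alternatively, the case analysis collapses to $i=1$: multiplication by $x^{5}$ is an automorphism of $\mathbb Z[x]$, hence of $\Delta_{0}$, sending $\mathbb R f_{i}$ to $\mathbb R f_{i+1}$, so $\Gamma^{i+1}=x^{5}\Gamma^{i}$.

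I do not expect a genuine obstacle: the argument is in essence a single invocation of Lemma~\ref{degree1} together with the observation $G\subset\mathbb Z[x]$. The only point that requires care is the opening step — identifying the ``stabilizer of the vector space spanned by $f_{i}$'' with $\Delta_{0}\cap\mathbb R f_{i}$ rather than with $\Gamma\cap\mathbb R f_{i}$, since it is $\Delta_{0}$ (the image of $\Gamma$ in $F^{\perp}$), not $\Gamma$ itself, that acts effectively on the plane $P_{0}^{\gamma}$ in which these lines sit; indeed $\Gamma\cap\mathbb R f_{i}$ is the strictly smaller group $\langle 3f_{i},\sqrt3 f_{i}\rangle_{\mathbb Z}$, which does not even contain $f_{i}$.
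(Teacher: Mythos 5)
Your proof is correct and follows essentially the same route as the paper's: reduce to deciding when $\lambda f_i$ lies in $\langle f_1,\dots,f_6\rangle_{\mathbb Z}\cong\tfrac{1}{\sqrt3}\mathbb Z[x]$, use the normal form of Lemma~\ref{degree1} (the paper invokes Lemma~\ref{lem-algebre-simple}, whose first part is the same statement) to conclude $\lambda\in\mathbb Z[\sqrt3]$, and then rewrite $\sqrt3 f_i$ in terms of the $f_j$ via Lemma~\ref{rem-simplif-gamma} to read off the chart. Your opening point that the stabilizer must be computed inside $\Delta_0=\langle f_1,\dots,f_6\rangle_{\mathbb Z}$ (the image of $\Gamma$ in $F^{\perp}$, which is how the paper uses these groups elsewhere) rather than literally as $\Gamma\cap\mathbb R f_i$ is exactly what the paper's proof does implicitly when it writes $h=\sum_j n_jf_j$, so it is a clarification of the same argument rather than a different approach.
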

\begin{proof}
Consider a line directed by the vector $f_i$. The group $\Gamma^{i}$ is the subgroup $\{h\in\Gamma\,|\, f_{i}+h=\mu f_{i}, \mu\in\mathbb R\}$. Thus we need to solve an equation of the type 
$$
\lambda f_i= h = n_1f_1+n_2f_2+n_3f_3+n_4f_4+n_5f_5+n_6f_6,
$$
where $\lambda\in\mathbb R$ and $ (n_1,\dots,n_6)\in\mathbb Z^6$. Using Equation~\eqref{eq-fx} the above expression can be rewritten in terms of roots of unity and our task is to understand which $\lambda\in\mathbb R$ can be written as $\lambda =\sum_k n_k x^k$. By Lemma~\ref{lem-algebre-simple} we deduce $\lambda\in \mathbb Z[\sqrt 3]$. 
The integer combinations of the $f_{i}$ which yield elements of the form $\mathbb Z[\sqrt3] f_{i} $ are collected in Lemma~\ref{rem-simplif-gamma}. 
The results in the table in the statement follow.
\end{proof}

\begin{lemma}\label{lem-R1-sansDelta}
Let $\beta$ be the map from Eq.~(\ref{beta-map}). For all parameters $\gamma$ we have $R = \rk \beta = 3$, and $\coker \beta \cong \mathbb Z^3$.
\end{lemma}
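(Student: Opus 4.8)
The plan is to compute both the rank of $\beta$ and the cokernel of $\beta$ directly from the explicit description of the six stabilizers $\Gamma^i$ given in Lemma~\ref{Stab-delta}, using the identification $F^\perp\cong\mathbb C$ with $f_i=\tfrac1{\sqrt3}x^{5(i-1)}$ from Corollary~\ref{c:fvsx}. First I would fix a convenient $\mathbb Z$-basis of $\Delta_0=\langle f_1,\dots,f_6\rangle_{\mathbb Z}$. By Lemma~\ref{rem-simplif-gamma} we have $f_5=f_3-f_1$ and $f_6=f_4-f_2$, so $\Delta_0$ is generated by $f_1,f_2,f_3,f_4$; moreover the relation $f_i+f_{i+2}=-\sqrt3 f_{i+1}$ shows $f_3=-\sqrt3 f_2-f_1$ and $f_4=-\sqrt3 f_3-f_2=3f_2+\sqrt3 f_1-f_2=2f_2+\sqrt3 f_1$. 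Hence $\Delta_0$ has $\mathbb Z$-basis $\{f_1,f_2,\sqrt3 f_1,\sqrt3 f_2\}$ (equivalently $\tfrac1{\sqrt3}\mathbb Z[x]$ has $\mathbb Z$-basis $\{1,x,\sqrt3,\sqrt3 x\}$ after scaling, matching Lemma~\ref{degree1}). Then $\Lambda^2\Delta_0$ is free of rank $6$, with basis the wedges of pairs from this $4$-element basis.

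Next I would express each $\Lambda^2\Gamma^\alpha$ in this basis. For $\alpha$ in direction $f_i$, $\Gamma^\alpha=\Gamma^i=\langle f_i, f_{i-1}+f_{i+1}\rangle$ (reading indices mod $6$, as in Lemma~\ref{Stab-delta}), and $\Lambda^2\Gamma^i$ is the single element $f_i\wedge(f_{i-1}+f_{i+1})\in\Lambda^2\Delta_0$. Using $f_i+f_{i+2}=-\sqrt3 f_{i+1}$ one gets $f_{i-1}+f_{i+1}=-\sqrt3 f_i$... wait — that would make the wedge zero, so one must instead use the honest generator: from Lemma~\ref{rem-simplif-gamma}, $f_6-f_2=\sqrt3 f_1$ is \emph{not} the second generator of $\Gamma^1$; rather the table lists $\Gamma^1=\langle f_1, f_6-f_2\rangle$ and indeed $f_6-f_2=f_4-2f_2$, which by the basis above equals $(2f_2+\sqrt3 f_1)-2f_2=\sqrt3 f_1$. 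So each $\Gamma^i$ is actually $\langle f_i,\sqrt3 f_i\rangle$ up to the basis change — but that again gives a vanishing wedge, which contradicts $R=3$. The resolution I would pursue: the listed second generators are $f_6-f_2, f_1+f_3, f_2+f_4,\dots$, and one must be careful that $f_1+f_3=-\sqrt3 f_2$ while $f_6-f_2=\sqrt3 f_1$; so $\Gamma^1=\langle f_1,\sqrt3 f_1\rangle$, $\Gamma^2=\langle f_2,\sqrt3 f_2\rangle$, and these wedges vanish, whereas $\Gamma^3=\langle f_3,f_2+f_4\rangle$ with $f_3=-f_1-\sqrt3 f_2$ and $f_2+f_4=3f_2+\sqrt3 f_1$, giving a genuinely nonzero wedge. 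I would therefore recompute all six wedges honestly in the basis $\{f_1,f_2,\sqrt3 f_1,\sqrt3 f_2\}$, assemble them as six vectors in $\mathbb Z^6=\Lambda^2\Delta_0$, row-reduce the $6\times 6$ integer matrix to read off that its rank is $3$, and compute the Smith normal form of that matrix to identify $\coker\beta$; the claim is the nonzero invariant factors are all $1$, so $\coker\beta\cong\Lambda^2\Delta_0/\mathrm{im}\,\beta\cong\mathbb Z^{6-3}=\mathbb Z^3$, with no torsion.

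Since $R=\rk\beta=\rk\langle\Lambda^2\Gamma^\alpha\rangle_{\alpha\in I_1}$ depends only on the \emph{directions} of the $1$-singularities and not on $\gamma$ (as observed after Theorem~\ref{ref-rank-cohomologie}), and since for every value of $\gamma$ there is at least one $1$-singularity in each of the six directions $f_1,\dots,f_6$ (by Proposition~\ref{orbits-line}, $L_1\geq 6$ with one orbit per direction always present), the set $\{\Gamma^\alpha:\alpha\in I_1\}$ always contains exactly the six groups $\Gamma^1,\dots,\Gamma^6$; extra parallel $1$-singularities contribute the same stabilizer and hence the same element of $\Lambda^2\Delta_0$, changing neither the span nor the quotient. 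Thus the computation above settles the statement for all $\gamma$ simultaneously.

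The main obstacle is purely bookkeeping: getting the second generators of the $\Gamma^i$ right and carrying the $\sqrt3$-coefficients correctly through the change to an integral basis of $\Delta_0$, so that the $6\times 6$ matrix — and in particular its Smith normal form — is computed without sign or index errors. Once the matrix is written down correctly, both $\rk\beta=3$ and the vanishing of the torsion of $\coker\beta$ are immediate from its Smith normal form.
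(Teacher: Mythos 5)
Your overall plan (express the six elements $\Lambda^2\Gamma^i$ in an explicit rank-$6$ model of $\Lambda^2\Delta_0$, read off the rank and the Smith normal form, and observe that only the six directions matter so the answer is independent of $\gamma$) is the same strategy as the paper's, just carried out in the basis $\{f_1,f_2,\sqrt3 f_1,\sqrt3 f_2\}$ of $\Delta_0$ rather than $\{f_1,f_2,f_3,f_4\}$. However, the middle of your argument contains a genuine conceptual error that, as written, would derail the computation. The exterior squares $\Lambda^2\Gamma^\alpha$ and $\Lambda^2\Delta_0$ in Eq.~(\ref{beta-map}) are exterior squares of \emph{abstract free abelian groups} (of ranks $2$ and $4$ respectively); they are not computed inside $\Lambda^2_{\mathbb R}F^\perp\cong\mathbb R$. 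Consequently $f_1\wedge\sqrt3 f_1$ is \emph{not} zero: $f_1$ and $\sqrt3 f_1$ are two distinct members of your own $\mathbb Z$-basis of $\Delta_0$, so $f_1\wedge\sqrt3 f_1$ is a basis element of $\Lambda^2\Delta_0\cong\mathbb Z^6$. The ``contradiction with $R=3$'' you perceive is therefore spurious, and your proposed resolution is false: by the relation $f_i+f_{i+2}=-\sqrt3 f_{i+1}$ of Lemma~\ref{rem-simplif-gamma} one has $f_2+f_4=-\sqrt3 f_3$ (indeed $\sqrt3 f_1+3f_2=-\sqrt3(-f_1-\sqrt3 f_2)$), so $\Gamma^3=\langle f_3,\sqrt3 f_3\rangle$ has exactly the same shape as $\Gamma^1$ and $\Gamma^2$; in fact all six stabilizers of Lemma~\ref{Stab-delta} are of the form $\langle f_i,\pm\sqrt3 f_i\rangle$. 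Under your (incorrect) vanishing criterion all six wedges would be zero and you would conclude $R=0$; and if you assembled the $6\times6$ matrix with the columns for $\Gamma^1,\Gamma^2$ set to zero, neither the rank nor the Smith normal form would come out as claimed.

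Once the correct $\mathbb Z$-module reading is adopted, your plan goes through and essentially coincides with the paper's proof: there, each $\Lambda_i=\Lambda^2\Gamma^i$ is expanded in the basis $f_j\wedge f_k$, $1\le j<k\le 4$ (for instance $\Lambda_1=f_1\wedge(f_6-f_2)=-2f_1\wedge f_2+f_1\wedge f_4$, which is precisely $f_1\wedge\sqrt3 f_1$ in your basis), the relations expressing $\Lambda_4,\Lambda_5,\Lambda_6$ through $\Lambda_1,\Lambda_2,\Lambda_3$ give $R=3$, and instead of a Smith normal form the paper exhibits three elements of $\im\beta$ spanning a rank-$3$ direct summand of $\mathbb Z^6$, whence $\coker\beta\cong\mathbb Z^3$. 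Your closing observation that the result is independent of $\gamma$ because parallel $1$-singularities share the same stabilizer and every $\gamma$ yields lines in all six directions is correct, and is the remark already made after Theorem~\ref{ref-rank-cohomologie}. So the fix you need is conceptual, not computational: identify $\Lambda^2\Gamma^i$ with the nonzero element $f_i\wedge(\pm\sqrt3 f_i)$ of $\Lambda^2\Delta_0$ and only then row-reduce.
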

\begin{proof}
By definition, the image of the map $\beta$ depends only on the stabilizers $\Gamma^\alpha$ of the $1$-singularities $\alpha$, that is, on their
directions $f_i$, and not on their positions. Let us denote $\Lambda_i=\Lambda^2 \Gamma^i$ for $i=1,\dots, 6$, so that $\im \beta$ is generated
by the $\Lambda_i$. Using the explicit description of the groups $\Gamma^{i}$ in Lemma~\ref{Stab-delta} and the relations in Lemma~\ref{rem-simplif-gamma},
we compute the relevant exterior products:
$$
\begin{cases}
\Lambda_1=f_1\wedge f_6-f_1\wedge f_2=-2f_1\wedge f_2+f_1\wedge f_4,\\
\Lambda_2=-f_1\wedge f_2+f_2\wedge f_3,\\
\Lambda_3=-f_2\wedge f_3+f_3\wedge f_4,\\
\Lambda_4=f_1\wedge f_4-2f_3\wedge f_4,\\
\Lambda_5=2f_5\wedge f_4-f_5\wedge f_2=f_1\wedge f_2-2 f_1\wedge f_4+f_2\wedge f_3+2 f_3\wedge f_4, \\ 
\Lambda_6=2f_6\wedge f_1-f_6\wedge f_3=2 f_1\wedge f_2 -2 f_1\wedge f_4+ f_2\wedge f_3+f_3\wedge f_4.\\ 
\end{cases}
$$
In order to compute the rank of the $\mathbb Z$-module $\im \beta$ generated by the $\Lambda^2\Gamma^i$, we represent each of these exterior products,
with respect to the basis $f_i\wedge f_j$ given by the lexicographic order, as a column in the following matrix:
$$
\begin{pmatrix}
-2&-1&0&0&1&2\\
0&0&0&0&0&0\\
1&0&0&1&-2&-2\\
0&1&-1&0&1&1\\
0&0&0&0&0&0\\
0&0&1&-2&2&1
\end{pmatrix}.
$$
The quantity $R$ we seek is the rank of this matrix. It is evident that the first three columns are linearly independent. We have
$
\begin{cases}
\Lambda_4=\Lambda_{1}-2\Lambda_{2}-2\Lambda_3,\\ 
\Lambda_5=-2\Lambda_1+3\Lambda_2+2\Lambda_3,\\ 
\Lambda_6=-2\Lambda_1+2\Lambda_2
\end{cases}
$ 
and therefore $R=3$.

It is also easy to see that $-(\Lambda_2+\Lambda_3)$, $\Lambda_1-2(\Lambda_2+\Lambda_3)$ and $-\Lambda_3$
span a $\mathbb Z$-module of rank 3 with free complement in $\mathbb Z^6$. As the latter equals $\Lambda^2\Delta_0$ in our basis,
$\coker \beta$ is free of rank 3.

\end{proof}

\begin{proposition}\label{rank}
The rank of $H^1(\Omega_{E_{12}^\gamma})$ depends on $\gamma$ and equals the values depicted in Figure~\ref{fig-orbit-tout} plus 1.
\end{proposition}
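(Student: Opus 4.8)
The plan is to invoke Theorem~\ref{ref-rank-cohomologie}, which gives $H^1(\Omega_E,\mathbb Z)=\mathbb Z^{4+L_1-R}$, and then simply assemble the two ingredients that have just been computed. First I would recall that Lemma~\ref{lem-R1-sansDelta} establishes $R=\rk\beta=3$ for \emph{all} parameters $\gamma$, since $R$ depends only on the directions of the $1$-singularities, which are the six fixed directions $f_1,\dots,f_6$ and do not vary with $\gamma$. Next I would recall that Proposition~\ref{orbits-line} computes $L_1$, the number of $\Gamma$-orbits of $1$-singularities, as a function of $\gamma$, with the precise values recorded in Figure~\ref{fig-orbit-tout} (taking values in $\{6,9,12,15,18,21,24\}$).

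Then the computation is immediate: substituting $R=3$ into the formula from Theorem~\ref{ref-rank-cohomologie} gives
$$
\rk H^1(\Omega_{E_{12}^\gamma}) = 4 + L_1 - R = 4 + L_1 - 3 = L_1 + 1,
$$
so the rank equals the value of $L_1$ read off from Figure~\ref{fig-orbit-tout}, plus one. I would also note that, as remarked after Theorem~\ref{ref-rank-cohomologie}, $H^1$ is torsion free, so this rank statement completely determines the group: $H^1(\Omega_{E_{12}^\gamma})\cong\mathbb Z^{L_1+1}$. One should double-check that the hypotheses for finite generation (the condition $\rk\Delta_0=\nu\dim F^\perp$ with $\nu=\rk\Gamma^\alpha$, here $\nu=2$) are in force, which was already verified in the discussion preceding Theorem~\ref{ref-rank-cohomologie}.

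There is essentially no obstacle here: the proposition is a one-line corollary of the two preceding major results, and all the genuine work — the classification of cubes, the description of the lines in Proposition~\ref{prop-intersections-cubes}, the orbit count of Proposition~\ref{orbits-line}, and the rank computation of the matrix of exterior products in Lemma~\ref{lem-R1-sansDelta} — has already been done. The only thing to be careful about is bookkeeping: making sure the quantity labelled in Figure~\ref{fig-orbit-tout} is indeed $L_1$ (and not, say, already $L_1+1$), and that the formula $4+L_1-R$ is applied with the correct value $R=3$ rather than a $\gamma$-dependent quantity. If one wished, one could also spell out the two extreme cases as a sanity check: $\gamma=(0,0)$ (or any $\gamma$ with $\gamma_1,\gamma_2\in G$) gives $L_1=6$ and hence $H^1=\mathbb Z^7$, matching Theorem~\ref{known}.3, while the generic case gives $L_1=24$ and $H^1=\mathbb Z^{25}$, matching the conjecture and point~(3) of Theorem~\ref{main}.
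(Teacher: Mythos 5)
Your proposal is correct and is essentially the paper's own argument: it invokes Theorem~\ref{ref-rank-cohomologie} for the formula $4+L_1-R$, takes $R=3$ from Lemma~\ref{lem-R1-sansDelta}, and takes the $\gamma$-dependent values of $L_1$ from Proposition~\ref{orbits-line} and Figure~\ref{fig-orbit-tout}, giving rank $L_1+1$. The additional sanity checks (torsion-freeness, finite generation, the extreme cases $L_1=6$ and $L_1=24$) are consistent with the paper but not needed beyond what the paper's one-line proof records.
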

\begin{proof}
By Theorem~\ref{ref-rank-cohomologie} the rank of $H^1$ can be computed as $4+L_1-R$. The value of $L_{1}$ as a function of $\gamma$ was computed in Lemma~\ref{orbits-line}. Finally, by the previous lemma we know that for all $\gamma$ we have $R=3$ and the result follows.
\end{proof}

\section{Intersection of lines}\label{sec-lines}

We are now left with the task of computing the rank of the second cohomology groups. To this end we need to compute the quantity $e$ in Theorem~\ref{ref-rank-cohomologie}. It is defined as a linear combination of the quantities   $L_0^\alpha$ and $L_0$ which depend on $\gamma$. We recall the reader that the latter quantity is the cardinality of the set of all orbits of 0-singularities, while the former is the cardinality of the set of orbits of $0$-singularities on the specific 1-singularity $\alpha$.

The 0-singularities are obtained intersecting the representatives of orbits of lines studied in Section~\ref{sect:lines-gamma}. From Proposition~\ref{orbits-line} we know that the number of orbits depends on the value of the parameter $\gamma$. In the next subsection we compute $L_0^\alpha$ and and $L_0$ in the case $\gamma=(0,0)$, which corresponds to having 6 different directions, or in other words, one orbit representative per direction. Following this, Subsection~\ref{2lines} describes all the cohomology groups of the generalized 12-fold tilings with the property of having at most two representatives of orbits of lines per direction. In the language of Proposition~\ref{orbits-line} this section deals with $L_{1}$ having value 6, 9 or 12. The corresponding values of $\gamma$ are the ones in the regions enclosed by a blue line in Figure~\ref{fig-orbit-tout}. These results come from a computed aided calculation. Finally, in the last subsection we address the general case. We have not obtained a complete description of the second cohomology groups in this case. However, again with a computed aided calculation, we present an example of the extremal case in Proposition~\ref{orbits-line} when we have 24 different line orbits to deal with. 

\subsection{First case: $\gamma=(0,0)$}\label{gammanul}
\begin{lemma}\label{lem-calc-droites-gamma-nul}
If $\gamma=(0,0)$, then $L_0=14$ and for each 1-singularity $\alpha$ we have $L_0^\alpha=6$.  
\end{lemma}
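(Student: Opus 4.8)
When $\gamma=(0,0)$, Proposition~\ref{orbits-line} tells us $L_1=6$: there is exactly one $\Gamma$-orbit of $1$-singularities in each of the six directions $x^0,x^1,\dots,x^5$, and by the reductions in Section~\ref{sec-cubes-droites} we may take as representatives the six lines through the origin of $P^{0}_{0}$ directed by the $f_i$ (equivalently $\tfrac{1}{\sqrt3}x^{i-1}$), $i=1,\dots,6$. The first task is to compute $L_0^\alpha$ for a fixed such line $\alpha$, say the one directed by $f_1=\tfrac{1}{\sqrt3}x^0$. By definition $L_0^\alpha$ counts the orbits, under the stabilizer $\Gamma^\alpha=\Gamma^1=\langle f_1,\,f_6-f_2\rangle$ (Lemma~\ref{Stab-delta}), of the points where $\alpha$ meets a $\Gamma$-translate of one of the other five $1$-singularities. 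Concretely: for each $j\neq 1$, intersect the line $\mathbb R f_1$ with every line of the form $t+\mathbb R f_j$, $t\in\Delta_0=\langle f_1,\dots,f_6\rangle_{\mathbb Z}$ (the $F$-component of $\gamma$ being zero, the relevant group acting is $\Delta_0\sim\tfrac1{\sqrt3}\mathbb Z[x]$); collect the resulting intersection points on $\mathbb R f_1$; then quotient that set of points by the action of $\Gamma^1$. First I would parametrize an intersection point: $\mathbb R f_1\cap(t+\mathbb R f_j)$ is the point $\mu_t f_1$ where $\mu_t$ is determined by writing $t$ in the $\{f_1,f_j\}$ basis — since $\{f_1,f_j\}$ spans $F^\perp$ over $\mathbb R$ for $j\neq 1,4$ (and $f_4=-f_1+\text{integer combination}$... actually $f_4$ is not parallel to $f_1$, so all five give genuine transversal intersections). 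As $t$ ranges over $\Delta_0$, the coefficient $\mu_t$ ranges over a coset-like subset of $\mathbb R$; using Lemma~\ref{degree1} and Lemma~\ref{lem-algebre-simple} one identifies this subset as a concrete fractional-ideal-type subgroup of $\tfrac1{\sqrt3}\mathbb Z[\sqrt3]$ or similar. Then $\Gamma^1$ acts on $\mathbb R f_1$ by translation through $\langle f_1,\,f_6-f_2\rangle\cap\mathbb R f_1$; since $f_6-f_2=-\sqrt3 f_1$ (Lemma~\ref{rem-simplif-gamma}), this translation subgroup is $\langle 1,\sqrt3\rangle_{\mathbb Z}=\mathbb Z[\sqrt3]=G$ acting on the coefficient $\mu$. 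So $L_0^\alpha$ is the number of $G$-cosets needed to cover all the $\mu$-values arising from the five families; I expect a short case analysis over $j=2,3,4,5,6$ (using the symmetry $f_5=f_3-f_1$, $f_6=f_4-f_2$ to reduce work) to yield exactly $6$ distinct cosets, matching the claim. By the dihedral symmetry of the configuration of the $f_i$, the same count holds for every direction, giving $L_0^\alpha=6$ for all $\alpha$.

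The second task is $L_0$, the number of $\Gamma$-orbits of all $0$-singularities. Here one works in $E^\perp=F^\perp\oplus F$ rather than inside a single plane: a $0$-singularity is an intersection of two non-parallel $1$-singularities in $E^\perp$, and two such are $\Gamma$-equivalent iff they differ by an element of $\Gamma=\Delta\oplus\Delta_0$ (Corollary~\ref{cor-groupe-delta}). The cleanest route is to use the relation $e=-L_0+\sum_\alpha L_0^\alpha$ together with the already-known answer $H^2(\Omega_{E_{12}})=\mathbb Z^{28}$ from Theorem~\ref{known}.3 and the formula $\operatorname{rk}H^2=3+L_1+e-R$ from Theorem~\ref{ref-rank-cohomologie}: with $L_1=6$, $R=3$, $\sum_\alpha L_0^\alpha=6\cdot 6=36$, we get $28=3+6+(36-L_0)-3$, hence $L_0=14$. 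However, since the point of this lemma is to \emph{re}-derive the cohomology independently, I would also give a direct count: enumerate the orbits of intersection points directly. One fixes a fundamental domain for $\Delta$ acting on $E^\perp$ (a translate of the prism $F^\perp\times[0,1)^2$ in the $\{A,B\}$ coordinates), lists the finitely many pairs (direction $x^i$, direction $x^j$) up to the symmetry group, computes each intersection point's $F^\perp$-coordinate via the Lemma~\ref{lem-algebre-simple} machinery, and tallies $\Delta_0$-orbits of these in $F^\perp$. The bookkeeping here is the genuine content of the lemma, and I expect it to produce fourteen orbits; cross-checking against $L_0=14$ from the Euler-characteristic relation is the safeguard.

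The main obstacle is the $L_0$ computation: unlike $L_0^\alpha$, which lives on a single line and reduces to arithmetic in $G=\mathbb Z[\sqrt3]$, the count of all $0$-singularity orbits requires tracking both the $F^\perp$-position and the $\Delta$-translation class simultaneously, and it is easy to over- or under-count orbits when two a priori different intersections turn out to coincide modulo $\Gamma$. The way I would control this is to exploit the $12$-fold (dihedral $D_6$) symmetry permuting the $f_i$, which collapses the $\binom{6}{2}=15$ ordered direction-pairs into just a couple of symmetry types (pairs at angle $\tfrac{\pi}{6}$, $\tfrac{\pi}{3}$, $\tfrac{\pi}{2}$ apart), compute representative intersection multiplicities for one pair of each type, and then carefully account for which symmetry-images are genuinely new $\Gamma$-orbits. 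The arithmetic itself — deciding when $\alpha x^i+\beta x^j\in\mathbb Z[x]$ — is entirely routine given Lemma~\ref{lem-algebre-simple}; the care is purely combinatorial.
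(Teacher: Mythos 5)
Your framework is exactly the paper's: for $L_0^\alpha$ you reduce to arithmetic on the line's coefficient, observing that the stabilizer $\Gamma^1=\langle f_1,f_6-f_2\rangle$ acts on $\mathbb R f_1$ by translations through $G=\mathbb Z[\sqrt3]$ (note $f_6-f_2=+\sqrt3 f_1$ by Lemma~\ref{rem-simplif-gamma}, not $-\sqrt3 f_1$, though the generated group is the same), and then count cosets of intersection parameters modulo $G$; and for $L_0$ you compare classes across lines of different directions, using that the comparison depends only on the difference of the two directions. This is precisely how the paper proceeds: writing $x^{i-j}=ax+b$ with $a,b\in G$ and solving $\mu x^{i-j}+\sum_k n_k x^k=\lambda$ forces $\lambda\equiv -\tfrac{b}{a}c \pmod G$ with $c\in G$, so the possible classes are the finitely many cosets $\{0,\tfrac{1}{\sqrt3},\tfrac{2}{\sqrt3},\tfrac12,\tfrac{\sqrt3}{2},\tfrac{1+\sqrt3}{2}\}$, whence $L_0^\alpha=6$; then pairwise comparison (lines $x^0$ and $x^1$ share only the class of $0$; the class $\tfrac1{\sqrt3}$ on one line is $\Gamma$-equivalent to $\tfrac2{\sqrt3}$ on the line two steps away; the line $x^3$ shares four classes with $x^0$ and three with $x^1$) gives exactly $14$ orbits.

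The shortfall is that your proposal stops at the point where the lemma's actual content begins: both decisive numbers are asserted as expectations (``I expect \dots exactly $6$ distinct cosets'', ``I expect it to produce fourteen orbits'') rather than derived, and it is exactly this bookkeeping — the explicit list of six $G$-cosets and the cross-line identifications — that constitutes the paper's proof. Your fallback derivation of $L_0=14$ from Theorem~\ref{known}.3 via $\operatorname{rk}H^2=3+L_1+e-R$ is arithmetically correct and fine as a cross-check, but it still presupposes $\sum_\alpha L_0^\alpha=36$, i.e.\ the uncomputed claim $L_0^\alpha=6$, and (as you yourself note) it would be circular if the lemma is meant to recover the cohomology of $\Omega_{E_{12}}$ independently. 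So the plan is sound and would succeed with the arithmetic of Lemmas~\ref{degree1} and~\ref{lem-algebre-simple} carried out, but as written it is a correct outline of the paper's argument rather than a proof.
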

\begin{proof}
First of all we show that $L_0^\alpha=6$ for each 1-singularity $\alpha$. For a fixed line $\alpha=\mathbb R f_i$, with $i=1,\dots,6$, we are interested in the intersections of this line with the orbits of the other lines under the action of $\Gamma$. Using the bijection described in Equation \ref{eq-fx} we can write the relevant equations in terms of powers of the root of unity $x$ instead of the vectors $f_{i}$. The generic formula we have to deal with sets a point in a line of direction $x^{i}$, or rather points in a $\Gamma^i$-orbit, equal a point in a line of direction $x^{j}$. In symbols we have, for $i\neq j\in \{1,\dots,6\}$:
\[
\mu x^i+\sum n_k x^k=\lambda x^j, n_k\in\mathbb Z,\ \mu,\lambda\in\mathbb R 
\iff 
\lambda= \mu x^{i-j}+\sum_k n_k x^k.
\]
It follows that  $\lambda$ is of the form $cx+d+\mu (ax+b)$ with $a, b, c, d \in G$. This gives us the conditions
\begin{equation}\label{lambda}
\begin{cases} 
\lambda=d+\mu b\\
-c=\mu a
\end{cases} 
\implies
\lambda=d-\frac{b}{a}c.
\end{equation}
Inside the infinite collection of points of intersection between a line of direction $x^{j}$ and the translates by $\Gamma$ of a line of direction $x^{i}$, determined in this setting by the possible values of $\lambda$, we need to count how many are in different orbits. That is, the points $\lambda$ and $\lambda'$ will be identified if $\lambda-\lambda'\in G$.

Since $x^{i-j}=a+xb$, by Lemma~\ref{degree1} when $i-j$ varies we obtain five possible values for $-\frac{b}{a}$ which are collected in the following set: 
$$
S=\{0, -\frac{1}{\sqrt 3}, \frac{-\sqrt 3}{2}, \frac{-2}{\sqrt 3}, -\sqrt 3\}.
$$
Keeping the notation $\lambda$ to refer to its class modulo $G$, from \eqref{lambda} we see that we need to understand how many different equivalence classes are there for numbers of the form $\lambda=\frac{b}{a}(c_{1}+c_{2}\sqrt 3)$ where $c_{1},c_{2}\in\mathbb Z$ and $\frac{b}{a}\in S$. The complete list is the following
$$
T=\{0,  \frac{1}{\sqrt 3},  \frac{2}{\sqrt 3}, \frac{1}{2}, \frac{\sqrt{3}}{2},  \frac{1+\sqrt{3}}{2}\},
$$
from which we deduce that $L_0^\alpha=6$ independently of the value of $\alpha$, that is, of the direction of the 1-singularity.

We have identified on each line 6 different equivalence classes of points. However, some of these classes in different lines can be related by an element of $\Gamma$. For example, the class of 0 belongs to every 1-singularity so there are at most 31 different classes. The final quantity we need to compute, $L_{0}$, is precisely this total number of classes of 0-singularities under the action of $\Gamma$. 

Notice that the classes of 0-singularities in the line $x^{i}$ which are in the same equivalence classes of 0-singularities in the line $x^{j}$ only depend on the difference $i-j$. Indeed, we have to solve an equation of the form
$$
\lambda_{1}x^{i}-\lambda_{2}x^{j}\in\Gamma \iff \lambda_{1}x^{i}-\lambda_{2}x^{j}=a+bx
$$
with $a,b\in G$ and $\lambda_{1},\lambda_{2}\in T$,   
which is equivalent to $\lambda_{1}x^{i-j}-\lambda_{2}=a'+b'x$ for some $a',b'\in G$. With the help of Lemma~\ref{degree1} we see that the line of direction $x^{0}$ and the line directed by $x^{1}$ only share one equivalence class of 0-singularities, that of 0. So we have that $L_{0}\geq 11$. Comparing the equivalence classes between the lines directed by $x^{0}$ and $x^{2}$ we obtain:
$$
\lambda_{1}x^{2}-\lambda_{2}=a+bx\iff \lambda_{1}(x\sqrt3-1)-\lambda_{2}=a+bx\implies
\begin{cases}
\lambda_{1}\in\frac{1}{\sqrt3}G\\
\lambda_{2}\in -\lambda_{1}+G
\end{cases}.
$$
It follows that the class of $\frac{1}{\sqrt3}$ and the class of $\frac{2}{\sqrt 3}$ are in the same $\Gamma$ orbit when considered in lines directed by $x^{i}$ and $x^{i-2}$. From here we conclude that $L_{0}\geq 14$ since we have the eleven 0-singularities in different orbits from the lines directed by $x^{0}$ and $x^{1}$ plus three more in the line directed by $x^{2}$.

An equivalent computation to the one shown above, comparing the lines directed by $x^{3}$ and $x^{0}$ and by $x^{3}$ and $x^{1}$ shows that there are no more orbits of 0-singularities and gives a complete description. Indeed, it turns out that the line directed by $x^{3}$ has four orbits in common with the one directed by $x^{0}$; while it has three orbits in common with the line directed by $x^{1}$. Figure \ref{points-gamma-nul} summarizes the the result: on each line of direction $x^{i}$ there are six marked points and points in different lines with the same color are in the same orbit under the action of $\Gamma$. We conclude that $L_0=14$. 
\end{proof}

\begin{figure}
\begin{tikzpicture}
\draw (0,0)--(6,0);
\draw (0,0)--(30:6);
\draw (0,0)--(60:6);
\draw (0,0)--(90:6);
\draw (0,0)--(120:6);
\draw (0,0)--(150:6);
\draw (30:1) circle(1mm);
\draw (150:1) circle(1mm);
\draw (90:4) circle(1mm);

\draw[pink] (00:1) circle(1mm);
\draw[pink] (60:4) circle(1mm);
\draw[pink] (120:1) circle(1mm);

\draw[red] (00:2) circle(1mm);
\draw[red] (90:5) circle(1mm);

\fill (00:3) circle(1mm);
\fill (90:3) circle(1mm);

\draw[green] (00:4) circle(1mm);
\draw[green] (60:1) circle(1mm);
\draw[green] (120:4) circle(1mm);

\fill[blue] (00:5) circle(1mm);
\fill[blue] (90:2) circle(1mm);

\fill[gray] (30:2) circle(1mm);
\fill[gray] (120:5) circle(1mm);

\fill[yellow] (30:3) circle(1mm);
\fill[yellow] (120:3) circle(1mm);

\fill[purple] (30:4) circle(1mm);
\fill[purple] (90:1) circle(1mm);
\fill[purple] (150:4) circle(1mm);

\draw[dotted] (30:5) circle(1mm);
\draw[dotted] (120:2) circle(1mm);

\draw[] (00:7) node{$x^{0}$};
\draw[] (30:7) node{$x^{1}$};
\draw[] (60:7) node{$x^2$};
\draw[] (90:7) node{$x^3$};
\draw[] (120:7) node{$x^4$};
\draw[] (150:7) node{$x^5$};

\draw (0,-.5) node{$0$};
\draw (1,-.5) node{$\frac{1}{\sqrt 3}$};
\draw (2,-.5) node{$\frac{\sqrt 3}{2}$};
\draw (3,-.5) node{$\frac{1+\sqrt 3}{2}$};
\draw (4,-.5) node{$\frac{2}{\sqrt 3}$};
\draw (5,-.5) node{$\frac{1}{2}$};

\draw[] (60:2) node{$A$};
\draw[] (150:5) node{$A$};

\draw[] (60:3) node{$C$};
\draw[] (150:3) node{$C$};

\draw[] (60:5) node{$B$};
\draw[] (150:2) node{$B$};

\draw (1,0) arc (0:150:1);
\draw (2,0) arc (0:150:2);
\draw (3,0) arc (0:150:3);
\draw (4,0) arc (0:150:4);
\draw (5,0) arc (0:150:5);
\end{tikzpicture}
\caption{Orbits of points if $\gamma\in\Delta$}\label{points-gamma-nul}
\end{figure}
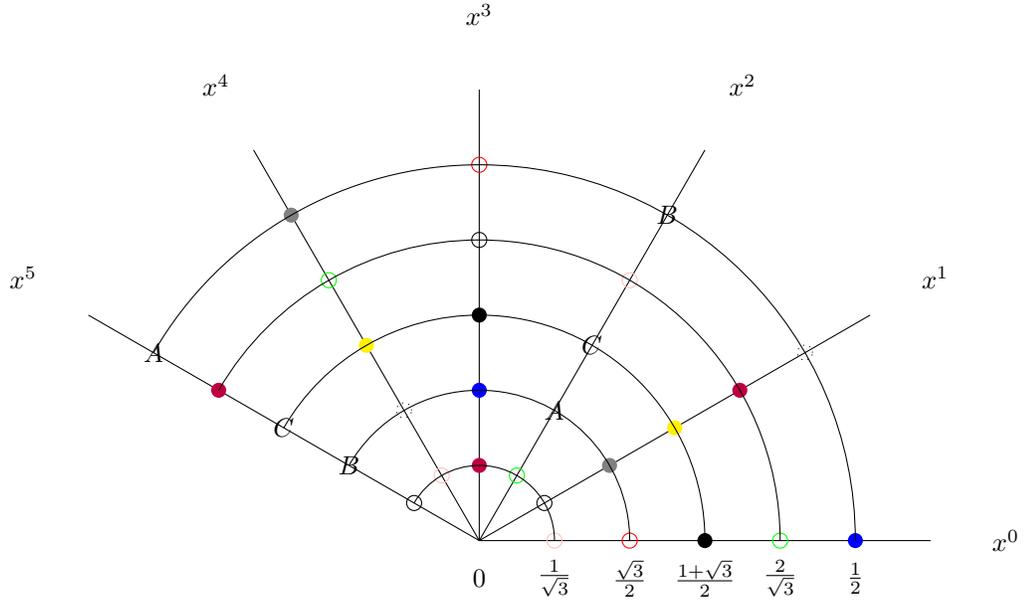

\subsection{All cases with up to two lines per direction}\label{2lines}

In this section, we summarize the results of some of the simpler cases, namely
all those with up to two $1$-singularities per direction. For this, we need to
determine all values of $\gamma$ giving rise to these cases, and then determine
for each of them the values of $L_0$, $L_1$, and $L_0^\alpha$ for all line orbits
$\alpha$. As $R=3$ and $\coker \beta$ is free independently of $\gamma$, we have by Theorem~\ref{ref-rank-cohomologie}:
$$
H^2(\Omega_{E_{12}^{\gamma}}) = \mathbb Z^{L_1+e},\quad H^1(\Omega_{E_{12}^{\gamma}}) = \mathbb Z^{1+L_1}\quad\mathrm{and}\quad H^0(\Omega_{E_{12}^{\gamma}})=\mathbb Z,
$$
where $e=-L_0+\sum_{\alpha \in I_1} L_0^\alpha$. 

For each value of $\gamma$ involved,
we have determined the combinatorics of line intersections with the help of a
computer program, which is implemented in the GAP language \cite{GAP4}.
It builds upon the GAP package Cryst \cite{Cryst} and is made available
for reference \cite{FGweb}.
In order to be able to connect the number of orbits
$L_0^\alpha$ of points on the individual lines $\alpha$ to the total number
of point orbits $L_0$, we have split the numbers $L_0^\alpha$ as
$L_0^\alpha=\sum_p L_{0,p}^\alpha$, where $L_{0,p}^\alpha$ is the number of those
orbits whose points are intersections of exactly $p$ 1-singularities. Correcting for
double counting, we obtain $L_0=\sum_{\alpha \in I_1} L_{0,p}^\alpha/p$.

These values are tabulated in each case. In these tables, we list $L_{0,p}$ for each \emph{type of line}. For example, the symbol $L^{1}_{0,p}$ indicates lines with combinatorics of ``type 1'', the symbol $L^{2}_{0,p}$ lines with combinatorics of ``type 2'' etc. The superscripts do not stand for directions or any other property of the lines, but they collect lines with the same type of combinatorics. The column $n$ in the table indicates the number of lines of a certain type. The column dir specifies whether lines of this type have even or odd directions, or both.
In two further lines in the tables, labelled $\sum L_{0,p}^\alpha$ and $ L_{0,p}$,  we give the sum of these data over all
line types, and the corresponding data for the total orbits. The last column
contains the sum over all values of $p$.

\textbf{Case 1.} We start analyzing $\gamma_1,\gamma_{2} \in G$, which is equivalent
to $\gamma=(0,0)$. This is the only case with just one line in each of the six directions. The explicit computations of $L_{0}$ and $L_{0}^{\alpha}$ in this case were carried out in Section~\ref{gammanul}. The intersection combinatorics in the notation introduced in this section are given in the following table:
\[ 
{\renewcommand\arraystretch{1.3}
\begin{array}{|c|c|c||c|c|c|c|c||c|}
  \hline
                    & n & \text{dir} & p=2 & p=3 & p=4 & p=5 & p=6 & \text{tot} \\
  \hline
  L_{0,p}^1          & 6 & e,o        &   3 &   2 &   0 &   0 &   1 &   6 \\
  \hline
  \sum L_{0,p}^\alpha & 6 &            &  18 &  12 &   0 &   0 &   6 &  36 \\
  \hline
  L_{0,p}            &   &            &   9 &   4 &   0 &   0 &   1 &  14 \\
  \hline
\end{array}} 
\]

We see that there is only one line type, containing one orbit with six-fold
intersections, two orbits with three-fold intersections, and three orbits
with two-fold intersections. These combinatorics should be compared with Figure~\ref{points-gamma-nul}: all lines, with even or odd direction, have the same combinatorics. There is precisely one orbit point, with representative the origin, which is a six-fold intersection. In each of the lines in Figure~\ref{points-gamma-nul} there are 3 symbols that appear exactly two times in the figure, those are the orbits of two-fold intersections; and there are two symbols that appear exactly three times in the figure, those are the orbits of three-fold intersections. Putting everything together, we get the following
end result:
\begin{proposition}[cf.\ Theorem~\ref{known}.3]\label{allgamma0} 
The ranks of the cohomology groups of the 12-fold tiling, together with the quantities $L_0^\alpha, L_0, L_1$ and $e$, are given by:
\[ 
{\renewcommand\arraystretch{1.3}
\begin{array}{|c|c|c|c|c|c|c|}
  \hline
  \sum L_0^\alpha & L_0 & L_1 &   e & \rk H^2(\Omega_{E_{12}}) & \rk H^1(\Omega_{E_{12}}) & \rk H^0(\Omega_{E_{12}}) \\
  \hline
             36 &  14 &   6 &  22 &      28 &       7 &      1 \\
  \hline
\end{array} } 
\]
\end{proposition}

\textbf{Case 2.} We now determine the intersection combinatorics under the assumption that there is only one line in each even direction and two lines per odd direction. From the proof of Proposition~\ref{orbits-line} (see particularly Figure~\ref{fig-orbit-line-pair-impair}) we see that this situation can arise in two ways.
\begin{itemize}
\item The first possibility is 
$\gamma_1 \in G$ and $\gamma_2 \in\frac{1}{2} G\setminus G$. All $\gamma$-values
in this set give the same intersection combinatorics, which are given in the following table:

\[ {\renewcommand\arraystretch{1.3}
\begin{array}{|c|c|c||c|c|c|c|c||c|}
  \hline
                    & n & \text{dir} & p=2 & p=3 & p=4 & p=5 & p=6 & \text{tot} \\
  \hline
  L_{0,p}^1          & 3 & e          &  10 &   0 &   0 &   2 &   0 &  12 \\
  \hline
  L_{0,p}^2          & 3 & o          &   4 &   5 &   0 &   1 &   0 &  10 \\
  \hline
  L_{0,p}^3          & 3 & o          &   2 &   4 &   0 &   2 &   0 &   8 \\
  \hline
  \sum L_{0,p}^\alpha & 9 &            &  48 &  27 &   0 &  15 &   0 &  90 \\
  \hline
  L_{0,p}            &   &            &  24 &   9 &   0 &   3 &   0 &  36 \\
  \hline
\end{array} } \]

We see here that we have two line types in each odd direction, with different
intersection combinatorics, but only one line type in even directions.
Putting these data together, we obtain:

\begin{proposition} 
The ranks of the cohomology groups of the generalized 12-fold tilings with parameter $\gamma=(\gamma_{1},\gamma_{2})$ satisfying $\gamma_{1}\in G$ and $\gamma_{2}\in\frac{1}{2}G\setminus G$, together with the quantities $L_0^\alpha, L_0, L_1$ and $e$, are given by:
\[ 
{\renewcommand\arraystretch{1.3}
\begin{array}{|c|c|c|c|c|c|c|}
  \hline
  \sum L_0^\alpha & L_0 & L_1 &   e & \rk H^2(\Omega_{E_{12}^\gamma}) & \rk H^1(\Omega_{E_{12}^\gamma}) & \rk H^0(\Omega_{E_{12}^\gamma}) \\
  \hline
             90 &  36 &   9 &  54 &     63  &      10 &       1 \\
  \hline
\end{array} } 
\]
\end{proposition}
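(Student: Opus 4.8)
The plan is to extract the four quantities $\sum_\alpha L_0^\alpha$, $L_0$, $L_1$ and $e$ that appear in the statement from the intersection table displayed immediately above it, and then to substitute them into Theorem~\ref{ref-rank-cohomologie}. First I would pin down the line orbits: for $\gamma_1\in G$ and $\gamma_2\in\frac{1}{2}G\setminus G$, the analysis in the proof of Proposition~\ref{orbits-line} (encapsulated in Figure~\ref{fig-orbit-line-pair-impair}) shows that there is exactly one $\Gamma$-orbit of lines in each of the three even directions and two $\Gamma$-orbits in each of the three odd directions, so $L_1=3+2\cdot3=9$. As representatives I would take the lines singled out in Corollary~\ref{cor-simplification}; by Lemma~\ref{lem-4directions} these account for every line occurring in $W\cap P^\gamma$.

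Next I would compute, for each of these nine representative lines $\alpha$, the set of $0$-singularities lying on $\alpha$: these are the intersection points of $\alpha$ with all $\Gamma$-translates of the remaining eight line orbits, counted up to the action of the stabilizer $\Gamma^\alpha$ (obtained from Lemma~\ref{Stab-delta} and rewritten via \eqref{eq-fx} in terms of powers of $x$, so that the relevant coordinates live in $G$). For each resulting orbit of points one records the number $p$ of distinct $1$-singularities through it, i.e.\ the intersection multiplicity, so that $L_0^\alpha=\sum_p L_{0,p}^\alpha$. This is precisely the bookkeeping performed by the GAP program built on the Cryst package \cite{GAP4,Cryst,FGweb}; it produces the three line types in the table, namely one even type with multiplicities $p=2$ (ten orbits) and $p=5$ (two orbits), and two odd types, one with $p=2,3,5$ occurring $4,5,1$ times and one with $p=2,3,5$ occurring $2,4,2$ times. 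Summing $L_0^\alpha$ over the nine lines yields $\sum_\alpha L_0^\alpha = 3\cdot12 + 3\cdot10 + 3\cdot8 = 90$, and correcting for the $p$-fold overcounting gives $L_0 = \sum_{\alpha,p}L_{0,p}^\alpha/p = 48/2 + 27/3 + 15/5 = 24+9+3 = 36$. Consequently $e = -L_0 + \sum_\alpha L_0^\alpha = -36+90 = 54$.

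To finish, recall from Lemma~\ref{lem-R1-sansDelta} that $R=3$ and that $\coker\beta\cong\mathbb Z^3$ is torsion free for every $\gamma$. Theorem~\ref{ref-rank-cohomologie} (equivalently Proposition~\ref{rank}) then gives that $H^*(\Omega_{E_{12}^\gamma})$ is torsion free with $\rk H^0 = 1$, $\rk H^1 = 4 + L_1 - R = 4+9-3 = 10$ and $\rk H^2 = 3 + L_1 + e - R = 3+9+54-3 = 63$, which is exactly the table in the statement.

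The main obstacle is the middle step, the exhaustive classification of $0$-singularity orbits on each line by multiplicity. It is delicate for two reasons: a single intersection point may be forced to lie on several $1$-singularities at once through the linear coincidences among the $f_i$ recorded in Lemma~\ref{rem-simplif-gamma}, so the multiplicities $p$ are not apparent from the equations taken individually; and one must track the $\Gamma$-action simultaneously within a fixed line (via $\Gamma^\alpha$, to obtain $L_0^\alpha$) and globally (to recover $L_0$ from the $L_0^\alpha$ without double counting). The condition $\gamma_2\notin G$ is what makes the $p=5$ orbits appear, in contrast with the case $\gamma=(0,0)$; confirming that no $p=4$ or $p=6$ coincidences survive is exactly what the computer computation certifies, and is the part that would be impractical to carry out by hand.
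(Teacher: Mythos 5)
Your proposal is correct and follows essentially the same route as the paper: determine $L_1=9$ from Proposition~\ref{orbits-line}, take the intersection combinatorics ($L_{0,p}^\alpha$, hence $\sum_\alpha L_0^\alpha=90$, $L_0=36$, $e=54$) from the GAP/Cryst computation, and substitute into Theorem~\ref{ref-rank-cohomologie} with $R=3$ and $\coker\beta$ free from Lemma~\ref{lem-R1-sansDelta}. The arithmetic and the appeal to the computer-assisted classification of point orbits match the paper's own proof, so there is nothing to add.
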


\item The second possibility occurs for $\gamma_1 \in\frac{1}{\sqrt3} G \setminus G$ and $\gamma_2 \in G$.
For all $\gamma$-values in this set, we obtain:

\[ {\renewcommand\arraystretch{1.3}
\begin{array}{|c|c|c||c|c|c|c|c||c|}
  \hline
                    & n & \text{dir} & p=2 & p=3 & p=4 & p=5 & p=6 & \text{tot} \\
  \hline
  L_{0,p}^1          & 3 & e          &   6 &   1 &   0 &   0 &   2 &   9 \\
  \hline
  L_{0,p}^2          & 6 & o          &   9 &   2 &   0 &   0 &   1 &  12 \\
  \hline
  \sum L_{0,p}^\alpha & 9 &            &  72 &  15 &   0 &   0 &  12 &  99 \\
  \hline
  L_{0,p}            &   &            &  36 &   5 &   0 &   0 &   2 &  43 \\
  \hline
\end{array} } \]

Here, for each odd direction, we have two lines of the same type. These two
lines are related by symmetry - they are mirror images of each other.
The cohomology of this case is slightly different:

\begin{proposition} 
The ranks of the cohomology groups of the generalized 12-fold tilings with parameter $\gamma=(\gamma_{1},\gamma_{2})$ satisfying $\gamma_{1}\in\frac{1}{\sqrt3}G\setminus G$ and $\gamma_{2}\in G$, together with the quantities $L_0^\alpha, L_0, L_1$ and $e$, are given by:
\[ {\renewcommand\arraystretch{1.3}
\begin{array}{|c|c|c|c|c|c|c|}
  \hline
  \sum L_0^\alpha & L_0 & L_1 &   e & \rk H^2(\Omega_{E_{12}^\gamma}) & \rk H^1(\Omega_{E_{12}^\gamma}) & \rk H^0(\Omega_{E_{12}^\gamma}) \\
  \hline
             99 &  43 &   9 &  56 &     65  &      10 &       1 \\
  \hline
\end{array} } 
\]
\end{proposition}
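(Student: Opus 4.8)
The plan is to obtain the numbers in the table from three facts established earlier in the paper: the count $L_1=9$ of $\Gamma$-orbits of $1$-singularities, the intersection combinatorics recorded in the table immediately preceding the statement, and the cohomology formula of Theorem~\ref{ref-rank-cohomologie}, simplified by $R=3$ and the freeness of $\coker\beta$ via Lemma~\ref{lem-R1-sansDelta}.

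First I would confirm $L_1=9$ from the case analysis in the proof of Proposition~\ref{orbits-line}. For $\gamma_1\in\tfrac{1}{\sqrt3}G\setminus G$ one has $\sqrt3\,\gamma_1\in G$ but $2\gamma_1\notin G$: writing $\sqrt3\,\gamma_1=a+b\sqrt3$ with $a,b\in\mathbb Z$, the relation $2\gamma_1\in G$ would force $3\mid 2a$, hence $3\mid a$, hence $\gamma_1=(a/3)\sqrt3+b\in G$, a contradiction. For $\gamma_2\in G$ one has both $2\gamma_2\in G$ and $\sqrt3\,\gamma_2\in G$. Feeding these into the four identification conditions of Proposition~\ref{orbits-line}: in each even direction the conditions $\sqrt3\,\gamma_1\in G$ and $2\gamma_2\in G$ collapse the four parallel lines to a single $\Gamma$-orbit ($3$ orbits total), while in each odd direction only $\sqrt3\,\gamma_2\in G$ holds, merging the four parallel lines into the two orbits with representatives $\{i+4,i+6\}$ and $\{-(i+4),-(i+6)\}$ ($6$ orbits total). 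Hence $L_1=9$, and the two representatives in each odd direction are mirror images of one another, so only the two line types appearing in the table occur.

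The heart of the proof is justifying the intersection table, and I expect this to be the main work. Using the nine explicit line equations from Corollary~\ref{cor-simplification}, one intersects a fixed representative $\alpha$ in direction $x^i$ with the $\Gamma$-translates of the representatives in each of the other five directions; each intersection is governed by a membership condition in $\mathbb Z[x]\cong\sqrt3\,\Delta_0$ that is resolved with Lemma~\ref{degree1} and Lemma~\ref{lem-algebre-simple}. Sorting the resulting points into $\Gamma^\alpha$-orbits and recording, for each orbit, the number $p$ of distinct $1$-singularities through it yields the numbers $L_{0,p}^\alpha$; by the mirror symmetry only one line of each of the two types has to be treated. Summing over the nine lines gives $\sum_\alpha L_0^\alpha=3\cdot 9+6\cdot 12=99$, and aggregating with the overcounting correction $L_{0,p}=\tfrac1p\sum_\alpha L_{0,p}^\alpha$ gives $L_{0,2}=72/2=36$, $L_{0,3}=15/3=5$, $L_{0,6}=12/6=2$, so $L_0=43$. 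Since every identification used is a membership condition in a $G$-submodule, the outcome depends only on which of the nested sets $G,\tfrac{1}{\sqrt3}G,\dots$ contain $\gamma_1$ and $\gamma_2$, hence is constant on the whole stratum; this is exactly the bookkeeping carried out by the GAP/Cryst computation \cite{FGweb}, and the delicate points are the cross-line orbit identifications (points arising from different directions that happen to be $\Gamma$-equivalent) and the correct reading of each multiplicity $p$, rather than any conceptual obstacle.

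Finally, with $L_0=43$ and $\sum_\alpha L_0^\alpha=99$ we get $e=-L_0+\sum_\alpha L_0^\alpha=56$, and since $R=3$ with $\coker\beta$ free (Lemma~\ref{lem-R1-sansDelta}) all cohomology groups are torsion free. Theorem~\ref{ref-rank-cohomologie} then gives $H^0(\Omega_{E_{12}^\gamma})=\mathbb Z$, $\rk H^1(\Omega_{E_{12}^\gamma})=1+L_1=10$, and $\rk H^2(\Omega_{E_{12}^\gamma})=L_1+e=65$, which is precisely the content of the table in the statement.
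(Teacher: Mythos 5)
Your proposal is correct and follows essentially the same route as the paper: $L_1=9$ comes from the identification conditions in Proposition~\ref{orbits-line} (your check that $\sqrt3\,\gamma_1\in G$, $2\gamma_1\notin G$, and $\sqrt3\,\gamma_2,2\gamma_2\in G$ is exactly the relevant case there), the intersection combinatorics ($\sum_\alpha L_0^\alpha=99$, $L_0=43$) are obtained from the same GAP/Cryst bookkeeping the paper relies on, and the ranks then follow from Theorem~\ref{ref-rank-cohomologie} with $R=3$ and $\coker\beta$ free as in Lemma~\ref{lem-R1-sansDelta}.
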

\end{itemize}

Of course, there is also the possibility of one line per odd direction,
and two lines per even direction, but this is completely analogous to
the above two cases, and we do not list them separately. This symmetry is reflected in Figure~\ref{fig-orbit-tout} by the symbolic green diagonal. 
\vspace{0.2cm}

We now turn to the situation where we have two lines in each even and each odd direction.

\textbf{Case 3.} We start with the case where the two lines in even directions are mirror images of
each other. There are then three subcases. 
\begin{itemize}
\item
  In the first subcase, both $\gamma_1$ and $\gamma_2$ are contained in
  $\frac{1}{\sqrt3}G \setminus G$ (cf. proof of Proposition~\ref{orbits-line}
  and Figure~\ref{fig-orbit-tout}), in which case also the lines in odd directions
  form mirror pairs. The corresponding $\gamma$-values then all lead to the same
  intersection combinatorics as follows:

\[ {\renewcommand\arraystretch{1.3}
\begin{array}{|c|c|c||c|c|c|c|c||c|}
  \hline
                    & n  & \text{dir} & p=2 & p=3 & p=4 & p=5 & p=6 & \text{tot} \\
  \hline
  L_{0,p}^1          & 12 & e,o        &  12 &   1 &   0 &   0 &   2 &  15 \\
  \hline
  \sum L_{0,p}^\alpha & 12 &            & 144 &  12 &   0 &   0 &  24 & 180 \\
  \hline
  L_{0,p}            &    &            &  72 &   4 &   0 &   0 &   4 &  80 \\
  \hline
\end{array} } \]

We see here that in this case, all even and odd lines are of the same type, and are
in fact all symmetry equivalent. This intersection combinatorics results in the following cohomology:

\begin{proposition} 
The ranks of the cohomology groups of the generalized 12-fold tilings with parameter $\gamma=(\gamma_{1},\gamma_{2})$ satisfying $\gamma_{1},\gamma_{2}\in\frac{1}{\sqrt3}G\setminus G$, together with the quantities $L_0^\alpha, L_0, L_1$ and $e$, are given by:
\[ 
{\renewcommand\arraystretch{1.3}
\begin{array}{|c|c|c|c|c|c|c|}
  \hline
  \sum L_0^\alpha & L_0 & L_1 &   e & \rk H^2(\Omega_{E_{12}^\gamma}) & \rk H^1(\Omega_{E_{12}^\gamma}) & \rk H^0(\Omega_{E_{12}^\gamma}) \\
  \hline
            180 &  80 &  12 & 100 &    112  &      13 &       1 \\
  \hline
\end{array} } 
\]
\end{proposition}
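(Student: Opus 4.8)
The plan is to assemble the proof from three inputs: the value of $L_1$, the intersection combinatorics of the $0$-singularities (summarised in the table preceding the statement), and Theorem~\ref{ref-rank-cohomologie}. First I would pin down $L_1$. When $\gamma_1,\gamma_2\in\frac{1}{\sqrt3}G\setminus G$, Figure~\ref{fig-orbit-tout} (equivalently, the combinatorial analysis in the proof of Proposition~\ref{orbits-line}) places us in the regime of two $\Gamma$-orbits of $1$-singularities per direction, for both even and odd directions, so that $L_1=6\cdot 2=12$; moreover the two lines sharing a direction form a mirror pair, which makes all $12$ line orbits symmetry-equivalent. The remaining quantity needed is $e=-L_0+\sum_{\alpha\in I_1}L_0^\alpha$, so the heart of the argument is the enumeration of $0$-singularity orbits.

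To produce the table, I would start from the explicit list of the $12$ line orbits obtained by specialising Corollary~\ref{cor-simplification} to the present $\gamma$, and intersect a fixed representative line $\alpha$ with all the $\Gamma$-translates of the lines in the remaining $20$ directions. As in the proof of Lemma~\ref{lem-calc-droites-gamma-nul}, each such intersection reduces to a linear equation of the form $\mu x^i+\sum n_k x^k=\lambda x^j$ with $n_k\in\mathbb Z$ and $\mu,\lambda\in\mathbb R$; writing $x^{i-j}=a+bx$ via Lemma~\ref{degree1} and using Lemma~\ref{lem-algebre-simple} to decide membership in $\mathbb Z[x]$, one locates the intersection point on $\alpha$ up to $\Gamma$. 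Running over all directions and translates, I would record, for each orbit point on $\alpha$, its multiplicity $p$ (the number of $1$-singularities passing through it), thereby obtaining the numbers $L_{0,p}^\alpha$. Because all $12$ lines are symmetry-equivalent, the computation need only be done for one line $\alpha$: it yields $12$ double points, one triple point and two sextuple points, so $L_0^\alpha=15$ and $\sum_{\alpha}L_0^\alpha=12\cdot 15=180$. Correcting for multiple counting, $L_0=\sum_p\bigl(\sum_\alpha L_{0,p}^\alpha\bigr)/p=144/2+12/3+24/6=72+4+4=80$.

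With these data in hand the conclusion is immediate: $e=-80+180=100$, and by Lemma~\ref{lem-R1-sansDelta} we have $R=3$ and $\coker\beta$ free, so all cohomology is torsion free and Theorem~\ref{ref-rank-cohomologie} gives $\rk H^0=1$, $\rk H^1=4+L_1-R=13$ and $\rk H^2=3+L_1+e-R=112$. The step I expect to be the main obstacle is precisely the enumeration of the $0$-singularity orbits: unlike the case $\gamma=(0,0)$, where the five possible slopes $x^{i-j}$ and the set $T$ of representatives could be tracked by hand, here the bookkeeping of which intersection points on different line orbits are identified by $\Gamma$, and of the exact intersection multiplicities, is delicate, and in practice is carried out by the GAP program of \cite{FGweb}; a purely hand verification would require a careful, case-heavy analysis of the parity conditions supplied by Lemma~\ref{lem-algebre-simple} for each pair of directions.
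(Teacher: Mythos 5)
Your proposal follows essentially the same route as the paper: read off $L_1=12$ (mirror pairs in every direction) from Proposition~\ref{orbits-line}, determine the intersection combinatorics of one representative line — in practice by the GAP computation of \cite{FGweb}, exactly as the paper does — giving $L_0^\alpha=15$ per line, hence $\sum_\alpha L_0^\alpha=180$, $L_0=72+4+4=80$, $e=100$, and conclude via Theorem~\ref{ref-rank-cohomologie} with $R=3$ and $\coker\beta$ torsion free. The only slip is cosmetic: a fixed line meets the $\Gamma$-translates of the $10$ non-parallel line orbits (two in each of the five other directions), not ``lines in the remaining $20$ directions''; this does not affect the argument.
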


\item
  In the second subcase, we have $\gamma_1 \in \frac{1}{\sqrt3}G \setminus G$, but
  now $\gamma_2 \not\in \frac{1}{2\sqrt3}G$. More precisely, $\gamma_2$ depends on
  $\gamma_1$, and must satisfy $2 \gamma_1 + \sqrt{3} \gamma_2 \in G$, which corresponds
  to region $E$ in Figure~\ref{fig-orbit-line-pair-impair} (cf. proofs of Proposition~\ref{orbits-line}
  and Lemma~\ref{lem-intersection-ensemble}). All $\gamma$-values satisfying the conditions in this second subcase lead to the following intersection combinatorics:

\[ {\renewcommand\arraystretch{1.3}
\begin{array}{|c|c|c||c|c|c|c|c||c|}
  \hline
                    & n  & \text{dir} & p=2 & p=3 & p=4 & p=5 & p=6 & \text{tot} \\
  \hline
  L_{0,p}^1          &  6 & e          &  12 &   0 &   0 &   3 &   0 &  15 \\
  \hline
  L_{0,p}^2          &  6 & o          &  10 &   3 &   0 &   2 &   0 &  15 \\
  \hline
  \sum L_{0,p}^\alpha & 12 &            & 132 &  18 &   0 &  30 &   0 & 180 \\
  \hline
  L_{0,p}            &    &            &  66 &   6 &   0 &   6 &   0 &  78 \\
  \hline
\end{array} } \]

Here, the lines in even and odd directions have different intersection combinatorics,
but in each direction they form mirror pairs. This intersection combinatorics results
in the following cohomology:

\begin{proposition} 
  The ranks of the cohomology groups of the generalized 12-fold tilings with parameter
  $\gamma=(\gamma_{1},\gamma_{2})$ satisfying $\gamma_{1}\in\frac{1}{\sqrt3}G\setminus G$, $\gamma_2 \not\in \frac{1}{2\sqrt3}G$ and $2 \gamma_1 + \sqrt{3} \gamma_2 \in G$, together with the quantities
  $L_0^\alpha, L_0, L_1$ and $e$, are given by:
\[ 
{\renewcommand\arraystretch{1.3}
\begin{array}{|c|c|c|c|c|c|c|}
  \hline
  \sum L_0^\alpha & L_0 & L_1 &   e & \rk H^2(\Omega_{E_{12}^\gamma}) & \rk H^1(\Omega_{E_{12}^\gamma}) & \rk H^0(\Omega_{E_{12}^\gamma}) \\
  \hline
            180 &  78 &  12 & 102 &    114  &      13 &       1 \\
  \hline
\end{array} } 
\]
\end{proposition}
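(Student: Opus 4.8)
The plan is to read off the cohomology ranks from Theorem~\ref{ref-rank-cohomologie}, which requires only the three quantities $L_1$, $e$ and $R$ (and the absence of torsion, which for every $\gamma$ is already guaranteed by Lemma~\ref{lem-R1-sansDelta}: there $R=\rk\beta=3$ and $\coker\beta$ is free of rank $3$, independently of $\gamma$). The value $L_1=12$ is obtained from Proposition~\ref{orbits-line}: the hypotheses $\gamma_1\in\frac{1}{\sqrt3}G\setminus G$, $\gamma_2\notin\frac{1}{2\sqrt3}G$ and $2\gamma_1+\sqrt3\gamma_2\in G$ place $(\gamma_1,\gamma_2)$ in the region of Figure~\ref{fig-orbit-tout} where each of the six directions carries exactly two line orbits, forming a mirror pair in each direction (on the odd side this is region $E$ of Figure~\ref{fig-orbit-line-pair-impair}, coming from the identification $\gamma_2\sqrt3+2\gamma_1\in G$ in the proof of Proposition~\ref{orbits-line}; on the even side the condition $\sqrt3\gamma_1\in G$ together with $2\gamma_2\notin G$, which follows since $\frac12 G\subset\frac{1}{2\sqrt3}G$, gives the two mirror orbits with representatives $i$ and $-i$). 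Hence $L_1=2\cdot 6=12$, and it remains to determine $e=-L_0+\sum_{\alpha\in I_1}L_0^\alpha$.

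To compute $e$ I would fix explicit representatives of the twelve $1$-singularities, one per $\Delta_0$-orbit taken from the list in Corollary~\ref{cor-simplification}, and for each such line $\alpha$ (directed by some $x^i$) intersect it with all $\Gamma$-translates of the remaining $1$-singularities, counting the resulting $0$-singularities up to the stabilizer $\Gamma^\alpha$ described in Lemma~\ref{Stab-delta}. As in the proofs of Lemma~\ref{lem-calc-droites-gamma-nul} and Proposition~\ref{orbits-line}, whether two intersection points on $\alpha$ lie in the same $\Gamma$-orbit reduces, via Lemma~\ref{lem-algebre-simple}, to membership conditions in $G$, $\tfrac{1}{\sqrt3}G$, $\tfrac12 G$, etc., so one checks that the answer depends only on the stated constraints on $(\gamma_1,\gamma_2)$ and not on the chosen representative inside the region. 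Sorting these orbit points by the number $p$ of $1$-singularities meeting at them gives the refined counts $L_{0,p}^\alpha$; since here the lines in even directions and those in odd directions have distinct combinatorics but form mirror pairs within each parity, there are precisely two line types, $L_{0,p}^1$ (even, $6$ lines) and $L_{0,p}^2$ (odd, $6$ lines), producing the table displayed above with $\sum L_{0,p}^\alpha=132+18+30=180$. Correcting for multiple counting by $L_{0,p}=\sum_{\alpha\in I_1}L_{0,p}^\alpha/p$ then yields $L_0=66+6+6=78$.

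Assembling the pieces, $e=-78+180=102$, and Theorem~\ref{ref-rank-cohomologie} gives $\rk H^0(\Omega_{E_{12}^\gamma})=1$, $\rk H^1(\Omega_{E_{12}^\gamma})=4+L_1-R=1+L_1=13$, and $\rk H^2(\Omega_{E_{12}^\gamma})=3+L_1+e-R=L_1+e=114$, all torsion free as noted. The main obstacle is the intersection-combinatorics step: tracking which of the many $0$-singularities on a given line collapse under the $\Gamma$-action is delicate and the number of subcases is large, which is why this step is carried out with the computer program in GAP built on the package Cryst \cite{GAP4,Cryst,FGweb}; the supporting claim that the combinatorics are constant across the whole parameter region reduces, as indicated, to the elementary but involved arithmetic of Lemmas~\ref{lem-algebre-simple} and~\ref{lem-intersection-ensemble}.
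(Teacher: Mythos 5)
Your proposal is correct and follows essentially the same route as the paper: $L_1=12$ and the mirror-pair structure are read off from the proof of Proposition~\ref{orbits-line} (region $E$ of Figure~\ref{fig-orbit-line-pair-impair} for odd directions, $\sqrt3\gamma_1\in G$ with $2\gamma_2\notin G$ for even ones), $R=3$ and freeness come from Lemma~\ref{lem-R1-sansDelta}, and the intersection combinatorics giving $\sum L_0^\alpha=180$, $L_0=78$, $e=102$ are obtained, exactly as in the paper, from the GAP/Cryst computation before plugging into Theorem~\ref{ref-rank-cohomologie}. The per-$p$ breakdown you quote ($132+18+30$ and $66+6+6$) matches the paper's table, so there is no gap beyond the computer-assisted step that the paper itself relies on.
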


We note that this case also occurs with the roles of even and odd directions
interchanged, which we do not list separately.

\item In the third subcase, the two lines in an odd direction are not symmetry equivalent,
  which is the case if $\gamma_1 \in G$ and
  $\gamma_2 \in\frac{1}{2\sqrt3}G \setminus (\frac{1}{\sqrt3}G \cup \frac{1}{2}G)$
  (again this characterization of the parameters comes from the proof of
  Proposition~\ref{orbits-line}). For all these $\gamma$-values, we obtain:

\[ {\renewcommand\arraystretch{1.3}
\begin{array}{|c|c|c||c|c|c|c|c||c|}
  \hline
                    &  n & \text{dir} & p=2 & p=3 & p=4 & p=5 & p=6 & \text{tot} \\
  \hline
  L_{0,p}^1          &  6 & e          &  16 &   0 &   0 &   2 &   0 &  18 \\
  \hline
  L_{0,p}^2          &  3 & o          &   8 &   4 &   0 &   2 &   0 &  14 \\
  \hline
  L_{0,p}^3          &  3 & o          &   4 &   2 &   0 &   4 &   0 &  10 \\
  \hline
  \sum L_{0,p}^\alpha & 12 &            & 132 &  18 &   0 &  30 &   0 & 180 \\
  \hline
  L_{0,p}            &    &            &  66 &   6 &   0 &   6 &   0 &  78 \\
  \hline
\end{array} } \]

Here, we have two types of odd lines with different intersection combinatorics, but
overall, this results in the same cohomology ranks as above:

\begin{proposition} 
  The ranks of the cohomology groups of the generalized 12-fold tilings with parameter
  $\gamma=(\gamma_{1},\gamma_{2})$ satisfying $\gamma_{1}\in G$ and
  $\gamma_2 \in\frac{1}{2\sqrt3}G \setminus (\frac{1}{\sqrt3}G \cup \frac{1}{2}G)$,
  together with the quantities $L_0^\alpha, L_0, L_1$ and $e$, are given by:
\[ {\renewcommand\arraystretch{1.3}
\begin{array}{|c|c|c|c|c|c|c|}
  \hline
  \sum L_0^\alpha & L_0 & L_1 &   e & \rk H^2(\Omega_{E_{12}^\gamma}) & \rk H^1(\Omega_{E_{12}^\gamma}) & \rk H^0(\Omega_{E_{12}^\gamma}) \\
  \hline
            180 &  78 &  12 & 102 &    114  &      13 &       1 \\
  \hline
\end{array} } 
\]
\end{proposition}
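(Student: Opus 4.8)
The plan is to assemble the three invariants $L_1$, $L_0$ and $e$ entering Theorem~\ref{ref-rank-cohomologie} for the stated family of parameters, and then read off the cohomology ranks.

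First I would pin down $L_1=12$. Since $\gamma_1\in G$ we have $\sqrt3\,\gamma_1\in G$ and $2\gamma_1\in G$, while the hypotheses $\gamma_2\in\tfrac1{2\sqrt3}G\setminus(\tfrac1{\sqrt3}G\cup\tfrac12 G)$ say exactly that $2\sqrt3\,\gamma_2\in G$ but $\sqrt3\,\gamma_2\notin G$ and $2\gamma_2\notin G$. Feeding this into the four identification conditions for parallel lines in the proof of Proposition~\ref{orbits-line} (and the two ``squares'' drawn there, summarized in Figure~\ref{fig-orbit-line-pair-impair}): in each even direction the relation $\sqrt3\gamma_1\in G$ merges the four parallel lines into two $\Gamma$-orbits, and no further identification holds because $2\gamma_2\notin G$; in each odd direction $2(\sqrt3\gamma_2+\gamma_1)\in G$ and $2\gamma_1\in G$ merge the four parallel lines into two $\Gamma$-orbits, but $\sqrt3\gamma_2\notin G$ and $\gamma_2\sqrt3+2\gamma_1\notin G$ prevent any cross identification, and these two orbits are not mirror images of one another. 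Hence $L_1=3\cdot2+3\cdot2=12$, organized into the three line types of the table: one even type with $n=6$, and two distinct odd types with $n=3$ each.

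Next I would compute, for a representative $\alpha$ of each of the three line types, the refined orbit counts $L_{0,p}^\alpha$ (the number of $\Gamma^\alpha$-orbits of $0$-singularities on $\alpha$ whose points lie on exactly $p$ of the twelve $1$-singularities). Concretely: write every line orbit in the normalized form of Corollary~\ref{cor-simplification}; for each of the other eleven orbits, intersect $\alpha$ with all its $\Gamma$-translates; collect the intersection points and sort them into $\Gamma^\alpha$-orbits using the $\mathbb Z[x]$-arithmetic of Lemmas~\ref{degree1} and~\ref{lem-algebre-simple}; and record, for each orbit, how many of the twelve $1$-singularities actually pass through it. This is the computation performed by the GAP program of~\cite{FGweb}, and its output is precisely the displayed table: each even line has $L_0^\alpha=18$ with $(L_{0,2}^\alpha,L_{0,5}^\alpha)=(16,2)$; the first odd type has $L_0^\alpha=14$ with $(L_{0,2}^\alpha,L_{0,3}^\alpha,L_{0,5}^\alpha)=(8,4,2)$; the second odd type has $L_0^\alpha=10$ with $(4,2,4)$.

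The rest is arithmetic. Summing over the $12$ line orbits gives $\sum_\alpha L_0^\alpha=6\cdot18+3\cdot14+3\cdot10=180$; correcting for the $p$-fold overcounting of each point, $L_0=\sum_p\bigl(\textstyle\sum_\alpha L_{0,p}^\alpha\bigr)/p=132/2+18/3+30/5=66+6+6=78$; hence $e=-L_0+\sum_\alpha L_0^\alpha=102$. By Lemma~\ref{lem-R1-sansDelta}, $R=3$ and $\coker\beta\cong\mathbb Z^3$ is free for every $\gamma$, so Theorem~\ref{ref-rank-cohomologie} yields $H^0(\Omega_{E_{12}^\gamma})=\mathbb Z$, $\rk H^1=4+L_1-R=13$, $\rk H^2=3+L_1+e-R=L_1+e=114$, with no torsion in any degree. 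The main obstacle is the middle step, establishing the intersection table rigorously. In contrast with the $\gamma=(0,0)$ case of Lemma~\ref{lem-calc-droites-gamma-nul}, here one must keep track of all four parallel lines per direction before merging, distinguish genuine coincidences of intersection points (those realized by an element of $\Gamma$) from spurious ones, and classify each orbit by the exact number $p$ of lines through it across all six directions — a case analysis whose size is exactly why it is delegated to the verified computer calculation; every other step is a direct application of the structural results already proved.
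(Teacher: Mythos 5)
Your proposal is correct and follows essentially the same route as the paper: you derive $L_1=12$ (two non-equivalent orbits per odd direction, two mirror-related orbits per even direction) from the identification conditions in the proof of Proposition~\ref{orbits-line}, delegate the intersection-combinatorics table to the GAP computation of \cite{FGweb} exactly as the paper does, and then obtain $\sum L_0^\alpha=180$, $L_0=78$, $e=102$ and the ranks via Lemma~\ref{lem-R1-sansDelta} and Theorem~\ref{ref-rank-cohomologie}. The tabulated values $(16,2)$, $(8,4,2)$, $(4,2,4)$ and all the arithmetic agree with the paper's table, so nothing is missing.
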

\end{itemize}

Of course, this latter case can also occur with the roles of even and odd directions
interchanged, but we do not list it separately.

\textbf{Case 4:} Finally, we can have two line types in both even and odd directions,
which occurs if both $\gamma_1$ and $\gamma_2$ are contained in $\frac{1}{2}G\setminus G$
(cf. proof of Proposition~\ref{orbits-line} and Figure~\ref{fig-orbit-tout}). This is the first time in our analysis in which the $G$-action on the set of 0-singularities, that is points of intersection between representatives of lines, has more than one orbit. Indeed, the set of parameters $\frac{1}{2}G\setminus G$ splits in three $G$-cosets, so that altogether we
need to analyze the behavior of $\gamma$-values $(\gamma_1,\gamma_2)$ from nine different $(G \times G)$-cosets. The analysis of these 9 sets gives rise to only two different intersection combinatorics. The simpler combinatorics is
obtained for the $\gamma$-values in the cosets with representatives
$\{\frac12(1,\sqrt3),\frac12(\sqrt3,1),\frac12(1+\sqrt3,1+\sqrt3)\}$. These can be characterised
as those cosets for which $\gamma_1+\gamma_2 \in \frac{\sqrt3 + 1}{2}G$. The
intersection combinatorics then is:

\[ {\renewcommand\arraystretch{1.3}
\begin{array}{|c|c|c||c|c|c|c|c||c|}
  \hline
                    & n  & \text{dir} & p=2 & p=3 & p=4 & p=5 & p=6 & \text{tot} \\
  \hline
  L_{0,p}^1          & 12 & e,o        &    6 &   4 &   0 &   0 &   2 &   12 \\
  \hline
  \sum L_{0,p}^\alpha & 12 &            &   72 &  48 &   0 &   0 &  24 &  144 \\
  \hline
  L_{0,p}            &    &            &   36 &  16 &   0 &   0 &   4 &   56 \\
  \hline
\end{array} } \]

Here, we have again a single type of lines, and the cohomology becomes:

\begin{proposition} 
  The ranks of the cohomology groups of the generalized 12-fold tilings with parameter
  $\gamma=(\gamma_{1},\gamma_{2})$ satisfying $\gamma_{1}, \gamma_{2} \in \frac12 G \setminus G$
  and $\gamma_1 + \gamma_2 \in \frac{\sqrt3 + 1}{2} G$,
  together with the quantities $L_0^\alpha, L_0, L_1$ and $e$, are given by:
\[ {\renewcommand\arraystretch{1.3}
\begin{array}{|c|c|c|c|c|c|c|}
  \hline
  \sum L_0^\alpha & L_0 & L_1 &   e & \rk H^2(\Omega_{E_{12}^\gamma}) & \rk H^1(\Omega_{E_{12}^\gamma}) & \rk H^0(\Omega_{E_{12}^\gamma}) \\
  \hline
            144 &  56 &  12 &  88 &    100  &      13 &       1 \\
  \hline
\end{array} } \]
\end{proposition}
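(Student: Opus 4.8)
The plan is to feed the quantities $L_{1}$, $L_{0}$ and the $L_{0}^{\alpha}$ into Theorem~\ref{ref-rank-cohomologie}. By Lemma~\ref{lem-R1-sansDelta} we have $R=3$ and $\coker\beta$ free for every $\gamma$, so all three cohomology groups are torsion free and
$$
\rk H^{0}=1,\qquad \rk H^{1}=1+L_{1},\qquad \rk H^{2}=L_{1}+e,\qquad e=-L_{0}+\sum_{\alpha\in I_{1}}L_{0}^{\alpha}.
$$
Thus it is enough to determine $L_{1}$ and the intersection data. First I would pin down $L_{1}$: an elementary computation gives $\tfrac12 G\cap\tfrac1{\sqrt3}G=G$, so the hypothesis $\gamma_{1},\gamma_{2}\in\tfrac12 G\setminus G$ forces $\gamma_{i}\notin\tfrac1{\sqrt3}G$, whence $\sqrt3\gamma_{i}\in\tfrac12 G\setminus G$ while $2\gamma_{i}\in G$. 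Feeding this into the case analysis in the proof of Proposition~\ref{orbits-line} (equivalently, reading the corresponding cell of Figure~\ref{fig-orbit-tout}) yields exactly two $\Gamma$-orbits of lines in each of the six directions, so $L_{1}=12$; in particular $\rk H^{1}=13$. Note that this does not use the extra constraint $\gamma_{1}+\gamma_{2}\in\tfrac{\sqrt3+1}{2}G$, which only influences the finer intersection combinatorics.

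Next I would reduce the computation of $L_{0}$ and the $L_{0}^{\alpha}$ to a finite check. Replacing $\gamma_{i}$ by $\gamma_{i}+g$ with $g\in G$ translates, by the first assertion of Lemma~\ref{lem-algebre-simple}, each of the $24$ representative lines of Corollary~\ref{cor-simplification} by an element of $\Delta_{0}=\tfrac1{\sqrt3}\mathbb Z[x]$; hence both the partition of these lines into $\Gamma$-orbits and the $\Gamma$-orbit structure of all their pairwise intersections depend only on the class of $\gamma$ modulo $G\times G$. Since $\tfrac12 G\setminus G$ is a union of three $G$-cosets, only finitely many $(G\times G)$-cosets occur here, and the condition $\gamma_{1}+\gamma_{2}\in\tfrac{\sqrt3+1}{2}G$ singles out the cosets yielding the simpler of the two line combinatorics that arise in this case. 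For a representative of each such coset (for instance the three listed in the statement) I would run the \texttt{GAP} program of~\cite{FGweb}: list the lines of Proposition~\ref{prop-intersections-cubes}, group them into the $12$ orbits, and for every orbit $\alpha$ intersect its representative with the $\Gamma$-translates of the representatives of the other eleven orbits, recording for each $p$ the number $L_{0,p}^{\alpha}$ of $\Gamma$-orbits of $p$-fold intersection points on $\alpha$. The expected outcome is that all twelve $1$-singularities are of a single combinatorial type, each carrying six $2$-fold, four $3$-fold and two $6$-fold orbits of $0$-singularities, so $\sum_{\alpha}L_{0}^{\alpha}=12\cdot 12=144$; correcting for multiple counting through $L_{0}=\sum_{\alpha}\sum_{p}L_{0,p}^{\alpha}/p$ gives $L_{0}=36+16+4=56$. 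Then $e=-56+144=88$, so $\rk H^{2}=L_{1}+e=100$, $\rk H^{1}=1+L_{1}=13$ and $\rk H^{0}=1$, which is the asserted table.

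The hard part is the orbit bookkeeping in the second paragraph, which is exactly what is delegated to the computer. As in the proof of Lemma~\ref{lem-calc-droites-gamma-nul}, deciding when two of the infinitely many intersection points of a fixed pair of line-orbits are $\Gamma$-equivalent reduces to membership questions in $G$ via Lemma~\ref{lem-algebre-simple}; but the coefficients now carry the parameters $\gamma_{1}$ and $\gamma_{2}$, so the arithmetic is markedly more delicate, and one must in addition keep track of which point-orbits on distinct lines get identified (to pass correctly from the $L_{0}^{\alpha}$ to $L_{0}$) and confirm that every $(G\times G)$-coset allowed by the hypothesis really produces the same table. A by-hand verification in the spirit of Lemma~\ref{lem-calc-droites-gamma-nul} is possible in principle but cumbersome with twelve line orbits, which is why the argument rests on~\cite{FGweb}.
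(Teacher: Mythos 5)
Your proposal is correct and follows essentially the same route as the paper's own proof: $L_1=12$ is read off from the case analysis of Proposition~\ref{orbits-line} (Figure~\ref{fig-orbit-tout}), the problem is reduced to finitely many $(G\times G)$-cosets of $\gamma$, the intersection combinatorics (a single line type with six $2$-fold, four $3$-fold and two $6$-fold point orbits, giving $\sum_\alpha L_0^\alpha=144$, $L_0=56$, $e=88$) is delegated to the same GAP computation \cite{FGweb}, and the ranks then follow from Theorem~\ref{ref-rank-cohomologie} with $R=3$ and $\coker\beta$ free (Lemma~\ref{lem-R1-sansDelta}). The only difference is cosmetic: you make the $G$-translation invariance of the singular-line pattern and the derivation of $L_1$ slightly more explicit than the paper does.
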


For the remaining six cosets of $\gamma$-values, the combinatorics is:

\[ {\renewcommand\arraystretch{1.3}
\begin{array}{|c|c|c||c|c|c|c|c||c|}
  \hline
                    & n  & \text{dir} & p=2 & p=3 & p=4 & p=5 & p=6 & \text{tot} \\
  \hline
  L_{0,p}^1          &  6 & e,o        &   18 &   0 &   2 &   0 &   0 &   20 \\
  \hline
  L_{0,p}^2          &  6 & e,o        &   12 &   0 &   4 &   0 &   0 &   16 \\
  \hline
  \sum L_{0,p}^\alpha & 12 &            &  180 &   0 &  36 &   0 &   0 &  216 \\
  \hline
  L_{0,p}            &    &            &  90  &   0 &   9 &   0 &   0 &   99 \\
  \hline
\end{array} } \]

Here, we have two line types, each of which occurs both in even and odd directions. 
This results in the following cohomology:

\begin{proposition}\label{last2orbits} 
  The ranks of the cohomology groups of the generalized 12-fold tilings with parameter
  $\gamma=(\gamma_{1},\gamma_{2})$ satisfying $\gamma_{1}, \gamma_{2} \in \frac12 G \setminus G$
  and $\gamma_1 + \gamma_2 \not \in \frac{\sqrt3 + 1}{2} G$,
  together with the quantities $L_0^\alpha, L_0, L_1$ and $e$, are given by:
\[ {\renewcommand\arraystretch{1.3}
\begin{array}{|c|c|c|c|c|c|c|}
  \hline
  \sum L_0^\alpha & L_0 & L_1 &   e & \rk H^2(\Omega_{E_{12}^\gamma}) & \rk H^1(\Omega_{E_{12}^\gamma}) & \rk H^0(\Omega_{E_{12}^\gamma}) \\
  \hline
            216 &  99 &  12 & 117 &    129  &      13 &       1 \\
  \hline
\end{array} } \]
\end{proposition}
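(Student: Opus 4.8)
The plan is to reduce everything, exactly as in the previous cases, to the three integers $L_1$, $\sum_{\alpha\in I_1}L_0^\alpha$ and $L_0$: once these are known we set $e=-L_0+\sum_{\alpha}L_0^\alpha$, and Theorem~\ref{ref-rank-cohomologie} together with Lemma~\ref{lem-R1-sansDelta} (which gives $R=3$ and $\coker\beta$ free for every $\gamma$) yields $H^0=\mathbb Z$, $\rk H^1=4+L_1-R=1+L_1$ and $\rk H^2=3+L_1+e-R=L_1+e$, all groups being torsion free. So the real content is the computation of $L_1$, $L_0$ and the per‑line counts.

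First I would pin down $L_1$. Since $\gamma_1,\gamma_2\in\frac12 G\setminus G$, Proposition~\ref{orbits-line} (via the two squares and Figure~\ref{fig-orbit-line-pair-impair} in its proof) shows that in each of the six directions the four candidate lines of Corollary~\ref{cor-simplification} split into exactly two $\Gamma$-orbits, so $L_1=12$ no matter which of the nine $(G\times G)$-cosets $(\gamma_1,\gamma_2)$ occupies. I would then fix twelve explicit orbit representatives taken from Corollary~\ref{cor-simplification}.

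Next comes the combinatorial heart of the argument. Translating $\gamma$ by an element of $G\times G$ moves the twelve lines only by a $\Gamma$-translation, so the pattern of $0$-singularities depends only on the coset; of the nine cosets, six satisfy $\gamma_1+\gamma_2\notin\tfrac{\sqrt3+1}{2}G$. For such a coset I would, for each ordered pair of non-parallel representatives, solve the single linear congruence over $\mathbb Z[\sqrt3]$ that governs which $\Gamma$-translates of one line meet the other — precisely the manipulation done by hand in Lemma~\ref{lem-calc-droites-gamma-nul} and in the proof of Proposition~\ref{orbits-line}. This yields, on each line $\alpha$, the finite list of $\Gamma^\alpha$-orbits of intersection points together with the number $p$ of $1$-singularities through each, i.e.\ the split $L_0^\alpha=\sum_p L_{0,p}^\alpha$; gluing these lists into global $\Gamma$-orbits and correcting the $p$-fold overcount by $L_{0,p}=\sum_\alpha L_{0,p}^\alpha/p$ produces $L_0$. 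Organised through the GAP program of \cite{FGweb}, this is expected to reproduce, uniformly over the six cosets, the table displayed just before the statement: two line types, each appearing six times and in both parities, with $(L_{0,p}^1)_{p}=(18,0,2,0,0)$ and $(L_{0,p}^2)_{p}=(12,0,4,0,0)$ for $p=2,\dots,6$. Hence $\sum_\alpha L_0^\alpha=6\cdot20+6\cdot16=216$ and $L_0=180/2+36/4=99$, so $e=117$ and therefore $\rk H^1=13$, $\rk H^2=129$.

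The delicate point, and the part genuinely delegated to the computer, is the simultaneous verification that (a) the enumeration of $0$-singularities on each line is complete and their partition into $\Gamma$-orbits is exactly right — neither too coarse nor too fine — and (b) all six cosets with $\gamma_1+\gamma_2\notin\tfrac{\sqrt3+1}{2}G$ really do give this one table, while the remaining three cosets give the strictly larger combinatorics of the preceding proposition. Point (a) is controlled in principle because two lines in fixed directions always intersect in a $\Gamma$-orbit pattern encoded by one congruence in $\mathbb Z[\sqrt3]$, but running this through all pairs of the twelve representatives and across all cosets is precisely what makes the computer assistance necessary; modulo that verification, the arithmetic above closes the proof.
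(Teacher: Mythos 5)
Your proposal is correct and follows essentially the same route as the paper: $L_1=12$ from Proposition~\ref{orbits-line}, reduction of the $\gamma$-dependence to the six $(G\times G)$-cosets with $\gamma_1+\gamma_2\notin\frac{\sqrt3+1}{2}G$, computer-assisted (GAP) enumeration of the intersection combinatorics giving the two line types with totals $20$ and $16$, and then $\sum_\alpha L_0^\alpha=216$, $L_0=99$, $e=117$ fed into Theorem~\ref{ref-rank-cohomologie} with $R=3$ and $\coker\beta$ free. The arithmetic and the acknowledgement of exactly which steps are delegated to the computation match the paper's treatment.
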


\subsection{General case: preliminary computations}\label{general1}

We now turn our efforts into computing the numbers $L_{0}$ and $L_{0}^{\alpha}$ in terms of $\gamma=(\gamma_{1},\gamma_{2})$. The strategy is the same as the one described for the case $\gamma=(0,0)$: we start by intersecting a fixed line from the 24 in Corollary~\ref{cor-simplification} with all the $\Gamma$-translates of the other 20 non-parallel lines. The computation and description of these intersections is the whole content of this subsection. The results obtained, which the reader might want to look at first, are collected and stated in the next section, concretely in Proposition \ref{prop-points-droite}.

Our first observation is that each family $(1, x^i)$ with $i>0$ can be a basis of $F^{\perp}$ (see Figure~\ref{fig-complexe}). For future reference, we collect in the following table the expressions of the powers of $x$ in these basis (cf.\ Lemma~\ref{degree1}):
$$
{\renewcommand\arraystretch{1.3}
\begin{array}{|c|c|c|c|c|}
\hline
&(1,x^2)&(1,x^3)&(1,x^4)&(1,x^5)\\
\hline
x&\frac{1}{\sqrt 3}(1+x^2)&\frac{x^3}{2}+\frac{\sqrt 3}{2}&\frac{2}{\sqrt 3}+\frac{1}{\sqrt 3}x^4&\sqrt 3+ x^5\\
\hline
x^2&&\frac{1}{2}+x^2\frac{\sqrt 3}{2}&1+x^4&2+\sqrt 3x^5\\
\hline
x^3&\frac{1}{\sqrt 3}(2x^2-1)&&\frac{\sqrt 3}{3}+\frac{2}{\sqrt 3} x^4&\sqrt 3+2x^5\\
\hline
x^4&x^2-1&-\frac{1}{2}+x^3\frac{\sqrt 3}{2}&&1+\sqrt 3 x^5\\
\hline
x^5&\frac{1}{\sqrt 3}(x^2-2)&\frac{x^3}{2}-\frac{\sqrt 3}{2}&-\frac{\sqrt 3}{3}+\frac{1}{\sqrt 3}x^4&\\
\hline
\end{array}}
$$

\begin{definition}
In order to simplify the following results we define the sets $A_{3}=\{0,\frac{\sqrt3}{3},\frac{2\sqrt3}{3}\}$ and $A_{4}=\{0,\frac{1}{2},\frac{\sqrt3}{2},\frac{1+\sqrt3}{2}\}$. The notation $x+A_3$ stands for the three values $x+y$ with $y\in A_3$.
\end{definition}

We consider two generic lines given in Corollary~\ref{cor-simplification} of directions $x^i, x^k$ with $k>i$. There are three possible choices and we will deal with them separately: $i, k$ are odd, $i, k$ are even or $i, k$ have different parity.

\subsubsection{Two even lines}

We pick two lines directed by $x^{i}$ and compute their intersection with the translates by $\Gamma$ of the four different lines directed by $x^k$ when $i,k$ are both even:

$$\frac{\gamma_1 }{\sqrt3}
\begin{cases} 
x^{i}\\ 
x^{i+2}
\end{cases}\!\!
+\frac{\gamma_2}{\sqrt3} x^{i+1}+\lambda x^i
= 
\pm(\frac{\gamma_1 }{\sqrt3}
\begin{cases} x^{k}\\ 
x^{k+2}
\end{cases}\!\!
+\frac{\gamma_2}{\sqrt3} x^{k+1})+\mu x^k+\sum n_px^p.
$$
We simplify by $x^i$ and obtain
\begin{equation}\label{lambda-pair}
\frac{\gamma_1 }{\sqrt3}
\begin{cases} 1 \\ 
x^{2}
\end{cases}\!\!
+\frac{\gamma_2}{\sqrt3} x+\lambda 
= 
\pm(\frac{\gamma_1 }{\sqrt3}
\begin{cases} 
x^{k-i}\\ 
x^{k+2-i}
\end{cases}\!\!
+\frac{\gamma_2}{\sqrt3} x^{k+1-i})+\mu x^{k-i}+\sum n_px^{p-i}.
\end{equation}
Recall that, by Lemma~\ref{degree1}, the number $\sum n_px^{p-i}$ is equal to $ax+b$ with $a, b \in G=\mathbb Z[\sqrt 3]$. We consider the basis $(1,x^{k-i})$ for $F^{\perp}$ and we compute the value of $\lambda$ using the array at the beginning of the section. Notice that because of our choice of basis we can compute directly $\lambda$ without worrying about $\mu$. There are two subcases to consider: 
\begin{itemize}
\item if $k-i=2$, then subbing the information in \eqref{lambda-pair} we obtain, considering only the first coordinate in the basis $(1,x^{2})$:
$$
\frac{\gamma_1 }{\sqrt3}
\begin{cases} 
1 \\ 
0
\end{cases}
+\frac{\gamma_2}{\sqrt3} \sqrt 3/3+\lambda 
= 
\pm(\frac{\gamma_1 }{\sqrt3}
\begin{cases} 
0\\ 
-1
\end{cases}
+\frac{\gamma_2}{\sqrt3} (-\frac{\sqrt3}{3}))+a\frac{\sqrt 3}{3}+b,
$$
which implies
$$
\lambda=\pm(\frac{\gamma_1 }{\sqrt3}
\begin{cases} 
0\\ 
-1
\end{cases}\!\!
-\frac{\gamma_{2}}{3})+a\frac{\sqrt 3}{3}+b-\frac{\gamma_1 }{\sqrt3}
\begin{cases} 
1 \\ 
0
\end{cases}\!\!
-\frac{\gamma_2}{3}.
$$
The $8$ families of values for $\lambda$, each corresponding to the intersection of one of the two lines directed by $x^{i}$ with one of the four lines directed by $x^{k}$ with $k-i=2$, are collected in the following accolades. On the left hand side we have the intersection with the line passing through the point $\frac{\gamma_{1}}{\sqrt3}x^{i}+\frac{\gamma_2}{\sqrt3} x^{i+1}$ and on the right hand side passing through the point $\frac{\gamma_{1}}{\sqrt3}x^{i+2}+\frac{\gamma_2}{\sqrt3} x^{i+1}$.
$$
\lambda=
\begin{cases}
\frac{-2\gamma_{2}}{3}+\frac{a\sqrt 3}{3}-\frac{\gamma_{1}}{\sqrt3}\\
\frac{-2\gamma_{2}}{3}+\frac{a\sqrt 3}{3}-\frac{2\gamma_{1}}{\sqrt3}\\

\frac{a\sqrt 3}{3}-\frac{\gamma_{1}}{\sqrt3}\\

\frac{a\sqrt 3}{3}\\
\end{cases}
\ \mathrm{and}\quad 
\lambda=
\begin{cases}
\frac{-2\gamma_{2}}{3}+\frac{a\sqrt 3}{3}\\
\frac{-2\gamma_{2}}{3}+\frac{a\sqrt 3}{3}-\frac{\gamma_{1}}{\sqrt3}\\

\frac{a\sqrt 3}{3}\\

\frac{a\sqrt 3}{3}+\frac{\gamma_1}{\sqrt 3}\\
\end{cases}
$$

The final step is to consider the above values modulo $G$. Since $a,b\in G$, we can get rid of $b$ in all of the above expressions and moreover, expressing $a$ as $a_1+a_2\sqrt 3$ where $a_{1},a_{2}\in\mathbb Z$ we see that modulo $G$ the numbers $\frac{a\sqrt 3}{3}$ split in three different equivalence classes with representatives in $\{0, \frac{\sqrt3}{3},\frac{2\sqrt3}{3}\}$. We conclude that the orbits of points on the two lines 
$$
\frac{\gamma_1 }{\sqrt3}
\begin{cases} 
x^{i}\\ 
x^{i+2}
\end{cases}\!\!
+\frac{\gamma_2}{\sqrt3} x^{i+1}+\lambda x^i
$$
obtained by intersecting them with the 4 lines directed by $x^{i+2}$ are determined by the following two sets of 12 values of $\lambda$. 
\begin{itemize}
\item Intersection with $\frac{\gamma_{1}}{\sqrt3}x^{i}+\frac{\gamma_2}{\sqrt3} x^{i+1}+\lambda x^i$ for the values of $\lambda$:
\begin{equation}\label{tab-1}
{\renewcommand\arraystretch{1.3}
\arraycolsep=0.25cm
\begin{array}{|c|c|c|c|}
\hline
-\frac{\gamma_{1}}{\sqrt3}-\frac{2\gamma_{2}}{3}+A_3&-\frac{2\gamma_2}{3}-\frac{2\gamma_1}{\sqrt 3}+A_3&-\frac{\gamma_{1}}{\sqrt3}+A_3 &A_3\\
\hline
\end{array}
}
\end{equation}
\item Intersection with $\frac{\gamma_{1}}{\sqrt3}x^{i+2}+\frac{\gamma_2}{\sqrt3} x^{i+1}+\lambda x^i$ for the values of $\lambda$:

\begin{equation}\label{tab-2}
{\renewcommand\arraystretch{1.3}
\arraycolsep=0.25cm
\begin{array}{|c|c|c|c|}
\hline
-\frac{2\gamma_{2}}{3}+A_3&-\frac{2\gamma_2}{3}-\frac{\gamma_1}{\sqrt 3}+A_3&A_3&\frac{\gamma_1}{\sqrt 3}+A_3\\
\hline
\end{array}
}
\end{equation}

\end{itemize}

\begin{remark}\label{rem-signe-moins}
Before we analyze the next case, we remark that if instead of the two lines 
$$
\frac{\gamma_1 }{\sqrt3}
\begin{cases} 
x^{i}\\ 
x^{i+2}
\end{cases}\!\!
+\frac{\gamma_2}{\sqrt3} x^{i+1}+\lambda x^i,
$$
we had considered the intersections with the two lines
$$
-\left(\frac{\gamma_1 }{\sqrt3}
\begin{cases} 
x^{i}\\ 
x^{i+2}
\end{cases}\!\!
+\frac{\gamma_2}{\sqrt3}x^{i+1}\right)+\lambda x^i,
$$
then we would have obtained the same values as in the above two tables \emph{with the opposite signs}.
\end{remark}

\item  if $k-i=4$, we perform the same computations as in the preceding point. The details are left to the reader. The equations to consider, ignoring the parameter $b\in G$ and written in the basis $(1,x^{4})$ are
$$
\frac{\gamma_1 }{\sqrt3}\begin{cases} 1 \\ 1\end{cases}\!\!+\frac{2\gamma_2}{3} +\lambda = \pm\left(\frac{\gamma_1 }{\sqrt3}\begin{cases} 0\\ -1\end{cases}\!\!-\frac{\gamma_2}{3}\right)+a\frac{2}{\sqrt3}.
$$
These yield $4$ different families of values for $\lambda$: 
$$
\lambda=\begin{cases}
-\gamma_2-\frac{\gamma_1 }{\sqrt3}+\frac{2a}{\sqrt 3}\\
-\gamma_2-\frac{2\gamma_1 }{\sqrt3}+\frac{2a}{\sqrt 3}\\
-\frac{\gamma_{2}}{3}+\frac{2a}{\sqrt 3}\\
-\frac{\gamma_{2}}{3}-\frac{\gamma_1 }{\sqrt3}+\frac{2a}{\sqrt 3}
\end{cases}
$$
Notice that in this case, the two parallel lines of direction $x^{i}$ under consideration yield the same values of $\lambda$ which, modulo de action of $G$, are collected in the following table:

\begin{equation}\label{tab-3}
{\renewcommand\arraystretch{1.3}
\arraycolsep=0.25cm
\begin{array}{|c|c|c|c|}
\hline
-\gamma_2-\frac{\gamma_1 }{\sqrt3}+A_3&-\gamma_2-\frac{2\gamma_1 }{\sqrt3}+A_3&-\frac{\gamma_{2}}{3}+A_3&-\frac{\gamma_{2}}{3}-\frac{\gamma_1 }{\sqrt3}+A_3\\
\hline
\end{array}
}
\end{equation}
\end{itemize}

\subsubsection{Two odd lines}
If $i, k$ are both odd numbers then we obtain, as in the preceding case, the following equations from Corollary~\ref{cor-simplification} (with $k>i$):
$$
\pm\left(\gamma_2\begin{cases}x^{k+4}\\ x^{k+6}
\end{cases}\!\!+\gamma_1x^{k+1}\right)+\mu\sqrt 3 x^k+\sqrt 3 \sum_p n_p x^p=\lambda \sqrt 3 x^i+\gamma_2\begin{cases}x^{i+4}\\ x^{i+6}\end{cases}\!\!+\gamma_1x^{i+1}.
$$
We simplify, multiplying the equation by $x^{-i}$:
$$
\pm\left(\gamma_2\begin{cases}x^{k+4-i}\\ x^{k+6-i}
\end{cases}\!\!+\gamma_1x^{k+1-i}\right)+\mu\sqrt 3 x^{k-i}+\sqrt 3\sum_p n_p x^{p-i}=\lambda\sqrt 3 +\gamma_2\begin{cases}x^{4}\\ x^{6}\end{cases}\!\!+\gamma_1x,
$$
and we substitute the expression $\sum_p n_p x^{p-i}$ by $ax+b$ using Lemma~\ref{degree1}:
\begin{equation}\label{lastone}
\pm\left(\gamma_2\begin{cases}x^{k+4-i}\\ x^{k+6-i}
\end{cases}\!\!+\gamma_1x^{k+1-i}\right)+\mu\sqrt 3 x^{k-i}+\sqrt 3(ax+b)=\lambda\sqrt 3 +\gamma_2\begin{cases}x^{4}\\ x^{6}\end{cases}\!\!+\gamma_1x.
\end{equation}
We now consider the above equation rewritten in the basis $(1,x^{k-i})$, which allows us to easily deduce the value of $\lambda$. To this end, we use the array at the beginning of the section. We split the analysis in two cases, depending on whether $k-i$ is 2 or 4. 
We deduce
\begin{itemize}
\item If $k-i=2$, we use the basis $(1,x^{2})$ and we further split the computations depending on the sign on the right hand side of \eqref{lastone}:

\begin{itemize}
\item Considering Equation~\eqref{lastone} with sign $+$:
\begin{align*}
&\lambda\sqrt 3+\gamma_2\begin{cases} -1\\-1\end{cases}\!\!+\gamma_1\frac{\sqrt 3}{3}=b\sqrt 3+a-\frac{\gamma_1}{\sqrt 3}+\gamma_2\begin{cases}-1 \\ 0\end{cases}&\iff\\
&\lambda\sqrt 3=a+b\sqrt 3-2\gamma_1\frac{\sqrt 3}{3}+\gamma_2\begin{cases}-1 \\ 0\end{cases}\!\!-\gamma_2\begin{cases} -1\\-1\end{cases}&\iff\\
&\lambda\sqrt 3 =a+b\sqrt 3-2\gamma_1\frac{\sqrt 3}{3}+\gamma_2\begin{cases}0 \\ 1\end{cases}&\iff\\
&\lambda =b+\frac{a\sqrt 3}{3}-\frac{2\gamma_1}{3}+\frac{\gamma_2}{\sqrt 3}\begin{cases}0 \\ 1\end{cases}
\end{align*}

\item Considering Equation~\eqref{lastone} with sign $-$:
\begin{align*}
&\lambda\sqrt 3+\gamma_2\begin{cases} -1\\-1\end{cases}\!\!+\gamma_1\frac{\sqrt 3}{3}=b\sqrt 3+a+\gamma_1\frac{\sqrt 3}{3}-\gamma_2\begin{cases}-1 \\ 0\end{cases}&\iff\\
&\lambda\sqrt 3=b\sqrt 3+a-\gamma_2\begin{cases}-2 \\ -1\end{cases}&\iff\\
&\lambda=b+a/\sqrt 3-\frac{\gamma_2}{\sqrt 3}\begin{cases}-2\\ -1\end{cases}
\end{align*}
\end{itemize}
Finally, we obtain the following 12 different values of $\lambda$ modulo $G$. Remark that in this case the two parallel lines of direction $x^i$ yield the same values of $\lambda$.

\begin{equation}\label{tab-4}
{\renewcommand\arraystretch{1.3}
\arraycolsep=0.25cm
\begin{array}{|c|c|c|c|}
\hline
\frac{-2\gamma_1}{3}+A_3&-\frac{2\gamma_1}{3}+\frac{\gamma_2}{\sqrt 3}+A_3&\frac{2\gamma_2}{\sqrt 3}+A_3&\frac{\gamma_2}{\sqrt 3}+A_3\\
\hline
\end{array}
}
\end{equation}

\item If $k-i=4$, we use the basis $(1,x^{4})$ in \eqref{lastone} and perform a computation analogous to the one in the preceding case:
\begin{align*}
&\lambda\sqrt 3+\gamma_2\begin{cases}0\\-1 \end{cases}\!\!+\gamma_1\frac{2}{\sqrt 3}=b\sqrt 3+2a\pm\left(-\gamma_1\frac{\sqrt 3}{3}+\gamma_2\begin{cases} -1\\ 0\end{cases}\right)&\iff\\
&\lambda\sqrt 3=
\begin{cases}b\sqrt 3+2a-\frac{\gamma_1}{\sqrt3}+\gamma_2
\begin{cases} -1\\ 0
\end{cases}\!\! +\gamma_2
\begin{cases}0\\1 
\end{cases}\!\!-\frac{2\gamma_1}{\sqrt 3}\\
\\
b\sqrt 3+2a+\frac{\gamma_1}{\sqrt3}-\gamma_2
\begin{cases} -1\\ 0
\end{cases}\!\! -\gamma_2
\begin{cases}0\\-1 
\end{cases}\!\!-\frac{2\gamma_1}{\sqrt 3}
\end{cases}&\iff\\
&\lambda=
b+\frac{2a}{\sqrt 3}-\gamma_1+\frac{\gamma_2}{\sqrt 3}\begin{cases} -1\\ 0\\ 1
\end{cases}
\quad\mathrm{or}\quad
\lambda=b+\frac{2a}{\sqrt 3}-\frac{\gamma_1}{3}+\frac{\gamma_2}{\sqrt 3}\begin{cases}0\\1\\2 \end{cases}
\end{align*}
We conclude that, for $i$ odd, the orbits of points on the two lines 
$$
\frac{\gamma_2}{\sqrt3}
\begin{cases} x^{i+4}\\ 
x^{i+6}\end{cases}\!\!
+\frac{\gamma_1}{\sqrt3} x^{i+1}+\lambda x^i
$$
obtained by intersecting them with the 4 lines directed by $x^{i+4}$ are determined by the following 2 sets of 12 values of $\lambda$. 
\begin{itemize}
\item Intersection with $\frac{\gamma_2}{\sqrt3}x^{i+4}+\frac{\gamma_1}{\sqrt3} x^{i+1}+\lambda x^i$ for the values of $\lambda$:

\begin{equation}\label{tab-5}
{\renewcommand\arraystretch{1.3}
\arraycolsep=0.25cm
\begin{array}{|c|c|c|c|}
\hline
-\gamma_{1}+A_3&-\frac{\gamma_1}{3}+A_3&-\gamma_{1}-\frac{\gamma_2}{\sqrt3}+A_3&-\frac{\gamma_1}{3}+\frac{\gamma_2}{\sqrt3}+A_3\\
\hline
\end{array}
}
\end{equation}
\item Intersection with $\frac{\gamma_2}{\sqrt3}x^{i+6}+\frac{\gamma_1}{\sqrt3} x^{i+1}+\lambda x^i$ for the values of $\lambda$:

\begin{equation}\label{tab-6}
{\renewcommand\arraystretch{1.3}
\arraycolsep=0.25cm
\begin{array}{|c|c|c|c|}
\hline
-\gamma_1+A_3&-\gamma_1+\frac{\gamma_2}{\sqrt 3}+A_3&-\frac{\gamma_1}{3}+\frac{2\gamma_2}{\sqrt3}+A_3&-\frac{\gamma_1}{3}+\frac{\gamma_2}{\sqrt3}+A_3\\
\hline
\end{array}
}
\end{equation}

\end{itemize}
\end{itemize}

\subsubsection{Two lines of different parity, $k$ odd}
If $i, k$ are of different parity and $k$ is odd, then we use  Corollary~\ref{cor-simplification} to understand the intersection of two lines of direction $x^{i}$ with the translates of 4 lines of direction $x^{k}$. We obtain
\begin{equation}\label{pair-impair}
\pm\left(\gamma_2\begin{cases}x^{k+4}\\ x^{k+6}\end{cases}\!\!+\gamma_1x^{k+1}\right)+\mu \sqrt 3x^k+\sqrt 3\sum_p n_p x^p=\lambda\sqrt 3 x^i+\gamma_1\begin{cases}x^i\\ x^{i+2}\end{cases}\!\!+\gamma_2x^{i+1}.
\end{equation}
We isolate $\lambda$ and divide the whole expression by $x^{i}$,
$$
\lambda\sqrt 3=\pm\left(\gamma_2\begin{cases}x^{k-i+4}\\ x^{k-i+6}\end{cases}\!\!+\gamma_1x^{k+1-i}\right)+\mu\sqrt 3 x^{k-i}+\sqrt 3\sum_p n_p x^{p-i}-\gamma_1\begin{cases}x^{0}\\ x^{2}\end{cases}\!\!-\gamma_2x.
$$
We use Lemma~\ref{degree1} to substitute the expression $\sum_p n_p x^{p-i}$:
$$
\lambda\sqrt 3=\pm\left(\gamma_2\begin{cases}x^{k-i+4}\\ x^{k-i+6}\end{cases}\!\!+\gamma_1x^{k+1-i}\right)+\mu\sqrt 3 x^{k-i}+\sqrt 3(ax+b)-\gamma_1\begin{cases}1\\ x^2\end{cases}\!\!-\gamma_2x.
$$
We finally divide by $\sqrt3$ and obtain the general equation we will be dealing with in the case of $i,k$ of different parity, $k$ odd:
\begin{equation}\label{diffparity}
\lambda=\pm\left(\frac{\gamma_2}{\sqrt 3}\begin{cases}x^{k-i+4}\\ x^{k-i+6}\end{cases}\!\!+\frac{\gamma_1}{\sqrt 3}x^{k+1-i}\right)+\mu x^{k-i}+ax+b-\frac{\gamma_1}{\sqrt 3}\begin{cases}1\\ x^2\end{cases}\!\!-\frac{\gamma_2}{\sqrt 3}x.
\end{equation}
 
The expression $k-i$ can take any of the following values $\{-3,-1,1,3,5\}$. Notice that since $x^n=-x^{n+6}$, the cases $-3$ and $3$ and $-1$ and $5$ will yield the same sets of values of $\lambda$. Indeed, in Equation~\ref{diffparity}, when written with respect to the bases $(1,x^{3})$ or $(1,x^{-3})$ the only change is an overall sign change in the first parenthesis, which leaves unchanged the set of possible values of $\lambda$. The same occurs when comparing the values $k-i\in\{-1,5\}$ with respect to the bases $(1,x^{-1})$ or $(1,x^{5})$. It follows that we will have a complete answer by studying the cases $k-i=-3,-1,1$. To simplify the notation we will denote by $\Re[\,\cdot\,]$ the linear map that returns the first coordinate of an arbitrary expression on the basis $(1,x^{k-i})$.

\begin{itemize}
\item If $k-i=-3$, then Equation~\eqref{diffparity} yields, ignoring the monomial with coefficient $\mu$,
$$
\lambda=\Re\left[\pm\left(\frac{\gamma_2}{\sqrt 3}\begin{cases}x\\ x^{3}\end{cases}\!\!+\frac{\gamma_1}{\sqrt 3}x^{-2}\right)+ax+b-\frac{\gamma_1}{\sqrt 3}\begin{cases}1\\ x^2\end{cases}\!\!-\frac{\gamma_2}{\sqrt 3}x\right].
$$
Now, ignoring the parameter $b$, since ultimately we are interested in $\lambda$ up to the action of $G$, and using the chart at the beginning of this section we obtain:

$$\lambda=\pm\left(\frac{\gamma_2}{\sqrt 3}\begin{cases}\frac{\sqrt 3}{2}\\ 0\end{cases}\!\!+\frac{\gamma_1}{2\sqrt 3}\right)+\frac{a\sqrt 3}{2}-\frac{\gamma_1}{\sqrt 3}\begin{cases}1\\ \frac{1}{2}\end{cases}\!\!-\frac{\gamma_2}{2}$$

We conclude that, for $i$ even, the orbits of points on the two lines 
$$
\frac{\gamma_1 }{\sqrt3}
\begin{cases} 
x^{i}\\ 
x^{i+2}
\end{cases}\!\!
+\frac{\gamma_2}{\sqrt3} x^{i+1}+\lambda x^i,
$$
obtained by intersecting them with the translates of the 4 lines directed by $x^{k}=x^{i-3}$ are determined by the following 2 sets of 16 values of $\lambda$ modulo $G$. To compile the tables we used the fact that $\frac{a\sqrt 3}{2}$ with $a\in G$ can be expressed as
$$
a\frac{\sqrt 3}{2}=(a_1+a_2\sqrt 3)\frac{\sqrt 3}{2}=\frac{1}{2}(a_1\sqrt 3+3a_{2})\ \mathrm{with\ } a_1, a_2\in \mathbb Z.
$$ 
It follows that modulo $G$, the expression $\frac{a\sqrt 3}{2}$ is equivalent to an element in the set $\{0,\frac{1}{2},\frac{\sqrt 3}{2},\frac{1+\sqrt 3}{2}\}$.

\begin{itemize}
\item Intersection with $\frac{\gamma_{1}}{\sqrt3}x^{i}+\frac{\gamma_2}{\sqrt3} x^{i+1}+\lambda x^i$ for the values of $\lambda$:
\begin{equation}\label{tab-7}
{\renewcommand\arraystretch{1.3}
\arraycolsep=0.25cm
\begin{array}{|c|c|c|c|}
\hline
-\frac{\gamma_{1}}{2\sqrt3}+A_4& -\gamma_{2}-\frac{\gamma_{1}\sqrt3}{2}+A_4& -\frac{\gamma_{2}}{2}-\frac{\gamma_{1}}{2\sqrt3}+A_4& -\frac{\gamma_{2}}{2}-\frac{\gamma_{1}\sqrt3}{2}+A_4\\
\hline
\end{array}
}
\end{equation}
\item Intersection with $\frac{\gamma_{1}}{\sqrt3}x^{i+2}+\frac{\gamma_2}{\sqrt3} x^{i+1}+\lambda x^i$ for the values of $\lambda$:
\begin{equation}\label{tab-8}
{\renewcommand\arraystretch{1.3}
\arraycolsep=0.25cm
\begin{array}{|c|c|c|c|}
\hline
A_4& -\gamma_{2}-\frac{\gamma_{1}}{\sqrt3}+A_4& -\frac{\gamma_{2}}{2}-\frac{\gamma_{1}}{\sqrt3}+A_4& -\frac{\gamma_{2}}{2}+A_4\\
\hline
\end{array}
}
\end{equation}
\end{itemize}

\item If $k-i=1$ we follow the same steps as in the preceding case to obtain:
$$
\lambda=\Re\left[\pm\left(\frac{\gamma_2}{\sqrt 3}\begin{cases}x^{3}\\ x^{5}\end{cases}\!\!+\frac{\gamma_1}{\sqrt 3}\right)+\mu x^{-1}+ax+b-\frac{\gamma_1}{\sqrt 3}\begin{cases}1\\ x^2\end{cases}\!\!-\frac{\gamma_2}{\sqrt 3}x\right],
$$
which turns into
$$
\lambda=\pm\left(\frac{\gamma_2}{\sqrt 3}\begin{cases}\sqrt 3\\ 0\end{cases}\!\!+\frac{\gamma_1}{\sqrt 3}\right)+\sqrt3 a-\frac{\gamma_1}{\sqrt 3}\begin{cases}1\\ 2\end{cases}\!\!-\gamma_2.$$

Notice that in this case, modulo de action of $G$, there is only one orbit of values of $\lambda$ for each of the 4 lines of direction $x^{i-1}$. The values of $\lambda$ obtained are:

\begin{itemize}
\item Intersection with $\frac{\gamma_{1}}{\sqrt3}x^{i}+\frac{\gamma_2}{\sqrt3} x^{i+1}+\lambda x^i$ for the values of:

\begin{equation}\label{tab-9}
\lambda\in\{0,-\gamma_2,-2\gamma_2-\frac{2\gamma_1}{\sqrt 3},-\gamma_2-\frac{2\gamma_1}{\sqrt 3}\}.
\end{equation}

\item Intersection with $\frac{\gamma_{1}}{\sqrt3}x^{i+2}+\frac{\gamma_2}{\sqrt3} x^{i+1}+\lambda x^i$ for the values of:

\begin{equation}\label{tab-10}
\lambda\in\{-\frac{\gamma_1}{\sqrt 3},-\gamma_2-\frac{2\gamma_1}{\sqrt 3},-2\gamma_2-\sqrt 3\gamma_1,-\gamma_2-\sqrt 3\gamma_1\}.
\end{equation}

\end{itemize}

\item If $k-i=1$, we follow the same steps as in the preceding cases to obtain:
$$
\lambda=\Re\left[\pm\left(\frac{\gamma_2}{\sqrt 3}\begin{cases}x^{5}\\ x^{6}\end{cases}\!\!+\frac{\gamma_1}{\sqrt 3}x^2\right)+\mu x^{1}+ax+b-\frac{\gamma_1}{\sqrt 3}\begin{cases}1\\ x^2\end{cases}\!\!-\frac{\gamma_2}{\sqrt 3}x\right],
$$
which turns into
$$
\lambda=\pm\left(\frac{\gamma_2}{\sqrt 3}\begin{cases}-\sqrt 3\\ 0\end{cases}\!\!-\frac{\gamma_1}{\sqrt 3}\right)+b-\frac{\gamma_1}{\sqrt 3}\begin{cases}1\\ -1\end{cases}\!\!,
$$
yielding,
\begin{itemize}
\item Intersection with $\frac{\gamma_{1}}{\sqrt3}x^{i}+\frac{\gamma_2}{\sqrt3} x^{i+1}+\lambda x^i$ for the values of:
\begin{equation}\label{tab-11}
\lambda\in\{-\frac{2\gamma_1}{\sqrt 3},-\frac{2\gamma_1}{\sqrt 3}-\gamma_2,0,\gamma_2\}.
\end{equation}
\item Intersection with $\frac{\gamma_{1}}{\sqrt3}x^{i+2}+\frac{\gamma_2}{\sqrt3} x^{i+1}+\lambda x^i$ for the values of:
\begin{equation}\label{tab-12}
\lambda\in\{0,-\gamma_2,\frac{2\gamma_1}{\sqrt 3},\frac{2\gamma_1}{\sqrt 3}+\gamma_2\}.
\end{equation}
\end{itemize}
\end{itemize}

\subsubsection{Two lines of different parity, $k$ even}
The last case we need to analyze is $k$, $i$ of different parity and $k$ even. So, once again we fix two lines directed by $x^{i}$ and we intersect them with the four different lines directed by $x^{k}$. That is, we are looking for the values of $\lambda$ in the following equation:
$$
\pm\left(\frac{\gamma_1 }{\sqrt3}\begin{cases} x^{k}\\ x^{k+2}\end{cases}+\frac{\gamma_2}{\sqrt3} x^{k+1}\right)+\mu x^k+\sum_p n_p x^p=\frac{\gamma_2}{\sqrt3}\begin{cases} x^{i+4}\\ x^{i+6}\end{cases}+\frac{\gamma_1}{\sqrt3} x^{i+1}+\lambda x^i,
$$
which we divide by $x^{i}$ to obtain:
\begin{equation}\label{impair-pair}
\pm\left(\frac{\gamma_1 }{\sqrt3}\begin{cases} x^{k-i}\\ x^{k-i+2}\end{cases}+\frac{\gamma_2}{\sqrt3} x^{k-i+1}\right)+\mu x^{k-i}+\sum_p n_p x^p=\frac{\gamma_2}{\sqrt3}\begin{cases} x^{4}\\ x^{6}\end{cases}+\frac{\gamma_1}{\sqrt3} x^{1}+\lambda.
\end{equation}

This time the expression $k-i$ can take any of the following values $\{-5,-3,-1,1,3\}$, and just like in the case $k$ odd, $i$ even, the analysis will be complete if we study the cases $k-i=-3,-1,1$. 
\begin{itemize}
\item If $k-i=-3$, then working with the basis $(1,x^{-3})$ we have:
\begin{align*}
\lambda&= \pm\left(\frac{\gamma_1 }{\sqrt3}\begin{cases} -x^3\\ -x^5\end{cases}-\frac{\gamma_2}{\sqrt3}x^4 \right)-\mu x^{-3}+\sum_p n_p x^p-\frac{\gamma_2}{\sqrt3}\begin{cases} x^{4}\\ x^{6}\end{cases}-\frac{\gamma_1}{\sqrt3} x^{1}\\[5pt]
\lambda&= \pm\left(\frac{\gamma_1 }{\sqrt3}\begin{cases} 0\\ -\Re(x^5)\end{cases}-\frac{\gamma_2}{\sqrt3}\Re(x^4) \right)+\Re(ax+b)-\frac{\gamma_2}{\sqrt3}\begin{cases} \Re(x^{4})\\ -1\end{cases}-\frac{\gamma_1}{\sqrt3} \Re(x)\\[5pt]
\lambda&= \pm\left(\frac{\gamma_1 }{\sqrt3}\begin{cases} 0\\ \frac{\sqrt 3}{2}\end{cases}+\frac{\gamma_2}{2\sqrt3} \right)+a\frac{\sqrt 3}{2}+b-\frac{\gamma_2}{\sqrt3}\begin{cases} -\frac{1}{2}\\ -1\end{cases}-\frac{\gamma_1}{2}\\
\end{align*}
yielding
\begin{itemize}
\item Intersection with $\frac{\gamma_{1}}{\sqrt3}x^{i+1}+\frac{\gamma_2}{\sqrt3} x^{i+4}+\lambda x^i$ for the values of:
$$
\lambda= \pm\left(\frac{\gamma_1 }{\sqrt3}\begin{cases} 0\\ \frac{\sqrt 3}{2}\end{cases}+\frac{\gamma_2}{2\sqrt3} \right)+a\frac{\sqrt 3}{2}+b+\frac{\gamma_2}{2\sqrt3}-\frac{\gamma_1}{2}
$$
which, modulo de action of $\Gamma$, can be summarized as:
\begin{equation}\label{tab-15}
{\renewcommand\arraystretch{1.3}
\arraycolsep=0.25cm
\begin{array}{|c|c|c|c|}
\hline
 -\frac{\gamma_1}{2}+\frac{\gamma_2}{\sqrt 3}+A_4& \frac{\gamma_2}{\sqrt 3}+A_4& -\gamma_{1}+A_4& -\frac{\gamma_1}{2}+A_4\\
 \hline
 \end{array}
 }
\end{equation}

\item Intersection with $\frac{\gamma_{1}}{\sqrt3}x^{i+1}+\frac{\gamma_2}{\sqrt3} x^{i+6}+\lambda x^i$ for the values of:
$$
\lambda= \pm\left(\frac{\gamma_1 }{\sqrt3}\begin{cases} 0\\ \frac{\sqrt 3}{2}\end{cases}+\frac{\gamma_2}{2\sqrt3} \right)+a\frac{\sqrt 3}{2}+b+\frac{\gamma_2}{\sqrt3}-\frac{\gamma_1}{2}
$$
which, modulo de action of $\Gamma$, can be summarized as:
\begin{equation}\label{tab-16}
{\renewcommand\arraystretch{1.3}
\arraycolsep=0.25cm
\begin{array}{|c|c|c|c|}
\hline
  -\gamma_1+\frac{\gamma_2}{2\sqrt 3}+A_4&-\frac{\gamma_1}{2}+\frac{\gamma_2}{2\sqrt 3}+A_4&-\frac{\gamma_1}{2}+\frac{\gamma_2\sqrt 3}{2}+A_4&\frac{\gamma_2\sqrt 3}{2}+A_4\\
\hline
\end{array}
}
\end{equation}
\end{itemize}

\item If $k-i=-1$, then remark that $x^{-i}=-x^{6-i}$ and we obtain
\begin{align*}
\lambda&= \pm\left(\frac{\gamma_1 }{\sqrt3}\begin{cases} -x^5\\ x\end{cases}+\frac{\gamma_2}{\sqrt3} \right)-\mu x^{5}+\sum_p n_p x^p-\frac{\gamma_2}{\sqrt3}\begin{cases} x^{4}\\ x^{6}\end{cases}-\frac{\gamma_1}{\sqrt3} x^{1}\\[5pt]
\lambda&= \pm\left(\frac{\gamma_1 }{\sqrt3}\begin{cases} 0\\ \Re(x)\end{cases}+\frac{\gamma_2}{\sqrt3} \right)+\Re(ax+b)-\frac{\gamma_2}{\sqrt3}\begin{cases} \Re(x^{4})\\ -1\end{cases}-\frac{\gamma_1}{\sqrt3} \Re(x)\\[5pt]
\lambda&= \pm\left(\frac{\gamma_1 }{\sqrt3}\begin{cases} 0\\ \Re(x)\end{cases}+\frac{\gamma_2}{\sqrt3} \right)+a\Re(x)+b-\frac{\gamma_2}{\sqrt3}\begin{cases} \Re(x^{4})\\ -1\end{cases}-\frac{\gamma_1}{\sqrt3} \Re(x)\\
\end{align*}
which turns into
$$
\lambda=\pm\left(\frac{\gamma_1 }{\sqrt3}\begin{cases} 0\\ \sqrt 3\end{cases}+\frac{\gamma_2}{\sqrt3} \right)+a\sqrt 3+b-\frac{\gamma_2}{\sqrt3}\begin{cases} 1\\ -1\end{cases}-\frac{\gamma_1}{\sqrt3} \sqrt 3
$$
yielding
\begin{itemize}
\item  Intersection with $\frac{\gamma_{1}}{\sqrt3}x^{i+1}+\frac{\gamma_2}{\sqrt3} x^{i+4}+\lambda x^i$ for the values of:
$$
\lambda=\pm\left(\frac{\gamma_1 }{\sqrt3}\begin{cases} 0\\ \sqrt 3\end{cases}+\frac{\gamma_2}{\sqrt3}\right) +a\sqrt 3+b-\frac{\gamma_2}{\sqrt3}-\gamma_1,
$$
that is
\begin{equation}\label{tab-13}
\lambda\in\{-2\gamma_1-\frac{2\gamma_2}{\sqrt 3},-\gamma_1,-\gamma_1-\frac{2\gamma_2}{\sqrt 3},0\}.
\end{equation}

\item   Intersection with $\frac{\gamma_{1}}{\sqrt3}x^{i+1}+\frac{\gamma_2}{\sqrt3} x^{i+6}+\lambda x^i$ for the values of:
$$
\lambda=\pm\left(\frac{\gamma_1 }{\sqrt3}\begin{cases} 0\\ \sqrt 3\end{cases}+\frac{\gamma_2}{\sqrt3}\right) +a\sqrt 3+b+\frac{\gamma_2}{\sqrt3}-\gamma_1,
$$
that is,
\begin{equation}\label{tab-14}
\lambda\in\{\frac{2\gamma_2}{\sqrt 3},-\gamma_1+\frac{2\gamma_2}{\sqrt 3},-\gamma_1,-2\gamma_1 \}.
\end{equation}
\end{itemize}
\item If $k-i=1$, then working with the basis $(1,x)$ and Lemma~\ref{degree1} we have:
\begin{align*}
\lambda&=\pm\left(\frac{\gamma_1 }{\sqrt3}\begin{cases} x\\ x^{3}\end{cases}+\frac{\gamma_2}{\sqrt3} x^{2}\right)+\mu x+\sum_p n_p x^p-\frac{\gamma_2}{\sqrt3}\begin{cases} x^{4}\\ x^{6}\end{cases}-\frac{\gamma_1}{\sqrt3} x^{1}\\[5pt]
\lambda&=\pm\left(\frac{\gamma_1 }{\sqrt3}\begin{cases} 0\\ -\sqrt 3\end{cases}-\frac{\gamma_2}{\sqrt3} \right)-\frac{\gamma_2}{\sqrt3}\begin{cases} -2\\ -1\end{cases}\\
\end{align*}
yielding
\begin{itemize}
\item Intersection with $\frac{\gamma_{1}}{\sqrt3}x^{i+1}+\frac{\gamma_2}{\sqrt3} x^{i+4}+\lambda x^i$ for the values of:
$$
\lambda=\pm\left(\frac{\gamma_1 }{\sqrt3}\begin{cases} 0\\ -\sqrt 3\end{cases}-\frac{\gamma_2}{\sqrt3} \right)+\frac{2\gamma_2}{\sqrt3}
$$
that is,
\begin{equation}\label{tab-17}
\lambda\in\{ -\gamma_1+ \frac{\gamma_2}{\sqrt 3}, \frac{\gamma_2}{\sqrt 3},\gamma_2\sqrt3, \gamma_2\sqrt 3+\gamma_1 \}.
\end{equation}

\item Intersection with $\frac{\gamma_{1}}{\sqrt3}x^{i+1}+\frac{\gamma_2}{\sqrt3} x^{i+6}+\lambda x^i$ for the values of:
$$
\lambda=\pm\left(\frac{\gamma_1 }{\sqrt3}\begin{cases} 0\\ -\sqrt 3\end{cases}-\frac{\gamma_2}{\sqrt3} \right)+\frac{\gamma_2}{\sqrt3}
$$
that is,
\begin{equation}\label{tab-18}
\lambda\in\{ -\gamma_1,0,\frac{2\gamma_2}{\sqrt 3},\frac{2\gamma_2}{\sqrt 3}+\gamma_1 \}.
\end{equation}
\end{itemize}
\end{itemize}

%
%
\subsection{General case: results}\label{generalfinal}

In the previous section we have computed for each line of direction $x^{i}$ all the intersection points with each of the $\Gamma$-translates of the other 20 lines of different directions. Moreover, we have collected these points in orbits under the $\Gamma$-action. The first step in order to compute the numbers $L_0$ and $L_0^\alpha$, needed to determine $H^2(\Omega_{E_{12}^\gamma})$, is to organize the information gathered so far, paying special attention to repetitions in the above tables. This is done in Proposition~\ref{prop-points-droite}. 

We have not computed explicitly the values $L_0$ and $ L_0^\alpha$ for all different parameters $\gamma$. The number of subcases is too large to be considered of interest by itself. We finish our study with Proposition~\ref{prop-points-droite}, which gives an upper bound on $L_0^\alpha$ and moreover it describes explicitly the representatives of the 0-singularities on each 1-singularity in terms of the parameter $\gamma$.

Before we present the general picture in the next proposition, we show here the intersection combinatorics, obtained with computer assistance, for the case
$\gamma_1=1/7+\sqrt3/11$, $\gamma_2=1/13+\sqrt3/17$. For technical reasons, our program requires $\gamma_i \in \mathbb Q(\sqrt3)$, but we conjecture these values are representatives of the `generic case', meaning that lines and line intersections cannot further split
up into several lines or intersections, if the $\gamma$ parameters are varied.
Note that the parameters $(\gamma_{1},\gamma_{2})$ are taken in the region
labelled with a red 24 in Figure~\ref{fig-orbit-tout}. The intersection
combinatorics for this case are:

\[ {\renewcommand\arraystretch{1.3}
\begin{array}{|c|c|c||c|c|c|c|c||c|}
  \hline
                    & n  & \text{dir} & p=2 & p=3 & p=4 & p=5 & p=6 & \text{tot} \\
  \hline
  L_{0,p}^1          & 24 & e,o        &   42 &   0 &   2 &   0 &   0 &   44 \\
  \hline
  \sum L_{0,p}^\alpha & 24 &            & 1008 &   0 &  48 &   0 &   0 & 1056 \\
  \hline
  L_{0,p}            &    &            & 504  &   0 &  12 &   0 &   0 &  516 \\
  \hline
\end{array} } \]

We see that we have here a single line type occurring in both even and odd directions,
and that the value of $L_0^\alpha$ really attains the maximal value of 44 admitted by
Proposition~\ref{prop-points-droite}. It is worth pointing out that in this generic case, not all
line intersections are generic in the sense that only two lines may intersect in a
single point (notice the entry in the table under $p=4$). The reason is that the positions of the singular lines do not move
independently of each other when the $\gamma$-values vary. In this generic case,
we obtain the following cohomology:

\[ {\renewcommand\arraystretch{1.3}
\begin{array}{|c|c|c|c|c|c|c|}
  \hline
  \sum L_0^\alpha & L_0 & L_1 &   e & \rk H^2(\Omega_{E_{12}^\gamma}) & \rk H^1(\Omega_{E_{12}^\gamma}) & \rk H^0(\Omega_{E_{12}^\gamma}) \\
  \hline
           1056 & 516 &  24 & 540 &    564  &      25 &       1 \\
  \hline
\end{array} } \]

We conjecture this to be the maximal cohomology attained among all the generalized 12-fold tilings studied in this paper.

Turning now our attention back to the general case, we present in the next proposition our findings on the quantity $L_{0}^{\alpha}$. The statement refers to the following explicit list of the 1-singularities $\alpha$:

\begin{enumerate}
\item[(a)] $\alpha=\pm\frac{1}{\sqrt 3}(\gamma_1 x^i+\gamma_2x^{i+1})+\lambda x^i$, $\lambda\in\mathbb R$, $i\in\{0,2,4\}$.  
\item[(b)] $\alpha=\pm\frac{1}{\sqrt 3}(\gamma_1 x^{i+2}+\gamma_2x^{i+1})+\lambda x^i$, $\lambda\in\mathbb R$, $i\in\{0,2,4\}$. 
\item[(c)] $\alpha=\pm\frac{1}{\sqrt3}(\gamma_{1}x^{i+1}+\gamma_2 x^{i+4})+\lambda x^i$, $\lambda\in\mathbb R$ $i\in\{1,3,5\}$.
\item[(d)] $\alpha=\pm\frac{1}{\sqrt3}(\gamma_{1}x^{i+1}+\gamma_2 x^{i+6})+\lambda x^i$, $\lambda\in\mathbb R$ $i\in\{1,3,5\}$. 
\end{enumerate}

\begin{proposition}\label{prop-points-droite}
All the 1-singularities $\alpha$ have an associated value $ L_0^\alpha$ bounded by $44$. Moreover, the values of $\lambda$ identifying representatives of points of intersection between $\alpha$ (in the above \emph{(a), (b), (c)} and \emph{(d)} cases) and the translates of the 20 non-parallel 1--singularities are collected (respectively) in Table~\ref{tab-even1}, Table~\ref{tab-even2}, Table~\ref{tab-odd1} and Table~\ref{tab-odd2}. (The values in the tables are for $\alpha$ with a positive $\frac{1}{\sqrt3}$ factor; if the factor is negative, one should consider minus the entries in the tables.) 
\end{proposition}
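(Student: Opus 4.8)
The proof is essentially a bookkeeping exercise that assembles the case-by-case computations carried out in Section~\ref{general1}. My plan is as follows.

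\textbf{Step 1: Reduce to the four listed families.} By Corollary~\ref{cor-simplification} every $1$-singularity is $\Gamma$-equivalent, after translating to $P^\gamma_0$, to one of the $24$ lines listed there, and these fall exactly into the four families (a)--(d) of the statement (three even directions $i\in\{0,2,4\}$ with the two ``phases'' $x^i$ and $x^{i+2}$, and three odd directions $i\in\{1,3,5\}$ with the two phases $x^{i+4}$ and $x^{i+6}$). The sign ambiguity $\pm\frac1{\sqrt3}(\cdots)$ is handled by Remark~\ref{rem-signe-moins} and its analogues: negating the defining offset negates every value of $\lambda$, which is why the tables are stated for the $+$ sign only. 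So it suffices to fix one line $\alpha$ from each family and tabulate the $\lambda$-values of its intersection points with the $\Gamma$-translates of the $20$ lines not parallel to it.

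\textbf{Step 2: Collect the $\lambda$-lists.} For a fixed $\alpha$ of, say, even direction $x^i$, the $20$ non-parallel lines split into: the $4$ lines in each of the two other even directions $x^{i+2}$, $x^{i+4}$ (i.e.\ $k-i=2$ and $k-i=4$), and the $4$ lines in each of the three odd directions, which relative to $x^i$ are the cases $k-i\in\{-3,-1,1\}$ up to the identification $x^n=-x^{n+6}$ noted in Section~\ref{general1}. Each of these sub-cases was solved explicitly: the relevant $\lambda$-values (as cosets mod $G$, written using $A_3$ or $A_4$) appear in Equations~\eqref{tab-1}--\eqref{tab-3}, \eqref{tab-7}--\eqref{tab-12} for an even $\alpha$ of phase $x^i$, and in \eqref{tab-2}, \eqref{tab-8}, \eqref{tab-10}, \eqref{tab-12} (together with the $k-i=4$ even case) for phase $x^{i+2}$; the odd families (c), (d) are assembled from \eqref{tab-4}--\eqref{tab-6} and the mixed-parity tables \eqref{tab-13}--\eqref{tab-18}. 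Concatenating these lists and recording them in Tables~\ref{tab-even1}--\ref{tab-odd2} is precisely the content of ``the values are collected in the tables''.

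\textbf{Step 3: The bound $L_0^\alpha\le 44$.} Count the entries. Each even $\alpha$ meets $8$ even lines ($k-i=2$ contributes the $4$-column table \eqref{tab-1} or \eqref{tab-2}, i.e.\ $4$ cosets each a coset of $A_3$, so $12$ values; $k-i=4$ contributes \eqref{tab-3}, another $12$ values) and $12$ odd lines (the cases $k-i=-3$ give the $4$-column $A_4$-tables \eqref{tab-7}/\eqref{tab-8}, i.e.\ $16$ values; $k-i=-1$ and $k-i=1$ each give $4$ bare values, cf.\ \eqref{tab-9}--\eqref{tab-12}). Summing, $12+12+16+4+4=48$ \emph{a priori} intersection cosets. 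The bound $44$ then comes from the \emph{forced coincidences}: the coset $0\in A_3$ and the coset $0\in A_4$ both represent the origin of $\alpha$, which lies on many of these translated lines simultaneously, so it is heavily overcounted; eliminating the systematic duplicates (the ``paying attention to repetitions in the tables'' mentioned before the proposition) trims the count from $48$ to at most $44$. One verifies — this is where a short but genuine argument is needed — that no \emph{further} coincidences are forced for all $\gamma$, so $44$ is attained (the generic example $\gamma_1=1/7+\sqrt3/11$, $\gamma_2=1/13+\sqrt3/17$ computed above realizes it). An entirely parallel count works for the odd families (c), (d), using that an odd $\alpha$ meets $8$ odd lines and $12$ even lines, and yields the same bound $44$ by the symmetry between even and odd directions already exploited throughout Section~\ref{sec-lines}.

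\textbf{Main obstacle.} The only real subtlety is Step~3: disentangling which of the $48$ nominal cosets are \emph{always} equal (and hence must be identified in every table, independently of $\gamma$) versus which merely \emph{can} coincide for special $\gamma$. The former depend only on the structure of the sets $A_3$, $A_4$ and on the elementary identities among powers of $x$ from Lemma~\ref{degree1}, so they are uniform in $\gamma$; the latter are governed by arithmetic conditions on $\gamma$ of the type analysed in Lemma~\ref{lem-algebre-simple} and Lemma~\ref{lem-intersection-ensemble}, and are exactly what prevents us from giving a closed formula for $L_0^\alpha(\gamma)$. The proposition deliberately asserts only the $\gamma$-independent upper bound plus the explicit tables, so the proof needs the first (uniform) analysis in full and can leave the second to the case-by-case discussion; verifying that $44$ is sharp is then immediate from the tabulated generic example.
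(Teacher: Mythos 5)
Your overall route coincides with the paper's: reduce to the four families via Corollary~\ref{cor-simplification}, handle the sign with Remark~\ref{rem-signe-moins}, concatenate the $\lambda$-lists computed in Section~\ref{general1} into the four tables, and bound $L_0^\alpha$ by the number of distinct values listed, starting from the raw count $12+12+16+4+4=48$ per line. The gap is in your Step~3, which is the only place where anything has to be verified. The drop from $48$ to $44$ does \emph{not} come from ``the coset $0\in A_3$ and the coset $0\in A_4$ both representing the origin'': the bare cosets $A_3$ and $A_4$ are never identified with each other (both survive as separate entries, e.g.\ in Table~\ref{tab-even2}), and most of the discarded duplicates are not the origin at all. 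The actual coincidences are specific $\gamma$-dependent values occurring in the lists attached to \emph{different} directions, found by direct inspection and different for each family: for a line of type (a), $-\tfrac{2\gamma_1}{\sqrt3}-\gamma_2$ occurs in \eqref{tab-3}, \eqref{tab-9} and \eqref{tab-11}, and $0$ occurs in both \eqref{tab-9} and \eqref{tab-11}; for type (c), $-\gamma_1$ occurs three times and $-\gamma_1+\tfrac{\gamma_2}{\sqrt3}$, $\tfrac{\gamma_2}{\sqrt3}$ twice each; for type (d), $-\gamma_1$ occurs three times and $-\gamma_1+\tfrac{\gamma_2}{\sqrt3}+A_3$, $-\gamma_1+\tfrac{2\gamma_2}{\sqrt3}$, $\tfrac{2\gamma_2}{\sqrt3}$ twice each; type (b) needs its own (single) identification. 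Exhibiting these repetitions is the whole substantive content of the proof; replacing it by the origin-overcounting heuristic leaves the step $48\to 44$ unproved, and as stated the heuristic is false.

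Two smaller points. Sharpness is not part of the statement: the proposition asserts only an upper bound that is uniform in $\gamma$, and the fact that $44$ is attained is a separate, computer-assisted observation about the generic example; your demand for ``a short but genuine argument that no further coincidences are forced'' is therefore not needed (and for special $\gamma$ further coincidences do occur — precisely the reason only an upper bound is claimed). Finally, in Step~2 the list ``\eqref{tab-1}--\eqref{tab-3}, \eqref{tab-7}--\eqref{tab-12} for an even $\alpha$ of phase $x^i$'' over-includes: only \eqref{tab-1}, \eqref{tab-3}, \eqref{tab-7}, \eqref{tab-9}, \eqref{tab-11} pertain to family (a), the sets \eqref{tab-2}, \eqref{tab-8}, \eqref{tab-10}, \eqref{tab-12} belonging to family (b); you sort this out afterwards, so it is only a bookkeeping slip, but in a proof that is pure bookkeeping such slips matter.
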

\begin{proof}
The proof of this statement is nothing but organizing the information collected in the previous computations. We fix a direction $x^{i}$ and we look at the 4 parallel lines given by
Corollary~\ref{cor-simplification}. 
\begin{enumerate}
\item If $i$ is even, then to understand the 0--singularities on:

\begin{enumerate}
\item $\alpha=\frac{1}{\sqrt 3}(\gamma_1 x^i+\gamma_2x^{i+1})+\lambda x^i$, we collect the values of $\lambda$ from the sets~\ref{tab-1} and~\ref{tab-3} (for the intersections with lines directed by $x^{i+2}$ and $x^{i+4}$ respectively) and from the sets~\ref{tab-7}, \ref{tab-9} and~\ref{tab-11} (for the intersections with lines directed by $x^{i+3}$, $x^{i+5}$ and $x^{i+1}$ respectively).

Notice that the value $-\frac{2\gamma_{1}}{\sqrt3}-\gamma_{2}$ appears in the sets~\ref{tab-3}, \ref{tab-9} and~\ref{tab-11}; the value 0 is in both sets~\ref{tab-9} and~\ref{tab-11}. The collection of values with no repetitions is the content of Table~\ref{tab-even1}. Depending on the parameter $\gamma$, some of the values in the table could be in the same orbit under the action of $\Gamma$. We conclude then that the total number of different values in the table provides an upper bound for $L_{0}^{\alpha}$, which in this case is easily checked to be $44$.

If $\alpha=-\frac{1}{\sqrt 3}(\gamma_1 x^i+\gamma_2x^{i+1})+\lambda x^i$ we use Remark~\ref{rem-signe-moins} to conclude that the values we are interested in are minus the ones displayed in Table~\ref{tab-even1}.

\item $\alpha=\frac{1}{\sqrt 3}(\gamma_1 x^{i+2}+\gamma_2x^{i+1})+\lambda x^i$, we proceed just as before extracting the values from the sets~\ref{tab-2}, \ref{tab-3}, \ref{tab-8}, \ref{tab-10} and \ref{tab-12}. The summary of all the values, with one repetition, is presented in Table~\ref{tab-even2}. 

Again, if we consider $\alpha=-\frac{1}{\sqrt 3}(\gamma_1 x^{i+2}+\gamma_2x^{i+1})+\lambda x^i$,  by Remark~\ref{rem-signe-moins}, we obtain minus the values in Table~\ref{tab-even2}. This time, the upper bound for $L_{0}^{\alpha}$ we obtain is 44.

\end{enumerate}
\item If $i$ is odd, we need to discuss the 0-singularities on: 
	\begin{enumerate}
 		\item[(c)] $\alpha=\frac{1}{\sqrt3}(\gamma_2x^{i+4}+\gamma_1x^{i+1})+\lambda x^i$, for which we extract the values of $\lambda$ from the sets~\ref{tab-4}, \ref{tab-5}, \ref{tab-15},  \ref{tab-13} and \ref{tab-17}. Notice that in these sets the value $ -\gamma_1$ appears three times, $-\gamma_1+\frac{\gamma_2}{\sqrt 3}$ twice and $\frac{\gamma_2}{\sqrt 3}$ also twice.  The collection of all these values with only one repetition is the content of Table~\ref{tab-odd1}. The upper bound for $L_{0}^{\alpha}$ in this case is found to be 44. The case of $\alpha=-\frac{1}{\sqrt3}(\gamma_2x^{i+4}+\gamma_1x^{i+1})+\lambda x^i$ is dealt with via Remark~\ref{rem-signe-moins} as in the previous cases.
 		\item[(d)] $\alpha=\frac{1}{\sqrt3}(\gamma_2x^{i+6}+\gamma_1x^{i+1})+\lambda x^i$ we use the values of $\lambda$ in sets~\ref{tab-4}, \ref{tab-6}, \ref{tab-16}, \ref{tab-14},  and \ref{tab-18}. This time the value $-\gamma_{1}$ appears three times and the values $-\gamma_{1}+\frac{\gamma_{2}}{\sqrt 3}+A_{3}$, $-\gamma_{1}+\frac{2\gamma_{2}}{\sqrt 3}$ and $\frac{2\gamma_{2}}{\sqrt3}$ each appears twice. The complete set of values with no repetitions is collected in Table~\ref{tab-odd2}. This time the bound obtained is $L_{0}^{\alpha}\leq 44$. The case $\alpha=-\frac{1}{\sqrt3}(\gamma_2x^{i+6}+\gamma_1x^{i+1})+\lambda x^i$ is dealt with via Remark~\ref{rem-signe-moins}.
\end{enumerate}
\end{enumerate}
\end{proof}

\begin{remark}\leavevmode  
\begin{itemize}
\item If we replace $\gamma$ by $0$ in the tables, the reader can remark that we obtain $L_0^\alpha=6$ as in Lemma \ref{lem-calc-droites-gamma-nul}. 
\item For all $\gamma$ we have $L_0\leq (44+44)\cdot2\cdot3\cdot2$. 
Of course in the case $\gamma=0$ the same upper bound gives $6\cdot6=36$, while the actual result is $14$. Thus we can imagine that $L_0$ is always strictly less than this value.
\end{itemize}
\end{remark}

\begin{table}[h]
\centering
$$
{\renewcommand\arraystretch{2}
\arraycolsep=0.35cm
 \begin{array}{|c|c|c|c|}
 \hline
-\frac{\gamma_{1}}{\sqrt3}+A_3&-\frac{\gamma_1}{\sqrt 3}-\frac{2\gamma_2}{\sqrt 3}+A_3&-\frac{2\gamma_{1}}{\sqrt3}-\frac{2\gamma_{2}}{3} +A_3 &A_3 \\
\hline
-\frac{\gamma_1}{\sqrt 3}-\gamma_2+A_3&-\gamma_2-\frac{2\gamma_1 }{\sqrt3}+A_3&-\frac{\gamma_{2}}{3}+A_3&-\frac{\gamma_{2}}{3}-\frac{\gamma_1 }{\sqrt3}+A_3 \\
\hline
-\frac{\gamma_{1}}{2\sqrt3}+A_4& -\gamma_{2}-\frac{\gamma_{1}\sqrt3}{2}+A_4& -\frac{\gamma_{2}}{2}-\frac{\gamma_{1}}{2\sqrt3}+A_4& -\frac{\gamma_{2}}{2}-\frac{\gamma_{1}\sqrt3}{2}+A_4\\
\hline
0&&-2\gamma_2-\frac{2\gamma_1}{\sqrt 3}&\\
 \hline
-\frac{2\gamma_1}{\sqrt 3}&&&\gamma_2 \\
 \hline
 \end{array}
 }
$$
\caption{Each value of $\lambda$ is of the form $a\gamma_1+b\gamma_2+c$. The value $a\gamma_1+b\gamma_2$ identifies the intersection point between the two 1--singularities, while $c\in A_{3}\cup A_{4}$ encodes the different representatives of the equivalence classes under the action of $G$. The data on the first 2 lines is from the sets~\ref{tab-1}, \ref{tab-3} respectively while the last three lines have values from the sets~\ref{tab-7},  \ref{tab-9} and~\ref{tab-11}.}
\label{tab-even1}
\end{table}
\vspace{10cm}
\begin{table}[h!]
\centering
$$
{\renewcommand\arraystretch{2}
\arraycolsep=0.35cm
 \begin{array}{|c|c|c|c|}
 \hline
\frac{\gamma_1}{\sqrt 3}+A_{3}&
-\frac{2\gamma_2}{3}+A_{3}&
 -\frac{2\gamma_2}{3}-\frac{\gamma_1}{\sqrt 3}+A_{3}&
  A_{3}\\
\hline
  -\frac{\gamma_1}{\sqrt 3}-\frac{\gamma_2}{3}+A_{3}&
   -\frac{2\gamma_1}{\sqrt 3}-\gamma_2+A_{3}&
    -\frac{\gamma_2}{3}+A_{3}&
    -\frac{\gamma_{1}}{\sqrt3}-\gamma_{2}+A_{3}\\
\hline
    A_{4}&
    -\frac{\gamma_{1}}{\sqrt3}-\gamma_{2}+A_{4}&
    -\frac{\gamma_1}{\sqrt 3}-\frac{\gamma_{2}}{2}+A_{4}&
    -\frac{\gamma_2}{2}+A_{4}\\
\hline
-\frac{\gamma_1}{\sqrt 3}&&-2\gamma_2-\sqrt 3\gamma_1&-\gamma_2-\sqrt 3\gamma_1\\
 \hline
&&\frac{2\gamma_1}{\sqrt 3}&\frac{2\gamma_1}{\sqrt 3}+\gamma_2\\
 \hline
 \end{array}
 }
$$
\caption{Each value of $\lambda$ is of the form $a\gamma_1+b\gamma_2+c$. The value $a\gamma_1+b\gamma_2$ identifies the intersection point between the two 1--singularities, while $c\in A_{3}\cup A_{4}$, encodes the different representatives of the equivalence classes under the action of $G$. There is one value repeated in the table, namely $-\frac{\gamma_{1}}{\sqrt3}-\gamma_{2}$.  The data on the first 3 lines is from Tables~\ref{tab-2}, \ref{tab-3} and \ref{tab-8} respectively while the last line has values from tables \ref{tab-10} and \ref{tab-12}.}
\label{tab-even2}
\end{table}
\begin{table}[h!]
\centering
$$
{\renewcommand\arraystretch{2}
\arraycolsep=0.35cm
 \begin{array}{|c|c|c|c|}
 \hline
- \frac{2\gamma_1}{3}+A_3&-\frac{2\gamma_1}{3}+\frac{\gamma_2}{\sqrt 3}+A_3&\frac{\gamma_2}{\sqrt 3}+A_3&\frac{2\gamma_2}{\sqrt 3}+A_3\\
\hline
-\gamma_{1}+A_3&-\frac{\gamma_1}{3}+A_3&-\gamma_{1}-\frac{\gamma_2}{\sqrt3}+A_3&-\frac{\gamma_1}{3}+\frac{\gamma_2}{\sqrt3}+A_3\\

\hline
  -\frac{\gamma_1}{2}+\frac{\gamma_2}{\sqrt 3}+A_4& \frac{\gamma_2}{\sqrt 3}+A_4& -\gamma_{1}+A_4& -\frac{\gamma_1}{2}+A_4\\
 \hline
 -2\gamma_1-\frac{2\gamma_2}{\sqrt 3}&&-\gamma_1-\frac{2\gamma_2}{\sqrt 3}&0\\
 \hline
& &\gamma_2\sqrt3& \gamma_2\sqrt 3+\gamma_1 \\
 \hline
 \end{array}
 }
$$
\caption{Each value of $\lambda$ is of the form $a\gamma_1+b\gamma_2+c$. The value $a\gamma_1+b\gamma_2$ identifies the intersection point between the two 1--singularities, while $c\in A_{3}\cup A_{4}$, encodes the different representatives of the equivalence classes under the action of $G$. The data on the first 2 lines is from the sets~\ref{tab-4}, \ref{tab-5} respectively while the last three lines have values from the sets~\ref{tab-15},  \ref{tab-13} and~\ref{tab-17}. Notice the repetition of the value $-\gamma_{1}$.}
\label{tab-odd1}
\end{table}
\begin{table}[h!]
\centering
$$
{\renewcommand\arraystretch{2}
\arraycolsep=0.35cm
 \begin{array}{|c|c|c|c|}
 \hline
- \frac{2\gamma_1}{3}+A_3&-\frac{2\gamma_1}{3}+\frac{\gamma_2}{\sqrt 3}+A_3&\frac{\gamma_2}{\sqrt 3}+A_3&\frac{2\gamma_2}{\sqrt 3}+A_3\\
\hline
-\gamma_1+A_3&-\gamma_1+\frac{\gamma_2}{\sqrt 3}+A_3&-\frac{\gamma_1}{3}+\frac{2\gamma_2}{\sqrt3}+A_3&-\frac{\gamma_1}{3}+\frac{\gamma_2}{\sqrt3}+A_3\\
\hline
-\gamma_1+\frac{\gamma_2}{2\sqrt 3}+A_4&-\frac{\gamma_1}{2}+\frac{\gamma_2}{2\sqrt 3}+A_4&-\frac{\gamma_1}{2}+\frac{\gamma_2\sqrt 3}{2}+A_4&\frac{\gamma_2\sqrt 3}{2}+A_4\\
\hline
\frac{2\gamma_2}{\sqrt 3}&&&-2\gamma_1\\
\hline
&0&&\gamma_1+\frac{2\gamma_2}{\sqrt 3}\\
 \hline
 \end{array}
 }
$$
\caption{Each value of $\lambda$ is of the form $a\gamma_1+b\gamma_2+c$. The value $a\gamma_1+b\gamma_2$ identifies the intersection point between the two 1--singularities, while $c\in A_{3}\cup A_{4}$, encodes the different representatives of the equivalence classes under the action of $G$.   
The data on the first $2$ lines is from the sets~\ref{tab-4}, \ref{tab-6} respectively while the last lines have values from the sets~\ref{tab-16}, \ref{tab-14} and \ref{tab-18}.}
\label{tab-odd2}
\end{table}

\newpage
\section{Disscussion}
\label{sec-disc}
In this final section, we put some of our results into a wider context.
Many results on the topology of tiling spaces have been obtained for
substitution (or inflation) tilings, for which powerful methods are
availabale \cite{And.Put.98,Sad.08}. The projection tilings considered
here are generically not substitutive, but many interesting examples are.
We sketch here briefly which projection tilings are substitutive,
and then derive some simple consequences.

Substitution tilings have a self-similarity built in. There exists a
factor $\lambda>0$, such that if the tiling is scaled by this factor,
its inflated tiles can be disected into tiles of the original size in
such a way, that the original tiling and the new, inflated and disected
tiling are locally indistinguishable (have the same local patterns).

For such an inflation procedure to exist for a projection tiling, two
conditions must be satisfied. Firstly, the scaling action by $\lambda$ must
lift to an automorphism of the lattice $\mathbb{Z}^n \cap (E \oplus F^\perp)$,
which then maps isomorphically to an automorphism of the projected lattice
$\Delta_0$. The latter extends continuously to a linear map of $F^\perp$.
This first condition depends only on the geometry of the lattice.
For our dodecagonal tilings, the scaling factor must satisfy
$\lambda = (2+\sqrt3)^k$ for some $k\in\mathbb{N}$, where
$2+\sqrt3$ is the fundamental real Pisot unit of infinite order
in the ring of 12-cyclotomic integers. The corresponding map on
$F^\perp$ then scales by its algebraic conjugate,
$\lambda^{-1} = (2-\sqrt3)^k$. $\Delta_0$ is invariant under any
scaling with an integer power of $2+\sqrt3$.

The second condition on the existence of an inflation requests, that
the pattern of singular lines $\{I_1^\alpha\}_{\alpha\in I_1}$ in $F^\perp$
is invariant under the scaling by $\lambda^{-1}$. This is a condition
on the values of the parameters $\gamma$, which is met if and only if
both parameters $\gamma_i$ are rational points in $G = \mathbb{Z}[\sqrt3]$,
that is, rational linear combinations of $1$ and $\sqrt3$. Then, and
only then, the $\Delta_0$-orbits of singular lines form \emph{finite}
orbits under the scaling with $2-\sqrt3$, so that some $k\in\mathbb{N}$
exists such that the whole pattern is invariant under a scaling by
$(2-\sqrt3)^k$.

The rational $\gamma$-parameters leading to substitutive tilings form only
a null set within all $\gamma$-parameters, but it's a null set containing
the most interesting cases. For instance, all tilings considered
in Section~\ref{2lines} fall into this class.

The above arguments let us conclude on the \emph{existence} of an inflation
procedure, which in practice may be far too complicated to be useful, however.
For instance, it might require a huge inflation factor. Still, the mere
existence of an inflation allows us to harvest some low-hanging fruit.
For substitution tilings, there is an associated action of the inflation
on the cohomology, whose eigenvalue spectrum is valuable information for
certain applications. For projection tilings, the cohomology groups are
determined by the singular lines and points, and the stabilizer groups
$\Gamma^\alpha$ acting on them, and there is a known action of the
inflation on these objects. This allows us to obtain at least partial
information on the spectrum of this action.

The first cohomology group is given by \cite{Gah.Hun.Kell.13}
\[
H^1 = \coker (\oplus_{\alpha\in I_1} \Lambda^3\Gamma^{\alpha} \rightarrow \Lambda^3\Delta_0)
       \oplus \ker (\oplus_{\alpha\in I_1} \Lambda^2\Gamma^\alpha
                          \rightarrow \Lambda^{2}\Delta_0).
\]
As the rank of $\Gamma^\alpha$ is $2$, the first summand is 
$\Lambda^3\Delta_0$, on which the inflation acts with eigenvalues
$\lambda$ and $\lambda^{-1}$ (two copies each). The second summand is
the kernel of our map $\beta$ (\ref{beta-map}), which has rank $3$.
The source of $\beta$
consists of summands $\Lambda^2\Gamma^\alpha$, on each of which the inflation
acts with eigenvalue $1$ ($\Gamma^\alpha$ is invariant under the scaling with
$\lambda$). Hence, the inflation acts on $H^1$ with two eigenvalues
$\lambda$, two eigenvalues $\lambda^{-1},$ and $|I_1|-3$ eigenvalues $1$.

The eigenvalues on $H^1$ have been used by Clark and Sadun \cite{CS.06}
to determine which changes of the tile shapes lead to changes of the
spectrum of the tiling dynamical system, and which shape changes leave
the spectrum invariant. Shape changes associated with cohomology classes
with eigenvalue less than one in modulus leave the spectrum invariant
\cite{CS.06}. For practical applications, however, one probably needs a
concrete representation of the inflation of the tiling.

In a similar way, the second cohomology group is given by \cite{Gah.Hun.Kell.13}
$H^2 = \coker\beta \oplus \ker \beta_0$. From the analysis above we
conclude, that the inflation acts on $\coker\beta$ with the three
eigenvalues $\lambda^2$, $\lambda^{-2}$, and $1$. $\ker\beta_0$
actually consists of two parts, $\ker\beta_0'\oplus\ker\beta_0''$.
The first of these is given by
\[
  \ker (\oplus_{\alpha \in I_1} \Lambda^1\Gamma^\alpha
  \rightarrow \Lambda^1 \Delta_0),
\]
carrying an inflation action with eigenvalues $\lambda$ and $\lambda^{-1}$
($|I_1| - 2$ times each). The second part, $\ker\beta_0''$, involves
the ($\Delta_0$-orbits of) singular points, on which the inflation acts
by permutaion. All eigenvalues of a permutation action are roots of unity.
Summarizing, the inflation acts on $H^2$ with eigenvalues $\lambda^2$,
$\lambda^{-2}$, $\lambda$ ($|I_1| - 2$ times) and $\lambda^{-1}$ ($|I_1| - 2$
times). All remaining eigenvalues have modulus $1$.

The subleading eigenvalues on $H^2$ determine the intrinsic fluctuations
in the distribution of a given local pattern in a tiling \cite{Schmi.Trev.18}.
When averaging the number of such patterns over a finite volume, two kinds
of errors are made. There is an error due to the boundary of the region
over which the average is taken, and there is an error due to the intrinsic
fluctuations in the distribution. Our analysis shows, that for dodecagonal
projection tilings which are also subsitutive, the error term due to intrinsic
fluctuations is of the same order of magnitude as the error due to the
boundary terms.

Homeomorphic tilings spaces have isomorphic cohomology, but the
converse is not generally true: cohomology is far from being a
complete invariant.  Still, there are sets of $\gamma$-parameters,
whose tilings spaces are known to be homeomorphic. We indicate here
some examples, but a complete analysis is beyond the scope of this
paper.  The simplest example is obtained, when the two $\gamma$-parameters
are swapped: the tiling spaces with parameters $(\gamma_1,\gamma_2)$ and
$(\gamma_2,\gamma_1)$ are homeomorphic. In fact, the two spaces are
mirror images of each other, where the mirror swaps the singular lines in
even and odd directions, respectively. More generally, if a pattern of
singular lines with parameters $\gamma'$ is an affine image of a pattern of
singular lines with parameters $\gamma$, then the corresponding tiling
spaces are homeomorphic. Similarly, if a pattern of singular lines is
invariant under a scaling with the $k$-th power of $2-\sqrt3$, then
scaling the pattern with $2-\sqrt3$ creates a finite orbit of patterns,
whose tiling spaces are homeomorphic. Such spaces are not invariant
under an inflation with scaling factor $2+\sqrt3$, but form an orbit of
homeomorphic spaces under such an inflation.

\section*{Acknowledgements}
We want to thank the referee for the careful reading of the manuscript and the very interesting questions posed. We thank Thomas Fernique for generously providing the pictures in
Figure~\ref{fig:patches}.  F.G. was supported by the German Research Foundation (DFG), within the CRC 1283. A.G.L.\ was supported by the EPSRC grant EP/T028408/1.

\clearpage
\bibliographystyle{plain}
\bibliography{biblio-cohom-bl}

\end{document}